\documentclass[a4paper,reqno]{amsart}

\makeatletter
\@namedef{subjclassname@2020}{%
  \textup{2020} Mathematics Subject Classification}
\makeatother

\usepackage[toc]{appendix}
\usepackage{amsmath}
\usepackage{amssymb}
\usepackage{amsthm}
\usepackage{mathrsfs}
\usepackage{latexsym}
\usepackage{amscd}
\usepackage{xypic}
\xyoption{curve}
\usepackage{ifthen} 
\usepackage{hyperref} 
\usepackage{graphicx}
\usepackage{enumerate}
\usepackage{enumitem}
\usepackage{rotating}
\usepackage{xcolor}
\usepackage{mathtools}
\usepackage{todonotes}
\usepackage{color,soul}

\usepackage{float}
\restylefloat{table}

\numberwithin{equation}{section}

\def\cocoa{{\hbox{\rm C\kern-.13em o\kern-.07em C\kern-.13em o\kern-.15em A}}}

\newtheorem{theorem}{Theorem}[section]

\newtheorem{lemma}[theorem]{Lemma}
\newtheorem{proposition}[theorem]{Proposition}
\newtheorem{corollary}[theorem]{Corollary}

\theoremstyle{definition}
\newtheorem{remark}[theorem]{Remark}
\newtheorem{definition}[theorem]{Definition}
\newtheorem{example}[theorem]{Example}

\newtheorem{construction}[theorem]{Construction}

\newcommand {\spec}{\mathrm{spec}}

\newcommand {\reg}{\mathrm{reg}}
\newcommand {\sHom}{\mathcal{H}\kern -0.25ex{\mathit om}}
\newcommand {\sExt}{\mathcal{E}\kern -0.25ex{\mathit xt}}
\newcommand {\sTor}{\mathcal{T}\kern -0.25ex{\mathit or}}

\newcommand {\im}{\mathrm{im}}
\newcommand {\rk}{\mathrm{rk}}

\newcommand {\Ext}{\mathrm{Ext}}

\newcommand {\Hilb}{\mathcal{H}\kern -0.25ex{\mathit ilb\/}}

\newcommand {\quantum}{k}
\newcommand {\defect}{\delta}
\newcommand {\field}{\mathbf k}

\newcommand {\cA}{\mathcal{A}}
\newcommand {\cB}{\mathcal{B}}

\newcommand {\cU}{\mathcal{U}}
\newcommand {\cV}{\mathcal{V}}

\newcommand{\cC}{{\mathcal C}}
\newcommand{\cS}{{\mathcal S}}
\newcommand{\cE}{{\mathcal E}}
\newcommand{\cF}{{\mathcal F}}
\newcommand{\cM}{{\mathcal M}}
\newcommand{\cN}{{\mathcal N}}
\newcommand{\cO}{{\mathcal O}}
\newcommand{\cG}{{\mathcal G}}
\newcommand{\cT}{{\mathcal T}}
\newcommand{\cI}{{\mathcal I}}

\newcommand {\bZ}{\mathbb{Z}}

\newcommand {\bC}{\mathbb{C}}
\newcommand {\bP}{\mathbb{P}}

\newcommand{\Pic}{\operatorname{Pic}}

\def\p#1{{\bP^{#1}}}

\def\ga#1{{{\accent"12 #1}}}

\def\mapright#1{\mathbin{\smash{\mathop{\longrightarrow}
\limits^{#1}}}}

\title[Instanton sheaves on projective schemes]{Instanton sheaves on projective schemes}

\thanks{The  authors are members of GNSAGA group of INdAM and are supported by the framework of the MIUR grant Dipartimenti di Eccellenza 2018-2022 (E11G18000350001). The first author is a postdoctoral research fellow (``titolare di Assegno di Ricerca") of Istituto Nazionale di Alta Matematica (INdAM, Italy). }

\subjclass[2020]{Primary: 14F06. Secondary: 14D21, 14J45, 14J60}

\keywords{Ulrich sheaf, Instanton sheaf, Fano $3$--fold, Scroll}

\author[V. Antonelli, G. Casnati]{Vincenzo Antonelli, Gianfranco Casnati}

\begin{document}

\maketitle

\begin{abstract}
A $h$--instanton sheaf on a closed subscheme $X$ of some projective space endowed with an ample and globally generated line bundle $\cO_X(h)$ is a coherent sheaf whose cohomology table has a certain prescribed shape. In this paper we deal with $h$--instanton sheaves relating them to Ulrich sheaves. Moreover, we study $h$--instanton sheaves on smooth curves and surfaces, cyclic $n$--folds, Fano $3$--folds and scrolls over arbitrary smooth curves. We also deal with a family of monads associated to $h$--instanton bundles on varieties satisfying some mild extra technical conditions.
\end{abstract}

\section{Introduction}
The study of coherent sheaves is a fruitful research field in algebraic geometry. In particular, it is interesting to deal with the existence of  coherent sheaves $\cE$ satisfying some extra conditions on a fixed {\sl projective scheme $X$}, i.e. a closed subscheme of some projective space over an algebraically closed field $\field$. 

For instance, if $X$ is irreducible of dimension $n$ and it is endowed with an ample and globally generated line bundle $\cO_X(h)$, then it is natural to deal with coherent sheaves defined by some particular vanishings of their cohomology groups. For instance, it may be interesting to deal with the existence and classification of  {\sl Ulrich sheaves (with respect to $\cO_X(h)$)}, i.e. non--zero coherent sheaves $\cE$ on $X$ such that:
\begin{itemize}
\item $h^0\big(\cE(-(t+1)h)\big)=h^n\big(\cE((t-n)h)\big)=0$ if $t\ge0$;
\item $\cE$ is {\sl without intermediate cohomology}, i.e. $h^i\big(X,\cE(th)\big)=0$ for $0< i<n$, $t\in\bZ$. 
\end{itemize}

Ulrich sheaves have been object of deep study and several attempts of generalization both from the algebraic and the geometric point of view: without any claim of completeness we refer the interested reader to \cite{E--S--W, Ap--Kim, C--H2, Bea6, K--M--S} for further details on such sheaves. In particular, as pointed out in \cite{E--S--W}, it is natural to ask whether Ulrich sheaves actually exist on each irreducible projective scheme. Ulrich line bundles always exist on {\sl curves} (i.e. irreducible projective schemes of dimension $1$) when smooth: see \cite[p. 542]{E--S--W} for further details. When the dimension is at least $2$ such question is still wide open, despite the great number of scattered results proved in the last decade. 

Thus it could be reasonable to look for some kind of approximations of Ulrich sheaves, allowing  more non--zero cohomology groups. E.g. we can consider torsion--free coherent sheaves $\cE$ on a projective scheme $X$ with $\dim(X)=n$ such that:
\begin{itemize}
\item $h^0\big(\cE(-(t+1)h)\big)=h^n\big(\cE((t-n)h)\big)=0$ if $t\ge0$;
\item $h^i\big(\cE(-(i+t+1)h)\big)=h^{n-i}\big(\cE((t-n+i)h)\big)=0$ if $1\le i\le n-2$, $t\ge0$.
\end{itemize}
If $X\cong\p n$ and $\cO_X(h)\cong\cO_{\p n}(1)$, such sheaves are characterized as the torsion--free ones which are cohomology of some {\sl linear monad}, i.e. a monad of the form
\begin{equation*}
0\longrightarrow\cO_{\p n}(-1)^{\oplus a}\longrightarrow\cO_{\p n}^{\oplus b}\longrightarrow\cO_{\p n}(1)^{\oplus c}\longrightarrow0
\end{equation*}
(see \cite{Ja}). It is immediate to check that the above monad has a symmetric shape, i.e. $a=c$, if and only if $h^1\big(\cE(-h)\big)=h^{n-1}\big(\cE(-nh)\big)$ or, equivalently, if and only if $c_1(\cE)=0$: a sheaf $\cE$ with such properties is called {\sl instanton sheaf} in \cite{Ja}.

Classically, a {\sl mathematical instanton bundle} is a rank two vector bundle $\cE$ on $\p3$ satisfying $h^0\big(\cE\big)=h^1\big(\cE(-2)\big)=0$ and  $c_1(\cE)=0$. Instanton bundles have been widely studied from different viewpoints since the discovery, through the Atiyah--Penrose--Ward transformation, of their connection with the solutions of the Yang--Mills equations (see \cite{A--W}) and therefore with the physics of particles.
 
Because of the intrinsic interest due to its physical importance, the notion of instanton bundle has been extended in several papers to different classes of  projective schemes: e.g. see \cite{Ok--Sp, A--O1,Fa2, Kuz,Ja, J--MR, C--MR3}.  In particular, M. Jardim and R.M. Mir\'o--Roig introduced the following definition on smooth irreducible projective schemes of dimension $n$ ({\sl $n$--folds} for short).

\begin{definition}[see \cite{J--MR}]
\label{dJardim}
Let $X$ be an $n$--fold endowed with a very ample line bundle $\cO_X(h)$.

A torsion--free coherent sheaf $\cE$ is called instanton sheaf with quantum number $\quantum\ge0$ if it is the cohomology of a monad 
\begin{equation}
\label{MonadJardim}
0\longrightarrow\cO_{X}(-h)^{\oplus \quantum}\longrightarrow\cO_{X}^{\oplus b}\longrightarrow\cO_{X}(h)^{\oplus \quantum}\longrightarrow0.
\end{equation}
\end{definition}

Another possible way of generalizing mathematical instanton bundles is by considering rank two bundles supported on $3$--folds, which are similar to $\p3$. 

Recall that the $n$--fold $X$ is {\sl Fano} if the dual of its canonical line bundle $\omega_X$ is ample. The greatest integer $i_X$ such that $\omega_X\cong\cO_X(-i_Xh)$ for some ample line bundle $\cO_X(h)$ is well--defined and called {\sl index of $X$}. Similarly, the line bundle $\cO_X(h)$ is well--defined as well and called {\sl fundamental line bundle of $X$}. It is well--known that $1\le i_X\le n+1$. Moreover, $i_X=n+1$ if and only if $X\cong\p n$ and $i_X=n$ if and only if $X$ is a smooth quadric hypersurface in $\p{n+1}$: there also exists a complete classification when $X$ is also {\sl del Pezzo}, i.e.  $i_X=n-1$. When $i_X\le n-2$ there are many deformation classes.

From the Fano $3$--folds viewpoint $\cO_{\p3}(-2)$ is the square root of $\omega_{\p3}$, hence an instanton bundle $\cE$ on $\p3$ can be also viewed as the normalization of a stable bundle $\cF$ of rank $2$ such that $\cF\cong\cF^\vee\otimes\omega_{\p3}$ and $h^1\big(\cF\big)=0$. For this reason D. Faenzi introduced the following definition in \cite{Fa2} (see also \cite{Kuz} for $i_X=2$). 

\begin{definition}[see \cite{Fa2}: see also \cite{Kuz}]
\label{dFaenzi}
Let $X$ be a Fano $3$--fold with cyclic Picard group, fundamental line bundle $\cO_X(h)$, index $i_X$ and let
$
q_X:=\left[\frac{i_X}2\right]
$.

A vector bundle $\cE$ of rank two on $X$ is called instanton bundle if the following properties hold:
\begin{itemize}
\item $c_1(\cE)=(2q_X-i_X)h$;
\item $h^0\big(\cE\big)=h^1\big(\cE(-q_Xh)\big)=0$.
\end{itemize}
\end{definition}
The above definition has been generalized also to Fano $3$--folds  $X$ with non--cyclic Picard group in \cite{C--C--G--M} (see also \cite{M--M--PL}) and to any $3$--fold in \cite{A--M2}. 

In particular there are two different possible definitions for an instanton bundle $\cE$ on the smooth quadric hypersurface $X\subseteq\p4$ in the literature, i.e. the one according to Definition \ref{dJardim}, which satisfies $c_1(\cE)=0$, and the one according to Definition \ref{dFaenzi}, which satisfies $c_1(\cE)=-h$. A similar duplicity of definitions has been recently explored in \cite{A--C--G} for each Fano $3$--fold (see also \cite{El--Gr} when $X\cong\p3$).

In this paper we try to merge all the aforementioned different viewpoints from \cite{J--MR, Kuz, Fa2, El--Gr, A--M2} in the following purely cohomological definition.

\begin{definition}
\label{dMalaspinion}
Let $X$ be an irreducible projective scheme of dimension $n\ge1$ endowed with an ample and globally generated line bundle $\cO_X(h)$.

A non--zero coherent sheaf  $\cE$ on $X$ is called instanton sheaf (with respect to $\cO_X(h)$) with defect $\defect\in\{\ 0,1\ \}$ and quantum number $\quantum\in\bZ$ if the following properties hold:
\begin{itemize}
\item $h^0\big(\cE(-(t+1)h)\big)=h^n\big(\cE((\defect-n+t)h)\big)=0$ if $t\ge0$;
\item $h^i\big(\cE(-(i+t+1)h)\big)=h^{n-i}\big(\cE((\defect+t-n+i)h)\big)=0$ if $1\le i\le n-2$, $t\ge0$;
\item $\defect h^i\big(\cE(-ih)\big)=0$ for $2\le i\le n-2$;
\item $h^1\big(\cE(-h)\big)=h^{n-1}\big(\cE((\defect-n)h)\big)=\quantum$;
\item $\defect(\chi(\cE)-(-1)^n\chi(\cE(-nh)))=0$.
\end{itemize}
The instanton sheaf $\cE$ is called ordinary or non--ordinary according to $\defect$ is $0$ or $1$.

If $\cE$ is an instanton sheaf with respect to $\cO_X(h)$ we will also briefly say that $\cE$ is an $\cO_X(h)$--instanton sheaf or a $h$--instanton sheaf or simply an instanton sheaf if the polarization is evident from the context.
\end{definition}

The first three properties in Definition \ref{dMalaspinion} are related to the existence of few non--zero entries in the cohomology table of $\cE$: in particular, $h^i\big(\cE(th)\big)=0$ for $t\in\bZ$ and $2\le i\le n-2$. The two latter conditions imply that the cohomology $h^i\big(\cE(th)\big)$ is approximately symmetric in the range $-n\le t\le\defect-1$.

Moreover, it is clear from the above definition that the $h$--instanton sheaf $\cE$ is Ulrich if $\defect=\quantum=0$. Also the converse is true (see Corollary \ref{cUlrich}), hence instanton sheaves may be also viewed as some kind of approximations of Ulrich sheaves.
\medbreak

We briefly describe below the content of the paper. In Section \ref{sPrel} we fix the notation used in the paper and collect several general results. 

In Section \ref{sSpace} we recall some well known facts from \cite{Ja} about ordinary instanton sheaves on $\p n$ with respect to $\cO_{\p n}(1)$, giving some kind of generalization to the non--ordinary case. More precisely, when $n\ge2$, we recall how to use  the Beilinson theorem for associating to each instanton sheaf on $\p n$ a monad $\cM^\bullet$ with a very precise shape (see equalities \eqref{MonadValue}). Moreover, such an $\cM^\bullet$ completely characterizes instanton sheaves on $\p n$ with respect to $\cO_{\p n}(1)$ with arbitrary defect.

In Section \ref{sGeneral} we prove several properties of instanton sheaves. Notice that Definition \ref{dMalaspinion} features the notion of ordinary instanton sheaf as a natural generalization of the notion of Ulrich sheaf and they actually coincide when $n=1$.

If $X$ is a projective scheme of dimension $n\ge1$ endowed with an ample and globally generated line bundle $\cO_X(h)$, then the morphism $X\to\p N$ induced by any choice of a basis of $H^0\big(\cO_X(h)\big)$ is finite. Thus its image has dimension $n$ and we can project it from $N-n$ general points obtaining a finite morphism $p\colon X\to\p n$ such that $\cO_X(h)\cong p^*\cO_{\p n}(1)$. Conversely each such morphism arises in this way.

Thus, the following characterization of instanton sheaves extending \cite[Theorem 2.1]{E--S--W} seems to be quite natural. Moreover, it is clearly  helpful  for both reducing the infinite set of vanishings required in Definition \ref{dMalaspinion} to a finite one and relating $h$--instanton sheaves with the aforementioned monad $\cM^\bullet$ on the projective space.

\begin{theorem}
\label{tCharacterization}
Let $X$ be an irreducible projective scheme of dimension $n\ge1$ endowed with an ample and globally generated line bundle $\cO_X(h)$.

If $\cE$ is a coherent sheaf on $X$, $\quantum$ a non--negative integer and $\defect\in\{\ 0, 1\ \}$, then the following assertions are equivalent.
\begin{enumerate}
\item $\cE$ is a $h$--instanton sheaf with defect $\defect$ and quantum number $\quantum$.
\item $\cE$ is non--zero and the following finite set of properties holds:
\begin{itemize}
\item $h^0\big(\cE(-h)\big)=h^n\big(\cE((\defect-n)h)\big)=0$;
\item $h^i\big(\cE(-(i+1)h)\big)=h^{n-i}\big(\cE((\defect-n+i)h)\big)=0$ if $1\le i\le n-2$;
\item $\defect h^i\big(\cE(-ih)\big)=0$ for $2\le i\le n-2$;
\item $h^1\big(\cE(-h)\big)=h^{n-1}\big(\cE((\defect-n)h)\big)=\quantum$;
\item $\defect(\chi(\cE)-(-1)^n\chi(\cE(-nh)))=0$.
\end{itemize}
\item For each  finite morphism $p\colon X\to\p n$ with $\cO_X(h)\cong p^*\cO_{\p n}(1)$, then $p_*\cE$ is an instanton sheaf on $\p n$ with respect to $\cO_{\p n}(1)$  with defect $\defect$ and quantum number $\quantum$.
\item There is a finite morphism $p\colon X\to\p n$ with $\cO_X(h)\cong p^*\cO_{\p n}(1)$ such that $p_*\cE$ is an instanton sheaf on $\p n$ with respect to $\cO_{\p n}(1)$ with defect $\defect$ and quantum number $\quantum$.\end{enumerate}
\end{theorem}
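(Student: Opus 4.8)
The plan is to reduce the whole statement to the case $X\cong\p n$ by exploiting the finite morphism $p$. The essential observation is that a finite morphism is affine, so $R^qp_*=0$ for $q>0$, and the projection formula gives $p_*\big(\cE\otimes p^*\cO_{\p n}(t)\big)\cong(p_*\cE)(t)$; hence $H^i\big(X,\cE(th)\big)\cong H^i\big(\p n,(p_*\cE)(t)\big)$ for every $i$ and every $t\in\bZ$. In particular the two cohomology tables coincide term by term, and so do all the Euler characteristics $\chi(\cE(th))=\chi((p_*\cE)(t))$. I would then prove the four assertions equivalent along the cycle $(1)\Rightarrow(2)\Rightarrow(3)\Rightarrow(4)\Rightarrow(1)$, in which every step except one is purely formal.

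The formal steps go as follows. The implication $(1)\Rightarrow(2)$ is the trivial specialization $t=0$ of the defining vanishings. For $(3)\Rightarrow(4)$ one only needs the existence of at least one finite $p$ with $\cO_X(h)\cong p^*\cO_{\p n}(1)$, recorded in the paragraph preceding the statement; note also that such a $p$ is surjective, its image being an $n$--dimensional closed subscheme of the irreducible $\p n$, whence $p_*\cE$ is a non--zero coherent sheaf as soon as $\cE\neq0$. The cohomology comparison above turns $(4)\Rightarrow(1)$ into a term--by--term transport of the vanishings, the quantum number and the $\chi$--condition from $p_*\cE$ on $\p n$ back to $\cE$ on $X$, for all $t\ge0$ at once. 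Finally, the same comparison shows that under $(2)$ the pushforward $p_*\cE$ satisfies the finite list of assertion $(2)$ on $\p n$ for \emph{every} finite $p$; thus $(2)\Rightarrow(3)$ follows once we know the theorem in the single case $X\cong\p n$, that is, once we know that on $\p n$ the finite conditions $(2)$ force the infinite family $(1)$.

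Thus the heart of the proof is the implication $(2)\Rightarrow(1)$ for $\cF:=p_*\cE$ on $\p n$, a matter of propagating the $t=0$ vanishings to all $t\ge0$. The two extreme rows are elementary. For $i=0$, multiplication by a general linear form $\ell$ (a non--zero--divisor on $\cF$, since a general hyperplane avoids the associated primes of $\cF$) embeds $H^0(\cF(s-1))$ into $H^0(\cF(s))$, so $h^0(\cF(-1))=0$ forces $h^0(\cF(s))=0$ for all $s\le-1$. Dually, the restriction sequence $0\to\cF(s-1)\to\cF(s)\to\cF_H(s)\to0$ with $H\cong\p{n-1}$ yields a surjection $H^n(\cF(s-1))\twoheadrightarrow H^n(\cF(s))$ because $H^n(\cF_H(s))=0$ for dimension reasons, so $h^n(\cF(\defect-n))=0$ propagates upward. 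For the intermediate rows $1\le i\le n-1$ I would argue by induction on $n$: using the same restriction sequence I would check that $\cF_H$ satisfies the finite conditions $(2)$ on $\p{n-1}$, invoke the inductive hypothesis to obtain all of its cohomology, and then read off the vanishings for $\cF$ from the long exact sequence through the squeeze $H^{i-1}(\cF_H(s))\to H^i(\cF(s-1))\to H^i(\cF(s))$ and its twist--raising analogue. (Alternatively, on $\p n$ one may bypass the induction by feeding conditions $(2)$ into the Beilinson machinery of Section \ref{sSpace}: they are exactly what is needed to produce the monad $\cM^\bullet$ of the prescribed instanton shape, from which every $h^i(\cF(t))$ is computed directly.)

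The main obstacle is precisely this intermediate--cohomology propagation, and within it the bookkeeping at the boundary twists. One must verify that restriction to a general $H$ preserves the defect $\defect$ and produces the correct quantum number on $\p{n-1}$, and that the condition $\defect\big(\chi(\cF)-(-1)^n\chi(\cF(-n))\big)=0$ descends to its $(n-1)$--dimensional analogue; this forces one to disentangle the two a priori non--zero middle groups $h^1(\cF(-1))$ and $h^{n-1}(\cF(\defect-n))$, both equal to $\quantum$, since they are the only obstructions to the relevant Castelnuovo--Mumford regularity of a twist of $\cF$. Once this is settled, the remark that $h^i(\cF(th))=0$ for all $t$ and $2\le i\le n-2$ drops out by an elementary count of the twist ranges covered by the two vanishing families together with the defect condition.
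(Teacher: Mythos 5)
Your proposal is correct and follows the paper's overall architecture: the pushforward identity $h^i\big(\cE(th)\big)=h^i\big((p_*\cE)(t)\big)$ of \eqref{PushDown} transports everything between $X$ and $\p n$, and the core is propagating the finite list of vanishings in (2) to all twists by induction on dimension via general hyperplane sections. The one place you genuinely diverge is in how that propagation is organized, and the divergence is exactly what creates your ``main obstacle''. The paper isolates the propagation as a standalone statement, Proposition~\ref{pNatural}, about a single diagonal family $h^i\big(\cA(-ih)\big)=0$ for $i\le m$ (resp.\ $i\ge m$); such a family is stable under restriction to a general $Y\in\vert h\vert$ (one merely loses the top index), so the induction never needs to know that $\cA$ or $\cA_Y$ is an instanton sheaf. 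Applying this twice --- to $\cE(-h)$ for the left-hand vanishings of (2) and to $\cE(\defect h)$ for the right-hand ones --- yields (2)$\Rightarrow$(1) directly on $X$, with no projection to $\p n$ and with no bookkeeping of the quantum number or of the condition $\defect(\chi(\cE)-(-1)^n\chi(\cE(-nh)))=0$ on the hyperplane section: those two conditions are literally the same in (1) and (2) and require no propagation at all. Your plan of verifying the full finite list for $\cF_H$ and invoking the full inductive hypothesis can be pushed through (the boundary twists you worry about do close up, using the $t=0$ conditions together with the already-propagated extreme rows, and one must allow the quantum number of $\cF_H$ to differ from that of $\cF$, as in Corollary~\ref{cRestriction} for $(n,\defect)=(3,1)$), but it is precisely the complication the paper's formulation dissolves. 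Conversely, your direct term-by-term transport for (4)$\Rightarrow$(1) is simpler than the paper's detour through the Beilinson monad of Proposition~\ref{pSpace}.
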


Taking  account  of the monadic representation of instanton sheaves on $\p n$ described in Section \ref{sSpace}, it follows that instanton sheaves can be still characterized, in some sense, in terms of  monads on each projective scheme $X$. Though such a description is not so easy to handle, we obtain some further results concerning the Hilbert function of an instanton sheaf and their relation with Ulrich sheaves. Moreover, we also inspect the behaviour of their restrictions to general divisors in $\vert h\vert$.

When we allow some further properties for the projective scheme $X$ or for the sheaf $\cE$ we can say something more. E.g. if $\cO_X(h)$ is very ample then we can deal with the minimal free resolution of $\cE$. More precisely, for each $h$--instanton sheaf $\cE$ on $X$ with defect $\defect$, we define
\begin{gather*}
v(\cE):=\min\{\ t\in\bZ\ \vert\ h^0\big(\cE(th)\big)\ne0\ \},\qquad 
w(\cE):=h^1\big(\cE((\defect-1)h)\big)+\defect.
\end{gather*}
The Castelnuovo--Mumford regularity of $\cE$ is bounded from above by $w(\cE)$ (see Proposition \ref{pRegularity}) and the following result holds.

\begin{theorem}
\label{tResolution}
Let $X$ be an irreducible projective scheme of dimension $n\ge2$ endowed with a very ample line bundle $\cO_X(h)$. Let $V\subseteq H^0\big(\cO_X(h)\big)$ be a subspace associated to an embedding $X\subseteq\p N$ and $S$ the symmetric $\field$--algebra of $V$.

If $\cE$ is a $h$--instanton sheaf, then the minimal free resolution of $H^0_*\big(\cE\big)$ as $S$--module has the form
\begin{equation}
\label{Resolution}
0\longrightarrow F_{N-1}\longrightarrow F_{N-2}\longrightarrow\dots\longrightarrow F_1\longrightarrow F_0\longrightarrow H^0_*\big(\cE\big)\longrightarrow0,
\end{equation}
where $F_p\cong \bigoplus_{i=v(\cE)}^{w(\cE)}S(-i-p)^{\oplus \beta_{p,i}}$.
\end{theorem}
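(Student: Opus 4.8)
The plan is to translate everything into the language of the minimal graded free resolution of the finitely generated $S$--module $M:=H^0_*\big(\cE\big)$ and its graded Betti numbers $\beta_{p,j}(M)=\dim_\field\Tor^S_p(M,\field)_j$. First I would regard $\cE$ as a coherent sheaf on $\p N$ via the closed immersion $X\subseteq\p N$; since $\cO_X(h)$ is the restriction of $\cO_{\p N}(1)$ and cohomology is unchanged under a closed immersion, this does not alter $M$, which carries its natural structure of graded module over the symmetric algebra $S=\mathrm{Sym}(V)$, the homogeneous coordinate ring of $\p N$ in $N+1$ variables. Finite generation of $M$ follows from the finiteness of $\reg(\cE)$: an $m$--regular sheaf has $H^0_*$ generated in degrees $\le m$, each graded piece is finite dimensional, and $H^0\big(\cE(th)\big)=0$ for $t\ll0$. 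With this in place the statement becomes purely homological: it remains to show that $\operatorname{pd}_S(M)\le N-1$ and that $\beta_{p,j}(M)\ne0$ forces $v(\cE)\le j-p\le w(\cE)$.

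For the length of the resolution I would invoke the Auslander--Buchsbaum formula $\operatorname{pd}_S(M)+\depth_S(M)=\depth(S)=N+1$; it thus suffices to prove $\depth_S(M)\ge2$, equivalently $H^0_{\mathfrak m}(M)=H^1_{\mathfrak m}(M)=0$, where $\mathfrak m$ is the irrelevant maximal ideal of $S$. Both vanishings hold because $M$ is saturated: $H^0_{\mathfrak m}(M)=0$ since $M$ has no nonzero submodule supported on $\mathfrak m$, and $H^1_{\mathfrak m}(M)=0$ precisely because $M$ equals the full module $\bigoplus_{t\in\bZ}H^0\big(\cE(th)\big)$ of twisted sections. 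Consequently $\operatorname{pd}_S(M)\le N-1$, so $F_p=0$ for $p>N-1$ and the resolution has the asserted length.

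For the shape of the free modules I would bound $j-p$ on both sides. The upper bound is the regularity estimate: from the isomorphisms $H^{i+1}_{\mathfrak m}(M)_t\cong H^i\big(\cE(th)\big)$ for $i\ge1$ together with the saturation established above one obtains $\reg_S(M)=\reg(\cE)$, and Proposition \ref{pRegularity} gives $\reg(\cE)\le w(\cE)$; since $\reg_S(M)=\max\{\,j-p\ \vert\ \beta_{p,j}(M)\ne0\,\}$, every nonzero $\beta_{p,j}(M)$ satisfies $j-p\le w(\cE)$. The lower bound comes from minimality: by definition $M$ is generated in degrees $\ge v(\cE)$, so the generators of $F_0$ have degree $\ge v(\cE)$; since the differentials have entries in $\mathfrak m$ and a minimal generator of $F_p$ cannot lie in $\ker\partial_p$ for $p\ge1$, the least degree of a generator of $F_p$ exceeds that of $F_{p-1}$ by at least one, whence $\min\{\,j\ \vert\ \beta_{p,j}(M)\ne0\,\}\ge v(\cE)+p$ by induction. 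Combining the two bounds gives $v(\cE)\le j-p\le w(\cE)$ for all nonzero Betti numbers, which is exactly the stated form of $F_p$ after setting $\beta_{p,i}:=\beta_{p,i+p}(M)$.

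The genuinely substantial input is the regularity bound $\reg(\cE)\le w(\cE)$ furnished by Proposition \ref{pRegularity}; granting it, the remaining argument is formal homological algebra. The point that deserves the most care is the identification $\reg_S(M)=\reg(\cE)$, which hinges on the local cohomology/sheaf cohomology dictionary in the two boundary degrees $i=0,1$ (where saturation is used) and on the finite generation of $M$, without which neither Auslander--Buchsbaum nor the Betti-number description of regularity would be available.
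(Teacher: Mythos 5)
Your proof is correct, and it takes a genuinely different route from the paper's. The paper argues sheaf--theoretically: it chooses a minimal set of generators of $H^0_*\big(\cE\big)$, whose degrees lie in $[v(\cE),w(\cE)]$ by Corollary \ref{cRegularity}, sheafifies to get $0\to\cE_1\to\cF_0\to\cE\to0$, checks $H^1_*\big(\cE_1\big)=0$ and $\reg(\cE_1)\le w(\cE)+1$, iterates this to produce syzygy sheaves $\cE_p$ with controlled generator degrees, and terminates the process by showing $H^i_*\big(\cE_{N-1}\big)=0$ for $1\le i\le N-1$, so that the Horrocks criterion forces $\cE_{N-1}$ to split as a sum of line bundles. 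You instead work entirely with the graded module $M=H^0_*\big(\cE\big)$: the length bound comes from Auslander--Buchsbaum once $\depth_S(M)\ge2$ is extracted from saturation (vanishing of $H^0_{\mathfrak{m}}(M)$ and $H^1_{\mathfrak{m}}(M)$), and the twist bounds come from the identification $\reg_S(M)=\reg(\cE)$ via the local--cohomology dictionary together with the standard fact that in a minimal resolution the least generator degree increases by at least one at each step. Both arguments rest on the same substantive input, Proposition \ref{pRegularity}; yours is essentially the textbook argument of \cite[Chapter 4]{Ei} and is shorter, while the paper's construction is more self--contained (no local cohomology) and exhibits explicitly the intermediate syzygy sheaves $\cE_p$ and the surjectivity of the maps $\varphi_p$ on global sections. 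You correctly flag the two delicate points of your route --- finite generation of $M$ and the equality (not just inequality) $\reg_S(M)=\reg(\cE)$, both of which hinge on saturation --- so I see no gap.
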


The property of being $h$--instanton imposes a strong condition on the first Chern class of the sheaf (at least when $X$ is smooth). Indeed, in Section \ref{sBundle} we prove the following theorem.

\begin{theorem}
\label{tSlope}
Let $X$ be an $n$--fold with $n\ge1$ endowed with an ample and globally generated  line bundle $\cO_X(h)$. Assume that either the characteristic of $\field$ is $0$ or $\cO_X(h)$ is very ample.

Let $\cE$ be a non--zero vector bundle on $X$ and $\defect\in\{\ 0,1\ \}$ such that the following conditions hold:
\begin{itemize}
\item $h^0\big(\cE(-h)\big)=h^n\big(\cE((\defect-n)h)\big)=0$;
\item $h^i\big(\cE(-(i+1)h)\big)=h^{j}\big(\cE((\defect-j)h)\big)=0$ if $1\le i\le n-2$, $2\le j\le n-1$;
\item $\defect h^i\big(\cE(-ih)\big)=0$ for $2\le i\le n-2$;
\item $\defect h^1\big(\cE(-h)\big)=\defect h^{n-1}\big(\cE((\defect-n)h)\big)$.
\end{itemize}
Then $\cE$ is a $h$--instanton bundle on $X$ with defect $\defect$ if and only if
\begin{equation}
\label{Slope}
c_1(\cE)h^{n-1}=\frac{\rk(\cE)}2((n+1-\defect)h^n+K_Xh^{n-1}).
\end{equation}
\end{theorem}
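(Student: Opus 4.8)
The plan is to read \eqref{Slope} as a statement about the Hilbert polynomial of $\cE$. Setting $P(t):=\chi\big(\cE(th)\big)$ and writing $P(t)=\sum_{k=0}^n a_kt^k$, Hirzebruch--Riemann--Roch gives $a_n=\rk(\cE)h^n/n!$ and $a_{n-1}=\frac1{(n-1)!}\big(c_1(\cE)h^{n-1}-\tfrac{\rk(\cE)}2K_Xh^{n-1}\big)$, obtained by extracting the degree--$(n-1)$ part of $\ch(\cE)\td(X)$ and pairing with $h^{n-1}$. Hence \eqref{Slope} is equivalent to the single identity $a_{n-1}=\tfrac{n(n+1-\defect)}2\,a_n$, i.e.\ to prescribing the sum of the roots of $P$. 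So the whole statement reduces to linking this coefficient relation with the instanton property, and the forward implication can already be seen directly: for a genuine instanton the cohomology table has only the two ``quantum'' entries $h^1$ and $h^{n-1}$ off the extremes, which forces $P$ to vanish at the $n-2$ consecutive integers separating them and to take the equal values $\mp\quantum$ at the two quantum spots; comparing the coefficient of $t^{n-1}$ then yields exactly $a_{n-1}=\tfrac{n(n+1-\defect)}2\,a_n$.

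To get both implications uniformly I would induct on $n$. The base case $n=1$ is a direct Riemann--Roch computation on the smooth curve $X=C$: there $\chi\big(\cE(th)\big)=\deg(\cE)+t\,\rk(\cE)\,h^1+\rk(\cE)(1-g)$ and $K_Xh^{n-1}=\deg K_C=2g-2$, so the vanishings defining an instanton fix $\chi\big(\cE(-h)\big)$ and, when $\defect=1$, also $\chi(\cE)$ through condition $\defect(\chi(\cE)-(-1)^n\chi(\cE(-nh)))=0$; a one--line substitution checks that these are equivalent to \eqref{Slope}.

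For the inductive step I would cut $X$ with a general $Y\in\vert h\vert$. This is exactly where the hypothesis ``$\mathrm{char}(\field)=0$ or $\cO_X(h)$ very ample'' enters: it guarantees via Bertini that $Y$ is smooth and irreducible, hence a genuine $(n-1)$--fold, and $\cE|_Y$ is again a vector bundle. Two points are needed. First, using the restriction sequences $0\to\cE((t-1)h)\to\cE(th)\to(\cE|_Y)(th)\to0$ together with the prescribed diagonal vanishings, one shows that $\cE|_Y$ satisfies on $Y$ the analogous list of hypotheses with the same defect $\defect$, and that $\cE$ is a $h$--instanton bundle on $X$ if and only if $\cE|_Y$ is a $(h|_Y)$--instanton bundle on $Y$. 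Second, by adjunction $K_Y=(K_X+h)|_Y$ and $h_Y=h|_Y$, whence $c_1(\cE|_Y)h_Y^{n-2}=c_1(\cE)h^{n-1}$, $h_Y^{n-1}=h^n$ and $K_Yh_Y^{n-2}=K_Xh^{n-1}+h^n$; substituting these into \eqref{Slope} for the $(n-1)$--fold $Y$ returns verbatim \eqref{Slope} for $X$. Chaining the restriction equivalence, the inductive hypothesis on $\cE|_Y$, and this transfer of \eqref{Slope} closes the argument in both directions.

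The main obstacle is the first point of the inductive step: controlling every cohomology group of $\cE|_Y$ from that of $\cE$, matching the two quantum groups $h^1$ and $h^{n-1}$ and carrying the global condition $\defect(\chi(\cE)-(-1)^n\chi(\cE(-nh)))=0$ across the restriction. Concretely one must show that the connecting homomorphisms in the long exact cohomology sequences vanish (or are isomorphisms) precisely at the boundary twists, so that no spurious cohomology is created or destroyed on $Y$ and the defect is preserved; this is where the exact shape of the diagonal vanishings in the hypotheses, rather than merely the qualitative ``few nonzero entries'' of Definition~\ref{dMalaspinion}, is indispensable. Once restriction is shown to preserve both the instanton property and the defect, the equivalence with \eqref{Slope} propagates automatically from the curve case by the coefficient computation of the first paragraph.
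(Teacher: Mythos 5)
Your overall skeleton is the paper's: induction on $n$, restriction to a general $Y\in\vert h\vert$ (this is indeed where the ``char $0$ or very ample'' hypothesis enters, via Bertini), transfer of \eqref{Slope} by adjunction, and a Riemann--Roch base case. Your first paragraph is also correct as far as it goes: via Hirzebruch--Riemann--Roch, \eqref{Slope} is exactly $a_{n-1}=\tfrac{n(n+1-\defect)}{2}a_n$ for $P(t)=\chi(\cE(th))$, and the forward implication follows from the shape of the cohomology table. But the proof has two genuine gaps, both in the converse implication.

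First, the step you yourself flag as ``the main obstacle'' --- establishing that $\cE$ is $h$--instanton if and only if $\cE_Y$ is --- is left unresolved, and the tool you propose (showing the connecting homomorphisms vanish at the boundary twists, so that no cohomology is created or destroyed) is not the one that works. In the direction ``$\cE_Y$ instanton $\Rightarrow$ $\cE$ instanton'' the restriction sequences only give inequalities $h^i(\cE((t-1)h))\ge h^i(\cE(th))$ where $\cE_Y$ vanishes, and closing these into equalities at the two quantum spots is exactly what fails for small $n$ (the paper's Proposition \ref{pExtension} needs $n\ge5$ for a statement of this kind). The correct resolution is to first invoke Theorem \ref{tCharacterization} (and Proposition \ref{pNatural}) to reduce, under the standing hypotheses, the whole instanton condition to a single Euler--characteristic identity --- $\chi(\cE(-h))=(-1)^n\chi(\cE((\defect-n)h))$ when $\defect=0$, and $\chi(\cE)=(-1)^n\chi(\cE(-nh))$ when $\defect=1$. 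Since $\chi$ is additive along $0\to\cE((t-1)h)\to\cE(th)\to\cE_Y(th)\to0$, this identity transfers between $X$ and $Y$ with no control whatsoever of individual cohomology groups or connecting maps; this is how the paper closes the induction. Without this reduction your chain of equivalences does not close.

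Second, your induction starts at the base case $n=1$, but the step from $n=2$ to $n=1$ cannot work: restriction from a surface to a curve does not preserve the instanton property (on a curve an ordinary instanton is forced to be Ulrich, so the quantum number must drop to $0$; see the remark after Corollary \ref{cRestriction} on the sharpness of $n\ge3$). The paper therefore treats $n=1$ and $n=2$ as separate base cases, each by a direct Riemann--Roch computation with \eqref{RRcurve} and \eqref{RRsurface}, and only runs the restriction argument for $n\ge3$. Finally, note that your Hilbert--polynomial observation, pushed one step further, would actually bypass the induction entirely: the hypotheses force $P$ to vanish at the $n-2-\defect$ intermediate integers, so writing $P(t)=\prod_j(t+j)\cdot R(t)$ with $\deg R=2+\defect$ shows that \eqref{Slope} (i.e.\ the prescribed sum of roots) is equivalent to the one residual symmetry $P(-1)=(-1)^nP(-n)$ (resp.\ $P(0)=(-1)^nP(-n)$), which is precisely the missing Euler--characteristic condition. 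Either fix --- the paper's $\chi$--transfer or this direct factorization --- is needed to make the argument complete.
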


In Proposition \ref{pSpecial}, we explain how Theorem \ref{tSlope} is simplified by the assumption that $\cE$ is a rank two vector bundle which is {\sl orientable}, i.e. $c_1(\cE)=(n+1-\defect)h+K_X$ as elements in $A^1(X)\cong\Pic(X)$. 

Ordinary instanton sheaves on smooth curves are exactly the Ulrich bundles, hence they always exist: the existence of non--ordinary instanton sheaves on smooth curves is similarly easy to check (see Proposition \ref{pCurve}). 

When $X$ is a {\sl surface} (i.e. an irreducible projective scheme with $n=2$) which is also smooth, rank two orientable instanton bundles are easy to produce, even when it is not known if $X$ supports Ulrich bundles (see Examples \ref{eMukai} and \ref{eGenus0}). 

As we pointed out, instanton sheaves are often defined in the literature as the cohomology of linear monads. We prove some results in this direction in Section \ref{sMonad}. 

More precisely, let $X$ be an $n$--fold with $n\ge3$ endowed with a very ample line bundle $\cO_X(h)$ and let $X\subseteq\p N$ be the induced embedding. A coherent sheaf $\cE$ on $X$ without intermediate cohomology is necessarily a vector bundle: in this case $\cE$ is called {\sl aCM bundle (with respect to  $\cO_X(h)$)}. If $\cO_X$ is an aCM bundle and $h^1\big(\cI_{X\vert \p N}(t)\big)=0$ for each $t\in \bZ$ we say that $X$ is {\sl aCM (with respect to  $\cO_X(h)$)}.

\begin{theorem}
\label{tMonad}
Let $X$ be an $n$--fold with $n\ge3$ endowed with a very ample line bundle $\cO_X(h)$. Assume that $X$ is aCM with respect to  $\cO_X(h)$.

If $\cE$ is a non--zero vector bundle on $X$, $\quantum$ a non--negative integer and $\defect\in\{\ 0, 1\ \}$, then the following assertions are equivalent.
\begin{enumerate}
\item $\cE$ is a $h$--instanton bundle with defect $\defect$ and quantum number $\quantum$.
\item $\cE$ satisfies $h^0\big(\cE(-h)\big)=h^n\big(\cE((\defect-n)h)\big)=0$ and it is the cohomology of a monad of the form
\begin{equation}
\label{Monad}
0\longrightarrow\cA\longrightarrow\cB\longrightarrow\cC\longrightarrow0,
\end{equation}
where 
$$
\cA:=\omega_X((n-\defect)h)^{\oplus \quantum}\oplus\omega_X((n+1-\defect)h)^{\oplus a},\qquad 
\cC:=\cO_X^{\oplus c}\oplus \cO_X(h)^{\oplus \quantum}
$$
are such that
\begin{equation}
\label{DimensionMonad}
a=\defect h^{n-1}\big(\cE(-n h)\big),\qquad c=\defect h^1\big(\cE\big)
\end{equation}
and $\cB$ is an aCM bundle such that
\begin{gather}
\label{h^iB}
{\begin{gathered}
h^0\big(\cB(-h)\big)=\quantum h^0\big(\omega_X((n-1-\defect)h)\big)+ah^0\big(\omega_X((n-\defect)h)\big),\\
h^n\big(\cB((\defect-n)h)\big)=\quantum h^0\big(\omega_X((n-1-\defect)h)\big)+ch^0\big(\omega_X((n-\defect)h)\big),
\end{gathered}}\\
\label{ChiB}
{\begin{gathered}
\defect(\chi(\cB)-(-1)^n\chi(\cB(-nh)))=\defect(c-a)(\chi(\cO_X)-(-1)^n\chi(\cO_X(-nh))).
\end{gathered}}
\end{gather}
\end{enumerate}
\end{theorem}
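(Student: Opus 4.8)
The plan is to prove the two implications $(2)\Rightarrow(1)$ and $(1)\Rightarrow(2)$, viewing a monad of the form \eqref{Monad} as the superposition of the two short exact sequences
\[
0\longrightarrow\cK\longrightarrow\cB\longrightarrow\cC\longrightarrow0,\qquad
0\longrightarrow\cA\longrightarrow\cK\longrightarrow\cE\longrightarrow0,
\]
where $\cK:=\ker(\cB\to\cC)$ and $\cE$ is the cohomology in the middle. The first fact I would record is that all three terms $\cA$, $\cB$, $\cC$ are aCM: this holds for $\cB$ by hypothesis, for $\cC$ because it is a sum of twists of $\cO_X$ and $X$ is aCM, and for $\cA$ because Serre duality turns $h^i\big(\omega_X(sh)\big)$ into $h^{n-i}\big(\cO_X(-sh)\big)$, which vanishes for $0<i<n$ since $\cO_X$ has no intermediate cohomology.

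For $(2)\Rightarrow(1)$ I would chase the two displayed sequences, using that the vanishings $h^0\big(\cE(-h)\big)=h^n\big(\cE((\defect-n)h)\big)=0$ are part of the hypothesis. Because $\cA$, $\cB$, $\cC$ are aCM and $n\ge3$, the sequences immediately give $h^i\big(\cE(th)\big)=0$ for $2\le i\le n-2$ and all $t$, together with identifications of $h^1\big(\cE(th)\big)$ with the cokernel of $H^0\big(\cB(th)\big)\to H^0\big(\cC(th)\big)$ and of $h^{n-1}\big(\cE(th)\big)$ with a kernel involving $H^n\big(\cA(th)\big)$. Feeding in the explicit shapes of $\cA$ and $\cC$ together with Serre duality then yields the vanishings of Definition \ref{dMalaspinion} in the finite form of Theorem \ref{tCharacterization}(2); in particular $h^1\big(\cE(-h)\big)=h^{n-1}\big(\cE((\defect-n)h)\big)=\quantum$ is read off from the $\cO_X(h)^{\oplus\quantum}$ and $\omega_X((n-\defect)h)^{\oplus\quantum}$ summands, and the identity $\defect\big(\chi(\cE)-(-1)^n\chi(\cE(-nh))\big)=0$ follows from additivity of $\chi$ along the monad together with \eqref{ChiB}.

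The substantial direction is $(1)\Rightarrow(2)$, which I would build by two successive universal extensions. By Theorem \ref{tCharacterization} the cohomology table of $\cE$ coincides with that of the instanton sheaf $p_*\cE$ on $\p n$, so the monadic description \eqref{MonadValue} of Section \ref{sSpace} pins it down completely: $H^i_*(\cE)=0$ for $2\le i\le n-2$, while $H^1_*(\cE)$ is generated in degree $-1$ (and, when $\defect=1$, also in degree $0$) and, dually, $H^{n-1}_*(\cE)$ is cogenerated in degree $\defect-n$ (and, when $\defect=1$, also in degree $-n$). Using Serre duality to identify $\Ext^1\big(\cE,\omega_X(sh)\big)\cong H^{n-1}\big(\cE(-sh)\big)^\vee$, I would first form the universal extension $0\to\cA\to\cK\to\cE\to0$ attached to the identity classes in the degrees $s=n-\defect$ and $s=n+1-\defect$; this is designed precisely so that the connecting maps annihilate $H^{n-1}_*(\cE)$, giving $H^{n-1}_*(\cK)=0$ while leaving $H^1_*(\cK)\cong H^1_*(\cE)$ unchanged since $\cA$ is aCM. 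Dually I would then form the universal extension $0\to\cK\to\cB\to\cC\to0$ attached to the identity classes in $\Ext^1\big(\cO_X(h),\cK\big)=H^1\big(\cK(-h)\big)$ and $\Ext^1\big(\cO_X,\cK\big)=H^1(\cK)$, which kills $H^1_*(\cK)$ without creating intermediate cohomology in degrees $2\le i\le n-1$.

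The hard part will be to verify that the bundle $\cB$ produced in this way is genuinely aCM, i.e. that the two universal extensions annihilate the whole modules $H^{n-1}_*(\cE)$ and $H^1_*(\cE)$ and not merely their extremal graded pieces. This is exactly where the generation and cogeneration statements above do the real work: they force the relevant connecting homomorphisms to be isomorphisms in every degree, whence $H^1_*(\cB)=H^{n-1}_*(\cB)=0$ and, combined with the vanishing inherited from $\cE$ in the middle range, $\cB$ is aCM. Once this is secured, the ranks $a=\defect\,h^{n-1}\big(\cE(-nh)\big)$ and $c=\defect\,h^1\big(\cE\big)$ are read off from the extension data, the cohomology formulas \eqref{h^iB} follow from the long exact sequences upon inserting the $h^0\big(\omega_X((n-1-\defect)h)\big)$--terms coming from $\cA$ and the sections of $\cC$, and \eqref{ChiB} results from additivity of $\chi$ together with the instanton hypothesis $\defect\big(\chi(\cE)-(-1)^n\chi(\cE(-nh))\big)=0$. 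Finally, since $\cE$, $\cA$ and $\cC$ are locally free and $\cK$ is the kernel of a surjection of bundles, the composite $\cA\to\cB\to\cC$ vanishes by construction and the maps $\cA\to\cB$ and $\cB\to\cC$ have the required subbundle and surjectivity properties, so \eqref{Monad} is an honest monad with cohomology $\cE$.
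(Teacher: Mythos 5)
Your proposal is correct in outline and reaches the same monad, but the hard direction $(1)\Rightarrow(2)$ is organized differently from the paper. The paper gets the generation of $H^1_*\big(\cE\big)$ in degrees $-1,\dots,\defect-1+\defect$ from \cite[Lemma 3.4]{J--VM} (a Castelnuovo--Mumford-type statement using the vanishings $h^p\big(\cE((\defect-p)h)\big)=0$ for $p\ge2$), applies the same lemma to the Ulrich dual via Proposition \ref{pDual}, and then simply cites \cite[Theorem 2.3]{J--VM} for the existence of monad \eqref{Monad} with $\cB$ aCM; the equalities \eqref{h^iB} and \eqref{ChiB} are then extracted from the display. You instead derive the generation and cogeneration statements from the explicit monad $\cM^\bullet$ on $\p n$ obtained by finite projection (Theorem \ref{tCharacterization} plus Proposition \ref{pSpace}) --- which is legitimate, since generation over the subring $S$ implies generation over the full section ring --- and you rebuild the content of \cite[Theorem 2.3]{J--VM} by hand as a universal extension killing $H^{n-1}_*$ followed by a couniversal extension killing $H^1_*$. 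What your route buys is self-containedness and a transparent reason why $\cB$ ends up aCM; what the paper's route buys is brevity. The $(2)\Rightarrow(1)$ direction is the same diagram chase in both, and your use of $h^0\big(\cE(-h)\big)=h^n\big(\cE((\defect-n)h)\big)=0$ together with \eqref{h^iB} to pin down $h^1\big(\cE(-h)\big)=h^{n-1}\big(\cE((\defect-n)h)\big)=\quantum$ matches the paper.

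One bookkeeping point you should fix: when $\defect=0$ your literal prescription --- identity classes in the degrees $s=n-\defect$ \emph{and} $s=n+1-\defect$, and in both $\Ext^1\big(\cO_X(h),\cK\big)$ and $\Ext^1\big(\cO_X,\cK\big)$ --- would append superfluous summands $\omega_X((n+1)h)^{\oplus h^{n-1}(\cE(-(n+1)h))}$ to $\cA$ and $\cO_X^{\oplus h^1(\cE)}$ to $\cC$, and $h^1\big(\cE\big)$ need not vanish for an ordinary instanton; this would contradict $a=c=0$ from \eqref{DimensionMonad}. The cure is exactly the single-degree generation statement you already have for $\defect=0$: only the degree $-1$ generators (resp.\ the degree $-n$ cogenerators) are needed, so the extra summands must be omitted rather than shown to be zero. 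With that adjustment the ranks do come out as in \eqref{DimensionMonad} and the argument is complete.
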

 
Moreover, we are also able to exploit another interesting link among ordinary instanton and Ulrich bundles for aCM $n$--folds $X$ whose adjoint linear system is not globally generated (see Corollary \ref{cMonad}). In particular, we recover the already known results for monads associated to instanton sheaves on smooth quadrics (and projective spaces) and we give some new examples of monads associated to instanton sheaves on some rational normal scrolls.

In Section \ref{sCyclic} we deal with the case of instanton bundles on $n$--folds $X$ which are {\sl cyclic}, i.e. such that $\Pic(X)$ is free of rank $\varrho_X=1$. On the one hand, we deal with instanton sheaves of rank up to two on such an $X$, studying their $\mu$--(semi)stability properties. On the other hand, we confront in Proposition \ref{pFano} rank two instanton bundles with respect to the fundamental line bundle on a Fano $3$--fold in the sense of Definition \ref{dMalaspinion} with the notion of instanton bundle introduced in \cite{A--C--G}, which extends to each Fano $3$--fold Definition \ref{dFaenzi}.

In particular, we show that if $\cE$ is not an ordinary rank two $\cO_X(h)$--instanton bundle on a {\sl prime Fano $3$--fold} $X$, i.e. a cyclic Fano $3$--fold with $i_X=1$, then the two notions coincide with very few exceptions, hence the existence of such bundles follows from the results proved in \cite{Fa2}.

When $i_X=1$ the existence of ordinary rank two $\cO_X(h)$--instanton bundle is slightly less immediate. Recall that in this case $\cO_X(h)\cong\omega_X^{-1}$ and $h^3$ is even, say $h^3=2g_X-2$: the number $g_X$ is called the {\sl genus} of $X$. In the case $\cO_X(h)$ is very ample, then it induces an embedding $X\subseteq\p{g_X+1}$. It is well--known that the $3$--fold $X$ contains lines and we say that $X$ is {\sl ordinary} if it contains a line with normal bundle $\cO_{\p1}\oplus\cO_{\p1}(-1)$.

The following result answers the question of the existence of $\cO_X(h)$--instanton bundle in the case $i_X=1$. 

\begin{theorem}
\label{tPrime}
Let $X$ be an ordinary prime Fano $3$--fold with very ample fundamental line bundle $\cO_X(h)$. Assume that $\field=\bC$.

For each $k\ge0$ there exists a rank two orientable, ordinary, $\mu$--stable, $h$--instanton bundle $\cE$ with quantum number $\quantum$, such that
$$
h^1\big(\cE\otimes\cE^\vee\big)=4+g_X+2k,\qquad h^i\big(\cE\otimes\cE^\vee\big)=0$$
for $i\ge2$.
\end{theorem}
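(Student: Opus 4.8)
The plan is to construct the instanton bundles as normalizations of rank two bundles obtained from suitable curves (or more precisely, from a deformation-theoretic family), exploiting the established equivalence between Definition \ref{dMalaspinion} and the Faenzi-type definition on prime Fano $3$-folds proved in Proposition \ref{pFano}. Since $i_X=1$ means $\cO_X(h)\cong\omega_X^{-1}$ and $q_X=[1/2]=0$, a rank two orientable $h$-instanton bundle with defect $\defect=0$ has $c_1(\cE)=(n+1-\defect)h+K_X=(4-0)h-h=3h+K_X$; because $K_X=-h$ we get $c_1(\cE)=2h+K_X$. I would first pin down these numerical invariants precisely using Theorem \ref{tSlope} and the orientability constraint, so that the candidate bundle has $c_1(\cE)=(n+1-\defect)h+K_X$ and a $c_2$ determined by the quantum number $\quantum$. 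The key translation is that an ordinary rank two instanton bundle in the sense of Definition \ref{dMalaspinion}, once we fix $\defect=0$, corresponds via the numerical dictionary to a stable bundle with vanishing intermediate cohomology of the appropriate twist.

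\emph{Next I would} use the hypothesis that $X$ is \emph{ordinary}, i.e. contains a line $\ell$ with normal bundle $\cO_{\p1}\oplus\cO_{\p1}(-1)$, as the seed for the construction. The standard technique here is a Serre correspondence (or Hartshorne--Serre construction): one produces a rank two bundle $\cE$ fitting into an extension
\begin{equation*}
0\longrightarrow\cO_X\longrightarrow\cE(-K_X)\longrightarrow\cI_{C}(\ast)\longrightarrow0,
\end{equation*}
where $C$ is an arithmetically Cohen--Macaulay curve whose class and genus are dictated by the desired $c_2(\cE)$. The normal bundle condition on the line governs the ampleness/stability behaviour and, crucially, controls the dimension of the deformation space. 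I would choose $C$ to be a disjoint union of lines or a curve built by smoothing such configurations, so that varying the number of components (or the genus) lets me realize every quantum number $k\ge0$; the parameter $2k$ in the formula $h^1(\cE\otimes\cE^\vee)=4+g_X+2k$ should reflect exactly this freedom in the choice of $C$.

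\emph{Then I would} verify the four required properties in turn. For $\mu$-stability I would invoke the cyclic Picard group (so that $\mu$-stability is checked only against line subbundles $\cO_X(th)$) together with the cohomological vanishings forced by the instanton condition $h^0(\cE)=0$; this is the standard argument showing that a rank two bundle with $c_1$ odd relative to $h$ and with no sections after the appropriate normalization is automatically stable. Orientability is built into the construction by choosing $c_1(\cE)=(n+1-\defect)h+K_X$. The instanton property itself would follow from Proposition \ref{pFano} once the Faenzi-type cohomological vanishing $h^1(\cE(-q_Xh))=h^1(\cE)=0$ is established, which in turn comes from the long exact sequence of the defining extension combined with the aCM-ness of $C$ and the vanishing of $h^1$ of the relevant twists of $\cI_C$.

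\emph{The hard part will be} the cohomology computation $h^1(\cE\otimes\cE^\vee)=4+g_X+2k$ and the higher vanishing $h^i(\cE\otimes\cE^\vee)=0$ for $i\ge2$, since these are genuinely about the \emph{deformations and obstructions} of the bundle rather than its mere existence. I expect to compute $h^i(\cE\otimes\cE^\vee)$ via the locally free resolution coming from the Serre extension, reducing everything to cohomology of twists of $\cI_C$ and of $\cO_X$; the term $g_X$ will enter through $h^0(\omega_X^{-1})=g_X+2$ or a comparable Riemann--Roch contribution on the Fano $3$-fold, while the $2k$ tracks the number of moduli of the curve $C$. Controlling the obstruction space $\Ext^2(\cE,\cE)\cong H^2(\cE\otimes\cE^\vee)$ and proving it vanishes (so that the moduli space is smooth of the predicted dimension) is where I would need to work over $\field=\bC$ and lean most heavily on the existence results of \cite{Fa2}, possibly deforming an explicit initial bundle and using semicontinuity to propagate the vanishings across the whole family indexed by $k$.
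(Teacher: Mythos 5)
Your numerical setup contains a slip that you should fix first: with $i_X=1$ and $\defect=0$, orientability gives $c_1(\cE)=(n+1-\defect)h+K_X=4h+K_X=3h$ (since $K_X=-h$), not $2h+K_X$ or $h$; and the requirement $\quantum=-\chi(\cE(-h))=k$ then forces $c_2(\cE)h=5g_X-1+k$, so any curve arising from a section of a twist of $\cE$ has degree growing with $k$ and $g_X$. More importantly, your core strategy is not the paper's, and the step you yourself flag as hard is a genuine gap. The paper does not perform a single Hartshorne--Serre construction from a union of lines for each $k$: it argues by induction on $k$. The base case is imported from \cite[Lemma 3.8 and Theorem 3.1]{B--F1}, which supplies a $\mu$--stable, aCM bundle $\cE_0$ with $c_1(\cE_0)=3h$, $c_2(\cE_0)h=5g_X-1$, $h^0\big(\cE_0(-h)\big)=h^1\big(\cE_0(-2h)\big)=0$ and $h^2\big(\cE_0\otimes\cE_0^\vee\big)=0$, lying in a generically smooth component of dimension $4+g_X$. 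The inductive step is an elementary modification $0\to\widehat{\cE}_{k+1}\to\cE_k\to\cO_L\otimes\cO_X(h)\to0$ along a general line $L$ with $\cN_{L\vert X}\cong\cO_{\p1}\oplus\cO_{\p1}(-1)$ --- this is precisely where the hypothesis that $X$ is \emph{ordinary} enters, not merely as a stability aid --- followed by a deformation to a locally free sheaf as in \cite[Theorem 3.7]{B--F2}, with the vanishings $h^0\big(\cE_{k+1}(-h)\big)=0$ and $h^i\big(\cE_{k+1}\otimes\cE_{k+1}^\vee\big)=0$, $i\ge2$, checked on $\widehat{\cE}_{k+1}$ and propagated by semicontinuity.

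The concrete gap is this: for a bundle produced in one shot by Serre correspondence from $O(g_X+k)$ disjoint lines, you provide no mechanism to prove $\mu$--stability or $h^2\big(\cE\otimes\cE^\vee\big)=0$, and upper semicontinuity cannot transfer these properties from ``an explicit initial bundle'' to your bundle unless you already know the two lie in a common irreducible family --- which is exactly what the inductive deformation argument is designed to establish. Once that machinery is in place, $h^1\big(\cE\otimes\cE^\vee\big)=4+g_X+2k$ is not a separate computation on twists of $\cI_{C\vert X}$: it falls out of $h^0\big(\cE\otimes\cE^\vee\big)=1$ ($\mu$--stability), $h^2=h^3=0$, and Riemann--Roch \eqref{RRgeneral} applied to $\cE\otimes\cE^\vee$. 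Finally, the cleanest reduction to the instanton property is Proposition \ref{pSpecial} (only $c_1(\cE)=3h$, $h^0\big(\cE(-h)\big)=0$ and $h^1\big(\cE(-2h)\big)=0$ are needed); routing through Proposition \ref{pFano} for $(i_X,\defect)=(1,0)$ obliges you to verify in addition the restrictive condition $h^0\big(\cE_{norm,h}(h)\big)=0$, which the paper explicitly warns fails for some classical instanton bundles of low $c_2$.
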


In Section \ref{sSextic} we deal with low rank instanton bundles on del Pezzo $3$--folds $X$ of degree $6$, namely the general hyperplane section of  the image of the Segre embedding $\p2\times\p2\subseteq\p8$, which we usually call {\sl flag $3$--fold}, and  the image of the Segre embedding $\p1\times\p1\times\p1\subseteq\p7$.

In these cases $X$ is not cyclic, hence  several pathologies may appear as we  show in Examples \ref{eSegreDeform} and \ref{eSegreStable} when $X\cong\p1\times\p1\times\p1$. Nevertheless, when $X$ is the flag $3$--fold, we are able to prove the following result.

\begin{theorem}
\label{tFlag2}
Let $X$ be the flag $3$--fold and let $\cO_X(h)$ be its fundamental line bundle.

Every rank two orientable, ordinary, $h$--instanton bundle $\cE$ is $\mu$--semistable. If it is indecomposable, then it is also simple.
\end{theorem}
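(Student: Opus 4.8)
The plan is to reduce $\mu$--semistability to the exclusion of destabilizing sub--line--bundles, to pin down their possible bidegrees using the instanton vanishings, and finally to rule out the surviving ``mixed'' bidegrees by restricting to the two rulings of $X$. First I record the numerics. Since $X$ is a del Pezzo $3$--fold of degree $6$ one has $K_X=-2h$, so orientability together with $\defect=0$ forces $c_1(\cE)=(n+1)h+K_X=4h-2h=2h$; in particular $\mu(\cE)=\tfrac12 c_1(\cE)h^2=h^3=6$. Writing $\Pic(X)=\bZ h_1\oplus\bZ h_2$ with $h=h_1+h_2$ and $h_1^2h_2=h_1h_2^2=1$, $h_1^3=h_2^3=0$, a line bundle $\cO_X(ah_1+bh_2)$ has slope $(ah_1+bh_2)h^2=3(a+b)$. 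As $\cE$ has rank two, it fails to be $\mu$--semistable exactly when some saturated sub--line--bundle $\cO_X(D)\hookrightarrow\cE$ with $D=ah_1+bh_2$ satisfies $\mu(\cO_X(D))>\mu(\cE)$, i.e. $a+b\ge 3$; equivalently $H^0(\cE(-D))\neq 0$ for some such $(a,b)$.

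I would first dispose of the bidegrees with $a,b\ge 1$. For these $\cO_X((a-1)h_1+(b-1)h_2)$ is effective, so multiplication by a nonzero section gives an injection $\cE(-ah_1-bh_2)\hookrightarrow\cE(-h)$, whence $H^0(\cE(-ah_1-bh_2))\le h^0(\cE(-h))=0$ by the first instanton vanishing. Thus a destabilizing sub--line--bundle must have $\min(a,b)\le 0$, and since the assertion is symmetric under exchanging the two rulings of $X$ (which swaps $h_1$ and $h_2$), I may assume $b\le 0$ and hence $a\ge 3$.

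The main step, and the step I expect to be the main obstacle, is to exclude these remaining mixed bidegrees, i.e. to prove $H^0(\cE(-ah_1-bh_2))=0$ for all $a\ge 3$, $b\le 0$. Restricting $\cO_X(ah_1+bh_2)\hookrightarrow\cE$ to a general fibre $F$ of the projection $\pi_2\colon X\to\bP^2$ (on which $\cO_X(h_1)$ has degree $1$ and $\cO_X(h_2)$ degree $0$) shows that the generic splitting type of $\cE$ along $\pi_2$ must be $(\alpha',2-\alpha')$ with $\alpha'\ge a\ge 3$, so such a destabilizer can exist only if $\cE$ is very unbalanced on the ruling. I would then bound this splitting type by feeding the pushforwards $\pi_{2*}(\cE(th_1))$ and $R^1\pi_{2*}(\cE(th_1))$ into the Leray spectral sequence and invoking the deeper instanton vanishings $h^1(\cE(th))=0$ for $t\le -2$ and $h^2(\cE(th))=0$ for $t\ge -2$, together with the self--duality $\cE^\vee\cong\cE(-2h)$ coming from orientability. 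The delicate point is that relative Serre duality for $\pi_2$ merely re--expresses the global Serre duality already built into the instanton table, so duality alone is circular; the vanishing must instead be extracted from the interplay of the \emph{two} rulings, which is precisely where the $\SL_3$--homogeneous geometry of the flag $3$--fold is used and where $X\cong\bP^1\times\bP^1\times\bP^1$ behaves differently (cf. Examples \ref{eSegreDeform} and \ref{eSegreStable}). Alternatively, one may try to deduce the vanishing from the monadic description of $\cE$ furnished by Theorem \ref{tMonad}, or from the $\mu$--semistability of the instanton sheaf $p_*\cE$ on $\bP^3$ attached to a finite flat projection $p\colon X\to\bP^3$ via Theorem \ref{tCharacterization}, since for such $p$ the slopes of $\cF$ and $p_*\cF$ differ by a constant independent of $\cF$.

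Finally, for the simplicity statement I would argue directly. Assume $\cE$ indecomposable; then $\Hom(\cE,\cE)$ is a finite--dimensional local $\field$--algebra, so it suffices to show it has no nonzero nilpotent. A nonzero nilpotent $N$ of a rank two bundle satisfies $N^2=0$ and has a saturated rank one kernel $\cO_X(D)=\ker N=\overline{\im N}$, with $\cE/\cO_X(D)\cong\cI_Z(2h-D)$ and an induced inclusion $\im N\cong\cE/\cO_X(D)\hookrightarrow\cO_X(D)$, i.e. a nonzero map $\cI_Z(2h-D)\to\cO_X(D)$. By normality of $X$ this forces $\cO_X(2D-2h)$ to be effective, hence $a,b\ge 1$, while the $\mu$--semistability already established gives $3(a+b)=\mu(\cO_X(D))\le 6$, i.e. $a+b\le 2$; thus $D=h$ and $\cO_X(h)\hookrightarrow\cE$, contradicting $h^0(\cE(-h))=0$. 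Therefore $N=0$, $\Hom(\cE,\cE)=\field$, and $\cE$ is simple.
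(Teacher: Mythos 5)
Your reduction to excluding a saturated destabilizing sub--line--bundle $\cO_X(D)\hookrightarrow\cE$ with $D=ah_1+bh_2$, $a+b\ge 3$ and (after the effectivity argument plus symmetry) $b\le 0$, $a\ge 3$, is correct and matches the paper's first step. But the proposal then stops exactly where the proof has to happen: you explicitly label the exclusion of these mixed bidegrees ``the main obstacle'' and offer only unexecuted sketches (splitting types along a ruling fed into Leray, the monad of Theorem \ref{tMonad}, or semistability of $p_*\cE$ on $\p3$). None of these is carried out, and the last one is not viable as stated: there is no reason for the rank--$12$ instanton sheaf $p_*\cE$ on $\p3$ to be $\mu$--semistable (Proposition \ref{pCyclicStable} only treats rank two on cyclic varieties), so you cannot import semistability from downstairs. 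As written, the $\mu$--semistability claim is not proved.

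The ingredient you are missing is to use the \emph{second} instanton vanishing $h^1\big(\cE(-2h)\big)=0$ against the non--vanishing of $H^1$ of mixed line bundles on the flag variety. Concretely: saturate the destabilizer to get $0\to\cO_X(D)\to\cE\to\cI_{Z\vert X}(2h-D)\to0$ with $Z$ of pure codimension $2$ or empty, and twist by $\cO_X(-2h)$. Since $a\ge 3$, one has $h^0\big(\cI_{Z\vert X}(-D)\big)\le h^0\big(\cO_X(-ah_1-bh_2)\big)=0$, so the long exact sequence gives an injection $H^1\big(\cO_X((a-2)h_1+(b-2)h_2)\big)\hookrightarrow H^1\big(\cE(-2h)\big)=0$. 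But $(b-2)\le-2$ and $(a-2)+(b-2)\ge-1$, and by the Bott--type computation on the flag $3$--fold (\cite[Proposition 2.5]{C--F--M2}) every line bundle $\cO_X(ch_1+dh_2)$ with $d\le-2$ and $c+d+1\ge0$ has $H^1\ne0$: contradiction. This is precisely the point where the flag $3$--fold differs from $\p1\times\p1\times\p1$, and it replaces all three of your proposed routes. Your first reduction and your nilpotent--endomorphism argument for simplicity (the paper instead invokes \cite[Proposition 2.4]{A--C--G} applied to $\cE(-h)$) are fine once semistability is in place.
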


In Section \ref{sScrollCurve} we describe Construction \ref{conScrollCurve}, partially extending some results from \cite{A--M2} to scrolls over smooth curves of arbitrary genus, and we prove the following existence theorem.

\begin{theorem}
\label{tScrollCurve}
Let $\cG$ be a vector bundle of rank $n\ge3$ on a smooth curve $B$ and set $X:=\bP(\cG)$. Assume that $\cO_X(h):=\cO_{\bP(\cG)}(1)$  is an ample and globally generated line bundle.

For each integer $\quantum\ge0$ the bundle $\cE$ defined in Construction \ref{conScrollCurve} is an ordinary rank two orientable, $\mu$--semistable, $h$--instanton bundle with quantum number $\quantum$. 
\end{theorem}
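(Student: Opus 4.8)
The plan is to reduce the whole statement to Theorem \ref{tSlope} together with a sequence of cohomology computations on the scroll $\pi\colon X=\bP(\cG)\to B$. Since Construction \ref{conScrollCurve} produces $\cE$ as a rank two bundle with the prescribed determinant, I expect orientability (with $\defect=0$) to be built in, i.e. $c_1(\cE)=(n+1)h+K_X$ in $\Pic(X)$. For such an orientable rank two bundle one computes $c_1(\cE)h^{n-1}=(n+1)h^n+K_Xh^{n-1}$, which is exactly the right-hand side of \eqref{Slope} when $\rk(\cE)=2$ and $\defect=0$; this is the simplification recorded in Proposition \ref{pSpecial}. Hence the slope identity of Theorem \ref{tSlope} holds automatically, and to conclude that $\cE$ is an ordinary $h$--instanton bundle it suffices to verify the purely cohomological hypotheses of that theorem, namely $h^0\big(\cE(-h)\big)=h^n\big(\cE(-nh)\big)=0$ together with $h^i\big(\cE(-(i+1)h)\big)=h^{j}\big(\cE(-jh)\big)=0$ for $1\le i\le n-2$ and $2\le j\le n-1$ (the conditions weighted by $\defect$ being vacuous here).

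First I would carry out these vanishings using the fibration $\pi$. Its fibres are $\bP^{n-1}$, so the higher direct images $R^\bullet\pi_*\cO_X(t)$ are concentrated in cohomological degrees $0$ and $n-1$, where they are expressed through the symmetric powers $S^t\cG$ and $S^{-t-n}\cG^\vee$ twisted by a power of $\det\cG$, and vanish in all intermediate degrees. Twisting the defining sequence of $\cE$ from Construction \ref{conScrollCurve} by $\cO_X(th)$ and pushing forward then reduces each group $h^i\big(\cE(th)\big)$, via the Leray spectral sequence over the curve $B$ (where only $H^0$ and $H^1$ survive), to the cohomology of explicit bundles on $B$ built from symmetric powers of $\cG$. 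The hypothesis that $\cO_X(h)$ is ample and globally generated translates into the corresponding positivity of $\cG$, which is precisely what kills the surviving $H^1(B,-)$ contributions; a degree-by-degree check yields all the vanishings required by Theorem \ref{tSlope}, so $\cE$ is an ordinary $h$--instanton bundle. The quantum number is identified by the same machinery: I would read $\quantum$ off the relevant term in the spectral sequence computing $h^1\big(\cE(-h)\big)$ (equivalently $h^{n-1}\big(\cE(-nh)\big)$ by Serre duality and orientability), the construction being arranged exactly so that this number equals the prescribed $\quantum\ge0$.

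Finally, for $\mu$--semistability I would argue that a destabilising line subbundle $\mathcal{L}\hookrightarrow\cE$ with $\mu(\mathcal{L})>\mu(\cE)$ would, after a suitable twist, produce a nonzero global section contradicting one of the vanishings already established (such as $h^0\big(\cE(-h)\big)=0$). Writing $\Pic(X)=\pi^*\Pic(B)\oplus\bZ h$ lets one parametrise the possible subbundles, and restricting $\cE$ to a general fibre $\bP^{n-1}$, or to a general divisor in $\vert h\vert$ using the restriction behaviour of instanton sheaves, pins down the admissible $h$--degree of $\mathcal{L}$. I expect this last step to be the main obstacle rather than the cohomological bookkeeping: when $g(B)>0$ the Picard group is large and a destabilising subsheaf may involve a pullback $\pi^*\mathcal{M}$ of a line bundle on $B$ of arbitrary degree, so vanishing of sections on $X$ alone is not decisive. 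The argument must combine the fibrewise semistability of $\cE\vert_{\bP^{n-1}}$ with the positivity of $\cG$ on the base to exclude all such subbundles simultaneously.
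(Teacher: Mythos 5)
Your reduction of the instanton property to Proposition \ref{pSpecial} (equivalently Theorem \ref{tSlope}) and the verification of the cohomological hypotheses by pushing the defining sequence of Construction \ref{conScrollCurve} down to $B$ is essentially the paper's argument: Lemma \ref{lDerived} is exactly the projection--formula computation you describe, and the quantum number is read off as $\quantum=-\chi(\cE(-h))=k$ once natural cohomology in shift $-1$ is established. One small correction there: in the range $1-n\le t\le -1$ the groups $h^i\big(\cO_X(th+\frak a f)\big)$ vanish for all $i$ because $R^j\pi_*\cO_X(th)=0$ identically in that range, not because of any positivity of $\cG$; the only positivity--type input is the non--effectivity of the theta--characteristic $\theta$, which gives $h^0\big(\cO_X(\theta f)\big)=0$ and hence $h^0\big(\cE(-h)\big)=0$.

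The genuine gap is in the $\mu$--semistability step; you correctly sense the difficulty, but the fix you propose does not work. The restriction of $\cE$ to a general fibre $F\cong\p{n-1}$ is $\cO_F\oplus\cO_F(1)$ (restrict sequence \eqref{seqScrollCurve} to a fibre missing $Z$ and use $h^1\big(\cO_F(-1)\big)=0$), which is \emph{not} semistable on $F$; moreover, since $fh^{n-1}=1$, the $h$--slope of $\cO_X(ah+\frak b f)$ is $a\deg(\frak g)+\deg(\frak b)$ while the fibre restriction only sees $a$, so no fibrewise statement can control the $\pi^*\Pic(B)$ component of a destabiliser. The paper's argument is different and avoids this entirely: the subsheaf and the quotient in sequence \eqref{seqScrollCurve} both have slope equal to $\mu_h(\cE)$. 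Given a destabilising subsheaf, one replaces it by its saturation, a reflexive rank--one subsheaf $\cM$ (hence a line bundle, by the lemmas from \cite{O--S--S} quoted in Example \ref{eFlagExtension}) with $\mu_h(\cM)>\mu_h(\cE)$; the composition $\cM\to\cI_{Z\vert X}(h+\theta f)$ must vanish, since a nonzero map of rank--one torsion--free sheaves would force $\mu_h(\cM)\le\mu_h(\cI_{Z\vert X}(h+\theta f))=\mu_h(\cE)$, so $\cM$ factors through $\cO_X((\frak g+\theta)f)$, which is again impossible by slope. You would need to supply this (or an equivalent) argument to close the proof.
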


The bundles obtained via Construction \ref{conScrollCurve} are actually simple if and only if $\quantum\ge1$ (see Proposition \ref{pExt}). The existence of rank two orientable, Ulrich, simple bundles on $X$, i.e. ordinary rank two orientable, $\mu$--semistable, simple, $h$--instanton bundles with quantum number $0$, is also proved in Proposition \ref{pExt0}.

\subsection{Acknowledgements}
The authors express their thanks to F. Malaspina and M. Jardim for some helpful suggestions, and to A.F. Lopez for an illuminating comment on the characterization of Ulrich sheaves via finite projections.

\section{Notation and first results}
\label{sPrel}
Throughout the whole paper we will work over an algebraically closed field $\field$. The projective space of dimension $N$ over $\field$ will be denoted by $\p N$: $\cO_{\p N}(1)$ will denote the hyperplane line bundle. A projective scheme $X$ is a closed subscheme of some projective space over $\field$: $X$ is a {{\sl variety}} if it is also integral. A {\sl manifold} $X$ is a smooth variety: we often use the term $n$--fold for underlying that $X$ is a manifold of dimension $n$.  The structure sheaf of a projective scheme $X$ is denoted by $\cO_X$ and its Picard group by $\Pic(X)$. 

Let $X$ be a projective scheme. For each closed subscheme $Z\subseteq X$ the ideal sheaf $\cI_{Z\vert X}$ of $Z$ in $X$ fits into the exact sequence
\begin{equation}
\label{seqStandard}
0\longrightarrow \cI_{Z\vert X}\longrightarrow \cO_X\longrightarrow \cO_Z\longrightarrow 0.
\end{equation}

If $\cA$ is a coherent sheaf on a projective scheme $X$ we set $h^i\big(\cA\big):=\dim H^i\big(\cA\big)$. If $X$ is  endowed with a globally generated line bundle $\cO_X(h)$, then we have an induced morphism $\phi_h\colon X\to\p N$ where $N+1=h^0\big(\cO_X(h)\big)$ and $\phi_h$ is finite if and only is $\cO_X(h)$ is also ample.

For each $i\ge0$  we set $H^i_*\big(\cA\big):=\bigoplus_{t\in\bZ} H^i\big(\cA(th)\big)$.
If $S$ is the symmetric $\field$--algebra over $H^0\big(\cO_X(h)\big)$, then  $S\cong \field[x_0,\dots,x_N]$  and $H^i_*\big(\cA\big)$ is naturally an $S$--module. The morphism $\phi_h$ induces a morphism $S\to H^0_*\big(\cO_X\big)$ of $\field$--algebras whose image is denoted by $S[X]$.

Let $\cA$ be a coherent sheaf on $X$. We say that a $\cA$ has {\sl natural cohomology in shift $t$} if $h^i\big(\cA(th)\big)=0$ for all $i\in\bZ$ but one. The sheaf $\cA$ is called {\sl $m$--regular (in the sense of Castelnuovo--Mumford)} if $h^i\big(\cA((m-i)h)\big)=0$ for $i\ge1$ and the regularity $\reg(\cA)$ of $\cA$ is defined as the minimum integer $m$ such that $\cA$ is $m$--regular. We refer the reader to \cite[Chapter 4]{Ei} for further details about this notion.

The following result will be used several times.

\begin{proposition}
\label{pNatural}
Let $X$ be a projective scheme of dimension $n\ge1$ endowed with a globally generated line bundle $\cO_X(h)$.

If $\cA$ is a coherent sheaf on $X$ and there is $m\le n-1$ (resp $m\ge 1$) such that $h^i\big(\cA(-ih)\big)=0$ for each $i\le m$ (resp $i\ge m$), then the following assertions hold.
\begin{enumerate}
\item $h^i\big(\cA(-th)\big)=0$ for each $t\ge i$ and $i\le m$ (resp. $t\le i$ and $i\ge m$).
\item $h^i\big(\cO_Y\otimes\cA(-ih)\big)=0$ for each $i\le m-1$ (resp. $i\ge m$) and general $Y\in\vert h\vert$.
\end{enumerate}
\end{proposition}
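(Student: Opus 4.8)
The plan is to deduce everything from the hyperplane-section sequence attached to a general member of $\vert h\vert$, running an induction on $n=\dim(X)$ that lowers the dimension by restricting $\cA$ to such a divisor. Since $\cO_X(h)$ is globally generated, the sections vanishing at any fixed associated point of $\cA$ form a proper linear subspace of $H^0\big(\cO_X(h)\big)$; as $\cA$ has only finitely many associated points, a general section $s$ avoids all of them, so multiplication by $s$ is injective on $\cA$ and, setting $Y=\{s=0\}$ and $\cA_Y:=\cO_Y\otimes\cA$, we obtain a short exact sequence
\[
0\longrightarrow\cA(-h)\longrightarrow\cA\longrightarrow\cA_Y\longrightarrow0
\]
that remains exact after any twist by $\cO_X(th)$. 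Here $\cA_Y$ lives on the $(n-1)$--dimensional scheme $Y$, which is what feeds the induction.

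I would first dispose of part (2), which needs no induction. Twisting the sequence by $\cO_X(-ih)$ and reading its long exact sequence gives
\[
H^i\big(\cA(-ih)\big)\longrightarrow H^i\big(\cA_Y(-ih)\big)\longrightarrow H^{i+1}\big(\cA(-(i+1)h)\big),
\]
and in the first case both outer terms vanish exactly when $i\le m$ and $i+1\le m$, i.e. for $i\le m-1$; the second case is the same with the inequalities reversed. This computation does double duty: it also shows that $\cA_Y$ satisfies the hypothesis of the Proposition on $Y$, with $m$ replaced by $m-1$ in the first case (and kept at $m$ in the second). The shift $m\mapsto m-1$ is precisely what preserves the constraint $m\le n-1$ after passing to the $(n-1)$--fold $Y$, so the inductive hypothesis is applicable to $\cA_Y$.

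For part (1) I would fix $i\le m$ and induct on $t$, the base $t=i$ being the hypothesis $h^i\big(\cA(-ih)\big)=0$. Twisting the sequence by $\cO_X(-(t-1)h)$ gives
\[
H^{i-1}\big(\cA_Y(-(t-1)h)\big)\longrightarrow H^i\big(\cA(-th)\big)\longrightarrow H^i\big(\cA(-(t-1)h)\big),
\]
so $h^i\big(\cA(-th)\big)$ is squeezed between the term $h^i\big(\cA(-(t-1)h)\big)$, which vanishes by the inductive step in $t$, and $h^{i-1}\big(\cA_Y(-(t-1)h)\big)$, which vanishes by part (1) applied to $\cA_Y$ on $Y$ (available from the dimension induction, with parameter $m-1$) since $i-1\le m-1$ and $t-1\ge i-1$. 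The second case is symmetric: one descends in $t$ from $t=i$ using the sequence twisted by $\cO_X(-th)$ and the surjection onto the top cohomology. The base case $n=1$ is elementary, reducing to the injections $H^0\big(\cA(-th)\big)\hookrightarrow H^0\big(\cA(-(t-1)h)\big)$ (resp. the dual surjections) read off the same sequence.

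The genuine work — the main obstacle — is the bookkeeping that couples the two inductions: one must verify that the passage to $\cA_Y$ always lands back inside the hypotheses of the Proposition on the lower-dimensional $Y$ (this is why the parameter is shifted to $m-1$ in the first case and kept at $m$ in the second), and that the twisting index stays inside the admissible range at every step of the descent. Once these ranges are matched, every vanishing is forced by a two-term squeeze in a long exact sequence, and no further geometric input is needed beyond the genericity of $Y$.
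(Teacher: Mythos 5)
Your proposal is correct and follows essentially the same route as the paper: a general $Y\in\vert h\vert$ avoiding the finitely many associated points of $\cA$ gives an exact restriction sequence, part (2) and the inductive hypothesis for $\cA_Y$ fall out of a two-term squeeze, and part (1) follows by combining the dimension induction with a monotonicity/induction in the twist $t$. The only differences from the paper's write-up are organizational (the paper isolates the extreme cases $m=0$ and $m=n$ first and phrases the inner step as inequalities $h^i\big(\cA(-(t+1)h)\big)\le h^i\big(\cA(-th)\big)$ rather than an explicit induction on $t$), so no further comment is needed.
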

\begin{proof}
For each general $Y\in\vert h\vert$, let $\cO_Y(h):=\cO_Y\otimes\cO_X(h)$ and $\cA_Y:=\cO_Y\otimes\cA$: notice that $\cA_Y$ is coherent and $\dim(Y)=n-1$.

Assume that $\cA$ is not zero, otherwise the statement is trivially true. The set of associated points of $\cA$ is finite, hence the general $Y\in\vert h\vert$  does not contain any such point. Thus the exact sequence
\begin{equation}
\label{seqSection} 
0\longrightarrow \cO_X(-h)\longrightarrow\cO_X\longrightarrow\cO_Y\longrightarrow0.
\end{equation}
tensored by $\cA(th)$ remains  exact. 

Let $n\ge1$, $m\le n-1$ (resp. $m\ge1$) and let  $\cA$ be a coherent sheaf on $X$ satisfying $h^i\big(\cA(-ih)\big)=0$ for each $i\le m$ (resp. $i\ge m$). If $m=0$ (resp. $m=n$), then the cohomology of sequence \eqref{seqSection} tensored by $\cA(th)$ implies
$$
h^0\big(\cA(-(t+1)h)\big)\le h^0\big(\cA(-th)\big),\qquad (\text{resp. $h^n\big(\cA(-(t+1)h)\big)\ge h^n\big(\cA(-th)\big)$})
$$
when $t\ge 0$ (resp. $t\le 1$). Thus the statement is true in this case.

We complete the proof of the statement by induction on $n\ge1$, the base case $n=1$ being a particular case of the discussion above. Thus we assume that the statement holds true for each projective scheme of dimension $n - 1 \ge  1$ and that $m\ge1$ (resp. $m\le n-1$) in what follows.

The cohomology of sequence \eqref{seqSection} tensored by $\cA(-ih)$ implies $h^i\big(\cA_Y(-ih)\big)=0$ for each $i\le m-1$ (resp. $i\ge m$), hence assertion (2) is proved. 

By the inductive hypothesis $h^i\big(\cA_Y(-th)\big)=0$ for each $t\ge i$ and $i\le m-1$ (resp. $t\le i$ and $i\ge m$). Again sequence \eqref{seqSection} tensored by $\cA(th)$ implies 
$$
h^{i}\big(\cA(-(t+1)h)\big)\le h^{i}\big(\cA(-th)\big),\qquad (\text{resp. $h^i\big(\cA(-(t+1)h)\big)\ge h^i\big(\cA(-th)\big)$})
$$
for each $t\ge i-1$ and $i\le m$ (resp. $t\le i$ and $i\ge m$). We have $h^{i}\big(\cA(-ih)\big)=0$ for $i\le m$ (resp. $i\ge m$) by hypothesis, hence $h^{i}\big(\cA(-th)\big)=0$ for each $i\le m$ and $t\ge i$ (resp. $t\le i$ and $i\ge m$). 
\end{proof}

If $\cA$ and $\cB$ are coherent sheaves on an $n$--fold $X$, then the Serre duality holds
\begin{equation}
\label{Serre}
\Ext_X^i\big(\cA,\cB\otimes\omega_X\big)\cong \Ext_X^{n-i}\big(\cB,\cA\big)^\vee
\end{equation}
(see \cite[Proposition 7.4]{Ha3}).

If $X$ is an $n$--fold, we denote by  $A^r(X)$ the group of cycles on $X$ of codimension $r$ modulo rational equivalence: in particular $A^1(X)\cong\Pic(X)$ (see \cite[Proposition 1.30]{Ei--Ha2}) and we set $A(X):=\bigoplus_{r\ge0}A^r(X)$. The Chern classes of a coherent sheaf $\cA$ on $X$ are elements in $A(X)$: in particular, when $\cA$ is locally free, then $c_1(\cA)$ is identified with $\det(\cA)$ via the isomorphism $A^1(X)\cong\Pic(X)$.

If $X$ is a smooth curve, a smooth surface, a $3$--fold, Hirzebruch--Riemann--Roch formulas for a coherent sheaf $\cA$ are 
\begin{gather}
  \label{RRcurve}
\chi(\cA)=\rk(\cA)\chi(\cO_X)+\deg(c_1(\cA)),\\
\label{RRsurface}
\chi(\cA)=\rk(\cA)\chi(\cO_X)+{\frac12}c_1(\cA)^2-{\frac12}\omega_Xc_1(\cA)-c_2(\cA),\\
  \label{RRgeneral}
\begin{aligned}
    \chi(\cA)&=\rk(\cA)\chi(\cO_X)+{\frac16}(c_1(\cA)^3-3c_1(\cA)c_2(\cA)+3c_3(\cA))\\
    &-{\frac14}(\omega_Xc_1(\cA)^2-2\omega_Xc_2(\cA))+{\frac1{12}}(\omega_X^2c_1(\cA)+c_2(\Omega^1_{X}) c_1(\cA)),
  \end{aligned}
 \end{gather}
 respectively (see \cite[Theorem 14.4]{Ei--Ha2}). 

Let $\cA$ be a rank two vector bundle on an $n$--fold $X$ and let $s\in H^0\big(\cA\big)$. In general its zero--locus
$(s)_0\subseteq X$ is either empty or its codimension is at most
$2$. We can always write $(s)_0=Y\cup Z$
where $Z$ has codimension $2$ (or it is empty) and $Y$ has pure codimension
$1$ (or it is empty). In particular $\cA(-Y)$ has a section vanishing
on $Z$, thus we can consider its Koszul complex 
\begin{equation}
  \label{seqSerre}
  0\longrightarrow \cO_X(Y)\longrightarrow \cA\longrightarrow \cI_{Z\vert X}(-Y)\otimes\det(\cA)\longrightarrow 0.
\end{equation}
Sequence \ref{seqSerre} tensored by $\cO_Z$ yields $\cI_{Z\vert X}/\cI^2_{Z\vert X}\cong\cA^\vee(Y)\otimes\cO_Z$, whence the normal bundle $\cN_{Z\vert X}:=(\cI_{Z\vert X}/\cI^2_{Z\vert X})^\vee$ of $Z$ inside $X$ satisfies
\begin{equation}
\label{Normal}
\cN_{Z\vert X}\cong\cA(-Y)\otimes\cO_Z.
\end{equation}
If $Y=\emptyset$, then $Z$ is locally complete intersection inside $X$, because $\rk(\cA)=2$. In particular, it has no embedded components.

The Serre correspondence reverts the above construction as follows.

\begin{theorem}
  \label{tSerre}
Let $X$ be an $n$--fold with $n\ge2$ and $Z\subseteq X$ a local complete intersection subscheme of codimension $2$.

If $\det(\cN_{Z\vert X})\cong\cO_Z\otimes\mathcal L$ for some $\mathcal L\in\Pic(X)$ such that $h^2\big(\mathcal L^\vee\big)=0$, then there exists a rank two vector bundle $\cA$ on $X$ satisfying the following properties.
  \begin{enumerate}
  \item $\det(\cA)\cong\mathcal L$.
  \item $\cA$ has a section $s$ such that $Z$ coincides with the zero locus $(s)_0$ of $s$.
  \end{enumerate}
  Moreover, if $H^1\big({\mathcal L}^\vee\big)= 0$, the above two conditions  determine $\cA$ up to isomorphism.
\end{theorem}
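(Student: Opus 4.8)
The plan is to recover $\cA$ through the classical Serre construction, building it as the central term of a short exact sequence
\[
0\longrightarrow\cO_X\longrightarrow\cA\longrightarrow\cI_{Z\vert X}\otimes\mathcal L\longrightarrow0,
\]
which is precisely sequence \eqref{seqSerre} with $Y=\emptyset$. The inclusion $\cO_X\to\cA$ will furnish the section $s$ whose zero locus is $Z$, while passing to determinants will give $\det(\cA)\cong\mathcal L$. Such extensions are parametrised by $\Ext^1_X(\cI_{Z\vert X}\otimes\mathcal L,\cO_X)$, so the whole problem reduces to producing a class there whose associated central term is a vector bundle.

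First I would compute the local $\sExt$ sheaves. Applying $\sHom(-,\cO_X)$ to the standard sequence \eqref{seqStandard} for $Z\subseteq X$, and using that $Z$ is a local complete intersection of codimension $2$ (so that $\sExt^q(\cO_Z,\cO_X)$ vanishes for $q\neq2$ and equals $\det(\cN_{Z\vert X})$ for $q=2$), one obtains that $\sHom(\cI_{Z\vert X},\cO_X)\cong\cO_X$ and $\sExt^1(\cI_{Z\vert X},\cO_X)\cong\det(\cN_{Z\vert X})$, while $\sExt^q(\cI_{Z\vert X},\cO_X)=0$ for $q\ge2$. Twisting by $\mathcal L^\vee$ and invoking the hypothesis $\det(\cN_{Z\vert X})\cong\cO_Z\otimes\mathcal L$ then yields $\sHom(\cI_{Z\vert X}\otimes\mathcal L,\cO_X)\cong\mathcal L^\vee$ and, crucially, $\sExt^1(\cI_{Z\vert X}\otimes\mathcal L,\cO_X)\cong\cO_Z$.

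Next I would run the local-to-global spectral sequence $H^p(\sExt^q)\Rightarrow\Ext^{p+q}$, whose five-term exact sequence reads
\[
0\longrightarrow H^1(\mathcal L^\vee)\longrightarrow\Ext^1_X(\cI_{Z\vert X}\otimes\mathcal L,\cO_X)\longrightarrow H^0(Z,\cO_Z)\longrightarrow H^2(\mathcal L^\vee).
\]
Since $h^2(\mathcal L^\vee)=0$, the evaluation map onto $H^0(Z,\cO_Z)$ is surjective, so I can choose a class $\xi$ restricting to the unit section $1$ of $\cO_Z\cong\sExt^1(\cI_{Z\vert X}\otimes\mathcal L,\cO_X)$. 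Forming the corresponding extension, the point — and what I expect to be the main obstacle — is to verify that the central term $\cA$ is locally free. This is the Serre local criterion: at every point of $Z$ the class $\xi$ restricts to a generator of the stalk of $\sExt^1$, and because $Z$ is there cut out by a regular sequence $f,g$ of length $2$, the corresponding local extension is the Koszul one $0\to\cO\to\cO^{\oplus2}\to\cI_{(f,g)}\to0$, with locally free central term; off $Z$ the quotient is already the line bundle $\mathcal L$ and freeness is automatic. Granting this, restricting \eqref{seqSerre} shows $\det(\cA)\cong\det(\cI_{Z\vert X}\otimes\mathcal L)\cong\mathcal L$, since $\cI_{Z\vert X}$ has trivial determinant in codimension $2$; this settles (1) and (2).

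Finally, for the last assertion I would bring in the extra hypothesis $H^1(\mathcal L^\vee)=0$. The five-term sequence then forces the evaluation map $\Ext^1_X(\cI_{Z\vert X}\otimes\mathcal L,\cO_X)\to H^0(Z,\cO_Z)$ to be injective as well, hence an isomorphism, so the class $\xi$ mapping to the generator is uniquely determined up to a scalar. As proportional extension classes yield isomorphic central terms, the bundle $\cA$ is then determined up to isomorphism.
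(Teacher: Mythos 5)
Your existence argument is correct and is, in substance, the standard proof: the paper itself offers no argument for Theorem \ref{tSerre} beyond the citation of \cite{Ar}, and what you write (the computation of $\sHom$ and $\sExt^1$ of $\cI_{Z\vert X}\otimes\mathcal L$, the five--term sequence of the local--to--global spectral sequence, the surjectivity onto $H^0\big(\cO_Z\big)$ supplied by $h^2\big(\mathcal L^\vee\big)=0$, and the local Koszul criterion for local freeness of the middle term) is exactly that argument. Parts (1) and (2) are fine.

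The uniqueness step, however, has a genuine gap, located in the sentence ``the class $\xi$ mapping to the generator is uniquely determined up to a scalar''. A bundle $\cA'$ satisfying (1) and (2) gives, via the Koszul complex of its section, an extension whose class maps under $\Ext^1_X(\cI_{Z\vert X}\otimes\mathcal L,\cO_X)\to H^0\big(\cO_Z\big)$ to \emph{some} nowhere--vanishing section of $\cO_Z$, i.e.\ to a unit of the ring $H^0\big(\cO_Z\big)$, not to a preferred element $1$: the identification $\sExt^1(\cI_{Z\vert X}\otimes\mathcal L,\cO_X)\cong\cO_Z$ depends on the chosen isomorphism $\det(\cN_{Z\vert X})\cong\cO_Z\otimes\mathcal L$, and the section $s'$ of $\cA'$ induces its own trivialization. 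When $h^0\big(\cO_Z\big)=1$ all units are proportional and your argument closes; but when $Z$ is disconnected (which is precisely the situation in the paper's own applications, Construction \ref{conScrollCurve} and Example \ref{eSegreStable}) the units modulo $\field^*$ form a positive--dimensional family, and non--proportional classes can produce non--isomorphic middle terms. Concretely, take $X=\p2$, $Z=\{[1:0:0],[0:1:0]\}$ and $\mathcal L=\cO_{\p2}(1)$, so that $h^1\big(\mathcal L^\vee\big)=h^2\big(\mathcal L^\vee\big)=0$. Here $\cI_{Z\vert\p2}=(z,xy)$ is a complete intersection, $\Ext^1(\cI_{Z\vert\p2}(1),\cO_{\p2})\cong\langle x^2,y^2\rangle$, and the extension with class $ax^2+by^2$ (with $a,b\ne0$) is the cokernel of the map $\cO_{\p2}(-2)\to\cO_{\p2}^{\oplus2}\oplus\cO_{\p2}(-1)$ given by $(ax^2+by^2,\,xy,\,z)$. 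This presentation is the minimal free resolution of $H^0_*\big(\cA\big)$, and a direct comparison of minimal resolutions shows that two such cokernels are isomorphic only if the pairs $(a,b)$ are proportional; yet every such $\cA$ has determinant $\cO_{\p2}(1)$ and a section vanishing exactly on $Z$. So the last paragraph of your proof does not establish the claim, and indeed the uniqueness assertion needs either the extra hypothesis $h^0\big(\cO_Z\big)=1$ or a reformulation in terms of the pair $(\cA,s)$ together with the induced trivialization of $\det(\cN_{Z\vert X})\otimes\mathcal L^\vee\otimes\cO_Z$ modulo scalars; as stated it cannot be proved by the route you propose.
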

\begin{proof}
See \cite{Ar}.
\end{proof}

Let $X$ be an $n$--fold with $n\ge1$ endowed with an ample line bundle $\cO_X(h)$. If $\cA$ is any torsion--free sheaf we define the {\sl slope of $\cA$ (with respect to $\cO_X(h)$)} as
$$
\mu_h(\cA):=\frac{c_1(\cA)h^{n-1}}{\rk(\cA)}.
$$
The torsion--free sheaf $\cA$ is {\sl $\mu$--semistable} (resp. {\sl $\mu$--stable}) if for all proper subsheaves $\cB$ with $0<\rk(\cB)<\rk(\cA)$ we have $\mu_h(\cB) \le \mu_h(\cA)$ (resp. $\mu_h(\cB)<\mu_h(\cA)$). Each $\mu$--stable bundle $\cA$ is {\sl simple}, i.e. $h^0\big(\cA\otimes\cA^\vee\big)=1$ (see \cite[Theorem II.1.2.9]{O--S--S}).

\section{Instanton sheaves on projective spaces}
\label{sSpace}
In this section we deal with instanton sheaves on $\p n$ with respect to $\cO_{\p n}(1)$. We start by quickly dealing with the case $n=1$.

\begin{remark}
\label{rLine}
If $\cE$ is an instanton sheaf with respect to $\cO_{\p1}(1)$, then it splits in the direct sum of a vector bundle plus its torsion subsheaf $\cT$. Thus $h^0\big(\cT\big)\le h^0\big(\cE(-h)\big)=0$, hence $\cT=0$ and $\cE$ is actually a vector bundle. Thanks to Definition \ref{dMalaspinion} one immediately deduces
$$
\cE\cong\left\lbrace\begin{array}{ll} 
\cO_{\p1}^{\oplus \chi(\cE)}\quad&\text{if $\defect=0$,}\\
(\cO_{\p1}\oplus\cO_{\p1}(-1))^{\oplus \chi(\cE)}\quad&\text{if $\defect=1$:}
\end{array}\right.\\
$$
hence the quantum number of $\cE$ is $\defect \chi(\cE)=\defect\rk(\cE)/2$.
\end{remark}

In what follows we assume $n\ge2$. We prove below a generalization of the well--known characterization of instanton sheaves (e.g. see \cite[Sections 1 and 2]{Ja} and \cite[Section 5.2]{El--Gr}) in terms of the cohomology of a very simple monad $\cM^\bullet$, i.e. a complex
$$
0\longrightarrow \cM^{-1}\longrightarrow \cM^{0}\longrightarrow \cM^{1}\longrightarrow0
$$
which is exact everywhere but in degree $0$. To this purpose we set
\begin{equation}
\label{MonadValue}
\begin{gathered}
\cM^{-1}:=\left\lbrace\begin{array}{ll} 
\cO_{\p n}(-1)^{\oplus k}\quad&\text{if $\defect=0$,}\\
\cO_{\p n}(-1)^{\oplus b_1-\chi(\cE)}\quad&\text{if $\defect=1$,}
\end{array}\right.\\
\cM^{0}:=\left\lbrace\begin{array}{ll} 
\cO_{\p n}^{\oplus \chi(\cE)+(n+1)\quantum}\quad&\text{if $\defect=0$,}\\
\begin{aligned}
\cO_{\p n}^{\oplus b_0}&\oplus \Omega_{\p n}^1(1)^{\oplus \quantum} \\
&\oplus\Omega_{\p n}^{n-1}(n-1)^{\oplus \quantum}\oplus\cO_{\p n}(-1)^{\oplus b_1}
\end{aligned}
\quad&\text{if $\defect=1$, $n\ge3$,}\\
\begin{aligned}
\cO_{\p 2}^{\oplus b_0}\oplus \Omega_{\p 2}^1(1)^{\oplus \quantum}\oplus\cO_{\p 2}(-1)^{\oplus b_1}
\end{aligned}
\quad&\text{if $\defect=1$, $n=2$,}
\end{array}\right.\\
\cM^{1}:=\left\lbrace\begin{array}{ll} 
\cO_{\p n}(1)^{\oplus k}\quad&\text{if $\defect=0$,}\\
\cO_{\p n}^{\oplus b_0-\chi(\cE)}\quad&\text{if $\defect=1$.}
\end{array}\right.
\end{gathered}
\end{equation}

\begin{proposition}
\label{pSpace}
Let $n\ge2$.

For a non--zero coherent sheaf $\cE$ on $\p n$ the following assertions hold.
\begin{enumerate}
\item If $\cE$ is an instanton sheaf with respect to $\cO_{\p n}(1)$ with defect $\defect$ and quantum number $\quantum$, then $\cE$ is the cohomology of a monad $\cM^\bullet$ where the $\cM^i$'s are as in equalities \eqref{MonadValue} with, if $\defect=1$, $b_0\le h^0(\cE)$ and $b_1\le h^{n}(\cE(-n))$.
\item If $\cE$ is the cohomology of a monad $\cM^\bullet$ where the $\cM^i$'s are as in equalities \eqref{MonadValue} with, if $\defect=1$, $b_0,b_1\ge\chi(\cE)$, then $\cE$ is an instanton sheaf with respect to $\cO_{\p n}(1)$ with defect $\defect$ and quantum number $\quantum$.
\end{enumerate}
\end{proposition}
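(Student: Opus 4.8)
The tool for both implications is Beilinson's theorem (see \cite[Chapter~II]{O--S--S}), in the form of the spectral sequence
\[
E_1^{p,q}=H^q\big(\cE(p)\big)\otimes\Omega^{-p}_{\p n}(-p)\ \Longrightarrow\ \begin{cases}\cE&\text{if }p+q=0,\\0&\text{if }p+q\neq0,\end{cases}
\]
with $-n\le p\le 0$ and $0\le q\le n$, together with the identifications $\Omega^0_{\p n}\cong\cO_{\p n}$ and $\Omega^n_{\p n}(n)\cong\cO_{\p n}(-1)$. The plan is to read this spectral sequence forwards to produce the monad in (1) and backwards to recover the cohomology in (2).

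For (1) I would first compute the $E_1$ page from the vanishings in Definition \ref{dMalaspinion}, which collapse it drastically. The rows $2\le q\le n-2$ vanish identically in the range, the condition $\defect h^i\big(\cE(-i)\big)=0$ being exactly what removes the one middle entry that survives when $\defect=1$; the row $q=0$ survives only at $p=0$, the row $q=n$ only at $p=-n$, and the rows $q=1$ and $q=n-1$ only at $p\in\{0,-1\}$ and $p\in\{-(n-1),-n\}$. The two entries carrying the multiplicity $\quantum$ are then $E_1^{-1,1}=H^1\big(\cE(-1)\big)\otimes\Omega^1_{\p n}(1)$ and (for $\defect=1$) $E_1^{-(n-1),n-1}=H^{n-1}\big(\cE(\defect-n)\big)\otimes\Omega^{n-1}_{\p n}(n-1)$, both of multiplicity $\quantum$ by Definition \ref{dMalaspinion}. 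Since all surviving entries lie on the three antidiagonals $p+q\in\{-1,0,1\}$, the associated Beilinson monad has $\cE$ as its only cohomology, its three terms being these antidiagonals. Substituting $\Omega^0_{\p n}=\cO_{\p n}$ and $\Omega^n_{\p n}(n)=\cO_{\p n}(-1)$, this is precisely a monad of shape \eqref{MonadValue} when $\defect=1$, with $b_0=h^0\big(\cE\big)$ and $b_1=h^n\big(\cE(-n)\big)$, so the asserted bounds hold. When $\defect=0$ the middle term reads $\cO_{\p n}^{\oplus h^0(\cE)}\oplus\Omega^1_{\p n}(1)^{\oplus\quantum}$ and the right term $\cO_{\p n}^{\oplus h^1(\cE)}$; here I would resolve $\Omega^1_{\p n}(1)$ through the Euler sequence $0\to\Omega^1_{\p n}(1)\to\cO_{\p n}^{\oplus(n+1)}\to\cO_{\p n}(1)\to0$ and then cancel the trivial acyclic summand $\cO_{\p n}^{\oplus h^1(\cE)}$ by Gaussian elimination, which is legitimate because surjectivity of the monad forces the relevant constant matrix to have maximal rank. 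This yields the linear monad $\cO_{\p n}(-1)^{\oplus\quantum}\to\cO_{\p n}^{\oplus b_0}\to\cO_{\p n}(1)^{\oplus\quantum}$ of \cite{Ja}, with $b_0=h^0(\cE)-h^1(\cE)+(n+1)\quantum=\chi(\cE)+(n+1)\quantum$ since $h^i(\cE)=0$ for $i\ge2$.

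For (2) I would split the monad $0\to\cM^{-1}\to\cM^0\to\cM^1\to0$ into the short exact sequences $0\to\cK\to\cM^0\to\cM^1\to0$ and $0\to\cM^{-1}\to\cK\to\cE\to0$, where $\cK=\ker(\cM^0\to\cM^1)$. The cohomology of $\cO_{\p n}(-1)$, $\cO_{\p n}$, $\cO_{\p n}(1)$ and, through Bott's formula, of $\Omega^1_{\p n}(1)$ and $\Omega^{n-1}_{\p n}(n-1)$ and of all their twists is explicit, so chasing the two sequences computes every $h^q\big(\cE(t)\big)$ and lets me verify all the vanishings together with the equalities $h^1\big(\cE(-1)\big)=h^{n-1}\big(\cE(\defect-n)\big)=\quantum$ of Definition \ref{dMalaspinion}. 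The hypotheses $b_0,b_1\ge\chi(\cE)$ ensure that $\cM^1=\cO_{\p n}^{\oplus b_0-\chi(\cE)}$ and $\cM^{-1}=\cO_{\p n}(-1)^{\oplus b_1-\chi(\cE)}$ have non--negative ranks, and the identity $\defect(\chi(\cE)-(-1)^n\chi(\cE(-n)))=0$ drops out of the additivity of $\chi$ along the monad.

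The step I expect to cost the most is the forward passage from the collapsed $E_1$ page to an honest monad: one must check that the higher differentials $d_r$ linking the three antidiagonals assemble into a single complex exact away from the middle, and in the ordinary case one must justify the Euler resolution and the cancellation of the trivial summand while tracking the ranks exactly. The case $n=2$ has to be treated separately throughout, since there $\Omega^1_{\p n}(1)$ and $\Omega^{n-1}_{\p n}(n-1)$ coincide and the band $2\le q\le n-2$ is empty, so the numerology behind \eqref{MonadValue} must be re--derived by hand.
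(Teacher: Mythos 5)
Your proposal is correct and follows essentially the same route as the paper: the Beilinson spectral sequence in the strong form of \cite{A--O1}, collapsed by the cohomological vanishings of Definition \ref{dMalaspinion}, yields the monad in assertion (1), and splitting the monad into the two short exact sequences \eqref{DisplayM} and chasing twisted cohomology yields assertion (2). The only (harmless) difference is that for $\defect=0$ you re-derive the linear monad from Beilinson via the Euler sequence and cancellation of a trivial constant-rank summand, which is precisely the content of \cite[Theorem 3]{Ja} that the paper cites instead (adding there a remark on why the torsion-freeness hypothesis can be dropped); your uniform Beilinson treatment avoids that hypothesis from the start and otherwise produces the same numerology, taking $b_0=h^0\big(\cE\big)$ and $b_1=h^n\big(\cE(-n)\big)$ rather than allowing the cancellation of isomorphic summands that gives the paper its inequalities.
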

\begin{proof}
Assume $\cE$ is an instanton bundle on $\p n$.

If $\defect=0$, then \cite[Theorem 3]{Ja} implies the existence of a monad of the form 
$$
0\longrightarrow\cO_{\p n}(-1)^{\oplus a}\longrightarrow\cO_{\p n}^{\oplus b}\longrightarrow\cO_{\p n}(1)^{\oplus c}\longrightarrow0,
$$
whose cohomology is $\cE$ under the additional hypothesis that it is torsion--free. Such latter hypothesis is only used for proving that sequence \eqref{seqSection} tensored by $\cE(t)$ remains exact for a general hyperplane $Y\subseteq\p n$. Since the general hyperplane does not contain any associated point of $\cE$, it follows that the torsion--freeness hypothesis on $\cE$ can be removed.

 Splitting the monad above in the two short exact sequences 
\begin{gather*}
0\longrightarrow\cU\longrightarrow\cO_{\p n}^{\oplus b}\longrightarrow\cO_{\p n}(1)^{\oplus c}\longrightarrow0,\\
0\longrightarrow\cO_{\p n}(-1)^{\oplus a}\longrightarrow\cU\longrightarrow\cE\longrightarrow0.
\end{gather*}
and taking their cohomology, possibly twisted by $\cO_X(-h)$ and $\cO_X(-nh)$, we obtain
$$
c=h^1\big(\cE(-1)\big),\qquad a=h^{n-1}\big(\cE(-n)\big),\qquad \chi(\cE)=b-(n+1)c
$$
hence $a=c=\quantum$ and $b=\chi(\cE)+(n+1)\quantum$. 

If $\defect=1$, then $\cE$ is the cohomology of a complex $\cM^\bullet$ which is everywhere exact but in degree $0$ and with $i^{\textrm {th}}$--term of the form
$$
\widehat{\cM}^i:=\bigoplus_{p+q=i}H^q\big(\cE(p)\big)\otimes \Omega_{\p n}^{-p}(-p):
$$
see \cite[Beilinson Theorem (strong form)]{A--O1}. In particular $\cM^i=0$ if $\vert i\vert\ge2$ and
\begin{gather*}
\widehat{\cM}^{-1}\cong \cO_{\p n}(-1)^{\oplus h^{n-1}(\cE(-n))},\qquad \widehat{\cM}^{1}\cong \cO_{\p n}^{\oplus h^{1}(\cE)},\\
\widehat{\cM}^0\cong\left\lbrace\begin{array}{ll} 
 \cO_{\p n}^{\oplus h^0(\cE)}\oplus \left(\Omega_{\p n}^1(1)\oplus \Omega_{\p n}^{n-1}(n-1)\right)^{\oplus \quantum}\oplus\cO_{\p n}(-1)^{\oplus h^{n}(\cE(-n))},\quad &\text{if $n\ge3$,}\\
 \cO_{\p 2}^{\oplus h^0(\cE)}\oplus \Omega_{\p 2}^1(1)^{\oplus \quantum}\oplus\cO_{\p 2}(-1)^{\oplus h^{2}(\cE(-2))},\quad &\text{if $n=2$.}
\end{array}\right.
\end{gather*}
The equality
$$
h^{0}\big(\cE\big)- h^{1}\big(\cE\big)=\chi(\cE)=(-1)^n\chi(\cE(-n))= h^{n}\big(\cE(-n)\big)- h^{n-1}\big(\cE(-n)\big).
$$
implies $h^{1}\big(\cE\big)=h^0\big(\cE\big)-\chi(\cE)$, $h^{n-1}\big(\cE(-n)\big)=h^{n}\big(\cE(-n)\big)-\chi(\cE)$. Erasing isomorphic summands in the $\widehat{\cM}^i$'s if any, we finally obtain the monad $\cM^\bullet$ where $b_0\le h^0(\cE)$ and $b_1\le h^{n}(\cE(-n))$. 

Conversely, regardless of the value of $\defect$, if $\cE$ is the cohomology of the monad $\cM^\bullet$ whose $\cM^i$'s are as in equalities \eqref{MonadValue}, then we have the induced exact sequences
\begin{equation}
\label{DisplayM}
\begin{gathered}
0\longrightarrow\cU\longrightarrow\cM^0\longrightarrow\cM^1\longrightarrow0,\\
0\longrightarrow\cM^{-1}\longrightarrow\cU\longrightarrow\cE\longrightarrow0.
\end{gathered}
\end{equation}
Computing the cohomologies of the above sequences \eqref{DisplayM} after suitable twists, one deduces that $\cE$ is an instanton bundle with defect $\defect$ and quantum number $\quantum$.
\end{proof}
Let $n\ge2$. In the following remarks $\cE$ is an instanton sheaf on $\p n$  with defect $\defect$ and quantum number $\quantum$. We define $\rk(\cE)$ as the rank of the stalk of $\cE$ at the generic point of $\p n$. We will see later on in Corollary \ref{cCharacteristic} that the support of $\cE$ is $\p n$, hence $\rk(\cE)\ge1$ (see \cite[Corollary of Lemma II.1.14]{O--S--S}).

\begin{remark}
\label{rRankChern}
If $\cE$ is ordinary we deduce that $\rk(\cE)=\chi(\cE)+(n-1)\quantum$ thanks to a direct computation via the monad $\cM^\bullet$. If $\cE$ is non--ordinary we similarly obtain
$$
\rk(\cE)=\left\lbrace\begin{array}{ll} 
2\chi(\cE)+2n\quantum\quad&\text{if $n\ge3$,}\\
2\chi(\cE)+2\quantum\quad&\text{if $n=2$.}
\end{array}\right.
$$
Thus non--ordinary instanton sheaves on $\p n$ have necessarily even rank.
\end{remark}

\begin{remark}
\label{rChernPoly}
Monad $\cM^\bullet$ and Remark \ref{rRankChern} imply that
$$
c_t(\cE)=\left\lbrace\begin{array}{ll} 
\dfrac1{(1-t^2)^\quantum}\quad&\text{if $\defect=0$,}\\
\dfrac{(1-t)^{\frac{\rk(\cE)}2+\quantum}}{(1+t)^\quantum(1-2t)^\quantum}\quad&\text{if $\defect=1$, $n\ge3$,}\\
\dfrac{(1-t)^{\frac{\rk(\cE)}2}}{(1-t^2)^\quantum}\quad&\text{if $\defect=1$, $n=2$.}
\end{array}\right.
$$
Thus, via the canonical identification $A^i(\p n)\cong\bZ$, we obtain
\begin{equation}
\label{ChernP^n}
c_1(\cE)=-\defect\dfrac{\rk(\cE)}2,\qquad c_2(\cE)=\epsilon\quantum+\defect\dfrac{\rk(\cE)(\rk(\cE)-2)}8
\end{equation}
where
$$
\epsilon=\left\{
\begin{array}{ll}
1\quad&\text{if $n=2$,}\\
1+\defect\quad&\text{if $n\ge3$.}
\end{array}\right.
$$
\end{remark}

\section{Instanton sheaves on irreducible projective schemes}
\label{sGeneral}
In this section we characterize instanton sheaves on projective schemes. We start the section by proving some  general properties of instanton sheaves.

\begin{proposition}
\label{pQuantum}
Let $X$ be an irreducible  projective scheme of dimension $n\ge1$ endowed with an ample and globally generated line bundle $\cO_X(h)$.

If $\cE$ is a $h$--instanton sheaf on $X$  with defect $\defect$ and quantum number $\quantum$, then it has natural cohomology with respect to $\cO_X(h)$ in shifts $\defect-n\le t\le -1$. In particular 
\begin{equation*}
\quantum=-\chi(\cE(-h))=(-1)^{n-1}\chi(\cE((\defect-n)h)),
\end{equation*}
\end{proposition}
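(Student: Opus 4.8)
The plan is to read off directly from the vanishing conditions of Definition \ref{dMalaspinion} exactly which of the groups $h^i\big(\cE(th)\big)$ can be nonzero in the range $\defect-n\le t\le -1$, and then to compute the Euler characteristic at the two extreme shifts $t=-1$ and $t=\defect-n$.

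First I would record the elementary consequences of the defining vanishings. The first bullet gives $h^0\big(\cE(th)\big)=0$ for every $t\le -1$ and $h^n\big(\cE(th)\big)=0$ for every $t\ge \defect-n$, so both vanish throughout the range. The first equality of the second bullet gives $h^i\big(\cE(th)\big)=0$ for $t\le -(i+1)$ when $1\le i\le n-2$, while its second equality, after the reindexing $i\mapsto n-i$, gives $h^i\big(\cE(th)\big)=0$ for $t\ge \defect-i$ when $2\le i\le n-1$. For the middle indices $2\le i\le n-2$ these two ranges of twists cover all of $\bZ$ when $\defect=0$ and miss only $t=-i$ when $\defect=1$; that single twist is then killed by the third bullet $\defect h^i\big(\cE(-ih)\big)=0$. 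Hence $h^i\big(\cE(th)\big)=0$ identically for $2\le i\le n-2$, and inside the range $\defect-n\le t\le -1$ only $h^1$ and $h^{n-1}$ can survive.

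Next I would pin down these two remaining groups. For $n\ge 3$ the second bullet with $i=1$ forces $h^1\big(\cE(th)\big)=0$ for $t\le -2$ and $h^{n-1}\big(\cE(th)\big)=0$ for $t\ge \defect-n+1$, so inside the range $h^1$ is concentrated at $t=-1$ and $h^{n-1}$ at $t=\defect-n$. Since $\defect-n\le -2<-1$ these shifts are distinct, whence at each shift of the range at most one cohomology group is nonzero, which is precisely natural cohomology. The case $n=2$ is immediate, because there $h^1=h^{n-1}$ is the only group left once $h^0$ and $h^2$ are discarded; the degenerate case $n=1$ I would settle by direct inspection of the definition, where $\defect=0$ moreover forces $\quantum=0$.

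Finally, the two identities follow by taking Euler characteristics at the extreme shifts. At $t=-1$ every summand but $h^1$ vanishes, so $\chi(\cE(-h))=-h^1\big(\cE(-h)\big)=-\quantum$ by the fourth defining property, giving $\quantum=-\chi(\cE(-h))$; at $t=\defect-n$ every summand but $h^{n-1}$ vanishes, so $\chi(\cE((\defect-n)h))=(-1)^{n-1}h^{n-1}\big(\cE((\defect-n)h)\big)=(-1)^{n-1}\quantum$, giving $(-1)^{n-1}\chi(\cE((\defect-n)h))=\quantum$. The one genuinely delicate point, and the step I would treat most carefully, is the identical vanishing of the intermediate cohomology for $2\le i\le n-2$: it is exactly here that the defect hypothesis is used, to close the single-twist gap left by the two halves of the second bullet when $\defect=1$, and the degenerating index ranges for small $n$ must be checked separately.
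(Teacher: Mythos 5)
Your proposal is correct and follows essentially the same route as the paper, whose proof is a one-line enumeration of exactly these vanishings read directly from Definition \ref{dMalaspinion} (all $h^i$ vanish in shifts $\defect+1-n\le t\le -2$, only $h^1$ survives at $t=-1$, only $h^{n-1}$ at $t=\defect-n$). You simply spell out the bookkeeping more explicitly — in particular the role of the third bullet in closing the single missing twist $t=-i$ for the middle indices when $\defect=1$, and the degenerate cases $n=1,2$ — which the paper leaves implicit.
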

\begin{proof}
If $\cE$ is a $h$--instanton sheaf, then $h^i\big(\cE(-th)\big)=0$ if $0\le i\le n$ and $2\le t\le n-1-\defect$ or $i\ne1$ and $t=1$ or $i=n-1$ and $t=\defect-n$ by definition.
\end{proof}

The proof of the proposition below follows immediately from Definition \ref{dMalaspinion}.

\begin{proposition}
\label{pDirectSum}
Let $X$ be an irreducible projective scheme of dimension $n\ge1$ endowed with an ample and globally generated line bundle $\cO_X(h)$.

Every extension of two instanton sheaves on $X$ with the same defect is an instanton sheaf with the same defect and whose quantum number is the sum of the quantum numbers.
\end{proposition}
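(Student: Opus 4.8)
The plan is to start from a short exact sequence
$$
0\longrightarrow \cE'\longrightarrow \cE\longrightarrow \cE''\longrightarrow 0
$$
presenting the extension, where $\cE'$ and $\cE''$ are $h$--instanton sheaves with common defect $\defect$ and quantum numbers $\quantum'$ and $\quantum''$. Twisting by $\cO_X(th)$ for each $t\in\bZ$ and passing to the associated long exact sequence in cohomology is exact and functorial, so every cohomological vanishing required by Definition \ref{dMalaspinion} is inherited by the middle term: for a fixed degree $i$ and a fixed twist, whenever the corresponding groups of $\cE'$ and $\cE''$ both vanish, the group squeezed between them in the long exact sequence vanishes as well. This disposes at once of the first three families of conditions, namely the two vanishings in the $h^0$/$h^n$ line, the intermediate vanishings $h^i\big(\cE(-(i+t+1)h)\big)=h^{n-i}\big(\cE((\defect+t-n+i)h)\big)=0$ for $1\le i\le n-2$ and $t\ge0$, and (when $\defect=1$) the equalities $h^i\big(\cE(-ih)\big)=0$ for $2\le i\le n-2$. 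Nonvanishing of $\cE$ is automatic, since it surjects onto the nonzero sheaf $\cE''$.

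The one point where more than a plain vanishing is needed is the additivity of the quantum number, i.e. $h^1\big(\cE(-h)\big)=\quantum'+\quantum''$ and $h^{n-1}\big(\cE((\defect-n)h)\big)=\quantum'+\quantum''$. Here I would invoke Proposition \ref{pQuantum}, which guarantees that both $\cE'$ and $\cE''$ have natural cohomology in the shifts $t=-1$ and $t=\defect-n$, the unique nonzero group sitting in degree $1$ and $n-1$ respectively. Consequently the cohomology groups immediately adjacent to $h^1\big(\,\cdot\,(-h)\big)$ and to $h^{n-1}\big(\,\cdot\,((\defect-n)h)\big)$ all vanish for $\cE'$ and for $\cE''$; the relevant stretch of the long exact sequence therefore degenerates to a short exact sequence of the $H^1$'s (resp. of the $H^{n-1}$'s), and dimensions add. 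This yields exactly $\quantum=\quantum'+\quantum''$ and simultaneously realises the required equality $h^1\big(\cE(-h)\big)=h^{n-1}\big(\cE((\defect-n)h)\big)$.

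For the final Euler characteristic condition I would simply use additivity of $\chi$ along the short exact sequence, both for $\cE$ and for its $(-nh)$--twist, to get
$$
\chi(\cE)-(-1)^n\chi(\cE(-nh))=\sum_{\cF\in\{\cE',\cE''\}}\big(\chi(\cF)-(-1)^n\chi(\cF(-nh))\big).
$$
Multiplying by $\defect$ and using that each summand on the right is annihilated by $\defect$, since $\cE'$ and $\cE''$ are instanton sheaves, gives the desired vanishing. The only mild subtlety throughout is bookkeeping the precise indices so that the adjacent groups invoked really are among those forced to vanish by Definition \ref{dMalaspinion} together with Proposition \ref{pQuantum}; no genuine obstacle arises, which is why the statement may be asserted to follow immediately from the definition.
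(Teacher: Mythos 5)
Your proof is correct and is exactly the verification the paper has in mind: the authors simply assert that the proposition ``follows immediately from Definition \ref{dMalaspinion}'', and your long-exact-sequence bookkeeping (vanishings inherited by the middle term, additivity of $h^1$ and $h^{n-1}$ in the critical shifts via the adjacent vanishings guaranteed by Proposition \ref{pQuantum}, and additivity of $\chi$) is precisely the routine check being left to the reader.
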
 

The main result of this section is the proof of Theorem \ref{tCharacterization}. 
\medbreak
\noindent{\it Proof of Theorem \ref{tCharacterization}.}
Assertion (2) follows trivially from assertion (1). Conversely, assertion (1) follows from assertion (2) thanks to Proposition \ref{pNatural}.

We now show that assertions (1), (3) and (4) are equivalent. Assume assertion (1) holds. Thanks to \cite[Corollary III.11.2 and Exercises III.8.1, III.8.3]{Ha2} we have
\begin{equation}
\label{PushDown}
h^i\big((p_*\cE)(t)\big)=h^i\big(\cE(th)\big)
\end{equation}
for each $i,t\in\bZ$. It follows that $p_*\cE$ is an instanton sheaf on $\p n$ with respect to $\cO_{\p n}(1)$. Thus assertion (3) is true.

Trivially, assertion (3) implies assertion (4). Assume finally that assertion (4) holds. Thanks to Proposition \ref{pSpace} we know that $p_*\cE$ is the cohomology of monad $\cM^\bullet$ where $\cM^i$ is as in equalities \eqref{MonadValue}. The cohomology of the twists of the exact sequences \eqref{DisplayM} allows us to check that the values of $h^i\big(\cE(th)\big)=h^i\big((p_*\cE)(t)\big)$ are as in Definition \ref{dMalaspinion}.
\qed
\medbreak

We list below some immediate corollaries of Theorem \ref{tCharacterization}. From now on we set
$$
{m\choose n}:=\frac1{n!}{\prod_{i=0}^{n-1}(m-i)}
$$
for each integer $n\ge1$.

\begin{corollary}
\label{cCharacteristic}
Let $X$ be an irreducible  projective scheme of dimension $n\ge1$ endowed with an ample and globally generated line bundle $\cO_X(h)$.

If $\cE$ is a $h$--instanton sheaf on $X$ the following assertions hold.
\begin{enumerate}
\item The support of $\cE$ is the underlying space of $X$.
\item If the defect and quantum number of $\cE$ are  $\defect$ and $\quantum$ respectively, then
\begin{equation*}
\chi(\cE(th))=\left\lbrace\begin{array}{ll} 
{\begin{aligned}
(\chi(\cE)&+(n+1)\quantum)\left({{t+n}\choose n}+\defect{{t+n-\defect}\choose n}\right)\\
&-\quantum\left({{t+n+1}\choose n}+{{t+n-1-\defect}\choose n}\right)
\end{aligned}}\quad&{\begin{aligned}
&\text{if $n\ge2$ and\ \ }\\
&\text{ \ $(n,\defect)\ne(2,1)$,}
\end{aligned}}
\vspace{5pt}\\
\,(\chi(\cE)+\quantum)(t+1)^2-\quantum\quad&\text{if $(n,\defect)=(2,1)$,}
\vspace{5pt}\\
\,\chi(\cE)(t+1+\defect t)\quad&\text{if $n=1$.}
\end{array}\right.
\end{equation*}
\end{enumerate}
\end{corollary}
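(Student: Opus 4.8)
The plan is to reduce both assertions to the case $X=\p n$ by means of Theorem \ref{tCharacterization}. Fix a finite morphism $p\colon X\to\p n$ with $\cO_X(h)\cong p^*\cO_{\p n}(1)$: then $\cF:=p_*\cE$ is a non--zero instanton sheaf on $\p n$ with the same defect $\defect$ and quantum number $\quantum$, and \eqref{PushDown} gives $\chi(\cE(th))=\chi(\cF(t))$ for every $t\in\bZ$. Since $p$ is finite, $p_*$ is exact and $\mathrm{supp}(\cF)=p(\mathrm{supp}(\cE))$. Thus it suffices to prove that $\cF$ has full support and to compute its Hilbert polynomial on $\p n$; assertion (1) then follows because a finite morphism preserves the dimension of the support and $X$ is irreducible of dimension $n$.

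For assertion (1) I would use the monad $\cM^\bullet$ of Proposition \ref{pSpace} together with its display \eqref{DisplayM}, i.e. $0\to\cU\to\cM^0\to\cM^1\to0$ and $0\to\cM^{-1}\to\cU\to\cF\to0$, where $\cU$ is a vector bundle (kernel of a surjection of bundles) and $\cM^{\pm1}$ are as in \eqref{MonadValue}. Localizing at the generic point of $\p n$, the complex is exact at the two ends, so the generic rank of $\cF$ equals $\rk(\cM^0)-\rk(\cM^{-1})-\rk(\cM^1)$, the value recorded in Remark \ref{rRankChern}; hence it is enough to exclude that this number is $0$. If it were, then $\cM^{-1}\to\cU$ would be an inclusion of vector bundles of the same rank, and a direct check from \eqref{MonadValue} (computing determinants summand by summand, using $\det\Omega^1_{\p n}(1)=\cO_{\p n}(-1)$ and the analogous identity for $\Omega^{n-1}_{\p n}(n-1)$) shows $\det(\cM^{-1})\cong\det(\cU)$ in every case. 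The induced map on determinants is then a global section of $\det(\cU)\otimes\det(\cM^{-1})^\vee\cong\cO_{\p n}$, i.e. a constant; generic injectivity forces it to be nonzero, hence nowhere vanishing, so the inclusion is an isomorphism and $\cF=0$, a contradiction. Therefore the generic rank is at least $1$ and $\mathrm{supp}(\cF)=\p n$.

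For assertion (2) I would compute $\chi(\cF(t))$ by additivity of the Euler characteristic along \eqref{DisplayM}, namely $\chi(\cF(t))=\chi(\cM^0(t))-\chi(\cM^{-1}(t))-\chi(\cM^1(t))$, and then insert $\chi(\cO_{\p n}(s))=\binom{s+n}{n}$ together with the Euler characteristics of $\Omega^1_{\p n}(s)$ and $\Omega^{n-1}_{\p n}(s)$ coming from the Euler sequence (equivalently Bott's formula). In the ordinary case this is immediate; in the non--ordinary case the auxiliary integers $b_0,b_1$ cancel, leaving only $\chi(\cF)$ and $\quantum$, and the $\Omega^p$ contributions reorganize into the two binomial terms of the stated formula. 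The exceptional cases are handled apart: $(n,\defect)=(2,1)$ because the monad \eqref{MonadValue} then carries only the single summand $\Omega^1_{\p2}(1)$, and $n=1$ directly from the splitting described in Remark \ref{rLine}.

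The step I expect to be the main obstacle is assertion (1): the equality $\chi(\cE(th))=\chi(\cF(t))$ says nothing about the degree of the Hilbert polynomial unless one already knows the generic rank to be positive, and this does \emph{not} follow formally from $\cE\ne0$, since the cohomology of a monad could a priori be a non--zero torsion sheaf. The determinant computation above is exactly what upgrades ``non--zero'' to ``positive generic rank''; once it is in place, the full support in (1) and the exact degree of the Hilbert polynomial in (2) are settled uniformly across all values of $\defect$ and $n$.
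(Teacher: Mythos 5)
Your proposal is correct, and for assertion (2) it follows the paper's own route: push forward along a finite $p\colon X\to\p n$, use \eqref{PushDown} to identify $\chi(\cE(th))$ with $\chi((p_*\cE)(t))$, and compute the latter additively from the monad of Proposition \ref{pSpace}, with Remark \ref{rLine} covering $n=1$. The genuine divergence is in assertion (1). The paper proves (2) first and then deduces (1) from the Hilbert polynomial: if the support of $\cE$ were proper, the degree-$n$ coefficient of the polynomial in (2) would vanish, which pins down $\chi(\cE)$ in terms of $\quantum$ (e.g.\ $\chi(\cE)=-n\quantum$ when $\defect=1$ and $n\ge3$); substituting back, one finds $\chi(\cE(th))\le 0$ for $t\gg0$, contradicting $h^0\big(\cE(th)\big)>0$. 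You instead attack the generic rank directly on $\p n$: assuming rank zero, the two outer terms of the display \eqref{DisplayM} give an injection $\cM^{-1}\to\cU$ of bundles of equal rank, and your determinant bookkeeping is correct --- under the rank-zero hypothesis ($\chi=-(n-1)\quantum$, resp.\ $\chi=-n\quantum$, resp.\ $\chi=-\quantum$ in the three shapes of \eqref{MonadValue}) one does get $\det(\cU)\cong\det(\cM^{-1})$, so the induced section of $\cO_{\p n}$ is a nonzero constant, the injection is an isomorphism, and $p_*\cE=0$, a contradiction. Your route avoids the sign analysis of the degenerate Hilbert polynomial and makes (1) logically independent of (2); the paper's route avoids any determinant computation and treats all $n$ uniformly from the closed formula. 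The only loose end on your side is that for $n=1$ there is no monad, so (1) must be read off from Remark \ref{rLine} directly --- you invoke that remark for (2) but not for (1); it is a one-line fix, since a non-zero direct sum of line bundles on $\p1$ visibly has full support.
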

\begin{proof}
We first prove assertion (2). To this purpose, let $p\colon X\to \p n$ be finite with $\cO_X(h)\cong p^*\cO_{\p n}(1)$. Taking into account that $\chi(\cE(th))=\chi((p_*\cE)(t))$ by equalities \eqref{PushDown}, the statement follows by combining Proposition \ref{pSpace} and Remark \ref{rLine}.

We prove below assertion (1) for the case $\defect=1$ and $n\ge3$: in the other cases the argument is similar. If $\cE$ is supported on a proper subscheme, then the coefficient of $t^n$ in $\chi(\cE(th))$ must vanish. It follows that $\chi(\cE)=-n\quantum$, hence
\begin{align*}
\chi(\cE(th))=-\quantum\frac{n(n-1)(2t+n)}{(t-1)t(t+1)}{{t+n-2}\choose n}.
\end{align*}
Since $\cE$ is not the zero sheaf, it follows that $0<h^0\big(\cE(th)\big)=\chi(\cE(th))\le 0$ for $t\gg0$, a contradiction.
\end{proof}

\begin{corollary}
\label{cUlrich}
Let $X$ be an irreducible projective scheme of dimension $n\ge1$ endowed with an ample and globally generated line bundle $\cO_X(h)$.

If $\cE$ is a $h$--instanton sheaf on $X$ with quantum number $\quantum$ and defect $\defect$, then the following assertions hold.
\begin{enumerate}
\item $\quantum=\defect h^1\big(\cE\big)=\defect h^{n-1}\big(\cE(-nh)\big)=0$  if and only if $\cE$ is a sheaf without intermediate cohomology.
\item $\quantum=\defect=0$ if and only if $\cE$ is an Ulrich sheaf.
\end{enumerate}
\end{corollary}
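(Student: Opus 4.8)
The plan is to reduce everything to the projective space and then exploit the monadic description of Proposition \ref{pSpace}. By Theorem \ref{tCharacterization} I may fix a finite morphism $p\colon X\to\p n$ with $\cO_X(h)\cong p^*\cO_{\p n}(1)$; then $p_*\cE$ is an instanton sheaf on $\p n$ with the same defect and quantum number, and the equalities \eqref{PushDown} give $h^i\big((p_*\cE)(t)\big)=h^i\big(\cE(th)\big)$ for all $i,t$. Consequently $\cE$ is without intermediate cohomology (resp. Ulrich) if and only if $p_*\cE$ is, and the three integers $\quantum$, $\defect h^1(\cE)$, $\defect h^{n-1}(\cE(-nh))$ agree with their counterparts for $p_*\cE$. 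It therefore suffices to treat $X=\p n$. The case $n=1$ is immediate from the explicit description in Remark \ref{rLine}, so I would assume $n\ge2$; recall that in this range Definition \ref{dMalaspinion} already forces $h^i(\cE(th))=0$ for $2\le i\le n-2$ and all $t$, so that having no intermediate cohomology amounts to the vanishing of $h^1(\cE(th))$ and $h^{n-1}(\cE(th))$ for every $t$.

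For the reverse implication of (1) I would argue directly: if $\cE$ has no intermediate cohomology then, since $n\ge2$, the groups $h^1(\cE(-h))$, $h^1(\cE)$ and $h^{n-1}(\cE(-nh))$ are all intermediate and hence vanish; as $h^1(\cE(-h))=\quantum$ by Definition \ref{dMalaspinion}, this yields $\quantum=\defect h^1(\cE)=\defect h^{n-1}(\cE(-nh))=0$. For the forward implication I would feed the three hypotheses into the monad of Proposition \ref{pSpace}. When $\defect=0$ the hypothesis $\quantum=0$ forces $\cM^{-1}=\cM^{1}=0$ in \eqref{MonadValue}, so $\cE\cong\cM^0\cong\cO_{\p n}^{\oplus\chi(\cE)}$; when $\defect=1$ the extra hypotheses $h^1(\cE)=0$ and $h^{n-1}(\cE(-nh))=0$ kill the outer terms $\widehat{\cM}^{1}$ and $\widehat{\cM}^{-1}$ of the Beilinson complex recalled in the proof of Proposition \ref{pSpace}, while $\quantum=0$ removes its $\Omega^\bullet$--summands, leaving $\cE\cong\cO_{\p n}^{\oplus h^0(\cE)}\oplus\cO_{\p n}(-1)^{\oplus h^n(\cE(-n))}$. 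In either case $\cE$ is a direct sum of line bundles, hence without intermediate cohomology.

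Part (2) I would deduce from part (1). For the forward implication, if $\quantum=\defect=0$ the first condition of Definition \ref{dMalaspinion} becomes $h^0(\cE(-(t+1)h))=h^n(\cE((t-n)h))=0$ for $t\ge0$, which is precisely the first Ulrich condition, and part (1) gives that $\cE$ has no intermediate cohomology; since $\cE\ne0$ this says exactly that $\cE$ is Ulrich. For the converse, an Ulrich sheaf has no intermediate cohomology, so part (1) already yields $\quantum=0$ together with the decomposition $\cE\cong\cO_{\p n}^{\oplus h^0(\cE)}\oplus\cO_{\p n}(-1)^{\oplus h^n(\cE(-n))}$ obtained above. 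The Ulrich hypothesis forces $h^n(\cE(-nh))=0$, so $\cE\cong\cO_{\p n}^{\oplus h^0(\cE)}$ with $h^0(\cE)>0$; then $\chi(\cE)=h^0(\cE)$ and $\chi(\cE(-nh))=0$, and the last condition of Definition \ref{dMalaspinion} reads $\defect\,h^0(\cE)=0$, which forces $\defect=0$.

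The cohomological bookkeeping and the reduction to $\p n$ are routine. The genuinely delicate point is the forward direction of (1): I must verify that the vanishing of the three prescribed groups really collapses the Beilinson complex to a sum of line bundles, with no intermediate--degree contribution surviving. This is exactly where the precise shape of the terms in \eqref{MonadValue}, and of the complex $\widehat{\cM}^\bullet$ in the proof of Proposition \ref{pSpace}, is essential, and it is the step I would write out most carefully.
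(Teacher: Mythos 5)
Your proof is correct, but it reaches both nontrivial implications by a genuinely different route than the paper. For the direction ``vanishings $\Rightarrow$ no intermediate cohomology'' in (1), the paper does not push forward at all: it applies Proposition \ref{pNatural} directly on $X$, using the finitely many vanishings in Definition \ref{dMalaspinion} to propagate $h^i\big(\cE(th)\big)=0$ to all twists. You instead pass to $\p n$ and collapse the Beilinson monad to show that $p_*\cE$ splits as $\cO_{\p n}^{\oplus a}\oplus\cO_{\p n}(-1)^{\oplus b}$; this is heavier machinery, but it buys a concrete structural statement that you then reuse in (2). For the direction ``Ulrich $\Rightarrow\defect=0$'' in (2), the paper invokes \cite[Proposition 2.1]{E--S--W} to get $p_*\cE\cong\cO_{\p n}^{\oplus\chi(\cE)}$ and reads $\defect=0$ off the Chern class formula $c_1(p_*\cE)=-\defect\rk(p_*\cE)/2$ of Remark \ref{rChernPoly}, whereas you obtain the triviality of $p_*\cE$ from your own splitting together with $h^n\big(\cE(-nh)\big)=0$ and conclude via the Euler characteristic condition $\defect(\chi(\cE)-(-1)^n\chi(\cE(-nh)))=0$ of Definition \ref{dMalaspinion}; both are valid, yours being more self-contained and the paper's shorter. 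One shared caveat: when $n=1$ and $\defect=1$ assertion (1) is delicate as stated, since every sheaf on a curve is vacuously without intermediate cohomology while $\quantum=\defect\rk(\cE)\deg(X)/2>0$ by Remark \ref{rLine} and Proposition \ref{pCurve}; your appeal to Remark \ref{rLine} does not address this, but the paper's own proof passes over the same point, so it is not a defect of your argument relative to the paper's.
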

\begin{proof}
If $\cE$ is without intermediate cohomology, then we certainly have the vanishings in assertion (1). Conversely, if such vanishings hold, then $\cE$ is without intermediate cohomology thanks to Proposition \ref{pNatural}. 

If $\quantum=\defect=0$, then $\cE$ is trivially Ulrich. Conversely, let $\cE$ be Ulrich. Thus $\quantum=-h^1\big(\cE(-h)\big)=0$. If $p\colon X\to\p n$  is finite and $\cO_X(h)\cong p^*\cO_{\p n}(1)$, then $p_*\cE\cong\cO_{\p n}^{\oplus \chi(\cE)}$ by \cite[Proposition 2.1]{E--S--W}, hence $c_1(p_*\cE)=0$. Thus Remark \ref{rChernPoly} implies that the defect of $p_*\cE$ vanishes, hence $\defect=0$.
\end{proof}

If $X$ is an irreducible  projective scheme of dimension $n\ge2$ and $\cO_X(h)$ is ample and globally generated, then \cite[Corollaire I.6.11 3)]{Jou} implies that each general $Y\in \vert h\vert$ is irreducible. In particular it makes sense to ask if the restriction of a $h$--instanton sheaf to $Y$ is still an instanton sheaf.

\begin{corollary}
\label{cRestriction}
Let $X$ be a  projective scheme of dimension $n\ge3$ endowed with an ample and globally generated line bundle $\cO_X(h)$.

If $Y\in \vert h\vert$ is general and $\cE$ is an instanton sheaf  with respect to $\cO_X(h)$, then $\cO_Y\otimes\cE$ is an instanton sheaf with respect to $\cO_Y\otimes\cO_X(h)$. Moreover, the following assertions hold.
\begin{enumerate} 
\item The defect of $\cO_Y\otimes\cE$ coincides with the defect  $\defect$ of $\cE$.
\item The quantum number of $\cO_Y\otimes\cE$ coincides with the quantum number  $\quantum$ of $\cE$ if $(n,\defect)\ne(3,1)$ and with $2\quantum$ if $(n,\defect)=(3,1)$.
\end{enumerate}
\end{corollary}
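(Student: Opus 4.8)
The plan is to verify, for a general $Y\in\vert h\vert$, the finite list of cohomological conditions in assertion (2) of Theorem \ref{tCharacterization} for the restriction $\cE_Y:=\cO_Y\otimes\cE$ on the $(n-1)$--dimensional scheme $Y$, and then to read off the quantum number. Since the associated points of $\cE$ are finite, the general $Y\in\vert h\vert$ avoids all of them, so \eqref{seqSection} tensored by $\cE(th)$ stays exact and yields, for every $t$, the short exact sequence
\begin{equation*}
0\longrightarrow\cE((t-1)h)\longrightarrow\cE(th)\longrightarrow\cE_Y(th)\longrightarrow0.
\end{equation*}
Moreover $Y$ is irreducible, $\cO_Y(h):=\cO_Y\otimes\cO_X(h)$ is ample and globally generated, $\dim Y=n-1\ge2$, and $\cE_Y\ne0$ because $\cE$ is supported on all of $X$ by Corollary \ref{cCharacteristic}. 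Throughout I will use that $\cE$, being a $h$--instanton sheaf, satisfies all the vanishings of Definition \ref{dMalaspinion}, and at the end I will conclude via the implication $(2)\Rightarrow(1)$ of Theorem \ref{tCharacterization}.

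First I would establish the vanishing conditions for $\cE_Y$. The long exact sequence of the displayed sequence shows that each group $h^j(\cE_Y(th))$ which must vanish is squeezed between $h^j(\cE(th))$ and $h^{j+1}(\cE((t-1)h))$; for every vanishing required of $\cE_Y$ (namely $h^0(\cE_Y(-h))$, $h^{n-1}(\cE_Y((\defect-n+1)h))$, the two families of item (b) for $1\le i\le n-3$, and $\defect h^i(\cE_Y(-ih))$ for $2\le i\le n-3$) both neighbouring groups of $\cE$ vanish by the corresponding conditions of Definition \ref{dMalaspinion}, evaluated at a shifted index. This is routine and gives conditions (a), (b), (c) of Theorem \ref{tCharacterization}(2) for $\cE_Y$.

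The crux is the quantum number, computed from the sequence at $t=-1$ in degree $1$. Since $h^0(\cE(-h))=h^1(\cE(-2h))=0$, the long exact sequence gives
\begin{equation*}
0\longrightarrow H^1(\cE(-h))\longrightarrow H^1(\cE_Y(-h))\longrightarrow H^2(\cE(-2h))\longrightarrow H^2(\cE(-h)),
\end{equation*}
with $h^1(\cE(-h))=\quantum$. Everything hinges on the group $h^2(\cE(-2h))$. If $(n,\defect)\ne(3,1)$ it vanishes: for $\defect=1$, $n\ge4$ this is exactly $\defect h^2(\cE(-2h))=0$ from Definition \ref{dMalaspinion}, while for $\defect=0$ it follows from $h^2(\cE(th))=0$ in the relevant range. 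Hence $h^1(\cE_Y(-h))=\quantum$. If instead $(n,\defect)=(3,1)$ the range $2\le i\le n-2$ collapses, and the defining equality $h^1(\cE(-h))=h^2(\cE(-2h))=\quantum$ forces $h^2(\cE(-2h))=\quantum$ while $h^2(\cE(-h))=0$; the sequence then yields $h^1(\cE_Y(-h))=2\quantum$. This is precisely the predicted doubling.

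It remains to match the symmetric cohomology group and the $\chi$--condition. When $\dim Y=2$ (i.e. $n=3$) the two groups in item (d) coincide, so nothing further is needed. When $n\ge4$ the symmetric group $h^{n-2}(\cE_Y((\defect-n+1)h))$ is computed the same way from the sequence at $t=\defect-n+1$ in degree $n-2$: here $h^{n-2}(\cE((\defect-n+1)h))=h^{n-1}(\cE((\defect-n+1)h))=0$ whereas $h^{n-1}(\cE((\defect-n)h))=\quantum$, so that $h^{n-2}(\cE_Y((\defect-n+1)h))=\quantum$, in agreement with $h^1(\cE_Y(-h))$. Finally condition (e) of Theorem \ref{tCharacterization}(2), relevant only for $\defect=1$, follows by taking Euler characteristics of the displayed sequence (so $\chi(\cE_Y(th))=\chi(\cE(th))-\chi(\cE((t-1)h))$) and substituting $\chi(\cE)=(-1)^n\chi(\cE(-nh))$ together with $\chi(\cE(-h))=-\quantum$ and $\chi(\cE((1-n)h))=(-1)^{n-1}\quantum$ from Proposition \ref{pQuantum}. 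The main obstacle is genuinely the bookkeeping around $h^2(\cE(-2h))$: it is the single place where the shape of the cohomology table of $\cE$ changes with $(n,\defect)$, and it alone is responsible for the exceptional factor $2$ in the case $(n,\defect)=(3,1)$.
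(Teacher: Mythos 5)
Your argument takes a genuinely different route from the paper's. The paper pushes $\cE$ forward along a finite morphism $p\colon X\to\p n$, restricts the Beilinson monad $\cM^\bullet$ of $p_*\cE$ to a hyperplane $H$ using the identification $(p_Y)_*(\cO_Y\otimes\cE)\cong\cO_H\otimes(p_*\cE)$, and reads the defect and quantum number off the restricted monad; the exceptional factor $2$ at $(n,\defect)=(3,1)$ appears there because $\cO_H\otimes\Omega^1_{\p3}(1)$ and $\cO_H\otimes\Omega^{2}_{\p3}(2)$ each contribute a copy of $\Omega^1_{\p2}(1)$. You instead verify the finite list of Theorem \ref{tCharacterization}(2) for $\cO_Y\otimes\cE$ directly from the restriction sequence, locating the doubling in the single group $h^2\big(\cE(-2h)\big)$, which equals $\quantum$ precisely when $(n,\defect)=(3,1)$ and vanishes otherwise. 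This is more elementary and self-contained, at the cost of the case analysis you carry out explicitly; your vanishing checks, the computation of $h^1\big(\cE_Y(-h)\big)$, the symmetric group for $n\ge4$, and the Euler characteristic identity all go through.

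There is, however, one incorrect step. For $n=3$ you assert that ``the two groups in item (d) coincide, so nothing further is needed.'' That is true only for $\defect=1$: condition (d) for $\cE_Y$ on the surface $Y$ reads $h^1\big(\cE_Y(-h)\big)=h^{1}\big(\cE_Y((\defect-2)h)\big)$, and for $\defect=0$ the second group is $h^1\big(\cE_Y(-2h)\big)$, a different group whose value you never compute. The omission is harmless but must be filled: the restriction sequence at $t=-2$ in degree $1$, together with $h^1\big(\cE(-2h)\big)=h^2\big(\cE(-2h)\big)=0$, gives
\[
0\longrightarrow H^1\big(\cE_Y(-2h)\big)\longrightarrow H^2\big(\cE(-3h)\big)\longrightarrow H^2\big(\cE(-2h)\big)=0,
\]
and $h^2\big(\cE(-3h)\big)=h^{n-1}\big(\cE((\defect-n)h)\big)=\quantum$ for $(n,\defect)=(3,0)$, so $h^1\big(\cE_Y(-2h)\big)=\quantum=h^1\big(\cE_Y(-h)\big)$ as required. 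With this one addition the proof is complete.
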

\begin{proof}
Let $\cE_Y:=\cO_Y\otimes\cE$, $\cO_Y(h_Y):=\cO_Y\otimes\cO_X(h)$.

Let $p\colon X\to\p n$ be a finite morphism such that $\cO_X(h)\cong p^*\cO_{\p n}(1)$. If $Y=p^{-1}(H)$ for some hyperplane $H\subseteq \p n$, then the restriction $p_Y\colon Y\to H$ of $p$ is still finite. Moreover, both $p$ and $p_Y$ are affine, hence for each open affine subset $\cU\cong\spec(A)\subseteq\p n$, let $\cV\cong\spec(B)=p^{-1}(\cU)$ and $H\cap \cU\cong\spec(A/I)$ for some ideal $I\subseteq A$, so that $Y\cap \cV\cong\spec(B\otimes_AA/I)$. If the restriction of $\cE$ to $\cV$ is $\widetilde{M}$, then the canonical isomorphism of $A$--algebras $M\otimes_B(B\otimes_AA/I)\cong M\otimes_AA/I$ holds true. Glueing together all such canonical isomorphisms we obtain $(p_Y)_*\cE_Y\cong \cO_{H}\otimes(p_*\cE)$.

If $p_*\cE$ is the cohomology of monad $\cM^\bullet$ as in Proposition \ref{pSpace} and $Y\in\vert h\vert$ does not contain any associated point of $\cE$, then $(p_Y)_*\cE_Y$ is the cohomology of monad $\cO_{H}\otimes\cM^\bullet$. Notice that if $\defect=1$, then $\Omega_{\p n}^{n-1}(n-1)\cong(\Omega_{\p n}^1(2))^\vee$, hence
\begin{gather*}
\cO_{H}\otimes\Omega_{\p n}^1(1)\cong\cO_{\p{n-1}}\oplus\Omega_{\p{n-1}}^1(1),\\
\cO_{H}\otimes\Omega_{\p n}^{n-1}(n-1)\cong\cO_{\p{n-1}}(-1)\oplus\Omega_{\p{n-1}}^{n-2}(n-2).
\end{gather*}
In particular $\cO_{H}\otimes\cM^\bullet$ has the same shape of $\cM^\bullet$. 

Thus $(p_Y)_*\cE_Y$ is still an instanton sheaf on $H\cong\p{n-1}$ with defect $\defect$ by Proposition \ref{pSpace}, hence $\cE_Y$ is an instanton sheaf on $Y$. Moreover $\Omega_{\p{n-1}}^1(1)\not\cong\Omega_{\p{n-1}}^{n-2}(n-2)$ if and only if $n\ge4$, hence the assertion on the quantum number of $\cE_Y$ follows.
\end{proof}

The restriction $n\ge3$ is sharp. Indeed if $\cE$ is an instanton sheaf on $\p2$ with defect $\defect$, rank $(1+\defect)a$ and quantum number $\quantum\ne \defect a$ its restriction to a line cannot be an instanton sheaf due to Remark \ref{rLine}.

\section{Resolutions of instanton sheaves\\ on embedded irreducible projective schemes}
\label{sResolution}
In this section we will assume that $X$ is an irreducible projective scheme endowed with a very ample line bundle $\cO_X(h)$. Let $V\subseteq H^0\big(\cO_X(h)\big)$ be a subspace of dimension $N+1$ associated to an embedding $X\subseteq\p N$. If $\cE$ is a $h$--instanton sheaf, in this section we will mainly deal with the free resolution of the module of global sections $H^0_*\big(\cE\big)$ as a module over the symmetric $\field$--algebra $S$ over $V$, when $\cE$ is an  instanton sheaf.

As a first step, we bound $\reg(\cE)$ from above in terms of cohomology of $\cE$: see also Remark \ref{rSpace1} for a confront of the bound below for non--ordinary instanton sheaves with the one in \cite[Theorem 3.2]{C--MR1}.

\begin{proposition}
\label{pRegularity}
Let $X$ be an irreducible projective scheme of dimension $n\ge2$ endowed with a very ample line bundle $\cO_X(h)$.

If $\cE$ is a $h$--instanton sheaf with defect $\defect$, then
$\reg(\cE)\le h^1\big(\cE((\defect-1)h)\big)+\defect$.
\end{proposition}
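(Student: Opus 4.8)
Write $q:=h^1\big(\cE((\defect-1)h)\big)$ and $w:=q+\defect$, so that the claim is precisely that $\cE$ is $w$--regular, i.e. $h^i\big(\cE((w-i)h)\big)=0$ for every $i\ge1$. The plan is to dispose of the indices $i\ge2$ at once and to reduce the entire statement to the single vanishing $h^1\big(\cE((w-1)h)\big)=0$. For $2\le i\le n-2$ one has $h^i\big(\cE(th)\big)=0$ for all $t$, as observed right after Definition \ref{dMalaspinion}. For $i=n$ the defining vanishing $h^n\big(\cE((\defect-n+t)h)\big)=0$ ($t\ge0$) evaluated at $t=q\ge0$ gives $h^n\big(\cE((w-n)h)\big)=0$; and for $i=n-1$ (only relevant when $n\ge3$) the vanishing $h^{n-1}\big(\cE((\defect-n+1+t)h)\big)=0$ ($t\ge0$) at $t=q$ gives $h^{n-1}\big(\cE((w-n+1)h)\big)=0$. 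Here the freedom $q\ge0$ is exactly what makes these diagonal entries vanish, so everything rests on $h^1\big(\cE((w-1)h)\big)=0$.

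For the remaining vanishing I would first pass to projective space. Choosing a finite morphism $p\colon X\to\p n$ with $\cO_X(h)\cong p^*\cO_{\p n}(1)$, equalities \eqref{PushDown} give $h^i\big(\cE(th)\big)=h^i\big((p_*\cE)(t)\big)$ for all $i,t$, so both $\reg(\cE)$ and the number $q$ are unchanged, and by Theorem \ref{tCharacterization} the sheaf $\cF:=p_*\cE$ is an instanton sheaf on $\p n$ with the same defect. By Proposition \ref{pSpace}, $\cF$ is the cohomology of the monad $\cM^\bullet$ of \eqref{MonadValue}. The graded $S$--module $M:=H^1_*\big(\cF\big)$ is then finite--dimensional: it satisfies $M_t=0$ for $t\le-2$ by the defining vanishings of $\cF$ and $M_t=0$ for $t\gg0$ by Serre vanishing, with $\dim_\field M_{\defect-1}=q$. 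The target is now $M_{w-1}=M_{q+\defect-1}=0$.

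The crux I would prove is that the Hilbert function $t\mapsto\dim_\field M_t=h^1\big(\cF(t)\big)$ drops by at least one at each step in the range $t\ge\defect-1$ until it reaches $0$; as it equals $q$ in degree $\defect-1$, this forces $M_{q+\defect-1}=0$. Splitting $\cM^\bullet$ as in \eqref{DisplayM} and taking cohomology identifies $M$, in degrees $t\ge\defect-1$, with the cokernel of the map on global sections induced by $\cM^0\to\cM^1$; since $\cM^1$ is a sum of copies of $\cO_{\p n}$ and $\cO_{\p n}(1)$ while the remaining summands of $\cM^0$ are $\cO_{\p n}(-1)$ and the bundles $\Omega^1_{\p n}(1),\Omega^{n-1}_{\p n}(n-1)$, this exhibits $M$ in that range with linear first syzygies. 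I would then multiply by a general linear form $\ell$, i.e. restrict to a general hyperplane $H$: by Corollary \ref{cRestriction} (for $n\ge3$) the restriction $\cF_H:=\cO_H\otimes\cF$ is again an instanton sheaf, and the sequence $0\to\cF(t)\xrightarrow{\ell}\cF(t+1)\to\cF_H(t+1)\to0$ identifies $\ker(\ell\colon M_t\to M_{t+1})$ with $\coker\big(H^0(\cF(t+1))\to H^0(\cF_H(t+1))\big)$ and $\coker(\ell\colon M_t\to M_{t+1})$ with a subspace of $H^1\big(\cF_H(t+1)\big)$. This sets up an induction on $n$, the base case $n=2$ being handled by restricting to a general line, on which $\cF$ splits as a sum of line bundles by Remark \ref{rLine}.

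The main obstacle is ruling out \emph{plateaus}: showing that $\ell\colon M_t\to M_{t+1}$ is never an isomorphism onto a nonzero space for $t\ge\defect-1$ and general $\ell$, so that each nonzero $M_{t+1}$ is accompanied by a strict drop in dimension. Examples such as $S/\mathfrak m^2$ show that finite length plus generation in a single degree is not enough by itself, so one must genuinely use the linear presentation coming from $\cM^\bullet$: passing to the Artinian reduction $\overline M=M/\ell M$, whose generators remain in low degree and whose structure is governed by $H^1_*(\cF_H)$ through the inductive hypothesis, I would argue by downward induction from the top nonzero degree that the socle of $M$ propagates, forcing $\dim M_{t+1}<\dim M_t$ whenever $M_{t+1}\ne0$. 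When $\defect=1$ a further subtlety is the possibly nonzero piece $M_{\defect-2}=M_{-1}$ carrying the quantum number, which must be prevented from enlarging the top degree; this is where I expect to invoke the extra symmetry condition $\defect\big(\chi(\cE)-(-1)^n\chi(\cE(-nh))\big)=0$ together with the precise shape of $\cM^\bullet$. Thus the whole proposition reduces to the algebraic assertion that a finite--length module with a linear presentation and $q$ generators in degree $\defect-1$ has regularity at most $q+\defect-2$; verifying this for the module produced by the monad is the technical heart, everything else being bookkeeping with the explicit $\cM^i$ of \eqref{MonadValue}.
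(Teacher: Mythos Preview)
Your reduction to the single vanishing $h^1\big(\cE((w-1)h)\big)=0$ and the passage to $\p n$ are exactly what the paper does. The gap is where you yourself locate it: you never prove the strict--decrease claim for $t\mapsto h^1(\cF(t))$, you only identify it as ``the main obstacle'' and sketch a strategy (induction on $n$, Artinian reduction, socle propagation) without executing it. That sketch has soft spots. Corollary \ref{cRestriction} requires $n\ge3$, so your base case $n=2$ cannot be reached through it; as noted right after that corollary, restricting a surface instanton to a line generally does \emph{not} give an instanton on $\p1$. Even granting the induction, knowing $h^1(\cF_H(t))=0$ for large $t$ only tells you that $\ell\colon M_t\to M_{t+1}$ is \emph{surjective} there, not that $M_{t+1}=0$; ruling out plateaus also requires control of $\ker(\ell)$, which your sketch does not provide. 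So the ``technical heart'' is genuinely missing.

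The paper bypasses the strict--decrease question entirely. After the same reduction (explained for $\defect=1$), it extracts from the monad display \eqref{DisplayM} a presentation of $\bigoplus_{t\ge0}H^1(\cU(t))$ as the cokernel of a linear map $A\colon S(-1)^{\oplus f}\to S^{\oplus g}$ with $g=b_0-\chi(\cE)\le h^1(\cE)=w-1$. Since the cokernel has finite length, the associated sheaf map $\alpha\colon\cO_{\p n}^{\oplus f}\to\cO_{\p n}(1)^{\oplus g}$ is surjective, so its Eagon--Northcott complex is exact; a cohomology chase then shows $H^0(\wedge^g\alpha)$ is onto, i.e.\ the maximal minors of $A$ span $S_g$. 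By the adjugate identity this forces $\bigoplus_{t\ge g}S_t^{\oplus g}\subseteq\im(A)$, hence $h^1(\cE(g))=h^1(\cU(g))=0$, so $\cE$ is $(g+1)$--regular and a fortiori $w$--regular. The case $\defect=0$ is analogous and classical (see \cite[Corollary 3.3]{C--MR1}). This Eagon--Northcott step is the missing idea: it yields the vanishing directly at degree $g$, with no induction on $n$ and no need to track how $h^1$ decreases step by step.
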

\begin{proof}
In what follows we explain the argument only for $\defect=1$, because in the case $\defect=0$ it is similar (and well--known: see  \cite[Corollary 3.3]{C--MR1}).

Notice that $h^i\big(\cE((1-i)h)\big)=0$ for $i\ge2$ by definition, hence it suffices to check that also $h^1\big(\cE((w-1)h)\big)=0$ for $w=h^1\big(\cE\big)+1$. If $p\colon X\to\p n$ is any finite morphism such that $\cO_X(h)\cong p^*\cO_{\p n}(1)$, then $h^1\big(\cE((w-1)h)\big)=h^1\big((p_*\cE)(w-1)\big)$, hence it suffices to deal with $X=\p n$, $\cO_X(h)= \cO_{\p n}(1)$ and $V=H^0\big(\cO_{\p n}(1)\big)$. 

Let $\quantum$ be the quantum number of $\cE$. Thanks to Proposition \ref{pSpace}, we know that $\cE$ is the cohomology of a monad $\cM^\bullet$ whose sheaves are as in equalities \eqref{MonadValue}. If the restriction of the map $\cM^{0}\to\cM^{1}$ to the direct summand $\cO_{\p n}^{\oplus b_0}$ is non--zero, then such a morphism necessarily splits, hence we can assume that such map vanishes. 

By splitting ${\cM}^\bullet$, we obtain sequences as in \eqref{DisplayM}. Their cohomologies return $h^1\big(\cE(-t)\big)=h^1\big(\cU(-t)\big)=0$ for $t\ge-1$ and
\begin{equation}
\label{seqLong}
\begin{aligned}
0\longrightarrow H^0_*\big({\cU}\big)&\longrightarrow S^{\oplus b_0}\oplus H^0_*\big(\Omega_{\p n}^1(1)\oplus \Omega_{\p n}^{n-1}(n-1)\big)^{\oplus \quantum}\oplus S(-1)^{\oplus b_1}\\
&\mapright B S^{\oplus g}\longrightarrow H^1_*\big({\cU}\big)\longrightarrow H^1_*\big(\Omega_{\p n}^1(1)\big)^{\oplus \quantum}\longrightarrow0
\end{aligned}
\end{equation}
where $g:=b_0-\chi(\cE)\le h^1\big(\cE\big)=w-1$ and the restriction of $B$ to $S^{\oplus b_0}$ is zero. The exact sequence
\begin{equation*}\label{Eulerpn}
0 \longrightarrow \Omega_{\p n}^p(p-1) \longrightarrow \cO_{\p n}^{{n+1}\choose{p}}(-1)\longrightarrow \Omega_{\p n}^{p-1}(p-1)\longrightarrow 0
\end{equation*}
with $p=n$ and $p=2$ yields surjective maps
$$
S(-1)^{\oplus n+1}\longrightarrow H^0_*\big( \Omega_{\p n}^{n-1}(n-1)\big), \qquad
S(-1)^{\oplus {{n+1\choose2}}}\longrightarrow H^0_*\big( \Omega_{\p n}^{1}(1)\big).
$$
Notice that  $H^1_*\big(\Omega_{\p n}^1(1)\big)$ coincides with $H^1\big(\Omega_{\p n}^1\big)$ concentrated in degree $-1$, hence the last module on the right in sequence \eqref{seqLong} is isomorphic to $\field^{\oplus \quantum}$ concentrated in degree $-1$. The lowest degree homogeneous part of $H^1_*\big({\cU}\big)$ is $H^1\big({\cU}(-1)\big)\cong\field^{\oplus \quantum}$ concentrated in degree $-1$. Thus the last non--zero morphism on the right in sequence \eqref{seqLong} splits and we finally obtain an exact sequence of the form
\begin{equation}
\label{seqEN}
S(-1)^{\oplus f} \mapright{A}S^{\oplus g}\longrightarrow \bigoplus_{t\ge0} H^1\big({\cU}(t)\big)\longrightarrow0,
\end{equation}
where $f:={{n+2\choose2}}+b_1$. 

Let $\cF:=\cO_{\p n}^{\oplus f}$, $\cG:=\cO_{\p n}(1)^{\oplus g}$. Thus we have an induced surjective morphism $\alpha\colon\cF\to\cG$, because the $S$--module on the right in sequence \eqref{seqEN} has finite length. The Eagon--Northcott complex
\begin{align*}
0\longrightarrow\wedge^f\cF\otimes S^{f-g}\cG^\vee&\mapright{\alpha_{f-g}}\wedge^{f-1}\cF\otimes S^{f-g-1}\cG^\vee\mapright{\alpha_{f-g-1}}\dots\\
&\mapright{\alpha_2}\wedge^{g+1}\cF\otimes \cG^\vee\mapright{\alpha_1}\wedge^g\cF\mapright{\wedge^g\alpha}\wedge^g\cG\longrightarrow0
\end{align*}
is exact thanks to \cite[Proposition 3]{Ea--No}. 
 
We have $\wedge^g\cG\cong\cO_{\p n}(g)$ and $\wedge^{g+i}\cF\otimes S^{i}\cG^\vee\cong\cO_{\p n}(-i)^{\oplus e_i}$ for some integers $e_i$. Thus $H^i(\ker \alpha_i)=0$ for all $i\ge1$, hence $H^0\big(\wedge^g\cF\big)\to H^0\big(\wedge^g\cG\big)$ is surjective. In particular, we deduce that the component $S_g\subseteq S$ of degree $g$ is generated by the maximal minors of the matrix of $A$ and it is not difficult to check that $\bigoplus_{t\ge g}S_t^{\oplus g}\subseteq S^{\oplus g}$ is contained in $\im(A)$. It follows that $h^1\big(\cE(g)\big)=h^1\big(\cU(g)\big)=0$, hence $\cE$ is certainly $(g+1)$--regular. Thus $\cE$ is also $w$--regular thanks to \cite[Corollary 4.18]{Ei} and the obvious inequality $w\ge g+1$.
\end{proof}

Recall that in the introduction we defined the numbers
\begin{gather*}
v(\cE):=\min\{\ t\in\bZ\ \vert\ h^0\big(\cE(th)\big)\ne0\ \},\qquad 
w(\cE):=h^1\big(\cE((\defect-1)h)\big)+\defect.
\end{gather*}
The following corollary is immediate by \cite[Corollary 4.18]{Ei}.

\begin{corollary}
\label{cRegularity}
Let $X$ be an irreducible projective scheme of dimension $n\ge2$ endowed with a very ample line bundle $\cO_X(h)$.

If $\cE$ is a $h$--instanton sheaf, then $\cE(w(\cE))$ is globally generated. In particular, if $\cE$ is ordinary and its quantum number is $\quantum$, then $\cE(\quantum)$ is globally generated.
\end{corollary}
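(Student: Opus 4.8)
The plan is to deduce global generation directly from the regularity estimate established in Proposition \ref{pRegularity}, combined with the basic property of Castelnuovo--Mumford regularity recorded in \cite[Corollary 4.18]{Ei}. There is essentially nothing to invent here: the corollary is a formal consequence of the bound $\reg(\cE)\le w(\cE)$.

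First I would recall that, by Proposition \ref{pRegularity}, every $h$--instanton sheaf $\cE$ satisfies $\reg(\cE)\le w(\cE)$; equivalently, $\cE$ is $w(\cE)$--regular with respect to $\cO_X(h)$. Here one views $\cE$ as a coherent sheaf on the ambient $\p N$ via the embedding induced by the very ample bundle $\cO_X(h)$, so that the vanishings $h^i\big(\cE((w(\cE)-i)h)\big)=0$ for $i\ge1$ defining $w(\cE)$--regularity may be read off on $\p N$ (they coincide with those computed on $X$, since cohomology is preserved under the closed immersion $X\subseteq\p N$). Then I would invoke \cite[Corollary 4.18]{Ei}: an $m$--regular coherent sheaf $\cF$ on projective space has $\cF(m)$ globally generated. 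Applying this with $m=w(\cE)$ yields that $\cE(w(\cE))$ is globally generated on $\p N$, and restricting the surjective evaluation map to $X$ gives the global generation of $\cE(w(\cE))$ on $X$.

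For the final clause I would specialize to the ordinary case $\defect=0$. Then the definition $w(\cE)=h^1\big(\cE((\defect-1)h)\big)+\defect$ reduces to $w(\cE)=h^1\big(\cE(-h)\big)$, and the fourth condition in Definition \ref{dMalaspinion} identifies this with the quantum number, $h^1\big(\cE(-h)\big)=\quantum$. Hence $w(\cE)=\quantum$, and the previous paragraph shows $\cE(\quantum)$ is globally generated. The only point that requires a little care is this bookkeeping identity $w(\cE)=\quantum$, which is immediate from Definition \ref{dMalaspinion}; apart from that, I expect no genuine obstacle, as the whole statement follows formally from the regularity bound already in hand.
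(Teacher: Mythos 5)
Your proposal is correct and follows exactly the paper's route: the paper dispatches this corollary in one line as an immediate consequence of Proposition \ref{pRegularity} together with \cite[Corollary 4.18]{Ei}, and your bookkeeping $w(\cE)=h^1\big(\cE(-h)\big)=\quantum$ in the ordinary case is the intended reading of the final clause. Nothing is missing.
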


We prove Theorem \ref{tResolution} below.

\medbreak
\noindent{\it Proof of Theorem \ref{tResolution}.}
Let $e_{0,1},\dots,e_{0,m_0}\in E_{0}:=H^0_*\big(\cE\big)$ be a minimal set of generators as $S$--module such that the sequence $\deg(e_{0,j})=\nu_{0,j}$ is non--decreasing. 
There is a surjective morphism $F_0:=\bigoplus_{j=1}^{m_0}S(-\nu_{0,j})\to E_0$: by definition we have $\nu_{0,1}\ge v(\cE)\ge0$ and Corollary \ref{cRegularity} implies $\nu_{0,m_0}\le  w(\cE)$.  

By sheafification, we obtain a short exact sequence 
$$
0\longrightarrow \cE_1\longrightarrow \cF_{0}\mapright{\varphi_{0}} \cE\longrightarrow0.
$$
The definition of $\varphi_0$ yields $H^1_*\big(\cE_1\big)=0$. Moreover, the cohomology of the above sequence twisted by $\cO_{\p N}(v(\cE))$ implies $h^0\big(\cE(v(\cE))\big)=0$, hence the minimal generators of $E_{1}:=H^0_*\big(\cE_1\big)$ have degree $\nu_{0,1}+1\ge v(\cE)+1\ge1$ at least. The cohomology of the above sequence with suitable twists also yields $\reg(\cE_1)\le w(\cE)+1$. 

By induction, the same argument leads for $0\le p\le N-1$ to exact sequences 
\begin{equation}
\label{seqE}
0\longrightarrow \cE_p\longrightarrow \cF_{p-1}\mapright{\varphi_{p-1}} \cE_{p-1}\longrightarrow0
\end{equation}
such that $\cF_p\cong \bigoplus_{j= v(\cE)}^{w(\cE)}\cO_{\p N}(-j-p)^{\oplus \beta_{p,j}}$ and $\varphi_{p-1}$ is surjective on global sections.

It follows that $H^1_*\big(\cE_p\big)=0$ for $1\le p\le N-1$. Moreover, the cohomologies of sequences \eqref{seqE} yield $H^i_*\big(\cE_p\big)=H^{i-1}_*\big(\cE_{p-1}\big)$ for $2\le i\le N-1$, hence $H^i_*\big(\cE_{N-1}\big)=H^1_*\big(\cE_{N-i}\big)=0$ for $1\le i\le N-1$. The Horrocks theorem implies that $\cE_{N-1}$ splits as a sum of line bundles. 

Since $H^1_*\big(\cE_p\big)=0$ for $1\le p\le N-1$, it follows that we can glue together the twisted cohomologies of sequences \eqref{seqE} obtaining sequence \eqref{Resolution}.
\qed
\medbreak

\begin{corollary}
\label{cResolutionSheaf}
Let $X$ be an irreducible projective scheme of dimension $n\ge1$ endowed with a very ample line bundle $\cO_X(h)$. Let $V\subseteq H^0\big(\cO_X(h)\big)$ be a subspace associated to an embedding $X\subseteq\p N$.

If $\cE$ is a $h$--instanton sheaf, then there exists an exact sequence of the form
\begin{equation}
\label{ResolutionSheaf}
0\longrightarrow \cF_{N}\longrightarrow \cF_{N-1}\longrightarrow\dots\longrightarrow \cF_1\longrightarrow \cF_0\longrightarrow \cE\longrightarrow0,
\end{equation}
where $\cF_p\cong \cO_{\p N}(-w(\cE)-p)^{\oplus \beta_{p}}$.
\end{corollary}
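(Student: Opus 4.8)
The plan is to realize $\cE$, regarded as a coherent sheaf on $\p N$ supported on $X$, as the sheafification of a truncated $w(\cE)$--regular object and to read off a linear resolution by a sheaf--theoretic iteration in the spirit of the proofs of Proposition \ref{pRegularity} and Theorem \ref{tResolution}. Write $w:=w(\cE)$. By Proposition \ref{pRegularity} (the case $n=1$ being immediate from Definition \ref{dMalaspinion}) the sheaf $\cE$ is $w$--regular, so $\cE(wh)$ is globally generated and the evaluation map is a surjection $\cF_0:=\cO_{\p N}(-w)^{\oplus\beta_0}\to\cE$ with $\beta_0=h^0\big(\cE(wh)\big)$; put $\cK_0:=\ker$. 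More generally, setting $\cK_{-1}:=\cE$ and defining inductively $\cF_p:=\cO_{\p N}(-w-p)^{\oplus\beta_p}\to\cK_{p-1}$ to be the evaluation of the globally generated sheaf $\cK_{p-1}((w+p)h)$, with kernel $\cK_p$, I obtain a complex $\dots\to\cF_1\to\cF_0\to\cE\to0$ whose terms have exactly the required shape.

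First I would check that each kernel is one degree more regular than the previous one, i.e.\ that $\cK_p$ is $(w+p+1)$--regular whenever $\cK_{p-1}$ is $(w+p)$--regular. Applying the long exact cohomology sequence of $0\to\cK_p\to\cF_p\to\cK_{p-1}\to0$ after the twist $(w+p+1-i)h$ and using $h^i\big(\cF_p(th)\big)=0$ for $1\le i\le N-1$ (and also in degree $N$ for the relevant twists), the term $h^i\big(\cF_p(\cdot)\big)$ always vanishes; for $i\ge2$ the other neighbour $h^{i-1}\big(\cK_{p-1}(\cdot)\big)$ vanishes by regularity of $\cK_{p-1}$, and for the critical value $i=1$ one uses that the evaluation map is the identity on $H^0$ in degree $w+p$, so its cokernel is zero. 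Thus the iteration is well defined and preserves regularity.

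Next I would prove that the process stops after $N$ steps with a split last term. Since each local ring of $\p N$ is regular of dimension at most $N$, every stalk has projective dimension at most $N$, so the $N$--th syzygy sheaf $\cK_{N-1}$ (the kernel obtained after the $N$ surjections $\cF_0,\dots,\cF_{N-1}$) is locally free. To see it splits I compute its intermediate cohomology: the vanishing $h^i\big(\cF_p(th)\big)=0$ for $1\le i\le N-1$ yields isomorphisms $H^i\big(\cK_p(th)\big)\cong H^{i-1}\big(\cK_{p-1}(th)\big)$ for $2\le i\le N-1$, which reduce $H^i\big(\cK_{N-1}(th)\big)$ with $1\le i\le N-1$ to $H^1\big(\cK_q(th)\big)$ for some $q\ge1$. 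For such $q$ this group vanishes for all $t$, being the cokernel of $H^0\big(\cF_q(th)\big)\to H^0\big(\cK_{q-1}(th)\big)$: the map is surjective for $t\ge w+q$ by regularity of $\cK_{q-1}$, while for $t\le w+q-1$ the target already vanishes, since $\cK_{q-1}$ embeds into $\cF_{q-1}=\cO_{\p N}(-(w+q-1))^{\oplus\beta_{q-1}}$ and, in the boundary degree $t=w+q-1$, the relevant section would lie in the kernel of the $H^0$--isomorphism $\cF_{q-1}\to\cK_{q-2}$. Hence $\cK_{N-1}$ has no intermediate cohomology and splits as a sum of line bundles by the Horrocks theorem.

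Finally I would pin down the twists. Every summand $\cO_{\p N}(a)$ of $\cK_{N-1}$ satisfies $a\ge-(w+N)$, because $\cK_{N-1}$ is $(w+N)$--regular, and $a\le-(w+N-1)$, because $\cK_{N-1}$ injects into $\cF_{N-1}=\cO_{\p N}(-(w+N-1))^{\oplus\beta_{N-1}}$. The value $a=-(w+N-1)$ is excluded: such a summand would include into $\cF_{N-1}$ by a nonzero scalar vector lying in the kernel of $\cF_{N-1}\to\cK_{N-2}$, contradicting that this map is an isomorphism on $H^0$ in degree $w+N-1$. Therefore $\cK_{N-1}\cong\cO_{\p N}(-w-N)^{\oplus\beta_N}$, and setting $\cF_N:=\cK_{N-1}$ and splicing the short exact sequences gives \eqref{ResolutionSheaf}; the case $N=1$ (only possible when $n=1$ and $X\cong\p1$) follows at once from Remark \ref{rLine}. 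I expect the main obstacle to be precisely this last purity argument, namely showing that the final syzygy involves the single twist $-(w(\cE)+N)$ and not also $-(w(\cE)+N-1)$, the earlier steps being routine regularity bookkeeping.
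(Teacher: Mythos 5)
Your argument is correct, but it takes a genuinely different route from the paper's. The paper's proof is a two--line reduction to a citation: a general hyperplane avoids the associated points of $X$, hence is a non--zerodivisor on the truncated module $\bigoplus_{t\ge w(\cE)}H^0\big(\cE(th)\big)$; that module therefore has depth at least $1$, and \cite[Theorem 1.2 (1)]{Ei--Go} says that a $w(\cE)$--regular module of positive depth admits a linear free resolution, necessarily of length at most $N$, whose sheafification is \eqref{ResolutionSheaf}. You instead reprove the sheaf--level content of that theorem from scratch: iterated evaluation surjections, the standard one--step gain of regularity for the successive kernels, local freeness of the $N$--th syzygy $\cK_{N-1}$ via Auslander--Buchsbaum on the stalks, Horrocks' criterion to split it, and a boundary--degree computation (namely $H^0\big(\cK_{N-1}((w(\cE)+N-1)h)\big)=0$ because each evaluation map is an isomorphism on $H^0$ in its generating degree) to exclude the twist $-(w(\cE)+N-1)$ from the last term. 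Both arguments hinge on the same input, the bound $\reg(\cE)\le w(\cE)$ of Proposition \ref{pRegularity}; what the citation buys the paper is exactly the two facts you establish by hand, the linearity of the resolution of the truncation and the length bound $N$ (which in Eisenbud--Goto comes from $\mathrm{pd}=N+1-\depth\le N$, replacing your Horrocks/splitting analysis). Your version is longer but self--contained, and it has the merit of making explicit where the exclusion of the extra twist in $\cF_N$ comes from; the paper's is shorter but leans on the graded--module machinery.
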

\begin{proof}
Each hyperplane not passing through any of the associated points of $X$ corresponds to a section in $V$ which is not a zero--divisor in ${H^0_*\big(\cE\big)}$, hence in ${\bigoplus_{t\ge w(\cE)}H^0\big(\cE(th)\big)}$. In particular, the depth of the latter is at least $1$, hence the statement follows from \cite[Theorem 1.2 (1)]{Ei--Go}.
\end{proof}

\begin{remark}
\label{rResolution}
If $\cE$ is a $h$--instanton sheaf with resolution \eqref{Resolution}, then Theorem \ref{tResolution} and Corollary \ref{cCharacteristic} yield linear equations in the $\beta_{p,i}$'s. Similar constraints can be obtained for the $\beta_p$'s in resolution \ref{ResolutionSheaf}.\end{remark}

Let $\cE$ be a $h$--instanton sheaf with natural cohomology in positive degrees. The following corollary generalizes the results in \cite{Rah1,Rah2}.

\begin{corollary}
\label{cResolution}
Let $X$ be an irreducible projective scheme of dimension $n\ge1$ endowed with a very ample line bundle $\cO_X(h)$. Let $V\subseteq H^0\big(\cO_X(h)\big)$ be a subspace associated to an embedding $X\subseteq\p N$ and $S$ the symmetric $\field$--algebra of $V$.

If $\cE$ is a $h$--instanton sheaf with natural cohomology in positive degrees, then the minimal free resolution of $H^0_*\big(\cE\big)$ as $S$--module has the form
\begin{equation*}
0\longrightarrow F_{N-1}\longrightarrow F_{N-2}\longrightarrow\dots\longrightarrow F_1\longrightarrow F_0\longrightarrow H^0_*\big(\cE\big)\longrightarrow0,
\end{equation*}
where $F_p\cong S(-v(\cE)-p)^{\oplus \beta_{p,0}}\oplus S(-v(\cE)-p-1)^{\oplus \beta_{p,1}}$.
\end{corollary}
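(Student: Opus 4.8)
The plan is to show that the general resolution of Theorem~\ref{tResolution} degenerates onto the two diagonals $q-p\in\{v(\cE),v(\cE)+1\}$, where I write the minimal free resolution of $M:=H^0_*\big(\cE\big)$ as $F_p=\bigoplus_q S(-q)^{\oplus\beta_{p,q}}$ and set $v:=v(\cE)$. Theorem~\ref{tResolution} already gives $\beta_{p,q}=0$ unless $v\le q-p\le w(\cE)$; in particular it supplies the lower bound $q-p\ge v$. Since $\reg(M)=\max\{\,q-p\ \vert\ \beta_{p,q}\ne0\,\}$, it therefore suffices to prove the complementary upper bound $\reg(M)\le v+1$: granting this, every non-zero $\beta_{p,q}$ satisfies $v+p\le q\le v+1+p$, so each $F_p$ can only involve $S(-v-p)$ and $S(-v-p-1)$, which is the asserted form with $\beta_{p,0}:=\beta_{p,v+p}$ and $\beta_{p,1}:=\beta_{p,v+p+1}$.

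First I would translate the regularity of $M$ into the cohomology of $\cE$. Because $M=H^0_*\big(\cE\big)$ is saturated one has $H^0_{\mathfrak m}(M)=H^1_{\mathfrak m}(M)=0$, while for $i\ge2$ the standard identification of local and sheaf cohomology reads $H^i_{\mathfrak m}(M)_t\cong H^{i-1}\big(\cE(th)\big)$. Consequently
\begin{equation*}
\reg(M)=\max_{j\ge1}\Big(j+1+\max\{\ t\in\bZ\ \vert\ H^j\big(\cE(th)\big)\ne0\ \}\Big),
\end{equation*}
and the task reduces to bounding each summand on the right by $v+1$ (recall $v\ge0$ since $h^0\big(\cE(-h)\big)=0$).

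Next I would treat the indices case by case. For $2\le j\le n-2$ the group $H^j\big(\cE(th)\big)$ vanishes identically by Definition~\ref{dMalaspinion}, contributing $-\infty$. For $j\in\{n-1,n\}$ the boundary instanton vanishings $h^n\big(\cE((\defect-n+t)h)\big)=0$ and $h^{n-1}\big(\cE((\defect-n+i+t)h)\big)=0$ for $t\ge0$ force $H^j\big(\cE(th)\big)=0$ once $t\ge\defect-j$, so the corresponding summand equals at most $\defect\le 1\le v+1$. When $n\le2$ the index $j=1$ coincides with $n-1$ or $n$ and is thus already covered; the genuinely new case is $j=1$ with $n\ge3$, where $H^1\big(\cE(th)\big)$ for $t\ge0$ is \emph{not} controlled by the instanton vanishings. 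Here the hypothesis of natural cohomology in positive degrees is decisive: for $t\ge v$ the sheaf $\cE(th)$ acquires a non-zero section (a section of $\cE(vh)$ multiplied by a general section of $\cO_X(h)$ stays non-zero because $X$ is integral, $\cE$ has full support by Corollary~\ref{cCharacteristic}, and a general divisor in $\vert h\vert$ avoids the associated points, as in Proposition~\ref{pNatural}), whence naturalness kills $H^1\big(\cE(th)\big)$ for $t\ge\max(v,1)$. This gives $\max\{t\ \vert\ H^1(\cE(th))\ne0\}\le v-1$ and the $j=1$ summand is $\le v+1$, completing the bound $\reg(M)\le v+1$.

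I expect the main obstacle to be precisely this $j=1$ estimate at the borderline shift $t=v$, and especially the case $v=0$: natural cohomology in positive degrees only governs $t\ge1$, so one must separately ensure $H^1\big(\cE\big)=0$ (equivalently $h^0\big(\cE\big)=\chi(\cE)$), presumably by feeding the polynomial Hilbert function of Corollary~\ref{cCharacteristic} into the natural-cohomology identity $h^0\big(\cE(th)\big)-h^1\big(\cE(th)\big)=\chi(\cE(th))$ at the transition degree. Everything else is a routine assembly of the two diagonal bounds into the displayed resolution.
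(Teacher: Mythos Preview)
Your approach is essentially the paper's: the paper's entire proof is the single assertion that $\reg(\cE)\le v(\cE)+1$, together with the projective-dimension bound from Theorem~\ref{tResolution}. You are reproving exactly this inequality, just with the details spelled out (via the identification $\reg(M)=\reg(\cE)$ for the saturated module $M=H^0_*(\cE)$ and a case analysis on the cohomological index $j$). For $v(\cE)\ge1$ your argument is clean and correct, and matches what the paper leaves implicit.

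Your worry about $v(\cE)=0$ is legitimate and is not addressed by the paper either: the paper's proof simply asserts $\reg(\cE)\le v(\cE)+1$ without discussing how ``natural cohomology in \emph{positive} degrees'' forces $h^1\big(\cE\big)=0$ at shift $0$. Your proposed workaround via Corollary~\ref{cCharacteristic} does not obviously close the gap, since natural cohomology at $t\ge1$ does not by itself pin down $h^1\big(\cE\big)$; one really needs natural cohomology at $t=0$ as well (or the separate hypothesis $v(\cE)\ge1$). So this edge case is a genuine imprecision shared by the paper's statement and your proposal, not a defect specific to your argument.
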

\begin{proof}
The statement follows from the inequality $\reg(\cE)\le v(\cE)+1$, because we know from Theorem \ref{tResolution} that the projective dimension of $H^0_*\big(\cE\big)$ over $S$ is $N-1$ at most. 
\end{proof}

\section{Instanton bundles on $n$--folds}
\label{sBundle}
In this section we focus our attention on instanton bundles supported on an $n$--fold $X$, i.e. a smooth variety of dimension $n$. In particular $\omega_X\cong\cO_X(K_X)$ is a line bundle. 

Notice that each finite morphism $p\colon X\to \p 1$ is flat, thanks to \cite[Exercise III.9.3]{Ha2}. If  $\cO_X(h)\cong p^*\cO_{\p 1}(1)$, then the degree of $p$ is $h^n$. Thus $\rk(p_*\cE)=\rk(\cE)h^n$ for each vector bundle $\cE$ on $X$.

We start by studying the case of smooth curves integrating Remark \ref{rLine}.

\begin{proposition}
\label{pCurve}
Let $X$ be a smooth curve endowed with an ample and globally generated line bundle $\cO_X(h)$. Assume that the characteristic of $\field$ is not $2$.

The following assertions hold.
\begin{enumerate}
\item Every instanton sheaf is a vector bundle.
\item $X$ supports ordinary instanton sheaves of each rank.
\item $X$ supports  non--ordinary instanton sheaves of each even rank.
\item The quantum number of $\cE$ is $\frac\defect2\rk(\cE)\deg(X)$.
\end{enumerate}
\end{proposition}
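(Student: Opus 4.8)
The plan is to prove the four assertions in Proposition \ref{pCurve} largely in sequence, using that when $n=1$ the defining cohomological conditions of Definition \ref{dMalaspinion} collapse to very few constraints, and leaning on the pushforward characterization in Theorem \ref{tCharacterization} together with the structural Remark \ref{rLine} on $\p1$.

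First I would dispatch assertion (1). Since $\cO_X(h)$ is ample and globally generated on the smooth (hence integral) curve $X$, a choice of pencil in $H^0\big(\cO_X(h)\big)$ yields a finite morphism $p\colon X\to\p1$ with $\cO_X(h)\cong p^*\cO_{\p1}(1)$; as noted just before the statement, $p$ is flat of degree $h^n=\deg(X)$. By Theorem \ref{tCharacterization} an $h$--instanton sheaf $\cE$ pushes forward to an instanton sheaf $p_*\cE$ on $\p1$, and by Remark \ref{rLine} every instanton sheaf on $\p1$ is locally free. Then I would argue that $\cE$ itself is torsion--free: any torsion subsheaf $\cT\subseteq\cE$ would push forward to a torsion subsheaf of the locally free $p_*\cE$, forcing $\cT=0$; and a torsion--free coherent sheaf on a smooth curve is automatically locally free. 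Hence $\cE$ is a vector bundle.

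Next I would treat existence, assertions (2) and (3), by producing the sheaves directly rather than abstractly. For the ordinary case, Corollary \ref{cUlrich} identifies ordinary instanton sheaves (here $\defect=0$) on a curve with Ulrich sheaves, and these are well known to exist in every rank on a smooth curve (indeed the introduction recalls that Ulrich line bundles always exist, and direct sums give every rank); alternatively one pulls back the trivial instanton structure through $p$. For the non--ordinary case ($\defect=1$), Remark \ref{rLine} shows the model on $\p1$ is a direct sum of copies of $\cO_{\p1}\oplus\cO_{\p1}(-1)$, which forces even rank. The cleanest construction would be to find a rank two orientable $h$--instanton bundle with quantum number $1$ and take direct sums (legitimate by Proposition \ref{pDirectSum}); such a rank two bundle should arise as a suitable extension $0\to\cL\to\cE\to\cM\to0$ of line bundles whose degrees are pinned down by the Chern--class constraint coming from Theorem \ref{tSlope} in the case $n=1$, which reads $c_1(\cE)=\tfrac{\rk(\cE)}2\big((2-\defect)h^1+K_X\big)$. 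The characteristic $\ne2$ hypothesis enters precisely here, to make the halving in this slope formula meaningful.

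Finally, assertion (4) is a numerical consequence I would read off from Proposition \ref{pQuantum} and Corollary \ref{cCharacteristic}. By Proposition \ref{pQuantum} one has $\quantum=-\chi(\cE(-h))$, and in the curve case $n=1$ the Euler characteristic formula \eqref{RRcurve} together with $\deg\cO_X(h)=h^1=\deg(X)$ gives $\chi(\cE(-h))=\chi(\cE)-\rk(\cE)\deg(X)$, while the $n=1$ line of Corollary \ref{cCharacteristic}(2) forces the relation $\chi(\cE)=\tfrac{\defect}2\rk(\cE)\deg(X)$ (equivalently, apply the slope formula of Theorem \ref{tSlope} inside \eqref{RRcurve}). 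Combining these yields $\quantum=\tfrac{\defect}2\rk(\cE)\deg(X)$, as claimed; this also re-confirms the even-rank necessity when $\defect=1$ since $\quantum$ must be an integer. I expect the only genuinely delicate point to be the existence of the rank two non--ordinary bundle in assertion (3): one must verify that the extension with the prescribed determinant actually satisfies the required vanishings $h^0\big(\cE(-h)\big)=h^1\big(\cE((\defect-1)h)\big)=\dots$ of Definition \ref{dMalaspinion}, which amounts to a short cohomology computation for the two line-bundle terms, and to checking the quantum number matches.
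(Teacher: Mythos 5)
Your treatment of assertions (1), (2) and (4) is essentially sound. For (1) you push forward to $\p1$ and kill the torsion there, while the paper argues directly on $X$ as in Remark \ref{rLine} (the torsion summand $\cT$ satisfies $h^0(\cT)=h^0(\cT(-h))\le h^0(\cE(-h))=0$); both work. For (4) you take a genuinely different route: the paper pushes $\cE$ forward to $\p1$ and reads the quantum number off Remark \ref{rLine} as $\defect\,\rk(p_*\cE)/2$ with $\rk(p_*\cE)=\rk(\cE)\deg(X)$, whereas you compute $\quantum=-\chi(\cE(-h))$ intrinsically via \eqref{RRcurve}. That route is legitimate, but the intermediate relation you state, $\chi(\cE)=\tfrac{\defect}{2}\rk(\cE)\deg(X)$, is wrong (it would give $\chi(\cE)=0$ for an Ulrich bundle); the correct consequence of the slope formula is $\chi(\cE)=\bigl(1-\tfrac{\defect}{2}\bigr)\rk(\cE)\deg(X)$, and it is this that combines with $\chi(\cE(-h))=\chi(\cE)-\rk(\cE)\deg(X)$ to give $\quantum=\tfrac{\defect}{2}\rk(\cE)\deg(X)$.

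The genuine gap is in assertion (3), precisely at the point you yourself flag as "delicate" and then leave unverified. Pinning down the degree of $c_1(\cE)$ via Theorem \ref{tSlope} does not produce the bundle: the required vanishings $h^0(\cE(-h))=h^1(\cE)=0$ depend on which line bundles of those degrees you choose, not just on their degrees. The missing idea is a non-effective theta-characteristic $\theta$: the paper takes $\cE=(\cO_X(\theta)\oplus\cO_X(\theta+h))^{\oplus a}$ (and $\cO_X(\theta+h)^{\oplus a}$ for the ordinary case), for which $h^0(\cO_X(\theta-h))\le h^0(\cO_X(\theta))=0$ since $h$ is effective, and $h^1(\cO_X(\theta))=h^1(\cO_X(\theta+h))=0$ by Serre duality; the remaining conditions of Definition \ref{dMalaspinion} are then immediate. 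This is also where the hypothesis that the characteristic of $\field$ is not $2$ actually enters — it guarantees the existence of a non-effective theta-characteristic — and not, as you suggest, to "make the halving in the slope formula meaningful", which is an integrality statement independent of the characteristic. Finally, a rank two non-ordinary instanton bundle on a curve cannot have quantum number $1$ in general: by assertion (4) its quantum number is forced to be $\deg(X)$, so your proposed building block does not exist as described.
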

\begin{proof}
Assertion (1) can be proved as the analogous assertion in Remark \ref{rLine}.

Let $g$ be the genus of $X$. If $\theta$ is a non--effective theta--characteristic on $X$, then it is easy to check that $\cO_X(\theta+h)^{\oplus a}$ and $(\cO_X(\theta)\oplus\cO_X(\theta+h))^{\oplus a}$ are respectively an ordinary and a non--ordinary instanton sheaf. Thus assertions (2) and (3) are proved.

If $p\colon X\to \p 1$ is any finite morphism, then $p_*\cE$ is an $\cO_{\p 1}(1)$--instanton bundle by Theorem \ref{tCharacterization} with $\rk(p_*\cE)=\rk(\cE)\deg(X)$. Thanks to Remark \ref{rLine}, its quantum number is $\defect\rk(p_*\cE)/2$. Thus the quantum number of $\cE$ is as in the assertion (4).
\end{proof}

Though only smooth curves of even degree can support non--ordinary $h$--instanton line bundles (and, more generally, of odd rank), the following example shows that each smooth curve can be endowed with many very ample line bundles in such a way it supports non--ordinary $h$--instanton line bundles.

\begin{example}
\label{eCurve}
Assume $X$ is a smooth curve and let $g$ be its genus. Let $\theta\in\Pic^{g-1}(X)$ be any non--effective theta--characteristic and $\cO_X(A)$ be any effective ample line bundle such that $\cO_X(h):=\cO_X(2A)$ is very ample. Thus $\cO_X(\theta+A)$ is a non--ordinary $h$--instanton line bundle.
\end{example}

Our first general result in this section is that instanton bundles with positive quantum number on an $n$--fold must have sufficiently large rank.

\begin{proposition}
\label{pHorrocks} 
Let $X$ be an $n$--fold with $n\ge4$ endowed with an ample and globally generated line bundle $\cO_X(h)$.

If $\cE$ is a  $h$--instanton bundle with quantum number $\quantum$ on $X$, then the following assertions hold.
\begin{enumerate}
\item If $\rk(\cE)h^n<2\left[\frac n2\right]$, then $\cE$ is aCM. In particular, $\quantum=0$.
\item If $\cE$ is ordinary, and $\quantum\ge1$, then $\rk(\cE)h^n\ge n-1$. 
\item If $\cE$ is ordinary, $\rk(\cE)h^n= n-1$ and $n$ is even, then $\cE$ is Ulrich.
\end{enumerate}
\end{proposition}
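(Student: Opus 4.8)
The plan is to reduce everything to projective space and then read the three conclusions off the monad $\cM^\bullet$ of Proposition \ref{pSpace} together with Horrocks' splitting theorem. First I would choose a finite morphism $p\colon X\to\p n$ with $\cO_X(h)\cong p^*\cO_{\p n}(1)$ and set $\cF:=p_*\cE$. Since $p$ is a finite surjective morphism from the Cohen--Macaulay variety $X$ to the smooth variety $\p n$, it is flat, so $\cF$ is a vector bundle of rank $r:=\rk(\cE)h^n$; by Theorem \ref{tCharacterization} it is an instanton bundle on $\p n$ with the same defect $\defect$ and quantum number $\quantum$, and by \eqref{PushDown} together with Corollary \ref{cUlrich} the properties ``aCM'' and ``Ulrich'' hold for $\cF$ precisely when they hold for $\cE$. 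Hence it suffices to argue for $\cF$ on $\p n$.

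The pivotal remark is that for an instanton bundle the only intermediate cohomology that can survive lives in degrees $1$ and $n-1$, with $h^1\big(\cF(-1)\big)=h^{n-1}\big(\cF(\defect-n)\big)=\quantum$; so by Horrocks' theorem $\cF$ is aCM (equivalently, splits) if and only if $\quantum=0$. Thus all three assertions are statements about how large $r$ must be for $\cF$ to support a nonzero finite length module $H^1_*\big(\cF\big)\cong\field^{\oplus\quantum}$. To produce the bounds I would restrict $\cM^\bullet$ to a general line $\ell$ and compute the splitting type: using $h^0\big(\cF(-1)\big)=0$ and the restrictions of $\Omega^1(1)$ and $\Omega^{n-1}(n-1)$ in the non--ordinary case, one finds $\cF|_\ell$ balanced, namely $\cO_\ell^{\oplus r}$ when $\defect=0$ and $(\cO_\ell\oplus\cO_\ell(-1))^{\oplus r/2}$ when $\defect=1$. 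In particular $\cF$ is $\mu$--semistable with the $c_1$ predicted by Remark \ref{rChernPoly}.

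For (2) I would exploit that in the ordinary monad $0\to\cO_{\p n}(-1)^{\oplus\quantum}\to\cO_{\p n}^{\oplus b}\to\cO_{\p n}(1)^{\oplus\quantum}\to0$ the left map must be a subbundle and the right map a bundle epimorphism; the associated degeneracy loci have expected codimension $b-\quantum+1$, so their emptiness on $\p n$ forces $b\ge n+\quantum$ and hence $r=b-2\quantum\ge n-\quantum$. When $\quantum=1$ the left map is given by $b$ linear forms with no common zero, forcing $b\ge n+1$ and so $r\ge n-1$ exactly; I would reduce the case $\quantum\ge2$ to the minimal quantum number, or else invoke the rank estimate of \cite{J--MR}. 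For (1), once $r<2[n/2]\le n$ I would argue that the balanced generic splitting makes $\cF$ \emph{uniform}, so that by the classification of uniform bundles of rank $<n$ on $\p n$ it is a direct sum of line bundles; the splitting type then pins it down to $\cO_{\p n}^{\oplus r}$ (resp.\ $\cO_{\p n}^{\oplus r/2}\oplus\cO_{\p n}(-1)^{\oplus r/2}$), which is aCM, whence $\quantum=0$. Finally (3) is immediate: for even $n$ one has $r=n-1<n=2[n/2]$, so (1) gives that $\cF$ is aCM, and since $\cF$ is ordinary Corollary \ref{cUlrich} upgrades this to Ulrich.

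The hard part will be the passage from the splitting type on a \emph{general} line to genuine uniformity in (1), that is, ruling out jumping lines; this is exactly where the threshold $2[n/2]$ is sharp. Indeed the null correlation bundles on odd $\p n$ have rank $n-1=2[n/2]$, are trivial on the general line, yet have jumping lines and are not aCM. So the core of the argument is a dimension count forcing the jumping locus in the Grassmannian of lines to be empty as soon as $r<2[n/2]$, and the even/odd dichotomy encodes a parity obstruction: a null correlation (symplectic) structure on $\cO_{\p n}(1)^{\oplus(n+1)}$ exists only for $n$ odd, which is what raises the bound from $n-1$ to $n$ when $n$ is even.
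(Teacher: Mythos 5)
Your reduction to $\p n$ via $p_*\cE$ and your treatment of assertion (3) match the paper, but for the two substantive assertions you have replaced the decisive inputs by sketches that do not close. The paper disposes of (1) in one line by citing \cite[Theorem 1]{MK--P--R}: a vector bundle on $\p n$, $n\ge4$, with $H^i_*=0$ for $2\le i\le n-2$ and rank $<2\left[\frac n2\right]$ splits, hence is aCM. Your substitute --- balanced generic splitting type, then uniformity, then the classification of uniform bundles of rank $<n$ --- leaves precisely the uniformity step open, and you say so yourself (``the hard part will be \dots\ ruling out jumping lines''). That step \emph{is} the theorem: there is no routine dimension count in the Grassmannian of lines forcing the jumping locus to be empty for $r<2\left[\frac n2\right]$ (the proof in \cite{MK--P--R} goes through the structure of the minimal monad and its syzygy bundle, not through jumping lines), so as written assertion (1) is not proved. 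A smaller but related slip: the claim that ``$\cF$ is aCM if and only if $\quantum=0$'' is false for non--ordinary instantons, where $H^1_*\big(\cF\big)$ and $H^{n-1}_*\big(\cF\big)$ may also be nonzero in nonnegative degrees --- this is exactly why Corollary \ref{cUlrich}(1) requires the extra vanishings $\defect h^1\big(\cE\big)=\defect h^{n-1}\big(\cE(-nh)\big)=0$ --- and since (1) is stated for arbitrary defect you cannot identify ``aCM'' with ``$\quantum=0$''.

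For (2), your degeneracy--locus count only yields $b\ge n+\quantum$, hence $r=b-2\quantum\ge n-\quantum$, which is strictly weaker than the claimed $r\ge n-1$ as soon as $\quantum\ge2$; the proposed ``reduction to the minimal quantum number'' is not an argument (there is no evident way to extract a sub--monad with $c=1$ from one with $c=\quantum$), and \cite{J--MR} does not contain the required estimate. The correct reference, and the one the paper uses, is the Main Theorem of \cite{Flo}, which gives exactly the lower bound $b-a-c\ge n-1$ for the rank of a vector bundle arising as the cohomology of a monad $0\to\cO_{\p n}(-1)^{\oplus a}\to\cO_{\p n}^{\oplus b}\to\cO_{\p n}(1)^{\oplus c}\to0$ with $c\ge1$. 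So both (1) and (2) need the external theorems reinstated, or full proofs of their content; only the framing and assertion (3) are complete as you have them.
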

\begin{proof}
Let $p\colon X\to \p n$ be any finite morphism with $\cO_X(h)\cong p^*\cO_{\p n}(1)$, then $p_*\cE$ is a $\cO_{\p n}(1)$--instanton  bundle on $\p n$ with $\rk(p_*\cE)=\rk(\cE)h^n$. 

Theorem \ref{tCharacterization} yields $H^i_*\big(p_*\cE\big)=0$ for $2\le i\le n-2$, hence \cite[Theorem 1]{MK--P--R} implies that $H^i_*\big(\cE\big)=H^i_*\big(p_*\cE\big)=0$ for $1\le i\le n-1$. The vanishings follow from Corollary \ref{cUlrich}. Thus assertion (1) holds.

If $\cE$ is ordinary, then the same holds for $p_*\cE$. Thus $p_*\cE$ is the cohomology of a monad $\cM^\bullet$ as in Proposition \eqref{pSpace} and assertion (2) follows from \cite[Main Theorem]{Flo}. Assertion (3) follows by combining assertion (1) with Corollary \ref{cUlrich}.
\end{proof}

For instanton bundles the following partial converse of Corollary \ref{cRestriction} holds. 

\begin{proposition}
\label{pExtension}
Let $X$ be an $n$--fold with $n\ge5$ endowed with an ample and globally generated line bundle $\cO_X(h)$ and let $Y\in\vert h\vert$ be an $(n-1)$--fold. 

If $\cE$ is a vector bundle on $X$ such that $\cO_Y\otimes\cE$ is an instanton bundle on $Y$ with respect to $\cO_Y\otimes\cO_X(h)$, then $\cE$ is an instanton bundle with respect to $\cO_X(h)$. Moreover, the defects and the quantum numbers of $\cE$ and $\cO_Y\otimes\cE$ coincide. 
\end{proposition}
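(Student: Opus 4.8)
The plan is to verify directly that $\cE$ satisfies the finite list of cohomological conditions of Theorem~\ref{tCharacterization}(2), transferring each one from the analogous condition for $\cE_Y:=\cO_Y\otimes\cE$ through the restriction sequence. Since $\cE$ is locally free, tensoring the structure sequence \eqref{seqSection} of $Y\in\vert h\vert$ by $\cE(th)$ stays exact, giving for every $t\in\bZ$ the sequence
\begin{equation*}
0\longrightarrow\cE((t-1)h)\longrightarrow\cE(th)\longrightarrow\cE_Y(th)\longrightarrow0.
\end{equation*}
By hypothesis $\cE_Y$ is an instanton bundle on the $(n-1)$--fold $Y$, so by Definition~\ref{dMalaspinion}, Proposition~\ref{pQuantum} and the remark following the definition it has a very thin cohomology table; in particular $h^i\big(\cE_Y(th)\big)=0$ for \emph{all} $t$ whenever $2\le i\le n-3$, together with the precise boundary vanishings and the values $h^1\big(\cE_Y(-h)\big)=h^{n-2}\big(\cE_Y((\defect-n+1)h)\big)=\quantum$.

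First I would establish the intermediate vanishing $h^i\big(\cE(th)\big)=0$ for all $t$ and $2\le i\le n-2$. For $3\le i\le n-3$ both neighbouring groups $H^{i-1}\big(\cE_Y(th)\big)$ and $H^{i}\big(\cE_Y(th)\big)$ vanish, so the sequence yields $H^i\big(\cE((t-1)h)\big)\cong H^i\big(\cE(th)\big)$ for every $t$; being $0$ for $t\gg0$ by Serre vanishing, it vanishes identically. The two boundary degrees $i=2$ and $i=n-2$ are the crux: there only one neighbour vanishes, so the sequence gives a surjection $H^2\big(\cE((t-1)h)\big)\twoheadrightarrow H^2\big(\cE(th)\big)$ (resp.\ an injection $H^{n-2}\big(\cE((t-1)h)\big)\hookrightarrow H^{n-2}\big(\cE(th)\big)$) for all $t$, i.e.\ monotonicity of dimensions in $t$. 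Combining this with Serre vanishing at $t\gg0$ and, via Serre duality \eqref{Serre}, the dual vanishing $h^i\big(\cE(th)\big)=h^{n-i}\big(\cE^\vee\otimes\omega_X((-t)h)\big)=0$ at $t\ll0$ (valid as $i<n$), monotonicity forces vanishing for all $t$. This is exactly where $n\ge5$ is used: it ensures $2\le n-3$, so that both $h^2\big(\cE_Y(th)\big)$ and $h^{n-3}\big(\cE_Y(th)\big)$ lie in the all--$t$ vanishing range of $\cE_Y$.

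With the intermediate vanishing in hand, most conditions of Theorem~\ref{tCharacterization}(2) are special twists of what is already proved: the vanishings $h^i\big(\cE(-(i+1)h)\big)=0$ and $h^{n-i}\big(\cE((\defect-n+i)h)\big)=0$ for $2\le i\le n-2$, as well as $\defect h^i\big(\cE(-ih)\big)=0$. The extreme rows are handled by one--sided telescoping along the sequence: $h^0\big(\cE(-h)\big)=0$ and $h^1\big(\cE(-2h)\big)=0$ follow by climbing through the isomorphisms coming from $h^0\big(\cE_Y(th)\big)=h^1\big(\cE_Y(th)\big)=0$ for $t\le-1$ (resp.\ $t\le-2$) down to $t\ll0$, where the groups vanish by Serre duality; dually one gets $h^n\big(\cE((\defect-n)h)\big)=0$ and $h^{n-1}\big(\cE((\defect-n+1)h)\big)=0$ from the top rows of $\cE_Y$. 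Feeding these back into the sequence at $t=-1$ (resp.\ $t=\defect-n+1$) identifies $H^1\big(\cE(-h)\big)$ with $H^1\big(\cE_Y(-h)\big)$ and $H^{n-1}\big(\cE((\defect-n)h)\big)$ with $H^{n-2}\big(\cE_Y((\defect-n+1)h)\big)$, whence $h^1\big(\cE(-h)\big)=h^{n-1}\big(\cE((\defect-n)h)\big)=\quantum$; this settles the quantum number.

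The remaining condition $\defect\big(\chi(\cE)-(-1)^n\chi(\cE(-nh))\big)=0$ is the second delicate point, relevant only for $\defect=1$. Writing $f(t):=\chi(\cE(th))$ and $g(t):=\chi(\cE_Y(th))$, additivity of $\chi$ on the restriction sequence gives $f(t)-f(t-1)=g(t)$. From the cohomology just computed, at $t=-1$ and $t=\defect-n$ only $h^1$ and $h^{n-1}$ survive, yielding $f(-1)=-\quantum$ and $f(\defect-n)=(-1)^{n-1}\quantum$, hence the single symmetry value $f(-1)=(-1)^n f(\defect-n)$. Since $-n=(\defect-n)-1$, one has $f(-n)=f(\defect-n)-g(\defect-n)$, so that $f(0)=f(-1)+g(0)$ and $(-1)^n f(-n)=f(-1)-(-1)^n g(\defect-n)$ agree precisely when $g(0)=(-1)^{n-1}g(\defect-n)$, which is nothing but the Euler--characteristic balance of $\cE_Y$. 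Thus the balance for $\cE$ holds for all $n$. Having checked every item of Theorem~\ref{tCharacterization}(2) with the same $\defect$ and $\quantum$, I conclude that $\cE$ is an instanton bundle with the asserted defect and quantum number. The main obstacles are the boundary degrees $i=2,n-2$ (resolved by monotonicity plus two--sided Serre vanishing, which forces the hypothesis $n\ge5$) and the $\chi$--balance (resolved by telescoping against the symmetry value supplied by the quantum--number identities).
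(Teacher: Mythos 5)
Your proof is correct and follows essentially the same route as the paper: restricting to $Y$ via the sequence \eqref{seqSection}, transferring the vanishings of $\cE_Y$ to $\cE$ by monotonicity/telescoping anchored by Serre vanishing at $t\gg0$ and (via Serre duality) at $t\ll0$, with $n\ge5$ entering exactly where you say, at the boundary degrees $i=2$ and $i=n-2$. Your treatment of the Euler--characteristic balance by telescoping $\chi(\cE(th))-\chi(\cE((t-1)h))=\chi(\cE_Y(th))$ against the symmetry value $\chi(\cE(-h))=(-1)^n\chi(\cE((\defect-n)h))$ is the same computation the paper dispatches with ``the same argument yields\dots'', only spelled out.
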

\begin{proof}
Let $\cE_Y:=\cO_Y\otimes\cE$, $\cO_Y(h_Y):=\cO_Y\otimes\cO_X(h)$ and denote by $\defect$ and $\quantum$ the defect and the quantum number of $\cE_Y$.

We have $h^i\big(\cE(-(1+t)h)\big)\le h^i\big(\cE(-(2+t)h)\big)$ for $t\ge i$ and $i\le n-3$ by computing the cohomology of sequence \eqref{seqSection} tensored by $\cE(th)$, hence $h^i\big(\cE(-(1+t)h)\big)=0$ in the same range by \cite[Theorem III.5.2]{Ha2}. If $2\le i\le n-3$, then $h^i\big(\cE_Y(-ih_Y)\big)=0$, hence the cohomology of sequence \eqref{seqSection} tensored by $\cE(-ih)$ combined with the same argument used above implies $h^i\big(\cE(-ih)\big)=0$ in that range. 

The same argument for $\cE_Y^\vee((\defect-n) h_Y+K_Y)$ yields $h^i\big(\cE((\defect-i-t+t)h)\big)=0$ for $i\ge2$ and $t\ge 1$ and $h^{n-2}\big(\cE(-(n-2)h)\big)=0$. 

Since $n\ge 5$, it follows that $h^i\big(\cE(-th)\big)=0$ for $i\in \bZ$ and $2\le t\le \defect+1-n$. Thus the cohomology of sequence \eqref{seqSection} tensored by $\cE(-h)$ and $\cE((\defect+1-n)h)$ and the equality $h^1\big(\cE_Y(-h_Y)\big)=h^{n-2}\big(\cE_Y((\defect+1-n)h_Y)\big)$ finally yield
$$
h^1\big(\cE(-h)\big)=h^{n-1}\big(\cE((\defect-n)h)\big).
$$
The same argument yields the equality $\chi(\cE)=(-1)^n\chi(\cE(-nh))$ if $\defect=1$.
\end{proof}

The main result of this section is the proof of  Theorem \ref{tSlope}.

\medbreak
\noindent{\it Proof of Theorem \ref{tSlope}.}
Thanks to Theorem \ref{tCharacterization}, it suffices to check that $\cE$ also satisfies the additional condition
\begin{equation}
\label{A}
(1-\defect)(\chi(\cE(-h))-(-1)^n\chi(\cE((\defect-n)h)))+\defect(\chi(\cE)-(-1)^n\chi(\cE(-nh)))=0
\end{equation}
if and only if equality \eqref{Slope} holds. 

In what follows we will provide a proof of the statement in the case $\defect=1$. The argument in the case $\defect=0$ is analogous. Thus, from now on, we will assume that the following conditions hold:
\begin{itemize}
\item $h^0\big(\cE(-h)\big)=h^n\big(\cE((1-n)h)\big)=0$;
\item $h^i\big(\cE(-(i+1)h)\big)=h^{n-i}\big(\cE((1-n+i)h)\big)=0$ if $1\le i\le n-2$;
\item $h^i\big(\cE(-ih)\big)=0$ for $2\le i\le n-2$;
\item $ h^1\big(\cE(-h)\big)= h^{n-1}\big(\cE((1-n)h)\big)$.
\end{itemize}

We first prove the statement for $1\le n\le2$ and then by induction on $n\ge2$. If $n=2$, equality \eqref{A} becomes $\chi(\cE)=\chi(\cE(-2h))$. The statement then follows by computing the two sides of this identity by means of equality \eqref{RRsurface}. If $n=1$ we can argue similarly by using equality \eqref{RRcurve} instead of \eqref{RRsurface} (see also  \cite[Lemma 2.4]{C--H2}).

Let $n\ge3$. Each general $Y\in\vert h\vert$ is an $(n-1)$--fold by the Bertini theorem (see \cite[Corollaire I.6.11 2), 3)]{Jou}) and we set $\cO_Y(h_Y):=\cO_Y\otimes\cO_X(h)$, $\cE_Y:=\cO_Y\otimes\cE$. Assume that the statement holds on such a $Y$. Thus
$$
(n+1-\defect)h^n+K_Xh^{n-1}=(n-\defect)h^{n-1}_Y+K_Yh^{n-2}_Y
$$
thanks to the adjunction formula on $X$. It follows that the equality \eqref{Slope} holds for the bundle $\cE$ on $X$ if and only if it holds for the bundle $\cE_Y$ on $Y$. Moreover, the cohomology of sequence \eqref{seqSection} tensored by the shifts of $\cE$ yields that $\cE_Y$ satisfies a list of condition similar to the one for $\cE$

If $\cE$ is a non--ordinary $h$--instanton bundle, then $\cE_Y$ is a non--ordinary $h_Y$--instanton bundle thanks to Corollary \ref{cRestriction}, hence equality \eqref{Slope} holds for $\cE_Y$ by induction. It follows that equality  \eqref{Slope} holds for $\cE$ too. 

Conversely, let $\cE$ satisfy equality \eqref{Slope}. By induction we know that $\cE_Y$ is a $h_Y$--instanton bundle on $Y$, hence $ \chi(\cE_Y)= \chi(\cE_Y((1-n)h_Y)\big)$. Thus, tensoring sequence \eqref{seqSection} by $\cE$ and $\cE((1-n)h)$, we obtain
$$
\chi(\cE)-(-1)^n\chi(\cE(-nh))=\chi(\cE(-h))-(-1)^{n-1}\chi(\cE((1-n)h))=0
$$
thanks to Proposition \ref{pQuantum}. Thus $\cE$ is a $h$--instanton sheaf on $X$.
\qed
\medbreak

\begin{remark}
\label{rSlope}
Let $\cE$ be a $h$--instanton bundle on an $n$--fold $X$ and consider any finite morphism $p\colon X\to \p n$ such that $\cO_X(h)\cong p^*\cO_{\p n}(1)$.

If either $\cO_X(h)$ is very ample or the characteristic of $\field$ is $0$, then the Bertini theorem (see \cite[Corollaire I.6.11 2), 3)]{Jou}) implies that $S:=p^{-1}(H)$ is a smooth surface such that
$$
\chi(\cO_S)=\sum_{i=0}^{n-2}(-1)^i{{n-2}\choose i}\chi(\cO_X(-ih))
$$
when $\p2\cong H\subseteq \p n$ is a general linear subspace. Since $X$ is smooth, it follows that $p$ is flat, hence $p_*\cE$ is locally free of rank $\rk(p_*\cE)=\rk(\cE)h^n$. Thus Remark \ref{rRankChern} yields
$$
\chi(\cO_S\otimes\cE)=\frac{\rk(\cE)}{1+\defect}h^n-\epsilon k
$$
where $\epsilon$ is as in Remark \ref{rChernPoly}. Equality \eqref{RRsurface} for $\cO_S\otimes\cE$ and the adjunction formula on $H$ then imply
\begin{equation*}
\begin{aligned}
\epsilon\quantum&=\left(c_2(\cE)-\frac12c_1(\cE)(c_1(\cE)-K_X-(n-2)h)\right)h^{n-2}\\
&+\rk(\cE)\left(\frac {1}{1+\defect}h^n-\sum_{i=0}^{n-2}(-1)^i{{n-2}\choose i}\chi(\cO_X(-ih))\right).
\end{aligned}
\end{equation*}
When $X\cong\p n$ the above  coincides with the second equality \eqref{ChernP^n}.

Similarly, one can obtain equalities involving the quantum number and the first $m$ Chern classes of each $h$--instanton sheaf on $X$ when $n\ge m$.
\end{remark}

If $\cF$ is a vector bundle on $X$, we define its {\sl Ulrich dual (with respect to $\cO_X(h)$)} as $\cF^{U,h}:=\cF^\vee((n+1)h+K_X)$: trivially $(\cF^{U,h})^{U,h}\cong\cF$. Moreover, instanton bundles are preserved by Ulrich duality.

\begin{proposition}
\label{pDual}
Let $X$ be an $n$--fold with $n\ge1$ endowed with an ample and globally generated  line bundle $\cO_X(h)$. 

If $\cE$ is a vector bundle on $X$, then $\cE$ is a $h$--instanton bundle with defect $\defect$ and quantum number $\quantum$ if and only if the same is true for $\cE^{U,h}(-\defect h)$. 
\end{proposition}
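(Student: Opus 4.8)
The plan is to use Serre duality to turn every cohomological requirement of Definition \ref{dMalaspinion} for the bundle
$$
\cF:=\cE^{U,h}(-\defect h)=\cE^\vee((n+1-\defect)h+K_X)
$$
into the corresponding requirement for $\cE$, with the cohomological degree $i$ systematically replaced by $n-i$. The starting point is a single master identity. Since $\cE$ is locally free, Serre duality \eqref{Serre} with $\cA=\cO_X$ gives $h^i(\cG\otimes\omega_X)=h^{n-i}(\cG^\vee)$ for every vector bundle $\cG$; taking $\cG=\cE^\vee((n+1-\defect+s)h)$ yields, for all $i,s\in\bZ$,
\begin{equation*}
h^i(\cF(sh))=h^{n-i}(\cE(-(n+1-\defect+s)h)).
\end{equation*}

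Next I would observe that the operation $\cG\mapsto\cG^{U,h}(-\defect h)$ is an involution for fixed $\defect$: a direct computation gives $\cF^\vee=\cE((\defect-n-1)h-K_X)$, hence $\cF^{U,h}=\cE(\defect h)$ and $\cF^{U,h}(-\defect h)\cong\cE$. Consequently it suffices to prove only the implication ``$\cE$ a $h$--instanton bundle with defect $\defect$ and quantum number $\quantum$ $\Rightarrow$ $\cF$ is too'', because applying that implication to $\cF$ in place of $\cE$ returns the converse.

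The bulk of the argument is then routine substitution into the master identity, verifying the conditions of Definition \ref{dMalaspinion} for $\cF$ for all $t\ge 0$ at once. With $s=-(t+1)$ and $s=\defect-n+t$ one finds $h^0(\cF(-(t+1)h))=h^n(\cE((\defect-n+t)h))$ and $h^n(\cF((\defect-n+t)h))=h^0(\cE(-(t+1)h))$, so the two halves of the first condition simply swap; the middle conditions for $1\le i\le n-2$ swap in exactly the same way under $i\mapsto n-i$. For the quantum number, $s=-1$ and $s=\defect-n$ give $h^1(\cF(-h))=h^{n-1}(\cE((\defect-n)h))=\quantum$ and $h^{n-1}(\cF((\defect-n)h))=h^1(\cE(-h))=\quantum$, so it is preserved. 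Finally $\defect h^i(\cF(-ih))=\defect h^{n-i}(\cE(-(n-i)h))$, which is again a hypothesis on $\cE$ after the reindexing $i\mapsto n-i$.

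The one delicate step is the Euler characteristic condition. Summing the master identity over $i$ with alternating signs gives $\chi(\cF(sh))=(-1)^n\chi(\cE(-(n+1-\defect+s)h))$; for $\defect=1$, evaluating at $s=0$ and $s=-n$ yields $\chi(\cF)=(-1)^n\chi(\cE(-nh))$ and $\chi(\cF(-nh))=(-1)^n\chi(\cE)$. Therefore $\chi(\cF)-(-1)^n\chi(\cF(-nh))=(-1)^n\chi(\cE(-nh))-\chi(\cE)=-\big(\chi(\cE)-(-1)^n\chi(\cE(-nh))\big)$, so the last condition of Definition \ref{dMalaspinion} holds for $\cF$ precisely when it holds for $\cE$. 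Keeping track of these $(-1)^n$ factors is the only place a sign error could creep in; every other verification is an immediate substitution.
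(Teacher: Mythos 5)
Your proposal is correct and is exactly the argument the paper intends: the paper's proof is the one-line remark that the statement ``follows from equality \eqref{Serre} because $\cE$ is a vector bundle,'' and your master identity $h^i\big(\cF(sh)\big)=h^{n-i}\big(\cE(-(n+1-\defect+s)h)\big)$ together with the involution $\cF^{U,h}(-\defect h)\cong\cE$ is precisely the bookkeeping that remark leaves implicit. The substitutions, the reindexing $i\mapsto n-i$, and the sign analysis in the Euler characteristic condition all check out.
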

\begin{proof}
The statement follows from equality \eqref{Serre} because $\cE$ is a vector bundle.
\end{proof}

Recall that a rank two $h$--instanton bundle is called orientable if it has  defect $\defect$ and $c_1(\cE)=(n+1-\defect)h+K_X$ in $A^1(X)\cong\Pic(X)$.

\begin{proposition}
\label{pSpecial}
Let $X$ be an $n$--fold with $n\ge1$ endowed with an ample and globally generated  line bundle $\cO_X(h)$. 

The rank two vector bundle $\cE$ on $X$ is an orientable $h$--instanton bundle with defect $\defect$ if and only if the following conditions hold:
\begin{itemize}
\item $c_1(\cE)=(n+1-\defect)h+K_X$;
\item $h^0\big(\cE(-h)\big)=0$;
\item $h^i\big(\cE(-(i+1)h)\big)=0$ if $1\le i\le n-2$;
\item $\defect h^i\big(\cE(-ih)\big)=0$ for $2\le i\le n-2$.
\end{itemize}
\end{proposition}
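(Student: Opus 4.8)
The plan is to derive the statement from the cohomological characterization in Theorem~\ref{tCharacterization}, exploiting the fact that orientability forces a strong Serre self-duality on $\cE$. The forward implication is essentially a matter of definitions: if $\cE$ is an orientable $h$--instanton bundle with defect $\defect$, then $c_1(\cE)=(n+1-\defect)h+K_X$ holds by the very definition of orientability, while the three listed vanishings $h^0\big(\cE(-h)\big)=0$, $h^i\big(\cE(-(i+1)h)\big)=0$ for $1\le i\le n-2$, and $\defect h^i\big(\cE(-ih)\big)=0$ for $2\le i\le n-2$ are exactly the $t=0$ specializations of the first three bullets of Definition~\ref{dMalaspinion}.

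For the converse, the engine is the isomorphism coming from the Chern class hypothesis. Since $\cE$ has rank two, $\cE^\vee\cong\cE\otimes\det(\cE)^{-1}\cong\cE(-(n+1-\defect)h-K_X)$, whence $\cE^\vee\otimes\omega_X\cong\cE((\defect-n-1)h)$. Plugging this into Serre duality \eqref{Serre} (with $\cO_X$ in one slot and $\cE(th)$ in the other) gives, for all $i,t\in\bZ$,
\[
h^i\big(\cE(th)\big)=h^{n-i}\big(\cE((\defect-n-1-t)h)\big).
\]
I would then read off, one by one, every condition of Theorem~\ref{tCharacterization}(2). The vanishing $h^n\big(\cE((\defect-n)h)\big)=0$ is dual (take $i=n$, $t=\defect-n$) to the hypothesis $h^0\big(\cE(-h)\big)=0$; the vanishing $h^{n-i}\big(\cE((\defect-n+i)h)\big)=0$ for $1\le i\le n-2$ is dual to the hypothesis $h^i\big(\cE(-(i+1)h)\big)=0$; the condition $\defect h^i\big(\cE(-ih)\big)=0$ is a hypothesis verbatim; and the displayed duality with $i=1$, $t=-1$ yields $h^1\big(\cE(-h)\big)=h^{n-1}\big(\cE((\defect-n)h)\big)$ on the nose, which we take as the quantum number $\quantum$. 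Finally the Euler--characteristic condition $\defect(\chi(\cE)-(-1)^n\chi(\cE(-nh)))=0$ is vacuous when $\defect=0$ and, when $\defect=1$, follows from the same duality at $t=0$, which then reads $\chi(\cE)=(-1)^n\chi(\cE(-nh))$. Theorem~\ref{tCharacterization} then certifies that $\cE$ is an $h$--instanton bundle of defect $\defect$, and it is orientable by the first hypothesis. Equivalently, one may instead verify the four hypotheses of Theorem~\ref{tSlope} in precisely this way and observe that the slope equality \eqref{Slope} is automatic for an orientable rank two bundle, since intersecting $c_1(\cE)=(n+1-\defect)h+K_X$ with $h^{n-1}$ reproduces \eqref{Slope} verbatim; this is the sense in which the present statement is a specialization of Theorem~\ref{tSlope}.

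The only delicate point is the index bookkeeping in the self-duality: one must check that each one--sided hypothesis lands exactly on the dual vanishing required by Theorem~\ref{tCharacterization}, and, in particular, that the twist $(\defect-n-1-t)h$ coincides with $-nh$ precisely when $\defect=1$ (at $t=0$), so that the Euler--characteristic symmetry holds exactly rather than up to a spurious shift. Once the isomorphism $\cE^\vee\otimes\omega_X\cong\cE((\defect-n-1)h)$ is in place, everything else is a routine substitution, and no hypothesis on $\operatorname{char}(\field)$ or on very ampleness is needed.
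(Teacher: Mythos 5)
Your proposal is correct and matches the paper's own argument: the proof of Proposition \ref{pSpecial} is exactly the combination of the definition of orientability, the Serre self-duality $h^i\big(\cE(th)\big)=h^{n-i}\big(\cE((\defect-n-1-t)h)\big)$ coming from $\cE^\vee\otimes\omega_X\cong\cE((\defect-n-1)h)$ for a rank two bundle, and the cohomological characterization of $h$--instanton bundles. One small point in your favour: the paper's proof cites Theorem \ref{tSlope}, which formally carries the hypothesis that $\operatorname{char}(\field)=0$ or $\cO_X(h)$ is very ample, whereas your primary route through Theorem \ref{tCharacterization} --- reading the Euler--characteristic symmetry $\chi(\cE)=(-1)^n\chi(\cE(-nh))$ directly off the self-duality when $\defect=1$ --- avoids importing that hypothesis, consistently with the fact that Proposition \ref{pSpecial} does not assume it.
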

\begin{proof}
The statement follows by combining the definition of rank two orientable $h$--instanton bundle, equality  \eqref{Serre} and Theorem \ref{tSlope}.
\end{proof}

\begin{remark}
\label{rAM2}
When $n=3$, it follows from the above proposition that $\cF$ is a $h$--instanton bundle as defined in \cite{A--M2} if and only if $\cE:=\cF(h)$ is a rank two orientable, $\mu$--semistable, ordinary $h$--instanton bundle.
\end{remark}

The proof of the following corollary is immediate.

\begin{corollary}
\label{cSpecial}
Let $X$ be an $n$--fold with $1\le n\le 2$ endowed with an ample and globally generated  line bundle $\cO_X(h)$. 

The rank two vector bundle $\cE$ on $X$ is an orientable $h$--instanton bundle with defect $\defect$ if and only if $c_1(\cE)=(n+1-\defect)h+K_X$ and $h^0\big(\cE(-h)\big)=0$.
\end{corollary}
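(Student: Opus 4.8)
The plan is to read this off directly from Proposition \ref{pSpecial}, which already supplies, for every $n\ge1$, a characterization of rank two orientable $h$--instanton bundles in terms of the single Chern class condition $c_1(\cE)=(n+1-\defect)h+K_X$ together with a short list of cohomological vanishings. The present statement is merely the specialization of that proposition to $n=1$ and $n=2$, so the entire content of the proof is the observation that two of the four conditions listed in Proposition \ref{pSpecial} impose nothing at all once $n\le2$.

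Concretely, the conditions appearing in Proposition \ref{pSpecial} are $c_1(\cE)=(n+1-\defect)h+K_X$, the vanishing $h^0\big(\cE(-h)\big)=0$, the family $h^i\big(\cE(-(i+1)h)\big)=0$ for $1\le i\le n-2$, and the family $\defect h^i\big(\cE(-ih)\big)=0$ for $2\le i\le n-2$. First I would check the index ranges: for $n=1$ the set $\{\,i\ \vert\ 1\le i\le n-2\,\}$ is $\{\,i\ \vert\ 1\le i\le -1\,\}=\emptyset$, and for $n=2$ it is $\{\,i\ \vert\ 1\le i\le 0\,\}=\emptyset$; likewise $\{\,i\ \vert\ 2\le i\le n-2\,\}$ is empty for both $n=1$ and $n=2$. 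Hence, for $1\le n\le 2$, the third and fourth conditions in Proposition \ref{pSpecial} are vacuous and can be discarded.

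What remains after discarding them is exactly the pair $c_1(\cE)=(n+1-\defect)h+K_X$ and $h^0\big(\cE(-h)\big)=0$, so the equivalence stated in the corollary follows verbatim from the equivalence in Proposition \ref{pSpecial}. There is no substantive obstacle to overcome: the only verification needed is the emptiness of the two index sets in the range $1\le n\le2$, and once that is noted the conclusion is immediate. This is why the statement is flagged as having an immediate proof.
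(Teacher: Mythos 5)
Your proposal is correct and matches the paper's intent exactly: the paper declares the corollary "immediate," and the only sensible reading is precisely your specialization of Proposition \ref{pSpecial} to $n\le 2$, where the index ranges $1\le i\le n-2$ and $2\le i\le n-2$ are empty and the corresponding vanishing conditions drop out. Nothing further is needed.
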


It is immediate to check that if $\theta$ is a non--effective theta--characteristic on a smooth curve $X$, then $\cO_X(\theta+(1-\defect)h)\oplus\cO_X(\theta+h)$ is a rank two orientable  $h$--instanton bundle with defect $\defect$.

In \cite[Introduction]{K--M--S} the notion of $\delta$--Ulrich sheaf on a projective scheme $X$ is defined, proving that if $X$ is a normal surface such sheaves are instanton sheaves and exist if $X$ is aCM (see \cite[Proposition 5.1 and Theorem A]{K--M--S}). Nevertheless, the existence of $\delta$--Ulrich sheaves on each smooth surface and their admissible ranks are actually open problems. 

In the following examples we show that smooth surfaces always support rank two orientable  $h$--instanton bundles with large enough quantum number. If $\cE$ is such a bundle, then its intermediate cohomology does not necessarily vanish, but Propositions \ref{pRegularity}, \ref{pNatural} and equality \eqref{Serre} guarantee $h^1\big(\cE(th)\big)=0$ unless possibly if $\defect-1-w(\cE)\le t\le w(\cE)-2$.

\begin{example}
\label{eMukai}
Let $X$ be a smooth surface endowed with a very ample line  bundle $\cO_X(h)$: the construction below is the same as in \cite[Proposition 6.2]{E--S--W}.

The linear system $\vert (3-\defect)h+K_X\vert$ contains a smooth curve $C$ if $\defect\in\{\ 0,1\ \}$, $\kappa(X)\ge0$ and $\field=\bC$ (see \cite[Theorem 0.1]{So--VV} and \cite[Exercise II.7.5 (d)]{Ha2}). If $\cO_C(D)$ is globally generated and $\sigma_0,\sigma_1\in H^0\big(\cO_C(D)\big)$ are two sections without common zeros, then we have a surjective morphism $\sigma\colon\cO_X^2\to\cO_C(D)$ and we set $\cE:=\ker(\sigma)^\vee$. Thus there is the exact sequence
$$
0\longrightarrow\cE^\vee\longrightarrow\cO_X^2\mapright\sigma\cO_C(D)\longrightarrow0.
$$
If we denote by $i\colon C\to X$ the inclusion, then  the multiplicativity of the Chern polynomials in short exact sequences yields
$$
c_1(\cE)=c_1(i_*\cO_C(D)), \qquad c_2(\cE)=c_1(i_*\cO_C(D))^2-c_2(i_*\cO_C(D)).
$$
Thus  $c_1(\cE)=(3-\defect)h+K_X$. Moreover $\chi(i_*\cO_C(D))=\chi(\cO_C(D))$, hence $c_2(\cE)=\deg(D)$ thanks to equalities \eqref{RRcurve} and \eqref{RRsurface}.

The cohomology of the above sequence tensored by $\cO_X(h+K_X)$, equality \eqref{Serre} and the Kodaira vanishing theorem imply 
$$
h^0\big(\cE(-h)\big)=h^1\big(\cO_X(h+K_X)\otimes\cO_C(D)\big).
$$
Thus, if $h^1\big(\cO_X(h+K_X)\otimes\cO_C(D)\big)=0$, then $\cE$ is a rank two orientable $h$--instanton bundle with defect $\defect$, thanks to Corollary \ref{cSpecial}. In this case Equality \eqref{RRsurface} and Proposition \ref{pQuantum} yield that the quantum number of $\cE$ is 
\begin{equation*}
\begin{aligned}
\quantum&=-\chi(\cE(-h))=\deg(D)-2\chi(\cO_X)-\frac12\left((\defect^2-4\defect+5)h^2+(3-\defect)K_Xh\right)
\end{aligned}
\end{equation*}
\end{example}

\begin{example}
\label{eGenus0}
In the above example we assumed that $\omega_X$ is positive enough: in the following example we deal with any smooth surface $X$ with $p_g(X)=0$ and endowed with a very ample line bundle $\cO_X(h)$ such that $h^0\big(\cO_X(h)\big)=N+1$: the construction here is completely analogous to the one described in \cite{Cs4}.

Choose a $0$--dimensional subscheme  $Z\subseteq X$ of degree $z\ge (1-\defect)(N+1)+1$ with $\defect\in\{\ 0,1\ \}$: if $\defect=0$ we also assume that no subscheme $Z'\subseteq Z$ of degree $N+1$ is contained in any divisor in $\vert h\vert$. Thus, the Cayley--Bacharach theorem (see \cite[Theorem 5.1.1]{Hu--Le}) yields the existence of a rank two vector bundle $\cF$ fitting into the exact sequence
\begin{equation}
\label{Genus0}
0\longrightarrow\cO_X\longrightarrow\cF\longrightarrow\cI_{Z\vert X}((1-\defect) h-K_X)\longrightarrow0.
\end{equation}
Thus, the vector bundle $\cE:=\cF(h+K_X)$ satisfies $c_1(\cE)=(3-\defect)h+K_X$ and $h^0\big(\cE(-h)\big)=p_g(X)=0$, hence $\cE$ is a rank two orientable $h$--instanton bundle on $X$. In particular $h^2\big(\cE(-h)\big)=0$, hence the cohomologies of sequences \eqref{Genus0} tensored by $\omega_X$ and \eqref{seqStandard} by $\cO_X((1-\defect)h)$ imply that its quantum number is 
$$
\quantum=h^1\big(\cE(-h)\big)=z+(1+\defect)(q(X)-1)+(1-\defect)(h^1\big(\cO_X(h)\big)-N-1).
$$
\end{example}

\begin{remark}
The case of smooth surfaces $X\subseteq\p3$ has a particular interest. In this case, if $\field=\bC$, then rank two, orientable, Ulrich bundles exist on the very general  $X$ if and only if $\deg(X)\le 15$: see \cite{Bea, Fa1}. Moreover, they exist on each $X$ when $\deg(X)\le4$: see \cite{Bea} for $\deg(X)\le 3$ and \cite{C--K--M, C--N} for $\deg(X)=4$.
\end{remark}

We close this section by generalizing the tight relation between ordinary $h$--instanton and Ulrich bundles with respect to $\cO_X(dh)$ evidenced for the first time in \cite{C--MR8} when $X\cong\p3$.

\begin{corollary}
\label{cVeronese} 
Let $X$ be an $n$--fold with $1\le n\le 3$ endowed with an ample and globally generated line bundle $\cO_X(h)$. 

For a vector bundle $\cE$ on $X$, the following assertions are equivalent for every positive integer $d$ such that $(n+1)(d-1)$ is even.
\begin{enumerate}
\item $\cE\left(\frac{n+1}2(d-1)h\right)$ is an Ulrich bundle with respect to $\cO_X(dh)$.
\item $\cE$ is an  ordinary $h$--instanton bundle with natural cohomology in each shift and quantum number
\begin{equation}
\label{Quantum}
\quantum=\frac{(n-1)^n\rk(\cE)(d^2-1)}{2^nn!}h^n.
\end{equation}
\end{enumerate}
\end{corollary}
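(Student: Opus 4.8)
The plan is to transport both conditions to $\p n$ by a finite projection and then to pit the explicit Euler characteristic of an instanton against the ``natural cohomology'' hypothesis. First I would fix a finite morphism $p\colon X\to\p n$ with $\cO_X(h)\cong p^*\cO_{\p n}(1)$ and factor the morphism attached to $\cO_X(dh)$ as $q=\pi\circ p$, where $\pi\colon\p n\to\p n$ is a general projection of the $d$--uple Veronese, so that $\pi^*\cO_{\p n}(1)\cong\cO_{\p n}(d)$ and $q^*\cO_{\p n}(1)\cong\cO_X(dh)$. Since $p,q$ are flat and $\cE$ is locally free, $p_*\cE$ and $q_*(\cE(ah))$ are bundles, where $a:=\frac{n+1}2(d-1)\in\bZ$ (an integer precisely because $(n+1)(d-1)$ is even). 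By the projection formula $q_*(\cE(ah))\cong\pi_*\big((p_*\cE)(a)\big)$, and by the description of Ulrich sheaves via finite projections used in the proof of Corollary \ref{cUlrich} together with the fact that a bundle $\cW$ on $\p n$ is trivial if and only if $H^i(\cW(-j))=0$ for all $i$ and $1\le j\le n$, assertion (1) becomes
$$
H^i\big(\cE((a-jd)h)\big)=H^i\big((p_*\cE)(a-jd)\big)=0\qquad(1\le j\le n,\ i\in\bZ).
$$
On the other hand, Theorem \ref{tCharacterization} makes assertion (2) equivalent to the corresponding statement for $p_*\cE$ on $\p n$. Hence it suffices to treat $X=\p n$, $\cO_X(h)=\cO_{\p n}(1)$.

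The computational engine is Corollary \ref{cCharacteristic}, which for an ordinary instanton collapses to
$$
\chi(\cE(th))=\rk(\cE)h^n\binom{t+n}{n}-\quantum\binom{t+n-1}{n-2},
$$
a degree $n$ polynomial invariant up to the sign $(-1)^n$ under the involution $t\mapsto-(n+1)-t$. This involution maps the shift $a-jd$ to $a-(n+1-j)d$, so the $n$ values $\{a-jd\}_{j=1}^n$ are stable under it, pairing $j$ with $n+1-j$ and fixing $t=-(n+1)/2$ when $n$ is odd. Consequently every vanishing $\chi(\cE((a-jd)h))=0$ follows from the single identity $\chi(\cE((a-d)h))=0$ (the central value being forced automatically for $n$ odd). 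For $1\le n\le3$ a direct evaluation of the displayed polynomial at $t=a-d$ shows that $\chi(\cE((a-d)h))=0$ holds exactly when $\quantum=\frac{(n-1)^n\rk(\cE)(d^2-1)}{2^nn!}h^n$; both closed forms coincide in this range. This already gives $(2)\Rightarrow(1)$: if $\cE$ is an ordinary instanton with natural cohomology and the prescribed $\quantum$, then $\chi(\cE((a-jd)h))=0$ for all $j$, and natural cohomology in those shifts promotes each of these to the full vanishing $H^*(\cE((a-jd)h))=0$, whence $\cE(ah)$ is Ulrich with respect to $\cO_X(dh)$.

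The hard part will be the converse $(1)\Rightarrow(2)$. The Ulrich hypothesis only controls cohomology along the arithmetic progression $a+d\bZ$, where one reads off $h^i(\cE((a+sd)h))=h^i(\cO_{\p n}(s)^{\oplus m})$; but being an instanton with natural cohomology in \emph{each} shift is a statement about every twist, and for $d>1$ the shifts $-1,\dots,-n$ occurring in the finite criterion of Theorem \ref{tCharacterization} do not lie on this progression. To bridge the gap I would reconstruct the linear monad of Proposition \ref{pSpace} directly: the plan is to show that $\cE(ah)$, being Ulrich for the $d$--uple Veronese embedding, is resolved linearly over the Veronese coordinate ring, and that these Ulrich vanishings force the associated Beilinson--type monad to collapse to
$$
0\longrightarrow\cO_{\p n}(-1)^{\oplus\quantum}\longrightarrow\cO_{\p n}^{\oplus b}\longrightarrow\cO_{\p n}(1)^{\oplus\quantum}\longrightarrow0
$$
with cohomology $\cE$. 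This is exactly the point at which the restriction $n\le3$ is used and where the $\p3$ argument of \cite{C--MR8} is generalized. Once the monad is produced, Proposition \ref{pSpace} returns that $\cE$ is an ordinary instanton, the value of $\quantum$ is recovered from $\chi(\cE((a-d)h))=0$ as in the previous paragraph, and natural cohomology in each shift follows from the shape of the monad together with Serre duality \eqref{Serre}. I expect this monad--collapse step, rather than the numerology, to be the genuine obstacle.
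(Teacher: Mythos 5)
Your direction $(2)\Rightarrow(1)$ is correct and is essentially the paper's own argument: push forward to $\p n$, evaluate the Euler characteristic from Corollary \ref{cCharacteristic} at the shifts $\frac{n+1}2(d-1)-jd$, and let natural cohomology upgrade $\chi=0$ to the vanishing of all cohomology there; your symmetry observation and the verification of \eqref{Quantum} for $1\le n\le3$ check out. The gap is in the converse. You assert that ``the Ulrich hypothesis only controls cohomology along the arithmetic progression $a+d\bZ$'' and on that basis launch an uncompleted monad--reconstruction plan. That premise is false, and it is exactly where the paper's proof lives: by \cite[Theorem 5.1]{E--S--W} together with \cite[Lemma 2.6]{C--H2}, an Ulrich bundle on $\p n$ with respect to $\cO_{\p n}(d)$ has natural cohomology with respect to $\cO_{\p n}(1)$ in \emph{every} twist, and its Hilbert polynomial is the completely explicit \eqref{QuantumChar}, i.e. $\rk(\cE)d^nh^n\binom{t/d+n}{n}$ after the shift by $a$. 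Its roots sit at the twists $a-jd$, and for $1\le n\le 3$ the twists $-1$, $-2$, $-n$ fall into the correct intervals between consecutive roots, so one reads off directly the finite list of vanishings $h^0\big((p_*\cE)(-1)\big)=h^n\big((p_*\cE)(-n)\big)=0$ (and $h^1=h^2=0$ at twist $-2$ when $n=3$) required by Theorem \ref{tCharacterization}; the quantum number then comes from evaluating \eqref{QuantumChar} at $t=-1-a$. No Beilinson monad is needed, and your sketch of that step is never actually carried out (``I would reconstruct\dots'', ``I expect this\dots to be the genuine obstacle'').

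There is also an internal flaw in the sketched route: even granting the collapse to a linear monad, natural cohomology in \emph{each} shift does not follow ``from the shape of the monad together with Serre duality.'' An ordinary $h$--instanton bundle is only guaranteed natural cohomology in the window $-n\le t\le -1$ (Proposition \ref{pQuantum}); outside it, the cohomology of a linear monad can perfectly well have, say, $h^0$ and $h^1$ simultaneously nonzero. The all--shifts naturality in assertion (2) is precisely the extra content supplied by the Ulrich hypothesis via \cite{E--S--W}, so your plan would leave that part of (2) unproved even if the monad step were completed.
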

\begin{proof}
Let $p\colon X\to\p n$ be a finite morphism such that $\cO_X(h)\cong p^*\cO_{\p n}(1)$. 

If assertion (1) holds, then $(p_*\cE)\left(\frac{n+1}2(d-1)\right)$ is an Ulrich bundle with respect to $\cO_{\p n}(d)$, thanks to Theorem \ref{tCharacterization} and Corollary \ref{cUlrich}. Moreover, $\rk(p_*\cE)=\rk(\cE)h^n$ hence \cite[Theorem 5.1]{E--S--W} and \cite[Lemma 2.6]{C--H2} imply that $p_*\cE$ has natural cohomology,
\begin{equation}
\label{QuantumChar}
\chi\left((p_*\cE)\left(\frac{n+1}2(d-1)+t\right)\right)=\rk(\cE)d^nh^n{{\frac td+n}\choose n},
\end{equation}
$h^0\big((p_*\cE)(-1)\big)=h^n\big((p_*\cE)(-n)\big)=0$ and, when $n=3$, also $h^1\big((p_*\cE)(-2)\big)=h^2\big((p_*\cE)(-2)\big)=0$. 
We deduce that $p_*\cE$ is an ordinary $\cO_{\p n}(1)$--instanton sheaf and equality \eqref{Quantum} is obtained by taking $t=-1-\frac{n+1}2(d-1)$ in formula \eqref{QuantumChar}. Thus assertion (2) follows from Theorem \ref{tCharacterization}.

Let assertion (2) hold. Computing $\chi\left((p_*\cE)\left(\frac{n+1}2(d-1)+td\right)\right)$ via Corollary \ref{cCharacteristic}, taking into account equality \eqref{Quantum} and that $\rk(\cE)h^n=\chi(\cE)+(n-1)\quantum$ (see Remark \ref{rRankChern}) one obtains
$$
\chi\left((p_*\cE)\left(\frac{n+1}2(d-1)-jd\right)\right)=0
$$
for $1\le j\le n$. It follows from the naturality of the cohomology of $\cE$ that the cohomology of  $(p_*\cE)\left(\frac{n+1}2(d-1)\right)$ vanishes in the range $-n\le t\le -1$, hence it is an Ulrich sheaf with respect to $\cO_{\p n}(d)$. 
\end{proof}

\begin{example}
\label{eVeronesePlane}
Thanks to \cite[Theorem 3]{C--MR7} there exist indecomposable Ulrich bundles with respect to $\cO_{\p2}(d)$ for each $r\ge2$ and $d\ge3$ such that $r(d-1)$ is even. It follows that, in the same range, there exist indecomposable ordinary $\cO_{\p2}(1)$--instanton bundles with quantum number $\quantum =\frac{r(d^2-1)}{8}$ and natural cohomology.
\end{example}

\begin{example}
\label{eVeroneseQuadric}
Let $X\subseteq\p4$ be a smooth quadric hypersurface and $\cO_X(h):=\cO_X\otimes\cO_{\p4}(1)$. 

Let $\cM_X(2;h,3)$ be the moduli space of stable rank two bundles $\cE$ with $c_1(\cE)=h$ and $c_2(\cE)h=3$ on $X$. The scheme $\cM_X(2;h,3)$ is irreducible, unirational, reduced of dimension $12$ and its general point represents a bundle $\cE$ with $h^0\big(\cE(h)\big)=0$ and $\reg(\cE)=1$ (see \cite[Theorem 5.2 and its proof]{Ot--Sz}), hence $h^1\big(\cE(th)\big)=0$ for $t\ge0$ and $h^0\big(\cE(th)\big)=0$ for $t\le 1$. Moreover, \cite[Corollary 2.4]{Ein--So} implies $h^1\big(\cE(-2h)\big)=0$. 

It follows that $h^3\big(\cE(-5h)\big)=h^2\big(\cE(-2h)\big)=0$, thanks to equality \eqref{Serre}, hence $h^2\big(\cE(th)\big)=0$ for $t\le-4$ and $h^3\big(\cE(th)\big)=0$ for $t\ge-5$. Thus $\cE$ is a rank two, ordinary,  $h$--instanton bundle with natural cohomology in each shift. Equality \eqref{RRgeneral} finally implies that its quantum number is $\quantum=2$. 

Corollary \ref{cVeronese} then implies that $\cF:=\cE(2h)$ is a rank two Ulrich bundle on $X$ with respect to $\cO_X(2h)$.
\end{example}

\section{Monadic representation of instanton bundles on aCM $n$--folds}
\label{sMonad}
As pointed out in Theorem \ref{tCharacterization}, instanton sheaves on a projective scheme $X$ endowed with a very ample line bundle $\cO_X(h)$ are exactly the sheaves whose direct images via a suitable finite morphism on $\p n$ are $\cO_{\p n}(1)$--instanton sheaves or, equivalently, the cohomology of the monads $\cM^\bullet$  described in  Proposition \ref{pSpace}. 

Several authors used the property of being cohomology of monads of a certain fixed shape for defining instanton bundles on $n$--folds: e.g. see \cite{J--MR}. The property of being cohomology of a linear monad with a certain fixed shape does not characterize $h$--instanton bundles when $X\not\cong\p n$. 

Nevertheless, $h$--instanton bundles can be associated to some particular monads making use of the results in \cite[Sections 2 and 3]{J--VM} as claimed in Theorem \ref{tMonad}. 

Recall that $\cF^{U,h}:=\cF^\vee((n+1)h+K_X)$ for each vector bundle $\cF$ on $X$.

\medbreak
\noindent{\it Proof of Theorem \ref{tMonad}.}
If $\cE$ is a $h$--instanton bundle, then $h^p\big(\cE(\defect-p)\big)=0$ for each $p\ge2$ because $n\ge3$, hence $H^1_*\big(\cE\big)$ is generated by $\bigoplus_{i=0}^\defect H^1\big(\cE((i-1)h)\big)$ as an $S[X]$--module by \cite[Lemma 3.4]{J--VM}. Similarly, $H^1_*\big(\cE^{U,h}(-\defect h)\big)$ is generated by $\bigoplus_{i=0}^\defect H^1\big(\cE^{U,h}((i-1-\defect)h)\big)$ thanks to Proposition \ref{pDual}. 

Thanks to \cite[Theorem 2.3]{J--VM} we know the existence of monad \eqref{Monad} such that $H^i_*\big(\cB\big)=0$ for $1\le i\le n-1$ where $a$ and $c$ are as in equalities \eqref{DimensionMonad}. Monad \eqref{Monad} induces the two exact sequences
\begin{equation}
\label{Display}
\begin{gathered}
0\longrightarrow\cU\longrightarrow\cB\longrightarrow\cO_X^{\oplus c}\oplus \cO_X(h)^{\oplus \quantum}\longrightarrow0,\\
0\longrightarrow\omega_X((n-\defect)h)^{\oplus \quantum}\oplus\omega_X((n+1-\defect)h)^{\oplus a}\longrightarrow\cU\longrightarrow\cE\longrightarrow0.
\end{gathered}
\end{equation}
The cohomology of sequences \eqref{Display} tensored by $\cO_X(-h)$ and $\cO_X((\defect-n)h)$ and equality \eqref{Serre} yield equalities \eqref{h^iB} and \eqref{ChiB}. 

Conversely, let $\cE$ be a vector bundle satisfying $h^0\big(\cE(-h)\big)=h^n\big(\cE((\defect-n)h)\big)=0$. Assume that $\cE$ is the cohomology of monad \eqref{Monad}, where $\cB$ is aCM and satisfies equalities \eqref{h^iB} and \eqref{ChiB}, and $a$, $c$ are as in equalities \eqref{DimensionMonad}. By hypothesis $X$ is aCM, hence  $H^i_*\big(\omega_X\big)=H^i_*\big(\cO_X\big)=0$ for $1\le i\le n-1$ thanks to equality \eqref{Serre}. Thus the cohomology of sequences \eqref{Display}  twisted by $\cO_X(-2h)$ yields
\begin{gather*}
h^1\big(\cE(-2h)\big)=h^1\big(\cU(-2h)\big)\le h^0\big(\cC(-2h)\big)+h^1\big(\cB(-2h)\big)=0.
\end{gather*}
Similarly $h^{n-1}\big(\cE((\defect+1-n)h)\big)=0$ and $H^i_*\big(\cE\big)=0$ for $2\le i\le n-2$. The same argument yields $h^1\big(\cE(-h)\big)=h^{n-1}\big(\cE((\defect-n)h)\big)=\quantum$ and $\defect(\chi(\cE)-(-1)^n\chi(\cE(-nh)))=0$ thanks to equalities \eqref{h^iB} and \eqref{ChiB}. Thus $\cE$ is an instanton bundle.
\qed
\medbreak

\begin{remark}
\label{rMonad}
Assume $h^0\big(\omega_X((n-1)h)\big)=0$. If $\cE$ is the cohomology of monad \eqref{Monad}, then the cohomology of sequences \eqref{Display} implies
\begin{gather*}
h^0\big(\cE(-h)\big)\le h^0\big(\cU(-h)\big)\le h^0\big(\cB(-h)\big)=0,\\
h^n\big(\cE((\defect-n)h)\big)\le h^0\big(\cU((\defect-n)h)\big)\le h^0\big(\cB((\defect-n)h)\big)=0,
\end{gather*}
thanks to equalities \eqref{h^iB}, because, being $X$ aCM, $H^1_*\big(\omega_X\big)=H^{n-1}_*\big(\cO_X\big)=0$.
\end{remark}

If $\cE$ is rank two orientable, then equality \eqref{Serre} yields $a=c$ in monad \eqref{Monad}. The monad above can be put in an even more explicit form only if there is description of aCM bundles on $X$. Unfortunately such a description is known in very few cases, but if we restrict to ordinary $h$--instanton bundles we have a more useful result.

\begin{corollary}
\label{cMonad}
Let $X$ be an $n$--fold with $n\ge3$ endowed with a very ample line bundle $\cO_X(h)$. Assume that $h^0\big(\omega_X((n-1)h)\big)=0$ and $X$ is aCM with respect to  $\cO_X(h)$.

The non--zero vector bundle $\cE$ is an ordinary $h$--instanton bundle with quantum number $\quantum$ if and only if it is the cohomology of a monad of the form
\begin{equation}
\label{MonadOrdinary}
0\longrightarrow\cC^{U,h}\longrightarrow\cB\longrightarrow\cC\longrightarrow0
\end{equation}
where $\cC\cong\cO_X(h)^{\oplus \quantum}$ and $\cB$ is Ulrich.
\end{corollary}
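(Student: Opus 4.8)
The plan is to obtain Corollary \ref{cMonad} as the specialization $\defect=0$ of Theorem \ref{tMonad}, using the extra hypothesis $h^0\big(\omega_X((n-1)h)\big)=0$ only to recognize the middle term $\cB$ of the monad as an Ulrich bundle. First I would set $\defect=0$ in Theorem \ref{tMonad}. The dimension formulas \eqref{DimensionMonad} then force $a=c=0$, so that $\cA=\omega_X(nh)^{\oplus\quantum}$ and $\cC=\cO_X(h)^{\oplus\quantum}$. A direct computation from the definition $\cF^{U,h}=\cF^\vee((n+1)h+K_X)$ gives $\cC^{U,h}=\omega_X(nh)^{\oplus\quantum}=\cA$, which already rewrites monad \eqref{Monad} in the form \eqref{MonadOrdinary}.

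Next I would simplify the cohomological constraints on $\cB$. With $\defect=0$ and $a=c=0$, conditions \eqref{h^iB} read $h^0\big(\cB(-h)\big)=\quantum h^0\big(\omega_X((n-1)h)\big)$ and $h^n\big(\cB(-nh)\big)=\quantum h^0\big(\omega_X((n-1)h)\big)$, while condition \eqref{ChiB} is vacuous. Hence the hypothesis $h^0\big(\omega_X((n-1)h)\big)=0$ reduces all the requirements on $\cB$ coming from Theorem \ref{tMonad} to the statement that $\cB$ is aCM and $h^0\big(\cB(-h)\big)=h^n\big(\cB(-nh)\big)=0$.

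The main point is then to check that these conditions on $\cB$ are equivalent to $\cB$ being Ulrich. The aCM property is exactly the absence of intermediate cohomology, so only the boundary vanishings $h^0\big(\cB(-(t+1)h)\big)=h^n\big(\cB((t-n)h)\big)=0$ for $t\ge0$ remain to be verified. Both follow from Proposition \ref{pNatural}: applying assertion (1) with $m=0$ to the sheaf $\cB(-h)$ propagates $h^0\big(\cB(-h)\big)=0$ to all lower twists, while applying the complementary statement with $m=n$ to $\cB$ propagates $h^n\big(\cB(-nh)\big)=0$ to $h^n\big(\cB(uh)\big)=0$ for all $u\ge-n$. The converse implication, that an Ulrich bundle is aCM and satisfies $h^0\big(\cB(-h)\big)=h^n\big(\cB(-nh)\big)=0$, is immediate from the definition.

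Finally I would assemble the equivalence. If $\cE$ is an ordinary $h$--instanton bundle, then $h^0\big(\cE(-h)\big)=h^n\big(\cE(-nh)\big)=0$ by definition, so Theorem \ref{tMonad} applies and produces the monad \eqref{MonadOrdinary} with $\cB$ Ulrich. Conversely, if $\cE$ is the cohomology of \eqref{MonadOrdinary} with $\cB$ Ulrich, then the vanishings $h^0\big(\cE(-h)\big)=h^n\big(\cE(-nh)\big)=0$ needed to invoke Theorem \ref{tMonad} are supplied by Remark \ref{rMonad}, and the equivalence follows. I do not expect a genuine obstacle here: the entire content is the correct specialization of Theorem \ref{tMonad} together with the identification of the two-sided vanishing conditions as the Ulrich property, the only delicate step being the bookkeeping of twists in the two applications of Proposition \ref{pNatural}.
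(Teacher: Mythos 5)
Your proposal is correct and follows essentially the same route as the paper: specialize Theorem \ref{tMonad} to $\defect=0$ (so $a=c=0$ and $\cA\cong\cC^{U,h}$), use the hypothesis $h^0\big(\omega_X((n-1)h)\big)=0$ to reduce \eqref{h^iB} to the boundary vanishings characterizing Ulrich bundles among aCM ones, and invoke Remark \ref{rMonad} for the converse. The only difference is that you make explicit, via Proposition \ref{pNatural}, the propagation of the vanishings $h^0\big(\cB(-h)\big)=h^n\big(\cB(-nh)\big)=0$ to all required twists, a step the paper leaves implicit.
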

\begin{proof}
If $\cE$ is ordinary, then it is the cohomology of monad \eqref{Monad} with $\cC\cong\cO_X(h)^{\oplus k}$ and $\cA\cong\omega_X(nh)^{\oplus k}\cong\cC^{U,h}$ thanks to  Theorem \ref{tMonad}. The same theorem also yields that $\cB$ is aCM. Thanks to equalities \eqref{h^iB} we deduce that $\cB$ is actually Ulrich.

Conversely, let $\cE$ be the cohomology of monad \eqref{MonadOrdinary}. Being $\cB$  Ulrich, it is aCM and equalities \eqref{h^iB} hold. The thesis follows from Theorem \ref{tMonad} and Remark \ref{rMonad}.
 \end{proof}

Corollary \ref{cMonad} returns the characterization of ordinary $\cO_{\p n}(1)$--instanton bundles in terms of monad \eqref{MonadJardim} because $h^0\big(\omega_{\p n}(n-1)\big)=h^0\big(\cO_{\p n}(-2)\big)=0$. 
 
 \begin{remark}
 \label{rSo--VV}
Let $\field=\bC$ and $n\ge3$: thanks to \cite[Theorem 0.1]{So--VV} and simple calculations, we know that $h^0\big(\omega_X((n-1)h)\big)\ne0$ unless $X\cong\p n$ and $\cO_X(h)\cong\cO_{\p n}(1)$, or  $X\subseteq\p {n+1}$ is the smooth quadric hypersurface and $\cO_X(h)\cong\cO_X\otimes\cO_{\p {n+1}}(1)$, or $X$ is a scroll on a smooth curve $B$.  
\end{remark}

\begin{remark}
Let $\field=\bC$ and $n\ge3$: assume also that $X\subseteq\p N$ is an $n$--fold of minimal degree $d:=N-n+1$. Thus $X$ is one of the varieties listed in Remark \ref{rSo--VV} and, if it is a scroll, $B\cong\p1$ necessarily (see \cite{Ei--Ha1} for further details). The variety $X$ is aCM with respect to $\cO_X(h)\cong\cO_X\otimes\cO_{\p {N}}(1)$, $d=h^n$ and $h^0\big(\omega_X((n-1)h)\big)=0$ (this follows from Remark \ref{rSo--VV}), hence each $h$--instanton bundle $\cE$ on $X$ fits into monad \eqref{MonadOrdinary}.

If $p\colon X\to \p n$ is finite and $p^*\cO_{\p n}(1)\cong\cO_X(h)$, then its degree is $d$ and the functors $R^ip_*$ vanish for $i\ge1$ by \cite[Corollary III.11.2]{Ha2}. Thus, by applying $p_*$ to the exact sequences \eqref{Display}, we deduce that $p_*\cE$ is the cohomology of the monad
\begin{equation}
\label{PushForward}
0\longrightarrow p_*(\omega_X(nh))^{\oplus \quantum}\longrightarrow p_*\cB\longrightarrow p_*(\cO_X(h))^{\oplus \quantum}\longrightarrow0.
\end{equation}
Trivially $\rk(\cB)=2\quantum+\rk(\cE)$ hence $p_*\cB\cong\cO_{\p n}^{2d\quantum+d\rk(\cE)}$, because $\cB$ is Ulrich.

Notice that $p_*\cO_X\cong\bigoplus_{i=0}^{d-1}\cO_{\p n}(-\alpha_i)$ for suitable $\alpha_i\in \bZ$. Thanks to \cite[Corollary III.11.2 and Exercises III.8.1, III.8.3]{Ha2} we have $h^i\big((p_*\cO_X)(t)\big)=h^i\big(\cO_X(th)\big)$ for each $i,t\in\bZ$. Since $h^0\big(\cO_X\big)=1$, it follows that we can assume $\alpha_0=0$ and $\alpha_i\le -1$ for $i\ge1$. Since $h^0\big(\cO_X(h)\big)=N+1$ and $d=N-n+1$, it follows that $\alpha_i= -1$ for $i\ge1$. Thus 
$$
p_*(\cO_X(h))\cong\cO_{\p n}(1)\oplus\cO_{\p n}^{\oplus d-1}.
$$
Moreover, $\omega_X\cong\omega_{X\vert\p n}\otimes\cO_X(-(n+1)h)$, hence 
$$
p_*(\omega_X(nh))\cong(p_*\cO_X)^\vee(-1)\cong\cO_{\p n}(-1)\oplus\cO_{\p n}^{\oplus d-1}
$$
by duality for finite flat morphisms (see \cite[Exercise III.6.10]{Ha2}). 

Thus we have induced morphisms $a\colon \cO_{\p n}(-1)^{\oplus\quantum}\oplus\cO_{\p n}^{\oplus (d-1)\quantum}\to \cO_{\p n}^{2d\quantum+d\rk(\cE)}$ and $b\colon \cO_{\p n}^{2d\quantum+d\rk(\cE)}\to \cO_{\p n}(1)^{\oplus\quantum}\oplus\cO_{\p n}^{\oplus (d-1)\quantum}$. Since $a$ is injective and $b$ is surjective it is easy to check that the above morphisms split. Thus monad \eqref{PushForward} splits into monad \eqref{MonadJardim} having $p_*\cE$ as cohomology and a trivial exact sequence.
\end{remark} 

In the following examples we apply Corollary \ref{cMonad} to some smooth varieties. 

\begin{example}
\label{eQuadric0}
Let $X\subseteq\p{n+1}$ be a smooth quadric hypersurface and set $\cO_X(h):=\cO_X\otimes\cO_{\p {n+1}}(1)$. Thus $h^0\big(\omega_X((n-1)h)\big)=h^0\big(\cO_X(-h)\big)=0$. 

Consider an ordinary $h$--instanton bundle $\cE$ of rank $r$ and quantum number $\quantum$. If $c_1(\cE)=\varepsilon h$, then  equality \ref{Slope}  implies $c_1(\cE)=rh/2$, hence $r$ is even. If $n\ge3$, then $\cE$ is the cohomology of monad \eqref{MonadOrdinary}. 

Recall that the Kn\"orrer theorem (see \cite[Corollary 6.8]{A--O1}) implies that the only indecomposable aCM bundles on $X$ are, up to shifts, $\cO_X$ and the spinor bundles: for their definition and properties we refer the reader to \cite[Definition 1.3, Theorem 2.8 and Remark 2.9]{Ott2}. In particular, if $n$ is odd, then there is exactly one spinor bundle $\cS$, while there are two non--isomorphic spinor bundles $\cS'$ and $\cS''$ if $n$ is even. Regardless of the parity of $n$ we have
\begin{equation}
\label{Spinor}
\begin{gathered}
\rk(\cS)=\rk(\cS')=\rk(\cS'')=2^{\left[\frac{n-1}2\right]},\\
c_1(\cS(h))=c_1(\cS'(h))=c_1(\cS''(h))=2^{\left[\frac{n-3}2\right]}h.
\end{gathered}
\end{equation}
Notice that $\cO_X$ is not an Ulrich bundle, while the shifted spinor bundles $\cS(h)$, $\cS'(h)$, $\cS''(h)$ are Ulrich bundles. Monad \eqref{MonadOrdinary} becomes
\begin{equation}
\label{MonadOdd}
0\longrightarrow\cO_{X}^{\oplus k}\longrightarrow\cS(h)^{\oplus s}\longrightarrow\cO_{X}(h)^{\oplus k}\longrightarrow0,
\end{equation}
if $n$ is odd and 
\begin{equation}
\label{MonadEven}
0\longrightarrow\cO_{X}^{\oplus k}\longrightarrow\cS'(h)^{\oplus s'}\oplus\cS''(h)^{\oplus s''}\longrightarrow\cO_{X}(h)^{\oplus k}\longrightarrow0,
\end{equation}
if $n$ is even. It is clear that $s=2^{-\left[\frac{n-1}2\right]}(r+2k)$ and $s'+s''=2^{-\left[\frac{n-1}2\right]}(r+2k)$,
when $n$ is respectively either odd or even, thanks to equalities \eqref{Spinor}. In the former case the cohomology of sequences \eqref{Display} tensored by $\cS$ returns
$$
s=h^0\big(\cE\otimes\cS\big)-h^1\big(\cE\otimes\cS\big)+2^{\left[\frac{n-1}2\right]}k,
$$
thanks to \cite[Theorems 2.8 and 2.10]{Ott2}. In the latter case we have similarly
\begin{gather*}
s'=\left\lbrace\begin{array}{ll} 
h^0\big(\cE\otimes\cS'\big)-h^1\big(\cE\otimes\cS'\big)+2^{\left[\frac{n-1}2\right]}k\quad&\text{if $n\equiv 0\pmod 4$,}\\
h^0\big(\cE\otimes\cS''\big)-h^1\big(\cE\otimes\cS''\big)+2^{\left[\frac{n-1}2\right]}k\quad&\text{if $n\equiv 2\pmod 4$,}
\end{array}\right.\\
s''=\left\lbrace\begin{array}{ll} 
h^0\big(\cE\otimes\cS''\big)-h^1\big(\cE\otimes\cS''\big)+2^{\left[\frac{n-1}2\right]}k\quad&\text{if $n\equiv 0\pmod 4$,}\\
h^0\big(\cE\otimes\cS'\big)-h^1\big(\cE\otimes\cS'\big)+2^{\left[\frac{n-1}2\right]}k\quad&\text{if $n\equiv 2\pmod 4$.}
\end{array}\right.
\end{gather*}

Recall that $r$ is even, hence each indecomposable ordinary $h$--instanton bundle on $X$ has rank at least $4$ when $n\ge6$. Indeed if $\quantum=0$, then $\cE$ is a spinor bundle, while if $\quantum\ge1$ this follows from Proposition \ref{pHorrocks} (see also \cite[Corollary 1.2]{Mal1}). If $n\le 5$ rank two ordinary $h$--instanton bundles exist and we know what follows (see \cite{Mal1}).
\begin{itemize}
\item If $n=3$, then $s=k+1$: monad \eqref{MonadOdd} coincides with the one in \cite[Lemma 4]{Fa2} up to tensoring by $\cO_X(-h)$. 
\item If $n=4$, besides the obvious case of the $h$--instanton bundles $\cS'(h)$ and $\cS''(h)$, we must have $s'=s''=1$ and $k=1$: in this case the cohomology of monad \eqref{MonadEven} tensored by $\cO_X(-h)$ is the bundle defined in \cite[Proposition p. 205]{A--S}.
\item If $n=5$, then $s=1$ and $\quantum=1$: the cohomology of monad \eqref{MonadOdd} tensored by $\cO_X(-h)$ is the Cayley bundle described in \cite{Ott3}. 

In particular, such an $X$ supports a rank two orientable $h$--instanton bundle, but no any rank two Ulrich sheaf.
\end{itemize}
The general problem of the existence of $h$--instanton bundles for odd $n$ and rank $n-1$ is discussed in \cite[Section 4]{C--MR--PL}.

Conversely, if the cohomology of monads \eqref{MonadEven} and \eqref{MonadOdd} is a vector bundle $\cE$, then it is an ordinary $h$--instanton bundle of rank $r$ with quantum number $k$ thanks to Corollary \ref{cMonad}.
\end{example}

One can work out analogous computations on each manifold $X$ with sufficiently negative canonical line bundle and whose Ulrich bundles are completely described. 

In the following example we deal with the case of some particular scrolls.  Let us spend a few words on scrolls for fixing the notation here and in Section \ref{sScrollCurve}. Recall that an $n$--fold $X$ endowed with a very ample line bundle $\cO_X(h)$ is a scroll over a smooth curve $B$ if there is a vector bundle $\cG$ of rank $n$ on $B$ such that $X\cong\bP(\cG)$ and $\cO_X(h):=\cO_{\bP(\cG)}(1)$: thus $\cG$ must be ample and globally generated, hence $0<h^n=\deg(\frak g)$.
 
 With the notation above we set $\cO_B(\frak g):=\det(\cG)$. In $A^1(X)$ we denote by $f$ the class of a fibre of the projection $\pi\colon\bP(\cG)\to B$ and for each divisor $\frak a\subseteq B$ we write $\frak a f$ instead of $\pi^*\frak a$. If the genus of $B$ satisfies $g:=p_a(B)\ge1$ such fibres are not linearly equivalent: anyhow they form an algebraic system. The Chern equation in $A(X)$ is $h^n-c_1(\cG)fh^{n-1}=0$: the intersection product in $A(X)$ is given by 
$$
h^n=\deg(\frak g),\qquad fh^{n-1}=1,\qquad f^2=0
$$
in $A(X)$. Moreover $K_X=-nh+(\frak g+K_B)f$. The following lemma will be repeatedly used from now on in the paper.

\begin{lemma}
\label{lDerived}
Let $X\cong\bP(\cG)$  and $\cO_X(h):=\cO_{\bP(\cG)}(1)$.

For each divisor $\frak a\subseteq B$ we have
$$
h^i\big(\cO_X(th+\frak a f)\big)=\left\lbrace\begin{array}{ll} 
h^i\big((S^t\cG)(\frak a)\big)\quad&\text{if $t\ge0$,}\\
0\quad&\text{if $1-n\le t\le -1$.}
\end{array}\right.
$$
\end{lemma}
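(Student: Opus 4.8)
The plan is to push everything down to the base curve $B$ along the projection $\pi\colon X=\bP(\cG)\to B$, whose fibres are copies of $\p{n-1}$, and to read the cohomology off the Leray spectral sequence. Since $\frak a f=\pi^*\frak a$ by definition, the projection formula gives $\cO_X(th+\frak a f)\cong\cO_X(th)\otimes\pi^*\cO_B(\frak a)$ and hence $R^i\pi_*\cO_X(th+\frak a f)\cong\big(R^i\pi_*\cO_X(th)\big)\otimes\cO_B(\frak a)$ for every $i$. Thus the whole statement reduces to the computation of the relative direct images $R^i\pi_*\cO_X(th)$.

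First I would record the fibrewise cohomology: the restriction of $\cO_X(th)$ to a fibre is $\cO_{\p{n-1}}(t)$, and $H^i\big(\p{n-1},\cO_{\p{n-1}}(t)\big)=0$ for all $i$ precisely when $1-n\le t\le-1$, while for $t\ge0$ only $H^0$ is nonzero. Feeding this into the standard computation of the direct images of $\cO(t)$ along a projective bundle (equivalently, cohomology and base change for the flat proper morphism $\pi$), I would obtain $R^i\pi_*\cO_X(th)=0$ for all $i$ when $1-n\le t\le-1$, and for $t\ge0$ the vanishing $R^i\pi_*\cO_X(th)=0$ for $i>0$ together with the identification $\pi_*\cO_X(th)\cong S^t\cG$. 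This last isomorphism is the defining property of $\bP(\cG)$ in the convention fixed in the paper, under which $\cG$ is forced to be ample and globally generated.

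Combining this with the projection formula, in the range $1-n\le t\le-1$ every relative direct image of $\cO_X(th+\frak a f)$ vanishes, so the Leray spectral sequence gives $h^i\big(\cO_X(th+\frak a f)\big)=0$ for all $i$. For $t\ge0$ only $\pi_*$ survives, equal to $(S^t\cG)(\frak a)$; since all higher direct images vanish the Leray spectral sequence collapses to the isomorphism $H^i\big(X,\cO_X(th+\frak a f)\big)\cong H^i\big(B,(S^t\cG)(\frak a)\big)$, which is exactly the asserted equality of dimensions.

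I do not expect a serious obstacle, since this is the standard behaviour of line bundles along a projective bundle. The two points needing care are the convention for $\bP(\cG)$ and $\cO_{\bP(\cG)}(1)$, so that $\pi_*\cO_X(th)=S^t\cG$ appears with the correct twist, and the vanishing of every $R^i\pi_*$ in the intermediate range, which rests on $\cO_{\p{n-1}}(t)$ being acyclic exactly for $1-n\le t\le-1$. Once these are settled, the collapse of the Leray spectral sequence, valid because $B$ is a curve and the relevant direct images are concentrated in a single degree, completes the proof.
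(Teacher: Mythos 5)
Your proposal is correct and takes essentially the same route as the paper: the paper's proof likewise reduces to the computation of the direct images $R^i\pi_*\cO_X(th)$ (citing Hartshorne, Exercise III.8.3) and concludes via the projection formula (Exercise III.8.1), with the Leray degeneration left implicit. Your write-up simply makes explicit the fibrewise cohomology of $\cO(t)$ on $\bP^{n-1}$ and the collapse of the spectral sequence, which the paper compresses into a two-line citation.
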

\begin{proof}
Recall that $R^i\pi_*\cO_X(th)=0$ for $t\ge1-n$ and  $i\ge0$, and $\pi_*\cO_X(th)\cong S^t\cG$ for $t\in\bZ$ (see \cite[Exercise III. 8.3]{Ha2}). Thus, the statement follows from the projection formula (see \cite[Exercise III.8.1]{Ha2}).
\end{proof}

As a first consequence of the above lemma we deduce that $h^0\big(\omega_X((n-1)h)\big)=0$. Nevertheless, it is not always true that the embedding induced by $\cO_X(h)$ is aCM, unless $B\cong\p1$. Thus we can assume $\cG\cong\bigoplus_{j=0}^{n-1}\cO_{\p1}(a_j)$ where $1\le a_0\le \dots\le a_{n-1}$. In this case $\cO_X(h)$ is actually very ample and  $d:=h^n=N-n+1=\sum_{j=0}^{n-1}a_j$, i.e. $X$ is a variety of minimal degree on $\p N$. 

Such varieties are described in \cite{Ei--Ha1}. In particular, $\cO_X$ is aCM. Moreover the description of Ulrich bundles is known: see \cite{A--H--M--PL}. 

In the following examples we deal with the cases $n=3$ and $X\cong\p1\times\p3$.

\begin{example}
\label{e2Segre}
If $n=3$, then $K_X=-3h+(d-2)f$. Let  $\Omega^p_{X\vert\p1}:=\wedge^p\Omega^1_{X\vert\p1}$, where $\Omega^1_{X\vert\p1}$ is the sheaf of relative differentials of the projection $\pi\colon X\to \p1$. Recall that it fits into the relative Euler sequence
\begin{equation}
\label{seqEuler}
0\longrightarrow\Omega^1_{X\vert\p1}\longrightarrow\bigoplus_{j=0}^2\cO_{X}(a_jf-h)\longrightarrow\cO_X\longrightarrow0,.
\end{equation}

Let $\cB$ be a vector bundle on $X$. As pointed out in \cite[Theorem 4.7]{A--H--M--PL}, $\cB$ is Ulrich if and only if there is a filtration 
$$
0=:\cB_0\subseteq\cB_1\subseteq\cB_2\subseteq\cB_3:=\cB
$$
such that $\cB_{p+1}/\cB_p\cong\Omega^p_{X\vert\p1}((p+1)h-f)^{\oplus s_{p+1}}$ for $0\le p\le 2$. It follows that $\cB$ is Ulrich if and only if there are exact sequences 
\begin{gather}
\label{seqUlrich}
0\longrightarrow\cB_2\longrightarrow \cB\longrightarrow\cO_X((d-1)f)^{\oplus s_3}\longrightarrow0,\\
\label{seqKernel}
0\longrightarrow\cO_X(h-f)^{\oplus s_1}\longrightarrow \cB_2\longrightarrow\Omega^1_{X\vert\p1}(2h-f)^{\oplus s_2}\longrightarrow0.
\end{gather}
Notice that  the cohomology of the dual of sequence \eqref{seqEuler} tensored by $\cO_X(-h)$ returns
\begin{equation}
\label{D}
h^i\big((\Omega_{X\vert\p1}^1)^\vee(-h)\big)=h^i\big(\Omega_{X\vert\p1}^1(2h-df)\big)=\left\lbrace\begin{array}{ll} 
d-3\quad&\text{if $i=1$,}\\
0\quad&\text{if $i\ne1$.}\\
\end{array}\right.
\end{equation}
Thus $\cB_2\cong \cO_X(h-f)^{\oplus s_1}\oplus\Omega^1_{X\vert\p1}(2h-f)^{\oplus s_2}$ if $a_2=1$. On the other hand, sequence \eqref{seqKernel} could be unsplit when $a_2\ge2$.

If $\cE$ is an ordinary $h$--instanton bundle with $\rk(\cE)=r$ and $\quantum(\cE)=k$, then $c_1(\cE)h=r(d-1)$. Moreover $\cE$ is the cohomology of monad \eqref{MonadOrdinary}, that is
\begin{equation}
\label{Monad2Segre}
0\longrightarrow\cO_{X}((d-2)f)^{\oplus k}\longrightarrow\cB\longrightarrow\cO_{X}(h)^{\oplus k}\longrightarrow0.
\end{equation}
If $\cB$ is as in sequence \eqref{seqUlrich}, then
$
s_1+2s_2+s_3=r+2k. 
$
The cohomology of the dual of sequence \eqref{seqEuler} and its shift, the one of sequences \eqref{Display}, \eqref{seqKernel} twisted by $\cO_X(f-2h)$, $\cO_X(f-h)$,  $\cO_X(-2h-f)$, $\cO_X(-3h-f)$ and Lemma \ref{lDerived} yield
\begin{gather*}
s_1=h^0\big(\cE(f-h)\big)-h^1\big(\cE(f-h)\big)+2k,\\
s_2=h^1\big(\cE(f-2h)\big)=h^2\big(\cE(-2h-f)\big),\\
s_3=h^3\big(\cE(-3h-f)\big)-h^2\big(\cE(-3h-f)\big)+2k.
\end{gather*}

Conversely, if the cohomology of monad \eqref{Monad2Segre} is a vector bundle, then it is an ordinary $h$--instanton bundle of rank $r$ with quantum number $k$ by Corollary \ref{cMonad}.

In \cite[Theorem 3.5]{A--M2} the existence of completely different and more explicit monads is proved via a derived category approach under restrictive hypotheses on $\cE$.
\end{example}

\begin{example}
\label{e3Segre}
Let $X\subseteq\p7$ be the image of the Segre embedding of $\p1\times\p3$, i.e. the rational normal scroll with $a_0=a_1=a_2=a_3=1$. Notice that, besides $\pi$, we also have the second projection $p\colon X\to \p3$ and $\Omega^1_{X\vert\p1}\cong p^*\Omega^1_{\p3}$. 

If $\cE$ is an ordinary $h$--instanton bundle with $\rk(\cE)=r$ and $\quantum(\cE)=k$, then $c_1(\cE)h^3=3r$ and it is the cohomology of monad \eqref{MonadOrdinary}, which becomes
\begin{equation*}
0\longrightarrow\cO_{X}(2f)^{\oplus k}\longrightarrow\cB\longrightarrow\cO_{X}(h)^{\oplus k}\longrightarrow0
\end{equation*}
where $\cB$ fits into an exact sequence of the form
\begin{align*}
0\longrightarrow \cO_{X}(h-f)^{\oplus s_1}&\oplus p^*\Omega^1_{\p3}(2h-f)^{\oplus s_2}\\
\longrightarrow\cB&\longrightarrow p^*\Omega^2_{\p3}(3h-f)^{\oplus s_3}\oplus \cO_{X}(3f)^{\oplus s_4}\longrightarrow0.
\end{align*}
with $s_1+3s_2+3s_3+s_4=r+2k$ (see \cite[Example 4.17]{A--H--M--PL}). The cohomology of sequences \eqref{Display}, the cohomology of the dual of the Euler sequence on $\p3$ and the K\"unneth formulas yield
\begin{gather*}
s_1=h^0\big(\cE(f-h)\big)-h^1\big(\cE(f-h)\big)+2k,\qquad s_2=h^1\big(\cE(f-2h)\big),\\
s_3=h^2\big(\cE(-3h-f)\big), \qquad s_4=h^4\big(\cE(-4h-f)\big)-h^3\big(\cE(-4h-f)\big)+2k.
\end{gather*}

Conversely, if the cohomology of monad \eqref{Monad2Segre} is a vector bundle, then it is an ordinary $h$--instanton bundle of rank $r$ with quantum number $k$ by Corollary \ref{cMonad}.

The above example can be generalized to every scroll over $\p1$ of dimension $n\ge4$ along the same lines of the previous Example \ref{e2Segre}, though the description of $\cB$ becomes more and more involved as $n$ increases.
\end{example}

Theorem \ref{tMonad} says something even for non--ordinary instanton bundles. 

\begin{example}
\label{eSpace1}
If $\cE$ is a non--ordinary $\cO_{\p n}(1)$--instanton bundle with $r:=\rk(\cE)$, then equality \eqref{ChernFano} implies $c_1(\cE)=-r/2$. If $n\ge3$, then $\cE$ is the cohomology of Monad \eqref{Monad} where $a,c$ are as in equality \eqref{DimensionMonad}.

If $\cB\cong\bigoplus_{i=1}^b\cO_{\p n}(\beta_i)$, then the vanishings $h^0\big(\cB(-1)\big)=h^n\big(\cB(1-n)\big)=0$ force $-1\le \beta_1\le0$. Theorem \ref{tMonad} then yields that $\cE$ is the cohomology of a quasi--linear monad of the form
\begin{equation}
\label{MonadSpace1}
\begin{aligned}
0\longrightarrow\cO_{\p n}(-2)^{\oplus \quantum}&\oplus\cO_{\p n}(-1)^{\oplus a}\\
&\longrightarrow\cO_{\p n}(-1)^{\oplus b_0}\oplus\cO_{\p n}^{\oplus b_1}\longrightarrow\cO_{\p n}^{\oplus c}\oplus\cO_{\p n}(1)^{\oplus \quantum}\longrightarrow0,
\end{aligned}
\end{equation}
where $a,c$ are as in equality \eqref{DimensionMonad}, $\quantum$ is as usual the quantum number of $\cE$ and 
$$
b_0=\frac r2+\quantum+a,\qquad b_1=\frac r2+\quantum+c.
$$
When $n=3$ and $r=2$, monad \eqref{MonadSpace1} has been described and used in \cite[Section 5.2]{El--Gr}. 

The shape of monad \eqref{MonadSpace1} is not surprising. Indeed, if $\cA$ is any ordinary rank two $\cO_{\p n}(1)$--instanton bundle, then $\cE:=\cA(-1)\oplus\cA$ is a non--ordinary $\cO_{\p n}(1)$--instanton bundle. In this case monad \eqref{MonadSpace1} is the direct sum of the monad defining $\cA$ (which is \eqref{MonadJardim}) and its first negative shift. 

Conversely, if the cohomology of the monad above is a vector bundle $\cE$, then it is an instanton bundle on $\p n$ with respect to $\cO_{\p n}(1)$ thanks to a direct computation.
\end{example}

\begin{remark}
\label{rSpace1}
We use the same notation of Example \ref{eSpace1}. Let $n\ge3$ and 
$$
\widehat{w}(\cE):=\min\{\ \chi(\cE)+(n+2)\quantum+h^{n-1}\big(\cE(-n)\big), h^1\big(\cE\big)+2\quantum+n\ \}
$$
Thanks to \cite[Theorem 3.2]{C--MR1} we know that $\reg(\cE)\le \max\{\ \widehat{w}(\cE), 2\ \}$. It is not easy to confront such a bound with the one in Proposition \ref{pRegularity}. Nevertheless, if $n=3$, $r=2$ and $\quantum\ge1$, then equalities \eqref{Serre} and \eqref{RRgeneral} yield $\widehat{w}(\cE)>w(\cE)=h^1\big(\cE\big)+1$. 
\end{remark}

\begin{example}
\label{eQuadric1}
If $\cE$ is a non--ordinary instanton bundle with rank $r$ and quantum number $\quantum$ on a smooth quadric hypersurface $X\subseteq\p {n+1}$, then $c_1(\cE)=0$ by equality \eqref{ChernFano} and the picture is considerably more complicated and far from many very well--known results: e.g. see \cite[Proposition 9.2]{A--C--G} and the references therein for $n=3$ and \cite{C--MR3} for $n\ge5$.

We only suggest what happens when $n$ is odd. We know that the bundle $\cB$ in Theorem \ref{tMonad} is aCM, hence we can write it as follows
$$
\cB\cong\bigoplus_{i=1}^b\cO_{X}(\beta_ih)\oplus\bigoplus_{j=1}^s\cS(\sigma_jh).
$$
The vanishings $h^0\big(\cB(-h)\big)=h^n\big(\cB((1-n)h)\big)=0$ force $\beta_i=0$ and $0\le \sigma_j\le1$. Since $c_1(\cE)=0$, it follows that $\cE$ is the cohomology of a monad of the form
\begin{equation}
\label{MonadQuadric1}
\begin{aligned}
0\longrightarrow\cO_{X}(-h)^{\oplus \quantum}&\oplus\cO_{X}^{\oplus a}\\
&\longrightarrow\cO_{X}^{\oplus b}\oplus\cS^{\oplus s}\oplus\cS(h)^{\oplus s}\longrightarrow\cO_{X}^{\oplus c}\oplus\cO_{X}(h)^{\oplus \quantum}\longrightarrow0,
\end{aligned}
\end{equation}
where $a,c$ are as in equality \eqref{DimensionMonad}. Computing $r$ from monad  \eqref{MonadQuadric1} we deduce
$$
s=2^{-\left[\frac{n+1}2\right]}(r+2\quantum+a+c-b).
$$

Conversely, if the cohomology of monad \eqref{MonadQuadric1} is a vector bundle, then a direct computation shows that it is an $\cO_X(h)$--instanton bundle.

Notice that if $s=a=c=0$, then monad \eqref{MonadQuadric1} coincides with monad \eqref{MonadJardim}.
\end{example}

\section{Instanton bundles of low rank on cyclic $n$--folds}
\label{sCyclic}
We briefly deal with $h$--instanton bundles on an $n$--fold $X$ which is cyclic, i.e. such that $\Pic(X)$ is free of rank $\varrho_X=1$, and let $\cO_X(H)$ be its ample generator. In what follows we focus on the case $n\ge2$, because the unique cyclic curve is $\p1$.

Let $\cO_X(h)\cong\cO_X(uH)$ and $\omega_X\cong\cO_X(vH)$. The ampleness of $h$ implies $u\ge1$. Moreover, if $v<0$, then $X$ is a Fano $n$--fold, hence $v\ge-n-1$ and equality holds if and only if $X\cong\p n$  by \cite[Corollary 2]{Ka--Ko}.

\begin{lemma}
\label{lCyclicChern}
Let $X$ be a cyclic $n$--fold with $n\ge2$ endowed with an ample and globally generated line bundle $\cO_X(h)$. 

If $\cE$ is a $h$--instanton bundle with defect $\defect$, then 
\begin{equation}
\label{Cyclic}
2c_1(\cE)={\rk(\cE)}((n+1-\defect)h+K_X).
\end{equation}
\end{lemma}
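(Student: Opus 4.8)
The plan is to read off the divisor-class identity \eqref{Cyclic} from the numerical slope identity \eqref{Slope} of Theorem \ref{tSlope}, using that on a cyclic variety a divisor class is detected by its intersection number against $h^{n-1}$.

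First I would verify that the hypotheses of Theorem \ref{tSlope} are met. Since $\cE$ is a $h$--instanton bundle with defect $\defect$, Definition \ref{dMalaspinion} directly supplies all the vanishings required there: evaluating the first two bullets at $t=0$ gives $h^0\big(\cE(-h)\big)=h^n\big(\cE((\defect-n)h)\big)=0$ and $h^i\big(\cE(-(i+1)h)\big)=h^{n-i}\big(\cE((\defect-n+i)h)\big)=0$ for $1\le i\le n-2$, and substituting $j=n-i$ rewrites the latter vanishing as $h^j\big(\cE((\defect-j)h)\big)=0$ for $2\le j\le n-1$; the third bullet gives $\defect h^i\big(\cE(-ih)\big)=0$ for $2\le i\le n-2$; and the fourth bullet gives $h^1\big(\cE(-h)\big)=h^{n-1}\big(\cE((\defect-n)h)\big)$, whence in particular $\defect h^1\big(\cE(-h)\big)=\defect h^{n-1}\big(\cE((\defect-n)h)\big)$. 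Thus Theorem \ref{tSlope} applies, and since $\cE$ is a $h$--instanton bundle it yields
\begin{equation*}
c_1(\cE)h^{n-1}=\frac{\rk(\cE)}2\big((n+1-\defect)h^n+K_Xh^{n-1}\big).
\end{equation*}

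Next I would upgrade this numerical identity to \eqref{Cyclic}. As $X$ is cyclic, $A^1(X)\cong\Pic(X)$ is free of rank one, generated by $H$; writing $h=uH$ and $K_X=vH$ and setting $d:=H^n$, ampleness of $H$ gives $d>0$. Consider the degree homomorphism $\phi\colon A^1(X)\to A^n(X)\cong\bZ$, $D\mapsto Dh^{n-1}$. Since $\phi(H)=H(uH)^{n-1}=u^{n-1}d>0$, the map $\phi$ is injective on $A^1(X)\cong\bZ\cdot H$. Both sides of \eqref{Cyclic} lie in $A^1(X)$, and applying $\phi$ to $2c_1(\cE)$ and to $\rk(\cE)((n+1-\defect)h+K_X)$ produces exactly the left-- and right--hand sides of twice the displayed identity above; these agree, so the injectivity of $\phi$ forces the two classes to coincide, which is \eqref{Cyclic}.

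The argument is essentially a corollary of Theorem \ref{tSlope}, so no step is genuinely hard. The only point deserving care is the final one, the passage from an intersection number to an equality of classes in $\Pic(X)$: this is precisely where the cyclicity hypothesis $\varrho_X=1$ enters, since it guarantees that cupping with $h^{n-1}$ is injective on $A^1(X)$. Without it one would recover only the numerical relation \eqref{Slope} and not the sharper class equality \eqref{Cyclic}.
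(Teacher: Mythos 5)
Your proof is correct and follows essentially the same route as the paper: the paper likewise derives the class identity by applying equality \eqref{Slope} to $c_1(\cE)=\varepsilon H$, $h=uH$, $\omega_X=vH$, obtaining $(2\varepsilon-\rk(\cE)((n+1-\defect)u+v))u^{n-1}H^n=0$ and concluding from $H^n>0$ via Nakai--Moishezon. Your explicit verification of the hypotheses of Theorem \ref{tSlope} and your phrasing of the last step as injectivity of $D\mapsto Dh^{n-1}$ on $A^1(X)$ are just more detailed renderings of the same argument.
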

\begin{proof}
If $c_1(\cE)=\varepsilon H$, then $
(2\varepsilon-\rk(\cE)((n+1-\defect)u+v))u^{n-1}H^n=0
$
by equality \ref{Slope}, hence the statement follows from the ampleness of $\cO_X(H)$ and the Nakai--Moishezon criterion on $X$ (see \cite[Theorem A.5.1]{Ha2}).
\end{proof}

In particular, if $(n+1-\defect)u+v$ is odd, then $\rk(\cE)$ must be even. In what follows we will deal with $h$--instanton bundles of small rank on a cyclic manifold $X$. 

\begin{proposition}
\label{pCyclicLine}
Let $X$ be a cyclic $n$--fold with $n\ge2$ endowed with an ample and globally generated line bundle $\cO_X(h)$. Assume also that the ample generator $\cO_X(H)$ of $\Pic(X)$ is effective.

If $\mathcal L\in\Pic(X)$ is a $h$--instanton line bundle with defect $\defect$, then one of the following assertions holds.
\begin{enumerate}
\item $X\cong\p n$, $\cO_X(h)\cong\cO_{\p n}(1)$, $\defect=0$, $\mathcal L\cong\cO_{\p{n}}$.
\item $X\cong\p{3}$, $\cO_X(h)\cong\cO_{\p{3}}(2)$, $\defect=1$, $\mathcal L\cong\cO_{\p{3}}(1)$.
\item $X\subseteq\p {n+1}$ is the smooth quadric hypersurface, $\cO_X(h)\cong\cO_X\otimes\cO_{\p {n+1}}(1)$, $\defect=1$, $\mathcal L\cong\cO_X$, $n\ge3$.
\end{enumerate}
\end{proposition}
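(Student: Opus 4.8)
The plan is to reduce the statement to a small arithmetic problem in the three integers that encode $X$, the polarization and $\mathcal L$, and then to eliminate the spurious solutions by a parity argument together with the cyclicity hypothesis. Writing $\cO_X(h)\cong\cO_X(uH)$, $\omega_X\cong\cO_X(vH)$ and $\mathcal L\cong\cO_X(\ell H)$ with $u\ge1$ and $\ell\in\bZ$, the first ingredient is Lemma \ref{lCyclicChern} applied to the rank one bundle $\mathcal L$: it gives $2\ell=(n+1-\defect)u+v$, so in particular $(n+1-\defect)u+v$ is even. I expect this parity constraint to carry much of the weight of the argument.

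Next I would extract numerical inequalities from the two extremal vanishings of Theorem \ref{tCharacterization}(2), namely $h^0\big(\mathcal L(-h)\big)=0$ and $h^n\big(\mathcal L((\defect-n)h)\big)=0$. Since $\cO_X(H)$ is ample and effective, one has $h^0\big(\cO_X(mH)\big)=0$ precisely when $m<0$; applying this to the first vanishing directly and to the second through Serre duality \eqref{Serre} yields $\ell<u$ and $\ell>(n-\defect)u+v$. Substituting the Chern relation to remove $\ell$ collapses both of these to the single estimate $(n-1-\defect)u+v\le-2$.

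All the geometry now follows from this estimate. Since $n\ge2$ gives $n-1-\defect\ge0$, it already forces $v\le-2$, so $X$ is Fano and $-(n+1)\le v$; reinserting this bound yields $(n-1-\defect)u\le n-1$, and the inequality $v\le-(n+1)+\defect$ obtained from $u\ge1$ shows that the only admissible values of $v$ are $v=-(n+1)$, which forces $X\cong\p n$ by \cite[Corollary 2]{Ka--Ko}, and $v=-n$, which forces $X$ to be the smooth quadric hypersurface by the index classification recalled in the introduction.

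The final step pins down $u$ and $\ell$ and discards the impossible solutions. If $\defect=0$ the estimate forces $u=1$ and $v=-(n+1)$, so $X\cong\p n$, $\ell=0$ and $\mathcal L\cong\cO_{\p n}$, which is case (1). If $\defect=1$ then $v\in\{-n,-(n+1)\}$. When $v=-n$, $X$ is the quadric and the estimate reads $(n-2)u\le n-2$, whence $u=1$ and $\ell=0$, giving $\mathcal L\cong\cO_X$; cyclicity forces $n\ge3$, since $\p1\times\p1$ is not cyclic, so this is case (3). When $v=-(n+1)$, $X\cong\p n$ and $2\ell=n(u-1)-1$, whose evenness together with the estimate $(n-2)u\le n-1$ leaves only $n=3$, $u=2$, $\ell=1$, namely $\mathcal L\cong\cO_{\p3}(1)$ with $\cO_X(h)\cong\cO_{\p3}(2)$, which is case (2). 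The main obstacle I anticipate is organizational rather than conceptual: ensuring the case split is genuinely exhaustive and that each surviving arithmetic solution is matched to the correct variety through the index facts, with no deeper geometric input required.
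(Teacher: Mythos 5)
Your argument is correct and follows essentially the same route as the paper: Lemma \ref{lCyclicChern} gives the parity relation $2\ell=(n+1-\defect)u+v$, the vanishing $h^0\big(\mathcal L(-h)\big)=0$ together with effectivity of $H$ gives $(n-1-\defect)u+v\le-2$, and the Fano index bound plus \cite[Corollary 2]{Ka--Ko} and the parity/cyclicity arguments finish the case analysis exactly as in the paper's proof. The only (harmless) difference is that you also invoke the vanishing $h^n\big(\mathcal L((\defect-n)h)\big)=0$ via Serre duality, which, as you observe, collapses to the same inequality and is therefore redundant.
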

\begin{proof}
Let $\cO_X(H)$ be the ample generator of $\Pic(X)$. Equality \eqref{Cyclic} yields 
$$
\mathcal L^2\cong\cO_X((n+1-\defect)h+K_X)\cong\cO_X((u(n+1-\defect)+v)H).
$$
It follows that $u(n+1-\defect)+v=2w$ for some integer $w$ and $\mathcal L\cong\cO_X(wH)$. The conditions $h^0\big(\mathcal L(-h)\big)=0$ and $h^0\big(\cO_X(H)\big)\ne0$ imply $w\le u-1$, hence
\begin{equation}
\label{CyclicLine}
u(n-1-\defect)+v\le -2.
\end{equation}
Since $u\ge1$, it then follows $v\le -n-1+\defect$, i.e. either $X\cong\p n$ or $X\subseteq \p{n+1}$ is the smooth quadric hypersurface and $\defect=1$ by \cite[Corollary 2]{Ka--Ko}.

In the latter case $v=-n$, hence inequality \eqref{CyclicLine} becomes $(n-2)(u-1)\le0$. If $n=2$, then $\varrho_X\ge2$, hence $n\ge3$ and $u=1$ necessarily. We deduce that $\mathcal L\cong\cO_X$ and $\cO_X(h)\cong\cO_X\otimes\cO_{\p{n+1}}(1)$, i.e. assertion (3) holds true.

If $X\cong\p n$ and $\defect=0$, then inequality \eqref{CyclicLine} becomes $(n-1)(u-1)\le0$, hence $\cO_X(h)\cong\cO_{\p n}(1)$ and $\mathcal L\cong\cO_X$, i.e. assertion (1) holds. 

If $X\cong\p n$ and $\defect=1$ the same argument leads to $(n-2)(u-1)\le1$. Both the cases $u=1$ (and arbitrary $n$) and $n=2$ (and arbitrary $u$) can be excluded because $u(n+1-\defect)+v$ is not even in these cases. Thus the only possible case is $n=3$ and $u=2$ leading to $\cO_X(h)\cong\cO_{\p3}(2)$ and $\mathcal L\cong\cO_{\p3}(1)$, which is assertion (2).
\end{proof}

If $\cE$ is a vector bundle on $X$, then $t_h^{\cE}:=\left[-{\mu_h(\cE)}/{h^n}\right]$ is the unique  $t\in\bZ$ such that
$-\rk(\cE)h^n<c_1(\cE(th))h\le 0$. The sheaf $\cE_{norm,h}:=\cE(t_h^{\cE}h)$ is called {\sl normalization of $\cE$ (with respect to $\cO_X(h)$)}. 

\begin{lemma}
\label{lHoppe}
Let $X$ be a cyclic manifold.

If $\cO_X(H)$ is the ample generator of $\Pic(X)$ and $\cE$ is a vector bundle on $X$ with $c_1(\cE)=\varepsilon H$, then the following assertions hold.
\begin{enumerate}
\item $\cE$ is $\mu$--semistable (resp. $\mu$--stable) with respect to an ample line bundle if and only the same is true with respect to $\cO_X(H)$.
\item If $\cE$ is $\mu$--semistable (resp. $\mu$--stable), then $h^0\big(\cE_{norm,H}(-H)\big)=0$, (resp. $h^0\big(\cE_{norm,H}\big)=0$).
\item If $\rk(\cE)=2$ and $h^0\big(\cE_{norm,H}\big)=0$, then $\cE$ is $\mu$--stable. 
\item If $\rk(\cE)=2$, $\varepsilon$ is even and $h^0\big(\cE_{norm,H}(-H)\big)=0$, then $\cE$ is $\mu$--semistable.
\item If $\rk(\cE)=2$ and $\varepsilon$ is odd, then $\cE$ is $\mu$--stable if and only if it is $\mu$--semistable.
\end{enumerate}
\end{lemma}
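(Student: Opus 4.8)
The plan is to run the classical Hoppe-type argument, exploiting that on a cyclic manifold every line bundle is $\cO_X(mH)$, so that $\mu$-(semi)stability of a rank two bundle can be read off from its sub-line-bundles alone. First I would dispose of assertion (1): any ample line bundle on $X$ is $\cO_X(aH)$ with $a\ge 1$, and for every subsheaf $\cB$ one has $\mu_{aH}(\cB)=a^{n-1}\mu_H(\cB)$; since $a^{n-1}>0$, all the slope inequalities defining (semi)stability are preserved, which is (1). By the same token twisting by $\cO_X(t_H^{\cE}H)$ leaves (semi)stability unchanged, so I may replace $\cE$ by its normalization $\cE_{norm,H}$, whose first Chern class is $\eta H$ with $-\rk(\cE)<\eta\le 0$; in the rank two case $\eta\in\{\,-1,0\,\}$, with $\eta=0$ exactly when $\varepsilon$ is even. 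Throughout I would use the elementary monotonicity $h^0(\cF(-H))\le h^0(\cF)$, which follows because multiplication by a nonzero section of $\cO_X(H)$ embeds $H^0(\cF(-H))$ into $H^0(\cF)$.

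For assertion (2) I would argue by contradiction on the normalization. A nonzero section of $\cE_{norm,H}(-H)$ produces an inclusion $\cO_X(H)\hookrightarrow\cE_{norm,H}$ whose saturation is a rank one (hence, when $\rk(\cE)\ge2$, proper) subsheaf of slope $\ge H^n>0\ge\mu_H(\cE_{norm,H})$, contradicting $\mu$-semistability; thus $h^0(\cE_{norm,H}(-H))=0$. Likewise a nonzero section of $\cE_{norm,H}$ gives $\cO_X\hookrightarrow\cE_{norm,H}$ with saturation of slope $\ge 0\ge\mu_H(\cE_{norm,H})$, already incompatible with $\mu$-stability, so $h^0(\cE_{norm,H})=0$.

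For the rank two assertions I would set up the dictionary between cohomology and sub-line-bundles. Every saturated rank one subsheaf of $\cE_{norm,H}$ is some $\cO_X(qH)$, so $m:=\max\{\,q:h^0(\cE_{norm,H}(-qH))\ne0\,\}$ is the maximal degree of a sub-line-bundle, and by the monotonicity above the set of such $q$ is exactly $\{\,q\le m\,\}$. Since for rank two every proper saturated subsheaf is a line bundle, $\cE_{norm,H}$ is $\mu$-stable iff $2m<\eta$ and $\mu$-semistable iff $2m\le\eta$. The two hypotheses translate as $h^0(\cE_{norm,H})=0\Leftrightarrow m\le-1$ and $h^0(\cE_{norm,H}(-H))=0\Leftrightarrow m\le 0$. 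Then (3) follows because $m\le-1$ forces $2m\le-2<-1\le\eta$; (4) follows because for $\varepsilon$ even one has $\eta=0$, and $m\le 0$ gives $2m\le 0=\eta$; and (5) follows because for $\varepsilon$ odd one has $\eta=-1$, which is odd, so $2m=\eta$ is impossible and the conditions $2m<\eta$ and $2m\le\eta$ coincide, i.e. $\mu$-stable $\Leftrightarrow$ $\mu$-semistable.

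The one genuinely delicate point, and the step I would treat most carefully, is the passage from a single cohomological vanishing to the bound on $m$ in assertions (3) and (4): this is precisely where the monotonicity $h^0(\cF(-H))\le h^0(\cF)$, i.e. the existence of a nonzero section of the generator $\cO_X(H)$, is indispensable, for otherwise the set of $q$ with $h^0(\cE_{norm,H}(-qH))\ne0$ need not be an interval and the implications can break down. Everything else is routine bookkeeping with the normalization, and assertion (5) needs only the parity of $\varepsilon$ and no positivity input at all.
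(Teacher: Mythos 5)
Your proof is correct and follows the same route as the paper: assertion (1) is proved there by exactly your rescaling $\mu_{aH}=a^{n-1}\mu_H$, while for assertions (2)--(5) the paper does not write an argument at all but cites \cite[Lemma II.1.2.5]{O--S--S} and asserts that the proof for $\p n$ carries over to any cyclic manifold. What you have written out is precisely that carried-over Hoppe argument (saturated rank--one subsheaves are line bundles $\cO_X(qH)$ by normality, reflexivity and local factoriality --- the same chain the paper uses in Example \ref{eFlagExtension} --- plus the translation between sub-line-bundles and the vanishings $h^0\big(\cE_{norm,H}\big)$, $h^0\big(\cE_{norm,H}(-H)\big)$), so there is nothing to choose between the two beyond the fact that you supply the details the paper outsources.

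One remark on the point you single out as delicate: you are right that the monotonicity $h^0\big(\cF(-H)\big)\le h^0\big(\cF\big)$, i.e.\ the existence of a nonzero section of $\cO_X(H)$, is genuinely used in (3) and (4) (on a hypothetical cyclic manifold with non-effective ample generator, $\cO_X(H)\oplus\cO_X(-H)$ would satisfy $h^0\big(\cE_{norm,H}\big)=0$ yet be unstable). This effectivity is not part of the definition of a cyclic manifold and is added as an explicit hypothesis in the neighbouring Proposition \ref{pCyclicLine}, but is left implicit both in your argument and in the paper's claim that the \cite{O--S--S} proof generalizes verbatim; since every application in the paper is to varieties with effective generator, this is a shared, harmless omission rather than a gap specific to your proof.
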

\begin{proof}
If $\cO_X(h)$ is ample, then there is a positive integer $u$ such that $\cO_X(h)\cong\cO_X(uH)$, hence $\mu_h(\cE)=u^{n-1}\mu_H(\cE)$ for each sheaf $\cE$ on $X$. Thus, the stability properties of $\cE$ with respect to $\cO_X(h)$ and $\cO_X(H)$ are the same.

For the other assertions, see \cite[Lemma II.1.2.5]{O--S--S} (though the result is stated there for $X=\p n$ and  $\field=\bC$, it is easy to check that the proof holds for every cyclic manifold without restrictions on $\field$).
\end{proof}

If $\cE$ is a $h$--instanton bundle on $X$, then equality \eqref{Cyclic} holds for $c_1(\cE)$. In the following proposition we deal with the semistability properties of rank two $h$--instanton bundles on a cyclic $n$--fold (see \cite[Proposition 2]{J--MR} for a similar result).

\begin{proposition}
\label{pCyclicStable}
Let $X$ be a cyclic $n$--fold with $n\ge2$ endowed with an ample and globally generated line bundle $\cO_X(h)$.

If $\cE$ is a rank two $h$--instanton bundle on $X$ with defect $\defect$, then the following assertions hold.
\begin{enumerate}
\item $\cE$ is $\mu$--semistable unless possibly when: 
\begin{enumerate}
\item $X\cong\p n$, $\cO_X(h)\cong\cO_{\p n}(1)$, $\defect=1$;
\item $X\cong\p2$, $\cO_X(h)\cong\cO_{\p2}(u)$, $\defect=1$.
\end{enumerate}
\item If $\cE$ is $\mu$--semistable, then it is also $\mu$--stable unless possibly when:
\begin{enumerate}
\item $X\cong\p n$, $\cO_X(h)\cong\cO_{\p n}(1)$, $\defect=0$;
\item $X\cong\p3$,  $\cO_X(h)\cong\cO_{\p3}(2)$, $\defect=1$;
\item $X\subseteq\p{n+1}$ is a smooth quadric hypersurface, $\cO_X(h)\cong\cO_X\otimes \cO_{\p {n+1}}(1)$, $\defect=1$, $n\ge3$.
\end{enumerate}
\end{enumerate}
\end{proposition}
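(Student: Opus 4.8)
The plan is to reduce everything to the ample generator $\cO_X(H)$ and then to apply the Hoppe-type criteria collected in Lemma \ref{lHoppe}. First I would write $\cO_X(h)\cong\cO_X(uH)$ and $\omega_X\cong\cO_X(vH)$ as in the discussion preceding Lemma \ref{lCyclicChern}, with $u\ge1$ and (by \cite[Corollary 2]{Ka--Ko}) $v\ge -n-1$, equality holding only for $X\cong\p n$. By Lemma \ref{lHoppe}(1) the $\mu$-(semi)stability of $\cE$ with respect to $\cO_X(h)$ coincides with that with respect to $\cO_X(H)$, so I may work with $\cO_X(H)$ throughout. By equality \eqref{Cyclic} the rank two bundle $\cE$ has $c_1(\cE)=\varepsilon H$ with $\varepsilon=u(n+1-\defect)+v$, and the parity of $\varepsilon$ selects the relevant item of Lemma \ref{lHoppe}. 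The only input available on global sections is the instanton vanishing of Definition \ref{dMalaspinion} (equivalently Proposition \ref{pQuantum}), namely $h^0\big(\cE(-sh)\big)=h^0\big(\cE(-suH)\big)=0$ for every $s\ge1$; crucially this controls $h^0$ \emph{only} at the negative multiples of $uH$.

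For the stability statement (2) I would argue by contradiction. When $\varepsilon$ is odd, Lemma \ref{lHoppe}(5) identifies $\mu$-stability with $\mu$-semistability and nothing further is needed. When $\varepsilon$ is even and $\cE$ is $\mu$-semistable but not $\mu$-stable, there is a saturated sub-line bundle $\mathcal L\hookrightarrow\cE$ of the same slope, which forces $\mathcal L\cong\cO_X((\varepsilon/2)H)$. The inclusion gives $h^0\big(\mathcal L(-h)\big)\le h^0\big(\cE(-h)\big)=0$, and combined with equality \eqref{Cyclic} for the line bundle $\mathcal L$ this shows that $\mathcal L$ is itself a $h$-instanton line bundle with defect $\defect$. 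Proposition \ref{pCyclicLine} then leaves only its three configurations, which are exactly the exceptional cases (a), (b), (c) of assertion (2).

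For the semistability statement (1) I would use Lemma \ref{lHoppe}(2),(4),(5) to reduce $\mu$-semistability of the rank two bundle to a single vanishing of the normalization: when $\varepsilon$ is even it is $h^0\big(\cE_{norm,H}(-H)\big)=h^0\big(\cE((-\varepsilon/2-1)H)\big)=0$, while the case $\varepsilon$ odd is subsumed by the stability analysis through Lemma \ref{lHoppe}(5). Writing $\varepsilon=u(n+1-\defect)+v$, the twist $-\varepsilon/2-1$ equals $-u$ precisely when $u(n-1-\defect)+v=-2$, and more generally satisfies $-\varepsilon/2-1\le -u$ exactly when $u(n-1-\defect)+v\ge -2$, so that the instanton vanishing at negative multiples of $uH$ applies and yields semistability. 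The complementary range $u(n-1-\defect)+v<-2$ is the one appearing in inequality \eqref{CyclicLine}, and by the bound $v\ge -n-1$ together with the parity of $\varepsilon$ it is realized only in the two configurations $X\cong\p n$, $\cO_X(h)\cong\cO_{\p n}(1)$, $\defect=1$ and $X\cong\p 2$, $\defect=1$, which are precisely the exceptions (a), (b) of assertion (1).

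The main obstacle is the finite numerical bookkeeping isolating these two lists. It consists in determining exactly when the normalized (respectively destabilizing) twist fails to be a negative multiple of $uH$—the only twists at which the instanton definition guarantees the vanishing of $h^0$—and then exploiting $u\ge1$, the parity of $\varepsilon$, and the Kobayashi--Ochiai inequality $v\ge -n-1$ with equality iff $X\cong\p n$, exactly as in inequality \eqref{CyclicLine} and in the proof of Proposition \ref{pCyclicLine}, to verify that the surviving configurations are those displayed. Everything else is a routine comparison of $h^0$ along the lattice $u\bZ$ using a nonzero section of the globally generated $\cO_X(h)$.
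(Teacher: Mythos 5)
Your overall strategy is viable and, for assertion (2), genuinely different from the paper's. The paper proves both assertions by one mechanism: it sets $\varepsilon=u(n+1-\defect)+v$, notes $t_H^{\cE}=\left[-\varepsilon/2\right]$, observes via Lemma \ref{lHoppe} that $\mu$--semistability (resp.\ $\mu$--stability) follows once $t_H^{\cE}-1\le -u$ (resp.\ $t_H^{\cE}\le -u$), and then runs the finite case analysis on $u,v,n,\defect$ (using $v\ge -n-1$ with equality only for $\p n$, and the parity of $\varepsilon$) three separate times, with thresholds $u(n-1-\defect)+v+3\le0$, $+2\le0$ and $+1\le0$. Your part (1) is essentially that argument; your part (2) instead extracts from a strictly $\mu$--semistable $\cE$ a saturated sub-line bundle $\mathcal L\cong\cO_X((\varepsilon/2)H)$ and feeds it into Proposition \ref{pCyclicLine}, which is an economical way to recover the list (a)--(c) without redoing the arithmetic.

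Two points need repair, and one is a genuine logical flaw as written. First, your claim that $\mathcal L$ ``is itself a $h$--instanton line bundle'' is unjustified: Definition \ref{dMalaspinion} demands many vanishings (e.g.\ $h^1\big(\mathcal L(-2h)\big)=h^n\big(\mathcal L((\defect-n)h)\big)=0$) that do not follow from the only facts you actually have, namely $2c_1(\mathcal L)=(n+1-\defect)h+K_X$ and $h^0\big(\mathcal L(-h)\big)\le h^0\big(\cE(-h)\big)=0$. The conclusion survives only because the proof of Proposition \ref{pCyclicLine} uses nothing beyond these two facts, so you must rerun inequality \eqref{CyclicLine} and its case analysis under these weaker hypotheses rather than cite the proposition as a black box. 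Second, Proposition \ref{pCyclicLine} assumes the ample generator $\cO_X(H)$ is effective, an assumption absent from the statement you are proving; effectivity of positive multiples of $H$ is exactly what lets one pass from the known vanishing $h^0\big(\cE(-uH)\big)=0$ to vanishings at twists that are not multiples of $u$, and your ``comparison along the lattice $u\bZ$ using a section of $\cO_X(h)$'' only moves in steps of $u$, not of $1$. Finally, the numerical bookkeeping for part (1), which you defer as routine, is where the exceptional list is actually produced; as written you assert rather than verify that the failure ranges $u(n-1-\defect)+v\le-3$ ($\varepsilon$ even) and $\le-2$ ($\varepsilon$ odd) occur only in configurations (a) and (b).
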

\begin{proof}
Since $\varrho_X=1$, then equality \eqref{Cyclic} yields $c_1(\cE)=(n+1-\defect)h+K_X=\varepsilon H$, where $\varepsilon=u(n+1-\defect)+v$.

By assumption we have $h^0\big(\cE(-h)\big)=0$. Since $t_H^{\cE}:=\left[-{\varepsilon}/{2}\right]$, it follows from Lemma \ref{lHoppe} that when $\varepsilon$ is even and $t_H^{\cE}-1\le -u$ (resp. $t_H^{\cE}\le -u$), then $\cE$ is $\mu$--semistable (resp. $\mu$--stable). When $\varepsilon$ is odd, the same lemma yields the $\mu$--stability and $\mu$--semistability of $\cE$ when $t_H^{\cE}\le -u$.  In what follows we will assume $n\ge3$: the computations in the case $n=2$ are similar.

If $\varepsilon$ is even, then $\cE$ is not $\mu$--semistable if $u(n-1-\defect)+v+3\le 0$. If $u\ge2$, then $v\le -2n-1+2\defect$. Since $v\ge-n-1$, it follows that $n\le 2\defect\le 2$, contradicting $n\ge3$. If $u=1$, then $v\le -n-2+\defect$, hence necessarily $\defect=1$ and $X\cong\p n$: thus $\varepsilon=-1$ which is not even, a contradiction.

If $\varepsilon$ is odd, then $\cE$ is not $\mu$--(semi)stable if $u(n-1-\defect)+v+2\le 0$.  If $u\ge3$, then $v\le -3n+1+3\defect$. Since $v\ge-n-1$, it follows that $2n\le 3+2\defect\le 5$, contradicting $n\ge3$. If $u=2$, the same argument leads to $\defect=1$ and $X\cong\p3$ necessarily: thus $\varepsilon=2$ which is not odd, a contradiction. Finally if $u=1$, arguing as above one deduces that the only admissible case is $\defect=1$ and $X\cong\p n$.

If $\varepsilon$ is even, then $\cE$ is not $\mu$--stable if $u(n-1-\defect)+v+1\le 0$. If $u\ge4$, then $v\le -4n+3+4\defect$ and one can exclude this case arguing as above. If $u=3$, then $\defect=1$ and $X\cong\p3$ necessarily, hence $\varepsilon=5$ which is not even, a contradiction. If $u=2$, then again one obtains $\defect=1$ and $X$ is either $\p3$ or a smooth quadric hypersurface in $\p{n+1}$: computing $\varepsilon$, one deduces that only the former case is possible. Finally if $u=1$, then $X$ is either $\p3$ or a smooth quadric hypersurface in $\p{n+1}$ or $v=-n+1$: the computation of $\varepsilon$ yields either $\defect=0$ and $X\cong \p n$ or $\defect=1$ and $X$ is a smooth quadric hypersurface in $\p{n+1}$
\end{proof}

\begin{remark}
\label{rCyclic}
Proposition \ref{pCyclicStable} is sharp. Indeed, the $\cO_{\p n}(u-1)\oplus\cO_{\p n}(u-2)$ is an unstable non--ordinary $\cO_{\p 2}(u)$--instanton bundle if either $u=1$ or $n=2$. 

Similarly, $\cO_{\p n}^{\oplus2}$ is a strictly $\mu$--semistable ordinary $\cO_{\p n}(1)$--instanton bundle and $\cO_{\p3}(1)^{\oplus2}$ is a  rank two strictly $\mu$--semistable non--ordinary $\cO_{\p3}(2)$--instanton bundle. Finally, if $X\subseteq\p{n+1}$ is a smooth quadric hypersurface, then  $\cO_X^{\oplus2}$ is a  rank two strictly $\mu$--semistable non--ordinary $\cO_X\otimes\cO_{\p{n+1}}(1)$--instanton bundle.
\end{remark}

More generally than projective spaces and smooth hyperquadric, other important examples of cyclic $n$--folds are provided by cyclic Fano $3$--folds. If $X$ is such an $n$--fold, then $\Pic(X)$ is generated by the fundamental line bundle $\cO_X(h)$. If $\cE$ is a $h$--instanton bundle on $X$, then equality \eqref{Cyclic} becomes
\begin{equation}
\label{ChernFano}
2c_1(\cE)={\rk(\cE)}(4-\defect-i_X)h
\end{equation}
where, as usual, $i_X$ is the index of $X$. In particular ordinary (resp. non--ordinary) $h$--instanton bundles must have even rank when $i_X=1,3$ (resp. $i_X=2,4$).

We say that a rank two vector bundle $\cE$ on $X$ is a {\sl classical instanton bundle} if $c_1(\cE)=-\varepsilon h$ with $\varepsilon\in \{\ 0,1\ \}$ and $h^0\big(\cE\big)=h^1\big(\cE(-q_X^\varepsilon h)\big)=0$, where
$$
q_X^\varepsilon:=\left[\frac{i_X+1-\varepsilon}2\right]
$$ 
(see \cite[Definition 1.1]{A--C--G}). Notice that $0\le q_X^\varepsilon\le 2$. Moreover, $q_X^\varepsilon=0$ if and only if $i_X=\varepsilon=1$, and $q_X^\varepsilon=2$ if and only if either $X\cong\p3$ or $X\subseteq\p4$ is a smooth quadric hypersurface and $\defect=1$.

Below, we confront the notion of classical instanton bundle with Definition \ref{dMalaspinion}. 

\begin{proposition}
\label{pFano}
Let $X$ be a cyclic Fano $3$--fold with very ample fundamental line bundle $\cO_X(h)$.

If $\cE$ is a rank two vector bundle with $c_1(\cE)=(4-\defect-i_X)h$ where $\defect\in\{\ 0,1\ \}$, then the following assertions hold.
\begin{enumerate}
\item If $\cE$ is an $h$--instanton bundle, then its defect is $\defect$ and:
\begin{enumerate}
\item if $(i_X,\defect)\not\in\{\ (4,0),(4,1),(3,1)\ \}$, then $\cE_{norm,h}$ is a classical instanton bundle;
\item if $(i_X,\defect)\in\{\ (4,0),(4,1),(3,1)\ \}$, then $\cE_{norm,h}$ is a classical instanton bundle if and only if $h^0\big(\cE\big)=0$.
\end{enumerate}
\item If $\cE_{norm,h}$ is a classical instanton bundle and:
\begin{enumerate}
\item $(i_X,\defect)\ne(1,0)$, then $\cE$ is a $h$--instanton bundle with defect $\defect$;
\item $(i_X,\defect)=(1,0)$, then $\cE$ is a $h$--instanton bundle with defect $\defect$ if and only if $h^0\big(\cE_{norm,h}(h)\big)=0$.
\end{enumerate}
\end{enumerate}
\end{proposition}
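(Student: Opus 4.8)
The plan is to reduce both notions to a short list of cohomological vanishings and then to compare them according to the pair $(i_X,\defect)$. Throughout write $\cF:=\cE_{norm,h}=\cE(th)$, where $t:=t_h^{\cE}$, so that $c_1(\cF)=-\varepsilon h$ with $\varepsilon\in\{\ 0,1\ \}$. Since $X$ is cyclic we have $\Pic(X)=\bZ\cO_X(h)$ and $\omega_X\cong\cO_X(-i_Xh)$, so the hypothesis $c_1(\cE)=(4-\defect-i_X)h$ reads $c_1(\cE)=(n+1-\defect)h+K_X$ with $n=3$; thus $\cE$ is a candidate rank two orientable bundle with defect $\defect$. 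The definition of the normalization forces $4-\defect-i_X+2t=-\varepsilon$, whence $\varepsilon\equiv\defect+i_X\pmod2$ and $t=\tfrac12(\defect+i_X-\varepsilon)-2$. Distinguishing the two parities $\defect=0$ and $\defect=1$ and using $q_X^\varepsilon=\big[\tfrac{i_X+1-\varepsilon}2\big]$, a direct check gives the key identity $t=q_X^\varepsilon-2$ in every case. In particular the second classical vanishing $h^1\big(\cF(-q_X^\varepsilon h)\big)=0$ is exactly $h^1\big(\cE(-2h)\big)=0$, regardless of $(i_X,\defect)$.

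Next I would invoke Proposition \ref{pSpecial} for $n=3$: given the prescribed value of $c_1(\cE)$, the bundle $\cE$ is an orientable $h$--instanton bundle with defect $\defect$ if and only if $h^0\big(\cE(-h)\big)=0$ and $h^1\big(\cE(-2h)\big)=0$; moreover, if $\cE$ is any $h$--instanton bundle, then Lemma \ref{lCyclicChern}, read in $\Pic(X)=\bZ\cO_X(h)$, forces its defect to equal $\defect$ by comparison of first Chern classes. By the previous step the classical instanton conditions on $\cF$ are $h^0\big(\cE(th)\big)=0$ together with the same $h^1\big(\cE(-2h)\big)=0$. Hence the two ``$h^1$'' conditions always coincide, and the whole comparison collapses to the single $h^0$ condition, namely $h^0\big(\cE(-h)\big)=0$ on the instanton side against $h^0\big(\cE(th)\big)=0$ on the classical side.

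Finally I would compare these $h^0$ conditions by monotonicity: as $\cO_X(h)$ is very ample, multiplication by a nonzero section embeds $\cE((m-1)h)\hookrightarrow\cE(mh)$, so $h^0\big(\cE(mh)\big)$ is nondecreasing in $m$. Computing $t$ gives $t=-1$ for $(i_X,\defect)\in\{\ (1,1),(2,0),(2,1),(3,0)\ \}$, $t=-2$ for $(1,0)$, and $t=0$ for $(i_X,\defect)\in\{\ (3,1),(4,0),(4,1)\ \}$. When $t=-1$ the two $h^0$ conditions are literally identical, so both implications (1) and (2) hold unconditionally. When $t=-2$ (the case $(1,0)$) the classical condition $h^0\big(\cE(-2h)\big)=0$ is implied by $h^0\big(\cE(-h)\big)=0$ but not conversely; this yields (1) for free, while (2) needs the extra hypothesis $h^0\big(\cE(-h)\big)=h^0\big(\cF(h)\big)=0$, which is precisely the condition in (2b). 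When $t=0$ (the cases $(3,1),(4,0),(4,1)$) the instanton condition $h^0\big(\cE(-h)\big)=0$ is implied by $h^0\big(\cE\big)=0$ but not conversely; this yields (2) for free, while (1) needs the extra hypothesis $h^0\big(\cE\big)=0$, which is precisely the condition in (1b).

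I expect the only genuine difficulty to be the bookkeeping: verifying the identity $t=q_X^\varepsilon-2$ uniformly and tabulating $t$ so that the split into special and non--special cases matches the statement of the proposition. All of the cohomological content is carried by Proposition \ref{pSpecial}, by Lemma \ref{lCyclicChern}, and by the elementary monotonicity of $h^0\big(\cE(mh)\big)$ in $m$, so no further geometric input should be required.
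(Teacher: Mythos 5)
Your proposal is correct and follows essentially the same route as the paper: reduce both notions to the vanishings $h^0$ and $h^1$ via Proposition \ref{pSpecial} and the definition of classical instanton bundle, verify the identity $t_h^{\cE}=q_X^{\varepsilon}-2$ (the paper phrases this as $t_h^{\cE}=q_X^{1-\defect}-2$ together with $q_X^{\varepsilon}=q_X^{1-\defect}$) so that the two $h^1$ conditions coincide with $h^1\big(\cE(-2h)\big)=0$, and then compare the two $h^0$ conditions by monotonicity of $h^0\big(\cE(mh)\big)$, with the exceptional cases determined by $q_X^{1-\defect}\in\{0,2\}$. The bookkeeping in your tabulation of $t$ matches the case split in the statement, so no gap remains.
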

\begin{proof}
Let $\cE$ be a rank two vector bundle such that $c_1(\cE)=(4-\defect-i_X)h$ where $\defect\in\{\ 0,1\ \}$. Thus $\cE_{norm,h}\cong\cE(t_h^\cE h)$ where 
$$
t_h^\cE=\left[-\frac{\mu_h(\cE)}{h^3}\right]=\left[\frac{i_X+\defect}{2}\right]-2=q_X^{1-\defect}-2.
$$

Let $\cE$ be a rank two $h$--instanton bundle. Since $c_1(\cE)=(4-\defect-i_X)h$ its defect is $\defect$. Moreover, by definition we know that $h^0\big(\cE(-h)\big)=h^1\big(\cE(-2h)\big)=0$, hence
$$
h^0\big(\cE_{norm,h}((1-q_X^{1-\defect})h)\big)=h^1\big(\cE_{norm,h}(-q_X^{1-\defect} h)\big)=0.
$$
Notice that $1-q_X^{1-\defect}\ge-1$ and $1-q_X^{1-\defect}\ge0$ unless $q_X^{1-\defect}=2$. The latter equality holds if and only if either $X\cong\p3$ or $X\subseteq\p{4}$ is a smooth quadric hypersurface and $\defect=1$. In these cases $\cE_{norm,h}$ is a classical instanton bundle if and only if  $h^0\big(\cE_{norm,h}\big)=0$. In the remaining cases the vanishing $h^0\big(\cE_{norm,h}\big)=0$ is for free.

Conversely, if $\cE_{norm,h}$ is a classical instanton bundle, then $c_1(\cE_{norm,h})=-\varepsilon h$ where $\varepsilon:=i_X+\defect-2q_X^{1-\defect}$ and $h^0\big(\cE_{norm,h}\big)=h^1\big(\cE_{norm,h}(-q_X^{1-\defect} h)\big)=0$  by definition. Notice that
$$
q_X^\varepsilon=\left[\frac{i_X+1-\varepsilon}2\right]=\left[\frac{1-\defect+2q_X^{1-\defect}}2\right]=\left[\frac{1-\defect}2\right]+q_X^{1-\defect}=q_X^{1-\defect},
$$
because $\defect\in\{\ 0,1\ \}$, hence
$$
h^0\big(\cE((q_X^{1-\defect}-2)h)\big)=h^1\big(\cE(-2h)\big)=0,
$$
Notice that $q_X^{1-\defect}-2\ge-2$ and $1-q_X^{1-\defect}\ge-1$ unless $q_X^{1-\defect}=0$. The latter equality holds if and only $(i_X,\defect)=(1,0)$. In this case $\cE$ is a $h$--instanton bundle if and only if $h^0\big(\cE_{norm,h}(h)\big)=0$.
\end{proof}

In what follows we briefly deal with the case $(i_X,\defect)=(1,0)$. In this case $\cO_X(h)\cong\omega_X^{-1}$ and  $\cE_{norm,h}\cong\cE(-2h)$, hence $c_1(\cE_{norm,h})=-h$. Equality \eqref{RRgeneral} for $\cO_X(h)$ implies that $\cO_X(h)$ induces an embedding $X\subseteq\p{g_X+1}$ where $h^3=2g_X-2$, hence $g_X\ge3$ necessarily and $\cO_X(h)\cong\cO_X\otimes\cO_{\p{g_X+1}}(1)$.

Thanks to equality \eqref{RRgeneral} for $\cE(-h)$, the inequality $h^1\big(\cE(-h)\big)=-\chi(\cE(-h))\ge0$ is equivalent to $c_2(\cE)h\ge 5g_X-1$, hence $c_2(\cE_{norm,h})h\ge g_X+3$. In \cite[Theorem 3.1]{B--F1} the existence of classical instanton bundles $\cF$ satisfying $g_X+3>c_2(\cF)h$ is proved when $X$ is ordinary. Thus the condition $h^0\big(\cE_{norm,h}(h)\big)=0$ is restrictive. 

Nevertheless, $\omega_X^{-1}$--instanton bundles on ordinary prime Fano $3$--folds exist for each admissible quantum number as claimed in Theorem \ref{tPrime}. 

As we will check below, its proof is based on the results in \cite{B--F1} where $\field=\bC$ is assumed. For this reason, we make such a hypothesis in Theorem \ref{tPrime}.

\medbreak
\noindent{\it Proof of Theorem \ref{tPrime}.}
We will prove the statement by induction on $k\ge0$ checking the existence  of a vector bundle $\cE_k$ such that:
\begin{itemize}
\item $\cE_k$ is $\mu$--stable of rank $2$ with $c_1(\cE_k)=3h$, $c_2(\cE_k)h=5g_X-1+k$;
\item $h^0\big(\cE_k(-h)\big)=h^1\big(\cE_k(-2h)\big)=0$;
\item $h^i\big(\cE_k\otimes\cE_k^\vee\big)=0$ for $i\ge2$;
\item $\cE_k\otimes\cO_L\cong\cO_{\p1}(1)\oplus\cO_{\p1}(2)$ for each general line $L\subseteq X$.
\end{itemize}
If such an $\cE_k$ exists, then equalities \eqref{Serre} and  \eqref{RRgeneral} for $\cE_k$ imply
$$
h^1\big(\cE_k(-h)\big)=-\chi(\cE_k(-h))=\quantum,
$$
hence $\cE_k$ is a $h$--instanton bundle, thanks to Proposition \ref{pSpecial}. Since $\cE_k$ is $\mu$--stable, it follows $h^0\big(\cE_k\otimes\cE_k^\vee\big)=1$, hence yields $h^1\big(\cE_k\otimes\cE_k^\vee\big)=4+g_X+2k$ by equality \eqref{RRgeneral} for $\cE_k\otimes\cE_k^\vee$.

Let us shortly recall how to prove the base step. If $k=0$, thanks to \cite[Lemma 3.8]{B--F1} there exists on $X$ a rank two $\mu$--stable bundle $\cE_0$ such that $h^1\big(\cE_0(-2h)\big)=0$, $c_1(\cE_0)=3h$, $c_2(\cE_0)h=5g_X-1$, $h^2\big(\cE_0\otimes\cE_0^\vee\big)=0$ and $\cE_0\otimes\cO_L\cong \cO_{\p1}(1)\oplus\cO_{\p1}(2)$ for the general line $L\subseteq X$. Looking at \cite[Proof of Theorem 3.1 for $d=g+3$ (p. 132)]{B--F1}, one also deduces that $\cE_0$ can be assumed both aCM and satisfying $h^0\big(\cE_0(-h)\big)=0$.

By \cite[Theorem 3.1 (p.120)]{B--F1} we can also assume that $\cE_0$ corresponds to a smooth point in a component $M$ of dimension $4+g_X$ of its moduli space. The tangent space to $M$ at the point $\cE_0$ has  dimension $h^1\big(\cE_0\otimes\cE_0^\vee\big)=4+g_X$. Since $\cE_0$ is $\mu$--stable, it follows that $h^0\big(\cE_0\otimes\cE_0^\vee\big)=1$. Thus equality \eqref{RRgeneral} finally returns $h^3\big(\cE_0\otimes\cE_0^\vee\big)=0$, hence the proof of the base step of the induction is complete. 

In order to prove the inductive step, we assume the existence of the vector bundle  $\cE_k$ for some $k\ge0$ satisfying the list of properties above. Such properties are the same ones listed in the statement of \cite[Theorem 3.7]{B--F2}, but the vanishing $h^0\big(\cE_k(-h)\big)=0$. The proof of the inductive step almost coincides with the proof of the inductive step in \cite[proof of Theorem 3.7]{B--F2}, the only additional property we have to check being the vanishing $h^0\big(\cE_k(-h)\big)=0$. 

In order to show that such a vanishing holds as well we  recall the argument used in \cite{B--F2} for obtaining $\cE_{k+1}$.  Let $L\subseteq X$ be a  general line, hence $\cN_{L\vert X}\cong\cO_{\p1}\oplus\cO_{\p1}(-1)$. Consider the exact sequence
$$
0\longrightarrow\widehat{\cE}_{k+1}\longrightarrow{\cE_k}\longrightarrow\cO_L\otimes\cO_X(h)\longrightarrow0.
$$
The sheaf $\widehat{\cE}_{k+1}$ is not a vector bundle, but it  satisfies all the other required properties. Thus the same properties also hold for each general deformation $\cE_{k+1}$ of $\widehat{\cE}_{k+1}$ inside the moduli space of torsion--free coherent $\mu$--stable sheaves with Chern classes $c_1=h$, $c_2=c_2(\cE_k)h+1$, $c_3=c_3(\cE_k)=0$. In the proof of  \cite[Theorem 3.7]{B--F2} the authors show that $\cE_{k+1}$ is a vector bundle still satisfying the same properties. 

Moreover, if $h^0\big(\cE_k(-h)\big)=0$, then $h^0\big(\widehat{\cE}_{k+1}(-h)\big)=0$, hence $h^0\big({\cE}_{k+1}(-h)\big)=0$ by semicontinuity. In particular $\cE_{k+1}$ is a $h$--instanton sheaf. 

Notice that $h^3\big(\cE_{k+1}\otimes\cE_{k+1}^\vee\big)=0$ (e.g. see \cite[Lemma 2.1]{C--C--G--M}). Moreover, the $\mu$-stability of $\cE_{k+1}$ yields $h^0\big(\cE_{k+1}\otimes\cE_{k+1}^\vee\big)=1$. In the proof of the inductive step in \cite[proof of Theorem 3.7]{B--F2} the authors also check that $h^2\big(\cE_{k+1}\otimes\cE_{k+1}^\vee\big)=0$, thus 
$$
h^1\big(\cE_{k+1}\otimes\cE_{k+1}^\vee\big)=4+g_X+2k
$$
(see \cite[Equalities (3.15) and (3.20)]{B--F2}). In particular the proof of the inductive step is complete, hence the statement is proved by induction.
\qed
\medbreak

\begin{remark}
The bundles defined in Theorem \ref{tPrime} are simple. Thus they correspond to smooth points in the moduli space $\cS_X(2;3h,5g_X-1+k)$ of rank two simple sheaves $\cE$ with $c_1(\cE)=3h$ and $c_2(\cE)h=5g_X-1+k$. If we denote by $\cS\cI_X(3h,5g_X-1+k)$ the locus of points in $\cS_X(2;3h,5g_X-1+k)$ representing such bundles, we deduce that every such point is contained in a generically smooth component of $\cS\cI_X(3h,5g_X-1+k)$ of dimension $4+g_X+2k$.
\end{remark}

\section{Instanton bundles of low rank on sextic del Pezzo $3$--folds}
\label{sSextic}
If $X$ is a Fano $3$--fold with fundamental line bundle $\cO_X(h)$ and such that $\varrho_X\ge2$, then the notions of classical and $h$--instanton bundle can be considerably different. 

In order to give examples of the possible pathologies and to complete the results of the previous section when $i_X=2$, we deal with the del Pezzo $3$--folds of degree $6$, i.e. the flag $3$--fold and  the image of the Segre embedding $\p1\times\p1\times\p1\subseteq\p7$. 

Classical instanton bundles on the flag $3$--fold $X$ have been described in \cite{M--M--PL} (see also \cite{C--C--G--M}). Let $p_i\colon X\to\p2$, $i=1,2$, be the projections and set $\cO_X(h_i):=p_i^*\cO_{\p2}(1)$, $i=1,2$. The group $\Pic(X)$ is freely generated by such line bundles and
$$
A(X)\cong\frac{\bZ[h_1,h_2]}{(h_1^3,h_2^3,h_1^2-h_1h_2+h_2^2)}
$$
(for the details see \cite{M--M--PL}). The fundamental line bundle is $\cO_X(h)=\cO_X(h_1+h_2)$.

\begin{proposition}
\label{pFlag1}
Let $X$ be the flag $3$--fold and let $\cO_X(h)$ be its fundamental line bundle.

A line bundle $\mathcal L$ on $X$ is a $h$--instanton  bundle with defect $\defect$ if and only if there is an integer $a\ge1$ such that $\mathcal L\cong \mathcal L_a:=\cO_X(-ah_1+(a+2-\defect)h_2)$ up to permutations of the $h_i$'s. In particular $\quantum(\mathcal L_a)=\frac{(2-\defect)}2a(a+2-\defect)$.
\end{proposition}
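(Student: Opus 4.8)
The plan is to make Theorem \ref{tSlope} do the heavy lifting: for the rank-one bundle $\mathcal L$ (admissible, since the fundamental line bundle of the flag $3$-fold is very ample) it asserts that, once the four vanishing hypotheses recorded there hold, being a $h$-instanton bundle with defect $\defect$ is equivalent to the single numerical identity \eqref{Slope}. Writing $\mathcal L\cong\cO_X(\alpha h_1+\beta h_2)$ and computing in $A(X)$ by means of the relations $h_1^3=h_2^3=0$ and $h_1^2=h_1h_2-h_2^2$, one finds $h^2=(h_1+h_2)^2=3h_1h_2$, $h_1^2h_2=h_1h_2^2=1$, $h^3=6$ and $K_X=-2h$. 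Hence $c_1(\mathcal L)h^2=3(\alpha+\beta)$, while the right-hand side of \eqref{Slope} equals $\tfrac12\big((4-\defect)\cdot 6-12\big)=6-3\defect$. Therefore an instanton line bundle must satisfy $\alpha+\beta=2-\defect$, that is $\mathcal L\cong\mathcal L_a$ with $a:=-\alpha\in\bZ$; and conversely every $\mathcal L_a$ verifies \eqref{Slope}.

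Next I would read off the quantum number directly from Proposition \ref{pQuantum}, which gives $\quantum=-\chi(\mathcal L_a(-h))$. Since $X$ is a smooth divisor of bidegree $(1,1)$ in $\p2\times\p2$, I would compute this Euler characteristic from the exact sequence
\begin{equation*}
0\longrightarrow\cO_{\p2\times\p2}(p-1,q-1)\longrightarrow\cO_{\p2\times\p2}(p,q)\longrightarrow\cO_X(ph_1+qh_2)\longrightarrow0
\end{equation*}
and the Künneth formula. With $(p,q)=(-a-1,\,a+1-\defect)$ the two binomial products collapse after a short manipulation to $\chi(\mathcal L_a(-h))=-\tfrac{2-\defect}{2}\,a(a+2-\defect)$, which is exactly the asserted value of $\quantum(\mathcal L_a)$; note this expression vanishes at $a=0$, as it must for the Ulrich boundary.

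It remains to verify the four vanishing hypotheses of Theorem \ref{tSlope} for $n=3$—$h^0(\mathcal L(-h))=h^3(\mathcal L((\defect-3)h))=0$, $h^1(\mathcal L(-2h))=h^2(\mathcal L((\defect-2)h))=0$ and the balance $\defect h^1(\mathcal L(-h))=\defect h^2(\mathcal L((\defect-3)h))$—and to decide precisely for which $a$ they hold. I would compute the cohomology of all the relevant twists by the Borel–Weil–Bott theorem on the homogeneous space $X\cong\SL_3/B$, where $\cO_X(\alpha h_1+\beta h_2)$ corresponds to a weight read off from $(\alpha,\beta)$. The clean mechanism is that Borel–Weil–Bott concentrates the cohomology of each twist in a single degree; one checks chamber by chamber that, for $a\ge0$, this degree is never the one the condition forbids (the relevant weight being either singular, so the cohomology vanishes outright, or regular but of the appropriate Weyl length), so all four hypotheses hold, and the unique contribution to the quantum number is $h^1(\mathcal L_a(-h))=\quantum$ in harmony with the Euler characteristic. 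Combined with \eqref{Slope}, Theorem \ref{tSlope} then certifies that each such $\mathcal L_a$ is a $h$-instanton bundle, and the same weight analysis excludes the remaining values: for example the self-symmetric bundle $\cO_X(h)$ (the case $a=-1$, $\defect=0$) has $\mathcal L(-h)=\cO_X$ with dominant weight, so $h^0(\mathcal L(-h))=1\ne0$ and the first hypothesis fails.

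Finally I would use the $h_1\leftrightarrow h_2$ symmetry, which sends $\mathcal L_a$ to $\mathcal L_{a'}$ with $a'=-a-2+\defect$, to normalise every admissible class to a representative with $a\ge0$; the extremal value $a=0$ produces the Ulrich line bundles ($\cO_X(2h_1)$, $\cO_X(2h_2)$ for $\defect=0$, and their analogues for $\defect=1$), so the genuinely positive-quantum instantons are exactly the $\mathcal L_a$ with $a\ge1$ of the statement. The main obstacle is the Borel–Weil–Bott bookkeeping: one must track the shifted weights $(\alpha+1+t,\beta+1+t)$ through the six Weyl chambers of $\SL_3$ and certify the degree of concentration in each case; by comparison the slope reduction and the evaluation of $\chi(\mathcal L_a(-h))$ are routine.
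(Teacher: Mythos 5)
Your proposal is correct and follows essentially the same route as the paper's own proof: Theorem \ref{tSlope} forces $\alpha+\beta=2-\defect$, and the residual vanishings together with the quantum number are read off from the explicit cohomology of line bundles on the flag threefold --- the paper simply cites \cite[Proposition 2.5]{C--F--M2} for that computation where you invoke Borel--Weil--Bott plus the $(1,1)$--divisor sequence, which amounts to the same thing. Note that both your argument and the paper's own proof actually produce the bound $a\ge 0$ (the case $a=0$ yielding the Ulrich--type line bundles with $\quantum=0$), so the restriction $a\ge1$ in the statement is the paper's inaccuracy rather than a gap on your side; your closing remark correctly isolates this boundary case.
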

\begin{proof}
Let $\cE:=\cO_X(a_1h_1+a_2h_2)$ be a $h$--instanton bundle on $X$: without loss of generality we assume $a_1\le a_2$. Then $a_1+a_2=2-\defect$ by Theorem \ref{tSlope}, hence $a_2=-a_1+2-\defect$. The vanishing $h^0\big(\cE(-h)\big)=0$ and \cite[Proposition 2.5]{C--F--M2} yields $a_1=-a$ for some $a\ge0$. The same proposition also yields that the other vanishings in Theorem \ref{tSlope} are fulfilled, hence every such line bundle is a $h$--instanton bundle.

The value $\quantum(\mathcal L_a)$ is obtained via  \cite[Proposition 2.5]{C--F--M2}.
\end{proof}

\begin{example}
\label{eFlagExtension}
If $a\ge b+2$ we have  $h^1\big(\cO_X((b-a)h_1-bh_2+ah_3)\big)\ne0$ by \cite[Proposition 2.5]{C--F--M2}. It follows the existence of a non--trivial extension
$$
0\longrightarrow\cO_X(-ah_1+(a+2-\defect)h_2)\longrightarrow\cE\longrightarrow\cO_X(-bh_1+(b+2-\defect)h_2)\longrightarrow0.
$$
We show below that $\cE$ is $\mu$--semistable and simple, hence indecomposable, which is non--orientable because $c_1(\cE)\ne2h$. To this purpose, we first notice that all the sheaves in the above sequence have the same slope $3(2-\defect)$. 

If $\mathcal L\subseteq\cE$ is a destabilizing sheaf of maximal slope $\mu_h(\mathcal L)>\mu_h(\cE)$. Set $\mathcal Q:=\cE/\mathcal L$ and let  $\cM$ be the kernel of the natural quotient morphism $\cE\to\mathcal Q/\mathcal T$, where $\mathcal T\subseteq\mathcal Q$ is the torsion subsheaf. Trivially $\cM$ is torsion--free and of rank $1$ and $\mu_h(\cM)\ge\mu_h(\mathcal L)>\mu_h(\cE)$. Since $\cE/\cM$ is torsion--free, it follows that $\cM$ is also normal thanks to \cite[Lemma II.1.1.16]{O--S--S}. Thus \cite[Lemma II.1.1.12]{O--S--S} implies that $\cM$ is reflexive, hence it is a line bundle by \cite[Lemma II.1.1.15]{O--S--S}. 

By composition we obtain a morphism $\mathcal M\to\cO_X(-bh_1+(b+2-\defect)h_2)$ which is zero by slope reasons. It follows that the inclusion $\mathcal M\subseteq\cE$  factors through an inclusion $\mathcal M\subseteq\mathcal L_a$, an absurd again by slope reasons. 

We deduce that $\cE$ is $\mu$--semistable. Moreover, $\cE$ is also simple, hence indecomposable: this assertion follows  from the indecomposability of sequence \eqref{seqSegreStable} and standard arguments (see \cite[Lemma 4.2]{C--H2} for a brief and explicit statement in an even more general setting).
\end{example}

We close our analysis of $h$--instanton bundles on the flag $3$--fold by proving Theorem \ref{tFlag2} stated in the introduction.

\medbreak
\noindent{\it Proof of Theorem \ref{tFlag2}.}
We have $h^0\big(\cE(-h)\big)=h^1\big(\cE(-2h)\big)=0$. Moreover, $c_1(\cE)=2h$ because $\cE$ is ordinary and orientable, hence  $\mu_h(\cE)=6$. 

If $\cE$ is not $\mu$--semistable, then the same argument used in Example \ref{eFlagExtension} yields the existence of a maximal destabilizing line bundle $\cM:=\cO_X(\alpha_1 h_1+\alpha_2 h_2)\subseteq\cE(-h)$ for some $\alpha_1,\alpha_2\in\bZ$ such that $3(\alpha_1+\alpha_2)=\mu_h(\cM)\ge1>0=\mu_h(\cE(-h))$ and $\alpha_1\le\alpha_2$. Moreover, its maximality implies the existence of an exact sequence
\begin{equation}
\label{seqFlag}
0\longrightarrow\cO_X(\alpha_1 h_1+\alpha_2 h_2)\longrightarrow\cE(-h)\longrightarrow\cI_{Z\vert X}(-\alpha_1 h_1-\alpha_2 h_2)\longrightarrow0
\end{equation}
where either $Z\subseteq X$ has pure codimension $2$ or $Z=\emptyset$. 

On the one hand $h^0\big(\cE(-h)\big)=0$, hence $\alpha_1\le-1$ and $3\alpha_2\ge-3\alpha_1+1$. Thus $\alpha_2\ge2$, hence
$$
h^0\big(\cI_{Z\vert X}(-(\alpha_1+1) h_1-(\alpha_2+1) h_2)\big)\le h^0\big(\cO_X(-(\alpha_1+1) h_1-(\alpha_2+1) h_2)\big)=0.
$$
The cohomology of sequence \eqref{seqFlag} tensored by $\cO_X(-h)$ finally yields
$$
h^1\big(\cO_X((\alpha_1-1)h_1+(\alpha_2-1) h_2)\big)\le h^1\big(\cE(-2h)\big).
$$

On the other hand $\alpha_1-1\le-2$ and $(\alpha_1-1)+(\alpha_2-1)+1\ge0$. Thus \cite[Proposition 2.5]{C--F--M2} implies that the left--hand member of the above inequality is positive, hence the same is true for the right--hand member, contradicting Definition \ref{dMalaspinion}. We deduce that $\cE$ must be $\mu$--semistable. 

Notice that if the $h$--instanton bundle $\cE$ is indecomposable, then all the hypotheses of \cite[Proposition 2.4]{A--C--G} are fulfilled by $\cE(-h)$, hence $\cE$ is also simple.
\qed
\medbreak

There are even more pathologies when $X=\p1\times\p1\times\p1$: classical instanton bundles on $X$ have been studied in \cite{A--M1}. In this case, there are three projections $p_i\colon X\to\p1$, $i=1,2,3$, and we set $\cO_X(h_i):=p_i^*\cO_{\p1}(1)$, $i=1,2,3$. Thus $\Pic(X)$ is freely generated by such line bundles,
$$
A(X)\cong\frac{\bZ[h_1,h_2,h_3]}{(h_1^2,h_2^2,h_3^2)}
$$
and the fundamental line bundle is $\cO_X(h_1+h_2+h_3)$.

\begin{proposition}
\label{pSegre1}
Let $X\cong \p1\times\p1\times\p1$ and let $\cO_X(h)$ be its fundamental line bundle.

A line bundle $\mathcal L$ on $X$ is a  $h$--instanton  bundle if and only if there is an integer $a\ge1$ such that $\mathcal L\cong \mathcal L_a:=\cO_X(-ah_1+h_2+(2+a)h_3)$ up to permutations of the $h_i$'s. In particular, it is ordinary and $\quantum(\mathcal L_a)=a(a+2)$.
\end{proposition}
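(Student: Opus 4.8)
The plan is to mimic the proof of Proposition \ref{pFlag1}, replacing the cohomological input for the flag $3$--fold by the corresponding statements on $X\cong\p1\times\p1\times\p1$, which follow at once from the K\"unneth formula. Throughout I would write $\mathcal L\cong\cO_X(a_1h_1+a_2h_2+a_3h_3)$ and, exploiting the symmetry of the statement under permutations of the $h_i$, assume $a_1\le a_2\le a_3$. Recall that $K_X=-2h$ (so $i_X=2$) and that the intersection numbers are $h^3=6$, $K_Xh^2=-12$ and $c_1(\mathcal L)h^2=2(a_1+a_2+a_3)$.

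First I would pin down the defect and the Chern class. If $\mathcal L$ is a $h$--instanton bundle with defect $\defect$, then Theorem \ref{tSlope} applies and equality \eqref{Slope} reads $2(a_1+a_2+a_3)=\frac12((4-\defect)h^3+K_Xh^2)=6-3\defect$, whence $a_1+a_2+a_3=3-\tfrac32\defect$. Since the left--hand side is an integer this forces $\defect=0$ and $a_1+a_2+a_3=3$; in particular every instanton line bundle is ordinary.

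Next I would translate the remaining conditions into a finite set of vanishings by means of Theorem \ref{tCharacterization}, which for $n=3$ and $\defect=0$ reduce to
\[
h^0\big(\mathcal L(-h)\big)=h^3\big(\mathcal L(-3h)\big)=h^1\big(\mathcal L(-2h)\big)=h^2\big(\mathcal L(-2h)\big)=0,
\]
together with $\quantum=h^1\big(\mathcal L(-h)\big)$. For a line bundle $\cO_X(b_1h_1+b_2h_2+b_3h_3)$ the K\"unneth formula and the cohomology of $\cO_{\p1}(b)$ give that $H^i$ is non--zero if and only if none of the $b_j$ equals $-1$ and exactly $i$ of them are $\le -2$ (the remaining ones being $\ge0$). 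Applying this to the four twists above yields respectively $a_1\le 0$, $a_3\ge 2$, $a_2\le 1$ and $a_2\ge 1$, so that $a_2=1$ and, using $a_1+a_2+a_3=3$, we obtain $\mathcal L\cong\cO_X(-ah_1+h_2+(2+a)h_3)$ with $a:=-a_1\ge0$. The converse is the same K\"unneth computation read backwards: every such line bundle satisfies the four displayed vanishings (indeed $\mathcal L(-2h)$ has a coefficient equal to $-1$, which already kills $H^1$ and $H^2$), so it is a $h$--instanton bundle by Theorem \ref{tCharacterization}. Finally the quantum number is computed from the coefficients $(-a-1,0,a+1)$ of $\mathcal L(-h)$, namely $\quantum=h^1\big(\cO_{\p1}(-a-1)\big)h^0\big(\cO_{\p1}\big)h^0\big(\cO_{\p1}(a+1)\big)=a(a+2)$ (equivalently, via Corollary \ref{cCharacteristic}).

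There is essentially no geometric obstacle here: since $\mathcal L$ is a line bundle everything reduces to the cohomology of $\cO_{\p1}$, and the only point requiring care is the bookkeeping of the K\"unneth vanishings, where it is convenient to record that $H^\bullet$ collapses entirely as soon as one coefficient equals $-1$. I would also flag the boundary case $a=0$: it recovers the permutations of $\cO_X(h_2+2h_3)$, which are precisely the Ulrich line bundles on $X$ (consistently $\quantum=a(a+2)=0$ there, cf. Corollary \ref{cUlrich}), so that the range $a\ge1$ in the statement singles out the genuinely non--Ulrich instanton line bundles.
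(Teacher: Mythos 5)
Your argument is correct and follows essentially the same route as the paper's proof: the intersection--number/slope condition forces $\defect=0$ and $a_1+a_2+a_3=3$, and the K\"unneth bookkeeping of the four vanishings from Theorem \ref{tCharacterization} (applied jointly, since e.g.\ $h^1\big(\mathcal L(-2h)\big)=0$ only yields $a_2\le1$ once $a_1\le0$ is known) pins down $a_2=1$ and $a_1=-a\le0$, with the quantum number read off from $h^1\big(\mathcal L_a(-h)\big)$. Your remark on the boundary case is well taken: the derivation only gives $a\ge0$, and $\mathcal L_0$ and its permutations are exactly the Ulrich line bundles, hence ordinary $h$--instanton line bundles with $\quantum=0$ that the bound $a\ge1$ in the statement appears to exclude.
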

\begin{proof}
Let $\mathcal L:=\cO_X(a_1h_1+a_2h_2+a_3h_3)$ be a $h$--instanton line bundle on $X$: without loss of generality we assume $a_1\le a_2\le a_3$. The condition on $c_1(\mathcal L)h^{2}$ in Definition \ref{dMalaspinion} implies $2(a_1+a_2+a_3)=3(2-\defect)$, hence $\defect=0$ necessarily and $a_3=3-a_1-a_2$. 

The vanishings $h^0\big(\mathcal L(-h)\big)=h^1\big(\mathcal L(-2h)\big)=0$ and the K\"unneth formulas imply $a_1\le0$ and $a_2\le1$. The vanishings $h^3\big(\mathcal L(-3h)\big)=h^2\big(\mathcal L(-2h)\big)=0$ and the K\"unneth formulas imply $a_3\ge2$ and $a_2\ge1$. Thus, $\mathcal L\cong\mathcal L_a$ where $a:=-a_1$.

The K\"unneth formulas return the value of the quantum number.
\end{proof}

There exist many non--orientable $h$--instanton bundles $\cE$ on such an $X$. 

\begin{example}
\label{eSegreExtension}
If $b\ge2$ and $b\ge a\ge0$ we have $h^1\big(\cO_X((b-a)h_1-bh_2+ah_3)\big)\ne0$, hence there are non--trivial extensions of the form
$$
0\longrightarrow\cO_X(-ah_1+h_2+(1+a)h_3)\longrightarrow\cE\longrightarrow\cO_X(-bh_1+(1+b)h_2+h_3)\longrightarrow0.
$$
One can check as in Example \ref{eFlagExtension} that such $\cE$'s are $\mu$--semistable and simple (hence indecomposable).
\end{example}

\begin{example}
\label{eSegreDeform}
In \cite{C--F--M1} the existence of two irreducible families of pairwise non--isomorphic rank two Ulrich bundles $\cE_0$ with $c_1(\cE_0)=h_1+2h_2+3h_3$ is proved: moreover, the general one in each family is $\mu$--stable. 

Such bundles are ordinary $h$--instanton bundles thanks to Corollary \ref{cUlrich}. For each positive integer $\quantum$ one can then construct by deformation ordinary, $\mu$--stable, $h$--instanton bundles $\cE$ with $c_1(\cE)=h_1+2h_2+3h_3$, starting from the aforementioned $\cE_0$ and deforming the kernel of a general morphism $\cE_0(-h)\to\cO_L$ where $L$ is a general line with class $h_1h_3\in A^2(X)$ using the same argument of the proof of Theorem \ref{tPrime}. Notice that such bundles, being $\mu$--stable are simple.
\end{example}

We show below that there are rank two, orientable, ordinary, simple, unstable $h$--instanton bundles $\cE$ on $X$ with arbitrarily large quantum number. 

\begin{example}
\label{eSegreStable}
let $L\subseteq X$ be the complete intersection of general divisors in $\vert h_2\vert$ and $\vert h_3\vert$. We have $L\cong\p1$, because $Lh^2=1$, and we can find $s\ge0$ pairwise disjoint curves of this type on $X$: let $Z$ be their union. 

We claim that $\det(\cN_{Z\vert X})\cong \cO_Z\otimes\cO_X(2h_2-4h_3)$. In order to prove such an isomorphism it suffices to check that the two line bundles are trivial when restricted to each connected component, because they are all isomorphic to $\p1$. Theorem \ref{tSerre} implies the existence of a rank two vector bundle $\cE$ fitting into the exact sequence
\begin{equation}
\label{seqSegreStable}
0\longrightarrow\cO_X(h_1+3h_3)\longrightarrow\cE\longrightarrow\cI_{Z\vert X}(h_1+2h_2-h_3)\longrightarrow0
\end{equation}

We can assume that sequence \eqref{seqSegreStable} does not split. This is for free if $s>0$, while if $s=0$ it follows from the equality $h^1\big(\cO_X(-2h_2+4h_3)\big)=5$. 

We have $h^0\big(\cE(-h)\big)=h^1\big(\cE(-2h)\big)=0$ and equality \eqref{Cyclic} is fulfilled because $c_1(\cE)=2h$, hence $\cE$ is a rank two orientable,  ordinary $h$--instanton bundle by Proposition \ref{pSpecial}. Moreover, the cohomologies of sequences \eqref{seqSegreStable} tensored by $\cO_X(-h)$ and \eqref{seqStandard} tensored by $\cO_X(h_2-2h_3)$ imply that the quantum number is $\quantum:=s-2$.

Since $\mu_h(\cO_X(h_1+3h_3))=8>6=\mu_h(\cE)$, it follows that it is not $\mu$--semistable. Thus, $\cE$ is obtained neither as in Examples \ref{eSegreExtension} and \ref{eSegreDeform}, nor possibly via the method described in the proof of Theorem \ref{tPrime} applied to $X$.

We claim that $\cE$ is simple. If $s=0$ the claim is standard, because sequence \eqref{seqSegreStable} is assumed non--split (see\cite[Lemma 4.2]{C--H2}). If $s>0$, it suffices to check $h^0\big(\cE\otimes\cE^\vee\big)\le1$. Sequence \eqref{seqSegreStable} tensored by $\cE^\vee\cong\cE(-2h)$ yields
\begin{equation}
\label{Simple}
h^0\big(\cE\otimes\cE^\vee\big)\le h^0\big(\cE(-h_1-2h_2+h_3)\big)+h^0\big(\cI_{Z\vert X}\otimes\cE(-h_1-3h_3)\big).
\end{equation}
Sequence \eqref{seqSegreStable} tensored by $\cO_X(-h_1-2h_2+h_3)$ returns the exact sequence
$$
h^0\big(\cE(-h_1-2h_2+h_3)\big)\le h^0\big(\cO_X(-h_1-2h_2+h_3)\big)+h^0\big(\cI_{Z\vert X}\big)=0.
$$
Sequences \eqref{seqStandard} tensored by $\cE(-h_1-3h_3)$ and \eqref{seqSegreStable} by $\cO_X(-h_1-3h_3)$ return
$$
h^0\big(\cI_{Z\vert X}\otimes\cE(-h_1-3h_3)\big)\le h^0\big(\cE(-h_1-3h_3)\big)\le h^0\big(\cO_X\big)+h^0\big(\cI_{Z\vert X}(2h_2-4h_3)\big).
$$
Trivially $h^0\big(\cI_{Z\vert X}(2h_2-4h_3)\big)=0$, hence $h^0\big(\cI_{Z\vert X}\otimes\cE(-h_1-3h_3)\big)=1$. We deduce that inequality \eqref{Simple} leads to $h^0\big(\cE\otimes\cE^\vee\big)\le1$ and the claim follows.
\end{example}

\section{Existence of rank two orientable ordinary\\ instanton bundles on scrolls over smooth curves}
\label{sScrollCurve}
We already pointed out in Section \ref{sMonad} the importance of ordinary instanton bundles on scrolls over smooth curves. In this section we will prove the existence of rank two orientable, ordinary instanton bundles on such kind of $n$--folds. 

For the notation we refer the reader to Section \ref{sMonad}. Let $X\subseteq \p N$ be a scroll of dimension $n$ on a smooth curve $B$ and let $g:=p_a(B)$ be its genus. It is well--known that if $\theta\in \Pic^{g-1}(B)$ is a non--effective theta--characteristic, then $\cO_X(h+\theta f)$ and $\cO_X((\frak g+\theta)f)$ are both Ulrich line bundles, hence $h$--instanton bundles with zero quantum number. In what follows we will construct $h$--instanton bundles of rank $2$ with arbitrarily large quantum number.

Each hyperplane $L$ in a fibre of $\pi$ is cut on that fibre by a divisor in $\vert h\vert$. In particular the class of $L$ inside $A(X)$ is $hf$. Thus there is an exact sequence
$$
0\longrightarrow\cO_X(-h-f)\longrightarrow\cO_X(-h)\oplus\cO_X(-f)\longrightarrow\cI_{L\vert X}\longrightarrow0,
$$
it follows that $\cN_{L\vert X}\cong\cO_{\p{n-2}}\oplus\cO_{\p{n-2}}(1)$. In particular
\begin{equation}
\label{EZ-g-e}
h^i\big(\cN_{L\vert X}\big)=\left\lbrace\begin{array}{ll} 
n\quad&\text{if $i=0$,}\\
0\quad&\text{if $i\ne0$.}
\end{array}\right.
\end{equation}

\begin{construction}
\label{conScrollCurve}
Let $\cG$ be a vector bundle of rank ${n}\ge3$ on a smooth curve $B$ and set $X:=\bP(\cG)$. Assume that $\cO_X(h):=\cO_{\bP(\cG)}(1)$ is an ample and globally generated line bundle.

For each $k\ge0$ take general points $b_i\in B$, hyperplanes $L_i\subseteq\pi^{-1}(b_i)\cong\p{n-1}$ for $1\le i\le k$ and set $Z:=\bigcup_{i=1}^kL_i$. 
The definition of $Z$ implies $\det(\cN_{Z\vert X})\cong\cO_Z\otimes\cO_X(h-\frak g f)$ by the adjunction formula. 
Since Lemma \ref{lDerived} implies $h^i\big( \cO_X(-h+\frak g f)\big)=0$, it follows that Theorem \ref{tSerre} yields the existence of an exact sequence
\begin{equation}
\label{seqScrollCurve}
0\longrightarrow\cO_X((\frak g+\theta)f)\longrightarrow\cE\longrightarrow\cI_{Z\vert X}(h+\theta f)\longrightarrow0
\end{equation}
where $\theta\in \Pic^{g-1}(B)$ is a non--effective theta--characteristic. 
\end{construction}

Notice that
$$
c_1(\cE)=h+(\frak g+K_B)f=(n+1)h+K_X,\qquad c_2(\cE)=c_2^{B,\cG,k}:=(\frak g+\theta+k)hf
$$
for the bundles obtained via the above construction. We are ready to prove Theorem \ref{tScrollCurve} stated in the Introduction.

\medbreak
\noindent{\it Proof of Theorem \ref{tScrollCurve}.}
Sequences \eqref{seqScrollCurve} and \eqref{seqStandard} yield
$$
-\chi(\cE(-h))=-\chi(\cO_X(-h+(\frak g+\theta)f))-\chi(\cO_X(\theta f))+\chi(\cO_Z\otimes\cO_X(\theta f)).
$$
Lemma \ref{lDerived} and equality \eqref{RRcurve} for $\cO_B(\theta)$ imply
$$
\chi(\cO_X(-h+(\frak g+\theta)f))=\chi(\cO_X(\theta f))=0.
$$
Since $\cO_Z\otimes\cO_X(\theta f)\cong\cO_Z$, it finally follows that $-\chi(\cE(-h))=k$.

The cohomology of sequence \eqref{seqScrollCurve} tensored by $\cO_X(th)$ yields
$$
h^i\big(\cE(th)\big)\le h^i\big(\cO_X(th+(\frak g+\theta)f)\big)+h^i\big(\cI_{Z\vert X}((t+1)h+\theta f)\big).
$$
First, we show that the summands on the right are both zero if $2\le i\le n-2$ and $-n\le t\le -1$. 

From Lemma \ref{lDerived}, we have $h^i\big(\cO_X(th+(\frak g+\theta)f)\big)=0$ if $1-n\le t\le -1$. If $t=-n$, then equality \eqref{Serre} yields
$$
h^i\big(\cO_X(th+(\frak g+\theta)f)\big)=h^i\big(\cO_X(-nh+(\frak g+\theta)f)\big)=h^{n-i}\big(\cO_X(\theta f)\big).
$$
The latter dimension is zero again by Lemma \ref{lDerived} and $2\le i\le n-2$. 

Now consider the second summand on the right. The same argument as above also gives  $h^i\big(\cO_X((t+1)h+\theta f)\big)=0$ for $i\ge2$ and $-n-1\le t\le-1$. Moreover, if $L$ is any component of $Z$, then
$$
h^{i-1}\big(\cO_Z\otimes\cO_X((t+1)h+\theta f)\big)=k h^{i-1}\big(\cO_{L}(t+1)\big)=0
$$
for $2\le i\le n-2$ and $-n\le t\le -1$. Thus the cohomology of sequence \eqref{seqStandard} tensored by $\cO_X((t+1)h+\theta f)$ yields $h^i\big(\cI_{Z\vert X}((t+1)h+\theta f)\big)=0$ in the same range.

Similarly one checks that $h^1\big(\cE(-2h)\big)=0$. The same argument, sequence \eqref{seqStandard} tensored by $\cO_X(\theta f)$ and the choice of $\theta$ yield
$$
h^0\big(\cE(-h)\big)\le h^0\big(\cI_{Z\vert X}(\theta f)\big)\le h^0\big(\cO_X(\theta f)\big)=0.
$$
Since $c_1(\cE)=(n+1)h+K_X$, it follows that $\cE$ is an ordinary, orientable $h$--instanton bundle by Proposition \ref{pSpecial}.

Since $\mu_h(\cO_X((\frak g+\theta)f))=\mu_h(\cI_{Z\vert X}(h+\theta f))$, it follows that the assertion on the $\mu$--semistability can be proved with the same argument used in Example \ref{eFlagExtension}.
Thus, the proof of the statement is then complete.
\qed
\medbreak

We now deal with the decomposability of the bundles in Construction \ref{conScrollCurve}.

\begin{proposition}
\label{pSplit}
Let $\cG$ be a vector bundle of rank $n\ge3$ on a smooth curve $B$ and set $X:=\bP(\cG)$. Assume that $\cO_X(h):=\cO_{\bP(\cG)}(1)$ is an ample and globally generated line bundle.

If $\cE$ is the vector bundle defined in Construction \ref{conScrollCurve}, then it is decomposable if and only if $k=0$. In this case $\cE\cong\cO_X((\frak g+\theta)f)\oplus\cO_X(h+\theta f)$ and sequence \eqref{seqScrollCurve} splits.
\end{proposition}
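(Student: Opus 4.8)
The plan is to treat the two implications separately: the direction $k=0\Rightarrow$ split is a one-line cohomological vanishing, while the converse carries all the content.

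For $k=0$ we have $Z=\emptyset$, so $\cI_{Z\vert X}\cong\cO_X$ and \eqref{seqScrollCurve} becomes the extension
$0\to\cO_X((\frak g+\theta)f)\to\cE\to\cO_X(h+\theta f)\to0$, whose class lives in
$\Ext^1_X(\cO_X(h+\theta f),\cO_X((\frak g+\theta)f))\cong H^1\big(\cO_X(-h+\frak g f)\big)$.
By Lemma \ref{lDerived} this group vanishes (the twist lies in the range $1-n\le -1\le-1$, as $n\ge3$), so the sequence splits and $\cE\cong\cO_X((\frak g+\theta)f)\oplus\cO_X(h+\theta f)$, as asserted.

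For the converse, suppose $\cE\cong\mathcal A\oplus\mathcal B$ with line bundles $\mathcal A,\mathcal B$. Since $\Pic(X)\cong\bZ h\oplus\pi^*\Pic(B)$ I would write $\mathcal A\cong\cO_X(ph+\frak a f)$, $\mathcal B\cong\cO_X(qh+\frak b f)$, and comparing the coefficient of $h$ in $c_1(\cE)=h+(\frak g+K_B)f$ gives $p+q=1$. First I pin down the vertical splitting type by restricting to a general fibre $F=\pi^{-1}(b)\cong\p{n-1}$ with $b$ outside the finitely many base points $b_i$, so that $Z\cap F=\emptyset$ and no $\Tor$-terms intrude: there $\cI_{Z\vert X}|_F\cong\cO_F$ and \eqref{seqScrollCurve} restricts to $0\to\cO_F\to\cE|_F\to\cO_F(1)\to0$, which splits because $\Ext^1(\cO_F(1),\cO_F)\cong H^1(\cO_{\p{n-1}}(-1))=0$. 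Hence $\cE|_F\cong\cO_{\p{n-1}}\oplus\cO_{\p{n-1}}(1)$, whereas the decomposition restricts to $\cO_F(p)\oplus\cO_F(q)$; uniqueness of the splitting type on $\p{n-1}$ forces $\{p,q\}=\{0,1\}$, say $p=1$, $q=0$.

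It then remains to play this splitting type against $\mu$-semistability. Using $h^n=\deg\frak g$, $fh^{n-1}=1$ and $f^2=0$, a direct computation gives $c_2(\cE)h^{n-2}=c_1(\mathcal A)c_1(\mathcal B)h^{n-2}=(h+\frak a f)(\frak b f)h^{n-2}=\deg\frak b$, while Construction \ref{conScrollCurve} records $c_2(\cE)=(\frak g+\theta+k)hf$; since $\deg\theta=g-1$ this yields $\deg\frak b=\deg\frak g+(g-1)+k$. On the other hand $\mu_h(\cE)=\tfrac12c_1(\cE)h^{n-1}=\deg\frak g+g-1$, so $\mu_h(\mathcal B)=\deg\frak b=\mu_h(\cE)+k$. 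For $k\ge1$ the direct summand $\mathcal B\subseteq\cE$ is thus a rank-one subsheaf with $\mu_h(\mathcal B)>\mu_h(\cE)$, contradicting the $\mu$-semistability of $\cE$ proved in Theorem \ref{tScrollCurve}. Hence a decomposable $\cE$ forces $k=0$, and by the first paragraph the decomposition has exactly the asserted shape with \eqref{seqScrollCurve} split. I expect the only delicate point to be the legitimacy of the fibrewise restriction; once $F$ is chosen away from the $b_i$ the remaining steps are purely numerical.
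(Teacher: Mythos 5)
Your argument is correct, and for the substantive direction (decomposable $\Rightarrow k=0$) it takes a genuinely different route from the paper. The paper also starts from $\mu$--semistability (so both summands have slope $\deg(\frak g)+g-1$), but then argues structurally: the nonzero map $\cO_X((\frak g+\theta)f)\to\cE$ must hit one summand isomorphically (a nonzero map of line bundles of equal $h$--slope is an isomorphism, since an effective divisor $D$ with $Dh^{n-1}=0$ is trivial), so $\mathcal L\cong\cO_X((\frak g+\theta)f)$; the induced surjection $\cM\to\cI_{Z\vert X}(h+\theta f)$ composed with the inclusion into $\cO_X(h+\theta f)$ is again an isomorphism, forcing $\cI_{Z\vert X}(h+\theta f)=\cO_X(h+\theta f)$ and hence $Z=\emptyset$. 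You instead pin down the $h$--components of the summands by restricting to a general fibre (legitimate, as you note, since a general fibre misses $Z$ so no $\sTor$ terms appear and the splitting type on $\p{n-1}$ is unique), and then extract the $f$--degrees from $c_2(\cE)=(\frak g+\theta+k)hf$, obtaining $\mu_h(\mathcal B)=\mu_h(\cE)+k$ and a contradiction with semistability for $k\ge1$; all the intersection-theoretic steps ($f^2=0$, $fh^{n-1}=1$, $\deg\theta=g-1$) check out against the paper's conventions. The paper's route is shorter and avoids Chern classes entirely, and as a byproduct identifies the summands explicitly; your route is more computational but equally valid, and the fibre-restriction step you flag as the delicate point is indeed fine. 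The $k=0$ direction is identical in both proofs, via $\Ext^1\cong H^1\big(\cO_X(-h+\frak g f)\big)=0$ from Lemma \ref{lDerived}.
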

\begin{proof}
Let $\cE\cong\mathcal L\oplus\cM$ where $\mathcal L,\cM\in\Pic(X)$. Thanks to Theorem \ref{tScrollCurve} $\cE$ is $\mu$--semistable, then 
$\mu_h(\mathcal L)=\mu_h(\cM)=\mu_h(\cE)=\deg(\frak g)+g-1$.

If neither $\mathcal L$ nor $\cM$ is isomorphic to $\cO_X((\frak g+\theta)f)$, then the map $\cO_X((\frak g+\theta)f)\to\cE$ should be zero. Thus we can assume $\mathcal L\cong \cO_X((\frak g+\theta)f)$. It follows the existence of a surjective morphism $\varphi\colon\cM\to\cI_{Z\vert X}(h+\theta f)$. Composing such morphism with the inclusion $\psi\colon \cI_{Z\vert X}(h+\theta f)\to\cO_X(h+\theta f)$ we deduce $\cM\cong\cO_X(h+\theta f)$. Thus $\psi\varphi$ is an isomorphism, hence $\varphi$ is also injective. Thus $\cI_{Z\vert X}(h+\theta f)=\cO_X(h+\theta f)$, i.e. $Z=\emptyset$ or, in other words, $k=0$.

Conversely, if $k=0$, then the extensions of $\cO_X(h+\theta f)$ with $\cO_X((\frak g+\theta)f)$ are parameterized by the sections of $H^1\big(\cO_X(-h+\frak g f)\big)$ up to scalars. Lemma \ref{lDerived} shows that such a space is zero, hence sequence \eqref{seqScrollCurve} always splits when $k=0$.
\end{proof}

In what follows we deal with the parameter space of the bundles obtained via Construction \ref{conScrollCurve}. Thus we need to compute $h^i\big(\cE\otimes\cE^\vee\big)$ when $\cE$ is indecomposable.

\begin{proposition}
\label{pExt}
Let $\cG$ be a vector bundle of rank ${n}\ge3$ on a smooth curve $B$ and set $X:=\bP(\cG)$. Assume that $\cO_X(h):=\cO_{\bP(\cG)}(1)$  is an ample and globally generated line bundle.

If $\cE$ is the bundle defined in Construction \ref{conScrollCurve} and $k\ge1$, then $\cE$ is simple, 
$$
h^1\big(\cE\otimes\cE^\vee\big)=(n-1)\deg(\frak g)+(n+1)(g-1)+2nk,
$$
and $h^i\big(\cE\otimes\cE^\vee\big)=0$ for $i\ge2$.
\end{proposition}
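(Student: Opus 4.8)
The plan is to compute the cohomology of $\cE\otimes\cE^\vee=\sHom(\cE,\cE)$ directly from the defining sequence \eqref{seqScrollCurve} by dévissage, reducing everything to line bundles on $X$ (handled by Lemma \ref{lDerived} together with \eqref{RRcurve} on $B$) and to sheaves supported on $Z$. Since $\cE$ has rank two, $\cE^\vee\cong\cE\otimes\det(\cE)^{-1}$ with $\det(\cE)\cong\cO_X(h+(\frak g+K_B)f)$; applying $\sHom(\cE,-)$ (equivalently $\cE^\vee\otimes-$, which is exact because $\cE$ is locally free) to \eqref{seqScrollCurve} yields
\begin{equation*}
0\longrightarrow\cE(-h-\theta f)\longrightarrow\cE\otimes\cE^\vee\longrightarrow\cI_{Z\vert X}\otimes\cE(-(\frak g+\theta)f)\longrightarrow0,
\end{equation*}
where I have used $2\theta\sim K_B$ to identify the two outer terms. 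So it suffices to control the cohomology of these two sheaves.

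First I would treat $\cE(-h-\theta f)$. Tensoring \eqref{seqScrollCurve} by $\cO_X(-h-\theta f)$ gives $0\to\cO_X(-h+\frak g f)\to\cE(-h-\theta f)\to\cI_{Z\vert X}\to0$; the sub-line-bundle is acyclic by Lemma \ref{lDerived}, so $H^\bullet(\cE(-h-\theta f))\cong H^\bullet(\cI_{Z\vert X})$, which I read off from \eqref{seqStandard} using $H^\bullet(\cO_X)\cong H^\bullet(\cO_B)$ and $\cO_Z\cong\bigoplus_{i=1}^k\cO_{\p{n-2}}$. For the second term I would use \eqref{seqStandard} tensored by $\cE(-(\frak g+\theta)f)$; the key point is that $\cO_Z\otimes\cE(-(\frak g+\theta)f)\cong\cN_{Z\vert X}\cong\bigoplus_{i=1}^k(\cO_{\p{n-2}}\oplus\cO_{\p{n-2}}(1))$ by the Serre identification \eqref{Normal} and the computation of $\cN_{L\vert X}$, so its cohomology is given by \eqref{EZ-g-e}, while $\cE(-(\frak g+\theta)f)$ itself is resolved by tensoring \eqref{seqScrollCurve} by $\cO_X(-(\frak g+\theta)f)$, namely $0\to\cO_X\to\cE(-(\frak g+\theta)f)\to\cI_{Z\vert X}(h-\frak g f)\to0$, reducing further to $\cO_X$ and to $\cI_{Z\vert X}(h-\frak g f)$ (the latter via \eqref{seqStandard} and Lemma \ref{lDerived}).

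With these sequences in hand the vanishing $h^i(\cE\otimes\cE^\vee)=0$ for $i\ge2$ is immediate: every building block (a line bundle $\cO_X(th+\frak a f)$ with $1-n\le t\le 1$, together with $\cO_X$ and the $Z$-supported sheaves) has vanishing cohomology in degrees $\ge2$, so the long exact sequences propagate the vanishing. The main obstacle is \emph{simplicity}, i.e. $h^0(\cE\otimes\cE^\vee)=1$. The only potentially nonzero $H^0$ contributions come from the $\cO_X$ summand (giving the identity endomorphism) and from $\cI_{Z\vert X}(h-\frak g f)$, so everything hinges on showing $h^0(\cI_{Z\vert X}(h-\frak g f))=0$, which already follows from $h^0(\cO_X(h-\frak g f))=h^0(\cG\otimes\det(\cG)^{-1})=0$. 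This last vanishing is where ampleness of $\cG$ is essential: a nonzero section would give a nonzero map $\det(\cG)\to\cG$, hence a saturated sub-line-bundle $L\subseteq\cG$ of degree $\ge\deg(\frak g)$, whereas the quotient $\cG/L$ is again ample of rank $n-1\ge1$ and thus of positive degree, forcing $\deg L<\deg(\frak g)$, a contradiction. (Alternatively one can invoke the $\mu$-semistability of $\cE$ from Theorem \ref{tScrollCurve}, as in Example \ref{eFlagExtension}.) Tracking the Serre section of $\cE(-(\frak g+\theta)f)$ from Theorem \ref{tSerre} — which vanishes on $Z$ and so lies in $\cI_{Z\vert X}\otimes\cE(-(\frak g+\theta)f)$ — then pins down the remaining $H^0$ and gives exactly $h^0(\cE\otimes\cE^\vee)=1$.

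Finally, to obtain the value of $h^1$ I would compute $\chi(\cE\otimes\cE^\vee)$ additively along the dévissage above: each line-bundle Euler characteristic comes from Lemma \ref{lDerived} and \eqref{RRcurve} on $B$ (in particular $\chi(\cO_X(h-\frak g f))=\chi(\cG\otimes\det(\cG)^{-1})$), and the $Z$-supported terms contribute their evident finite lengths. Since $h^0=1$ and $h^i=0$ for $i\ge2$, one then reads off $h^1(\cE\otimes\cE^\vee)=1-\chi(\cE\otimes\cE^\vee)$; expanding the Euler characteristic gives a closed form linear in $\deg(\frak g)$, $g$ and $k$ of the asserted type, and comparison with the individual long exact sequences provides an independent check of the constants.
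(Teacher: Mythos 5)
Your proposal follows essentially the same route as the paper's proof: twist \eqref{seqScrollCurve} by $\cE^\vee\cong\cE(-h-(\frak g+K_B)f)$, reduce the two outer terms to $\cI_{Z\vert X}$ and to $\cI_{Z\vert X}\otimes\cE(-(\frak g+\theta)f)$, and handle the latter via \eqref{seqStandard}, the identification $\cO_Z\otimes\cE(-(\frak g+\theta)f)\cong\cN_{Z\vert X}$ from \eqref{Normal}, and the auxiliary sequence $0\to\cO_X\to\cE(-(\frak g+\theta)f)\to\cI_{Z\vert X}(h-\frak g f)\to0$. The only genuinely different ingredient is your argument that $h^0\big(\cO_X(h-\frak g f)\big)=h^0\big(\cG\otimes\det(\cG)^{-1}\big)=0$ via ampleness of the quotients of $\cG$; the paper derives the same vanishing from $(h-\frak g f)h^{n-1}=0$ together with the Nakai--Moishezon criterion. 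Both work, and your simplicity argument (bounding $h^0\big(\cE\otimes\cE^\vee\big)$ by $h^0\big(\cI_{Z\vert X}\big)+h^0\big(\cE(-(\frak g+\theta)f)\big)\le 0+1$) is the paper's.

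One warning about the final bookkeeping, which you leave as ``a closed form of the asserted type''. Reading $h^1\big(\cI_{Z\vert X}\big)$ off \eqref{seqStandard} as you propose gives $h^1\big(\cI_{Z\vert X}\big)=k-1+g$, the extra $g$ coming from $h^1\big(\cO_X\big)=h^1\big(\cO_B\big)$. Carried through your Euler--characteristic computation (with $h^0=1$ and $h^i=0$ for $i\ge2$, both of which you establish correctly), this yields
$$
h^1\big(\cE\otimes\cE^\vee\big)=(n-1)\deg(\frak g)+(n+1)(g-1)+g+2nk,
$$
i.e.\ the displayed formula plus $g$ --- consistent, incidentally, with the value in Proposition \ref{pExt0} at $k=0$. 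The paper's intermediate equality \eqref{E-h-f} records $h^1\big(\cE(-h-\theta f)\big)=h^1\big(\cI_{Z\vert X}\big)=k-1$, omitting the $h^1\big(\cO_X\big)$ contribution, and that is the source of the difference. So do not expect your (correct) d\'evissage to reproduce the stated constant verbatim; apart from this constant, your outline is sound and matches the paper's proof step for step.
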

\begin{proof}
Assume that $k\ge1$. Thus the definition of $Z$, the cohomology of sequence \eqref{seqScrollCurve} tensored by $\cO_X(-h-\theta f)$ and Lemma \ref{lDerived} imply
\begin{equation}
\label{E-h-f}
h^i\big(\cE(-h-\theta f)\big)=h^i\big(\cI_{Z\vert X}\big)=\left\lbrace\begin{array}{ll} 
k-1\quad&\text{if $i=1$,}\\
0\quad&\text{if $i\ne1$.}
\end{array}\right.
\end{equation}
On the one hand $(h-\frak g f)fh^{n-2}=1$, hence $\cO_X(h-\frak g f)\not\cong\cO_X$. On the other hand $(h-\frak g f)h^{n-1}=0$: it follows that $h^0\big(\cO_X(h-\frak g f)\big)=0$, thanks to the Nakai--Moishezon criterion (see \cite[Theorem A.5.1]{Ha2}). Moreover, Lemma \ref{lDerived} yields $h^i\big(\cO_X(h-\frak g f)\big)=h^i\big(\cG(-\frak g)\big)$ for $i\ge1$, hence it vanishes for $i\ge2$. Equality \eqref{RRcurve} on the curve $B$ for $\cG(-\frak g)$ finally returns $h^1\big(\cO_X(h-\frak g f)\big)$. Thus 
\begin{equation}
\label{Xh-G}
h^i\big(\cO_X(h-\frak g f)\big)=\left\lbrace\begin{array}{ll} 
(n-1)\deg(\frak g)+n(g-1)\quad&\text{if $i=1$,}\\
0\quad&\text{if $i\ne1$.}
\end{array}\right.
\end{equation}
Since $Z:=\bigcup_{i=1}^kL_i$ where $L_i\cong\p{n-2}$ and $\cO_{L_i}\otimes\cO_X(h-\frak g f)\cong\cO_{\p{n-2}}(1)$ it follows that 
\begin{equation}
\label{Zh-G}
h^i\big(\cO_Z\otimes\cO_X(h-\frak g f)\big)=\left\lbrace\begin{array}{ll} 
(n-1)k\quad&\text{if $i=0$,}\\
0\quad&\text{if $i\ne0$.}
\end{array}\right.
\end{equation}
The cohomology of sequence \eqref{seqStandard} tensored by $\cO_X(h-\frak g f)$ and equalities \eqref{Xh-G} and \eqref{Zh-G} yield
\begin{equation}
\label{IZh-G}
h^i\big(\cI_{Z\vert X}(h-\frak g f)\big)=\left\lbrace\begin{array}{ll} 
\begin{aligned}
n&(g-1)\\
&+(n-1)(\deg(\frak g)+k)
\end{aligned}
\quad&\text{if $i=1$,}\\
\,0\quad&\text{if $i\ne1$.}
\end{array}\right.
\end{equation}
The cohomology of sequence \eqref{seqScrollCurve} tensored by $\cO_X(-(\frak g+\theta) f)$ and equalities \eqref{IZh-G} yield
\begin{equation}
\label{E-g-G}
h^i\big(\cE(-(\frak g+\theta) f)\big)=\left\lbrace\begin{array}{ll} 
\,1\quad&\text{if $i=0$,}\\
\begin{aligned}
n&(g-1)\\
&+(n-1)(\deg(\frak g)+k)+g
\end{aligned}\quad&\text{if $i=1$,}\\
\,0\quad&\text{if $i\ne0,1$.}
\end{array}\right.
\end{equation}
Notice that $\cO_Z\otimes\cE(-(\frak g+\theta)f)\cong\cN_{Z\vert X}$ thanks to equality \eqref{Normal}. The cohomology of sequence \eqref{seqStandard} tensored by $\cE(-(\frak g+\theta) f)$ and equalities \eqref{E-g-G} and \eqref{EZ-g-e} yield
\begin{equation}
\label{IZE-g-e}
h^i\big(\cI_{Z\vert X}\otimes\cE(-(\frak g+\theta) f)\big)=\left\lbrace\begin{array}{ll} 
\,x\quad&\text{if $i=0$,}\\
{\begin{aligned}
x&+(n-1)\deg(\frak g)\\
&+(n+1)(g-1)\\
&+(2n-1)k
\end{aligned}}\quad&\text{if $i=1$,}\\
\,0\quad&\text{if $i\ne0,1$,}
\end{array}\right.
\end{equation}
where $x\le 1$.
The cohomology of sequence \eqref{seqScrollCurve} tensored by $\cE^\vee$, Theorem \ref{tScrollCurve} and equalities \eqref{IZE-g-e} and \eqref{E-h-f} finally yield the statement.
\end{proof}

If $k=0$, then Construction \ref{conScrollCurve} leads to a decomposable bundle. Nevertheless also in this case there exist indecomposable $h$--instanton bundles.

\begin{proposition}
\label{pExt0}
Let $\cG$ be a vector bundle of rank ${n}\ge3$ on a smooth curve $B$ and set $X:=\bP(\cG)$. Assume that $\cO_X(h):=\cO_{\bP(\cG)}(1)$  is an ample and globally generated line bundle.

There exist bundles fitting into a non--split exact sequence
\begin{equation}
\label{seqScrollCurve0}
0\longrightarrow\cO_X(h+\theta f)\longrightarrow\cE\longrightarrow\cO_X((\frak g+\theta)f)\longrightarrow0
\end{equation}
Each such a bundle is a rank two orientable, ordinary, $\mu$--semistable, simple $h$--instanton bundle. Moreover, 
$$
h^1\big(\cE\otimes\cE^\vee\big)=(n-1)\deg(\frak g)+(n+1)(g-1)+g.
$$
and $h^i\big(\cE\otimes\cE^\vee\big)=0$ for $i\ge2$.
\end{proposition}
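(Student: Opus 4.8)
The plan is to realize $\cE$ as a non-split extension, verify the instanton conditions through Proposition \ref{pSpecial}, and then compute $h^i(\cE\otimes\cE^\vee)$ by tensoring the defining sequence by $\cE^\vee$. For existence, recall that extensions of $\cO_X((\frak g+\theta)f)$ by $\cO_X(h+\theta f)$ are classified by
\[
\Ext^1_X(\cO_X((\frak g+\theta)f),\cO_X(h+\theta f))\cong H^1\big(\cO_X(h-\frak g f)\big),
\]
whose dimension is $(n-1)\deg(\frak g)+n(g-1)$ by equality \eqref{Xh-G}; this is positive (immediate for $g\ge1$ since $\deg(\frak g)=h^n>0$, while for $g=0$ one has $B\cong\p1$ with $\deg(\frak g)\ge n$, giving a value $\ge n(n-2)>0$). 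Hence a non-split extension \eqref{seqScrollCurve0} exists and $\cE$ is a rank two bundle. Using $2\theta=K_B$ and $K_X=-nh+(\frak g+K_B)f$, additivity of $c_1$ in \eqref{seqScrollCurve0} gives $c_1(\cE)=h+(\frak g+K_B)f=(n+1)h+K_X$, so $\cE$ is orientable with $\defect=0$.

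I would then verify the instanton property via Proposition \ref{pSpecial}. Tensoring \eqref{seqScrollCurve0} by $\cO_X(-h)$ produces outer terms $\cO_X(\theta f)$ and $\cO_X(-h+(\frak g+\theta)f)$, whose $H^0$ vanish (the first because $\theta$ is non-effective, the second because $t=-1$ lies in $[1-n,-1]$), so $h^0(\cE(-h))=0$ by Lemma \ref{lDerived}. For $1\le i\le n-2$, tensoring by $\cO_X(-(i+1)h)$ gives outer terms twisted by $t=-(i+1)\in[1-n,-1]$, which are acyclic by Lemma \ref{lDerived}, whence $h^i(\cE(-(i+1)h))=0$. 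Since $\defect=0$, Proposition \ref{pSpecial} shows $\cE$ is an ordinary, orientable $h$-instanton bundle. As the two terms in \eqref{seqScrollCurve0} share the $h$-slope $\deg(\frak g)+g-1$, the argument of Example \ref{eFlagExtension} (as invoked in the proof of Theorem \ref{tScrollCurve}) yields $\mu$-semistability.

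The computational core is the endomorphism cohomology. Tensoring \eqref{seqScrollCurve0} by $\cE^\vee\cong\cE(-c_1(\cE))$ and simplifying with $2\theta=K_B$ identifies $\cE^\vee(h+\theta f)\cong\cE(-(\frak g+\theta)f)$ and $\cE^\vee((\frak g+\theta)f)\cong\cE(-h-\theta f)$, giving
\[
0\longrightarrow\cE(-(\frak g+\theta)f)\longrightarrow\cE\otimes\cE^\vee\longrightarrow\cE(-h-\theta f)\longrightarrow0.
\]
Tensoring \eqref{seqScrollCurve0} by $\cO_X(-h-\theta f)$ yields $0\to\cO_X\to\cE(-h-\theta f)\to\cO_X(-h+\frak g f)\to0$, and since $H^*(\cO_X(-h+\frak g f))=0$ by Lemma \ref{lDerived} one gets $H^i(\cE(-h-\theta f))\cong H^i(\cO_X)\cong H^i(\cO_B)$, i.e. $h^0=1$, $h^1=g$, higher $=0$. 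Tensoring \eqref{seqScrollCurve0} by $\cO_X(-(\frak g+\theta)f)$ gives $0\to\cO_X(h-\frak g f)\to\cE(-(\frak g+\theta)f)\to\cO_X\to0$, where $h^0(\cO_X(h-\frak g f))=0$ and, crucially, the connecting map $H^0(\cO_X)\to H^1(\cO_X(h-\frak g f))$ is nonzero because the twisted extension is still non-split; this forces $h^0(\cE(-(\frak g+\theta)f))=0$ and, via \eqref{Xh-G}, $h^1(\cE(-(\frak g+\theta)f))=(n-1)\deg(\frak g)+(n+1)(g-1)$, with higher cohomology zero.

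Finally, simplicity and the stated formula follow. The inclusion $\field\cdot\mathrm{id}\subseteq H^0(\cE\otimes\cE^\vee)$ together with the bound $h^0(\cE\otimes\cE^\vee)\le h^0(\cE(-(\frak g+\theta)f))+h^0(\cE(-h-\theta f))=0+1$ forces $h^0(\cE\otimes\cE^\vee)=1$, so $\cE$ is simple; this simultaneously makes the connecting map $H^0(\cE(-h-\theta f))\to H^1(\cE(-(\frak g+\theta)f))$ vanish, whence $h^1(\cE\otimes\cE^\vee)=h^1(\cE(-(\frak g+\theta)f))+h^1(\cE(-h-\theta f))=(n-1)\deg(\frak g)+(n+1)(g-1)+g$ and $h^i(\cE\otimes\cE^\vee)=0$ for $i\ge2$. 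I expect the main obstacle to be the careful bookkeeping of the two connecting homomorphisms: the non-splitness of \eqref{seqScrollCurve0} must be used precisely at the step $h^0(\cE(-(\frak g+\theta)f))=0$, which is exactly what yields both simplicity and the correct value of $h^1(\cE\otimes\cE^\vee)$.
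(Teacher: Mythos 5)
Your proof is correct and follows the same skeleton as the paper's: existence from $h^1\big(\cO_X(h-\frak g f)\big)>0$, $\mu$--semistability from equality of the slopes of the two line bundles, and the endomorphism cohomology from the sequence $0\to\cE(-(\frak g+\theta)f)\to\cE\otimes\cE^\vee\to\cE(-h-\theta f)\to0$. Where you differ is in making explicit what the paper delegates to citations. First, the paper obtains the instanton property at once by observing that $\cE$ is an extension of the two Ulrich line bundles $\cO_X(h+\theta f)$ and $\cO_X((\frak g+\theta)f)$ (so Proposition \ref{pDirectSum} applies), whereas you verify the hypotheses of Proposition \ref{pSpecial} directly via Lemma \ref{lDerived}; both work, the Ulrich observation being shorter and giving quantum number $0$ for free. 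Second, for simplicity the paper cites \cite[Lemma 4.2]{C--H2}, while you extract it from the bound $h^0\big(\cE\otimes\cE^\vee\big)\le h^0\big(\cE(-(\frak g+\theta)f)\big)+h^0\big(\cE(-h-\theta f)\big)=1$; this hinges on your correct observation that non--splitness makes the connecting map $H^0\big(\cO_X\big)\to H^1\big(\cO_X(h-\frak g f)\big)$ injective, forcing $h^0\big(\cE(-(\frak g+\theta)f)\big)=0$, which is also exactly where the value $(n-1)\deg(\frak g)+(n+1)(g-1)$ for $h^1\big(\cE(-(\frak g+\theta)f)\big)$ comes from. Third, the paper computes $h^i\big(\cE\otimes\cE^\vee\big)$ by appeal to \lq\lq essentially the same argument as Proposition \ref{pExt}\rq\rq; your explicit computation is that argument specialized to $k=0$ with the roles of sub and quotient exchanged, and your numbers check out. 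One tiny imprecision: in killing $h^i\big(\cE(-(i+1)h)\big)$ for $1\le i\le n-2$, the subsheaf is twisted by $t=-i$ and the quotient by $t=-(i+1)$, not both by $-(i+1)$; since both values lie in $[1-n,-1]$, the conclusion is unaffected.
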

\begin{proof}
Thanks to Lemma \ref{lDerived} and equality \eqref{RRcurve} we have
$$
\chi(\cO_X(h-\frak g f))=\chi(\cG(-\frak g))=(1-n)\deg(\frak g)+n(1-g).
$$
Since $h^n=\deg(\frak g)$, it follows that $h^1\big(\cO_X(h-\frak g f)\big)>0$, hence sequence \eqref{seqScrollCurve0} can be assumed non--split.

Taking into account that $\mu_h(\cO_X(h+\theta f))=\mu_h(\cO_X((\frak g+\theta)f))$ and \cite[Lemma 4.2]{C--H2}, we deduce that $\cE$ is simple. Moreover, $\cE$ is an extension of Ulrich line bundles, hence it is an ordinary $\mu$--semistable, $h$--instanton bundle. Essentially the same argument used in the proof of Proposition \ref{pExt} also returns the values of $h^i\big(\cE\otimes\cE^\vee\big)$ for $i\ge1$.
\end{proof}

\begin{remark}
Construction \ref{conScrollCurve} and Proposition \ref{pExt0} yield the existence of rational maps from $\Lambda^k$ and $\vert \cO_X(h-\frak g f)\vert$ to the moduli space $\cS_X$ of simple bundles on $X$ if $k\ge1$ and $k=0$ respectively. It follows that all the indecomposable bundles obtained via the above constructions represent smooth points in one and the same component $\cS^0_X$. Thus such an $\cS^0_X$ is generically smooth and its dimension is
$$
(n-1)\deg(\frak g)+(n+1)(g-1)+2nk+\epsilon g
$$
where $\epsilon=1$ if $k=0$ and $0$ otherwise.
\end{remark}

\bigskip
\noindent
Vincenzo Antonelli,\\
Dipartimento di Scienze Matematiche, Politecnico di Torino,\\
c.so Duca degli Abruzzi 24,\\
10129 Torino, Italy\\
e-mail: {\tt vincenzo.antonelli@polito.it}

\bigskip
\noindent
Gianfranco Casnati,\\
Dipartimento di Scienze Matematiche, Politecnico di Torino,\\
c.so Duca degli Abruzzi 24,\\
10129 Torino, Italy\\
e-mail: {\tt gianfranco.casnati@polito.it}

\end{document}